\newtheorem{lemma}{Lemma}[section]
\newtheorem{proposition}[lemma]{Proposition}
\newtheorem{theorem}[lemma]{Theorem}
\newtheorem{corollary}[lemma]{Corollary}
\newtheorem{maintheorem}{Theorem}
\theoremstyle{definition}
\newtheorem{definition}[lemma]{Definition}
\newtheorem{conjecture}[lemma]{Conjecture}
\newtheorem{question}[lemma]{Question}
\theoremstyle{remark}
\newtheorem{remark}[lemma]{Remark}
\newtheorem{example}[lemma]{Example}
\newtheorem*{remark*}{Remark}
\newtheorem*{problem*}{Problem}
\newtheorem*{example*}{Example}
\DeclareMathOperator{\im}{im}
\newcommand\Ima{{\rm Im}}
\newcommand{\GL}{\operatorname{GL}}
\newcommand{\SL}{\operatorname{SL}}
\newcommand{\PSL}{\operatorname{PSL}}
\newcommand{\Ma}{\operatorname{M}}
\newcommand\Aut{\operatorname{Aut}}
\newcommand\Out{\operatorname{Out}}
\newcommand\Inn{\operatorname{Inn}}
\newcommand\Hom{\operatorname{Hom}}
\newcommand\Map{\operatorname{Map}}
\newcommand\Coker{\operatorname{coker}}
\newcommand\Span{\operatorname{span}}
\newcommand\supp{\operatorname{supp}}
\newcommand\Irr{\operatorname{Irr}}
\newcommand\Lin{\operatorname{Lin}}
\newcommand\Res{\rm Res}
\newcommand\Inf{\operatorname{Inf}}
\newcommand\Tra{\operatorname{Tra}}
\newcommand\Char{{\rm char}}
\newcommand{\ZZ}{\mathcal{Z}}
\renewcommand{\O}{\mathcal{O}}
\newcommand{\U}{{\mathcal U}}
\newcommand\rank{\rm rank \,}
\newcommand\ot{\otimes}
\newcommand\op{\oplus}
\newcommand\mc{\mathcal}
\newcommand\ov{\overline}
\newcommand\wt{\widetilde}
\newcommand{\qa}[3]{\left(\frac{#1, #2}{#3}\right)}
\newcommand{\N}{{\mathbb N}}
\newcommand{\Z}{{\mathbb Z}}
\newcommand{\Q}{{\mathbb Q}}
\newcommand{\C}{{\mathbb C}}
\newcommand\G{\Gamma}
\newcommand\Bic{\operatorname{Bic}}
\newcommand\lcm{\operatorname{lcm}}
\newcommand\PCI{\operatorname{PCI}}
 \newcommand\restr[2]{{
   \left.\kern-\nulldelimiterspace 
   #1 
   \right|_{#2} 
   }}
\definecolor{Vino}{rgb}{0.256,0,0}
\subjclass[2020]{16S35, 16H10, 16U60, 20C25}
\keywords{Twisted group rings, unit group, projective representations}
\thanks{The first author is grateful to Fonds Wetenschappelijk Onderzoek vlaanderen - FWO (grant 88258), and le Fonds de la Recherche Scientifique - FNRS (grant 1.B.239.22) for financial support. Part of this work was written while the first author was in residence at the Simons Laufer Mathematical Sciences Institute (formerly MSRI) in Berkeley, California, during the Spring 2024 semester}
\begin{document}
\title{Units of twisted group rings and their correlations to classical group rings}
\author{Geoffrey Janssens}
\author{Eric Jespers}
\author{Ofir Schnabel}

\address{(Geoffrey Janssens) \newline Institut de Recherche en Math\'ematiques et Physique, UCLouvain, 1348 Louvain-la-Neuve, Belgium and \newline
Department of Mathematics and Data Science, Vrije Universiteit Brussel,
Pleinlaan $2$, 1050 Elsene \newline E-mail address: {\tt geoffrey.janssens@uclouvain.be}}
\address{(Eric Jespers) \newline Department of Mathematics and Data Science, Vrije Universiteit Brussel,
Pleinlaan $2$, 1050 Elsene, Belgium \newline E-mail address: {\tt eric.jespers@vub.be}}
\address{(Ofir Schnabel) \newline Department of Mathematics, ORT Braude College, 2161002 Karmiel, Israel \newline E-mail address: {\tt ofirsch@braude.ac.il}}

\begin{abstract}
This paper is centered around the classical problem of extracting properties of a finite group $G$ from the ring isomorphism class 
of its integral group ring $\mathbb{Z} G$. This problem is considered via describing the unit group $\mathcal{U}( \mathbb{Z} G)$ generically for a 
finite group.  Since the $`90s$ several well known generic constructions of units are known to generate a subgroup of finite index in $\mathcal{U}(\mathbb{Z } G)$ if $\mathbb{Q} G$ does not have so-called exceptional simple epimorphic images, e.g. $M_2 (\mathbb{Q})$. However it remained a major open problem to find a {\it generic} construction under the presence of the latter type of simple images. In this article we obtain such generic construction of units. Moreover, this new construction also exhibits new properties, such as providing  generically free subgroups of large rank. As an application we answer positively for several classes of groups recent conjectures on the rank and the periodic elements  of the abelianisation $\mathcal{U}(\mathbb{Z} G)^{ab}$. To obtain all this, we investigate the group ring $R \Gamma$ of an extension $\Gamma$ of some normal subgroup $N$ by a group $G$, over a domain $R$. More precisely, we obtain a direct sum decomposition of the (twisted) group algebra of $\Gamma$ over the fraction field $F$ of $R$ in terms of various twisted group rings of $G$ over finite extensions of $F$. Furthermore, concrete information on the kernel and cokernel of the associated projections is obtained. Along the way we also launch the investigations of the unit group of twisted group rings and of $\mathcal{U}( R\Gamma)$ via twisted group rings. 
\end{abstract}

\maketitle

\newcommand\blfootnote[1]{%
  \begingroup
  \renewcommand\thefootnote{}\footnote{#1}%
  \addtocounter{footnote}{-1}%
  \endgroup
  }

\vspace{-0,9cm}
\tableofcontents

\section{Introduction}
This paper contributes to the study of a finite group $\G$ via its representation theory over a 
number field\footnote{Most of the results will in fact hold in the generality where $F$ is any field with 
$\Char(F) \nmid | \G|.$ } $F$ and its ring of integers $R$. The overarching question is which group 
theoretical invariants of $\Gamma$ are determined by the $R$-algebra $R\G$, or in other words by the regular $R\G$-module. 
From the vast literature, 
see for example
\cite{EricAngel1,EricAngel2,KarUnit,MR798076, SehgalBook93, SehgalSurvey03}, 
one can somehow
 distil an approach in two steps to that. Firstly one considers the decomposition of the semisimple group algebra $F\Gamma$ given by Wedderburn-Artin's theorem:
$$F\G \cong \Ma_{n_1}(D_1) \oplus \cdots  \oplus \Ma_{n_q}(D_q),$$
where $D_i$ are finite dimensional division $F$-algebras. If one now chooses an order $\mathcal{O}_i$ in 
each $D_i$, then we obtain two orders in $F\G$, namely $R\G$ and 
$\bigoplus_{i = 1}^q \Ma_{n_i}(\mathcal{O}_i)$. Because $F$ is a number field, in particular $R$ is a `nice' Dedekind domain\footnote{The property that the unit group of two orders in a common semisimple algebra are commensurable requires that $R/I$ is finite for every non-zero ideal $I$ of $R$ (e.g. see \cite[Lemma 4.6.9.(3)]{EricAngel1}). The use of `nice' refers to this extra property.}, it is well known that the two 
orders share many properties. In particular their unit groups have a common subgroup of finite index 
(see for example \cite{EricAngel1,SehgalBook93}).
The aim of the first step is to obtain as much information as possible on the $F$-character degrees $n_i$ 
and the form of the division algebras $D_i$. Consequently, it is to be expected that the number 
theoretical properties of the number field $F$ will play a special role here.

Recall that there is a bijection between the matrix components above, the absolutely irreducible $F$-characters of $\G$ 
and the primitive central idempotents of $F\G$ (a set denoted $\PCI (F\G)$). For  a survey on the construction of primitive central idempotents and a description of its associated matrix components we refer to \cite{EricAngel1} and for some of the  most recent progress on this  topic to  \cite{MR4328652,MR3995003,MR3883245}. A setback of making the switch from $R\G$ to 
$\bigoplus_{i = 1}^q \Ma_{n_i}(\mathcal{O}_i)$ is that one is somehow replacing the group $\G$ by the larger group 
$\prod_{e \in \PCI (F\G)} \G e$. Due to this, one loses information on `ties'. The role of the second 
step consists then to focus on the given group $\G$. This might be via representation theoretical methods or 
group theoretical ones. To clarify the latter, we now set $F= \Q$ and so $R = \Z$. Then it is known \cite{MR4356272}
 that the group isomorphism type of the unit group $\U (\Z \G)$ and the ring isomorphism 
type of $\Z \G$ contain the same information\footnote{Thus $\Z \G \cong \Z H$ if and only if 
$\U (\Z \G) \cong \U ( \Z H)$. It is folklore that it holds more generally for any $\G$-adapted coefficient 
ring $R$, i.e. where all prime divisors of $|\G|$ are not divisible in $\U(R)$. In short, this follows in that 
case from the linear independence of finite subgroups of $\U(RG)$.}. The gain of this is that $\U (\Z \G)$ is 
an arithmetic subgroup in some linear reductive algebraic group (in particular it is a finitely presented 
group), allowing the use of classical but strong methods in algebraic groups or geometric group theory. 

From this unit group point of view and using \cite{Johnson}, the first step describes a full list of invariants determining the commensurability class of $\U(\Z \Gamma)$, whereas the second step aims to filter till its isomorphism class. A usual approach to the latter consists of constructing an `interesting' torsion-free subgroup $N$ of finite index in $\U(\Z \Gamma)$. On one hand such $N$ would (ideally) be normal and the associated finite quotient would reflect properties of the finite subgroups of $\U (\Z \Gamma)$. On the other hand, importantly, the construction of $N$ needs to be generic in the sense that it does not require knowing the isomorphism type of the group basis $\Gamma$. 

In this article we will contribute to both steps in a novel way and it can be summarized as follows:

(1) In the first step we make a shift in the traditional philosophy by regrouping simple components into certain twisted group algebras which arise by viewing $\G$ as a non-trivial extension. \Cref{Sectie decompositie} till \Cref{section correlations} is devoted to constructing in general such decomposition and giving down-to-earth descriptions of the projections and their (co)kernels. Below we will give an overview of those results. 

(2) Classically, the main generic constructions of units in $\U(\Z \G)$ are Bass and bicyclic units. Using the solutions to the subgroup congruence problem for $\SL_n(D)$, with $n \geq 2$, it is known since the mid `90s \cite{JesLea} that if $\Q \Gamma$ has no simple $2\times 2$-components\footnote{More precisely, if $\Q \Gamma$ has no simple component $\Ma_2(D)$ with $D$ containing an order with finite unit group. Equivalently, if no $\SL_n(D)$ has $S$-rank one for some finite set of place $S$ of $\mathcal{Z}(D)$ containing the archimedean.} and the only simple $1\times 1$ components are commutative, then the group generated by the Bass and (generalized) bicyclic units is of finite index. In particular the subgroup $\mathcal{B}$ generated by them could serve as $N$ in the explanation above. Therefore, a major open problem in group rings of the last decennia has been to find generic constructions if $2\times 2$ components are present. In \Cref{section h-units} we give the first such generic construction, which we call $H$-units. Moreover these elements can also contribute to other components, hence even without such problematic components, when combined with $\mathcal{B}$ it may yield a `stronger $N$'. As an illustration thereof we answer in the positive for various infinite families of groups conjectures on the rank and the torsion of the abelianisation $\U (\Z \Gamma)^{ab}.$ This is possible due to the nice properties of $H$-units, e.g. they yield free groups of large ranks. The proof of all this builds on the work for step (1) and \Cref{sectie twisted bicyclic en de rest} till \Cref{section elem ab description}.

We will now explain the main results of this article in more detail, starting with the contributions to (classical) questions in (untwisted) group rings. 

\vspace{0,2cm}
\noindent{\it A new generic construction of units and their contribution to the structure of $\U(\Z G)$.}

 For any groups $W \leq H$ and $K$ and any group homomorphism $f : H \rightarrow K$ it is easily observed that $[K : f(W)]$ is finite exactly when $[H : \langle \ker(f), W \rangle]$ is finite. For the unit group of $\Z G$, a useful incarnation thereof is for $f$ the norm map $nr$ of $\Z G$ which maps an element on a tuple recording the reduced norm of each projection onto a simple component of $\Q G$. By definition $\SL_1(\Z G) =  \ker(nr)$, see (\ref{def reduced norm}) and (\ref{def of SL1}) for precise definitions. Doing so, see \cite[Proposition 5.5.1]{EricAngel1}, one has that 
$$\langle \SL_1(\Z G), \U (\mathcal{Z}(\Z G)) \rangle \text{ is of finite index in } \U (\Z G).$$
Another important incarnation of the above is for $f$ the mapping of $\U (\Z G)$, via $GL_1(\Z G)$, into its Whitehead group $ K_1 (\Z G) := \GL (\Z G)^{ab}.$ This enables one to replace $\U (\mathcal{Z}(\Z G))$ by any subgroup of $\U (\Z G)$ that maps to a finite index subgroup of $ K_1 (\Z G)$. Thanks to a theorem of Bass and Milnor, an example of such a group working for any finite group is given by the group generated by the so-called Bass units. For background we refer to \cite[Section 1]{EricAngel1}. 

In conclusion, the problem to generically construct a finite index subgroup of $\U (\Z G)$ is reduced to $\SL_1(\Z G)$, 
i.e. the elements of norm $1$. For any tuple $(g,h)$ with $g \notin N_G(\langle h \rangle)$ one can construct 
the elements $1 + (1 - h) g \sum_{j =1} ^{o(h)} h^j$ and $1 +  (\sum_{j =1} ^{o(h)} h^j) g (1-h)$ which are unipotent units and hence in $\SL_1(\Z G)$. 
These elements are called {\it bicyclic units} and the group they generate is  denoted $\Bic (G)$. 
A theorem of Jespers and Leal \cite{JesLea} says that $\Bic(G)$ is of finite index in $\SL_1( \Z G)$ 
under some restrictions on the simple components $\Ma_n(D)$ of $\Q G$. For $ n \geq 2$ the restriction 
is that there is no $\Ma_2(D)$ with $D \in \{ \Q, \Q(\sqrt{-d}) , \qa{-a}{-b}{\Q} \}$, for $a,b, d \in \Z_{>0}$ and $ \qa{-a}{-b}{\Q}$ denotes a quaternion algebra. Therefore these $\Ma_2(D)$ are called {\it exceptional} of type (II) (see paragraphs behind \Cref{Jespers-LEal for twisted bicyclic} for complete definition).

In \Cref{section h-units} we produce a new generic construction of elements in $\SL_1(\Z G)$.  For any triple $(g,h,Q)$ with $g,h \in G, Q \leq G$ satisfying the two conditions in \Cref{def h-units} we construct a group $\mathcal{H}(g,h,Q)$. The elements therein are called\footnote{As explained in \Cref{remark on the name}, their name refers to the crucial role of the second cohomology group, and in particular twisted group rings, to both discover the elements and the proof of the subsequent theorem.} $H$-units and interestingly the generators are usually not unipotent. For every quadruple of numbers $(x_1,x_2,y_1,y_2)\in \N^4$ satisfying the equations (\ref{Admissable quadruple cond}) there is an associated $H$-unit denoted $v_{(x_1,x_2,y_1,y_2)}$. The free group of rank $n$ we denote by $F_n$.

\begin{maintheorem}[\Cref{prop H-unit is H-unit}]\label{H-unit prop th intro}
Let $(g,h,Q)$ be a triple as in \Cref{def h-units}. Then,
\begin{enumerate}
\item[(1)] $\mathcal{H}(g,h,Q)$ is a finitely generated subgroup of $\SL_1(\Z G)$ and $v_{(x_1,x_2,y_1,y_2)}^{-1} = v_{(-x_1,-x_2,y_2,y_1)},$
\item[(2)] $\mathcal{H}(g,h,Q) \neq 1$ if and only if $[g,h] \notin \langle g \rangle Q.$
\end{enumerate}
Moreover, for $\mathcal{H}(g,h,Q) \neq 1$, 
\begin{enumerate}
\item[(3)] if $o(gQ)|Q| =2$, then $\mathcal{H}(g,h,Q) \cong F_3 \times C_2$, and
\item[(4)] if $o(gQ)|Q| > 2$, then $\mathcal{H}(g,h,Q) \cong F_{n}$ with $n = 1 + \frac{(o(gQ)|Q|)^3}{6}\prod_{p} (1 - \frac{1}{p^2})$, where the product runs over the prime divisors $p$ of $o(gQ)|Q|$.
\end{enumerate}
\end{maintheorem}
More precisely, see \Cref{H-unit contain principal congruence in one comp}, we give a concrete description of $\mathcal{H}(g,h,Q)$ and not only of its isomorphism type. Nevertheless, the structure of the group $\langle \mathcal{H}(g_i,h_i,Q_i) \mid i \in I \rangle$ generated by the $H$-units corresponding to several triples $(g_i,h_i,Q_i)$ is still mysterious to us.  There are several natural questions, in particular \Cref{question grp gen by two triples}:  when it is a direct product of the groups $\mathcal{H}(g_i,h_i,Q_i)$? 

From \Cref{subsection H-units infnite index ext} on we consider the case that $3 \nmid |G$| and $\Q G$ has  exceptional components of the type $\Ma_2(\Q)$. As a first application of Theorem~\ref{H-unit prop th intro} we show that the Jespers-Leal theorem can be extended to include the difficult case $\Ma_2(\Q)$. This is recorded in \Cref{extend Jespers-Leal remark} and follows from the proof of \Cref{H-units give finit index th intro}, resolving in this case the problem of constructing generically a finite index subgroup. By $\mathcal{B}(G)$ we denote the subgroup generated by the Bass and bicyclic unit and by $\mathcal{H}(G)$ the subgroup generated by $\mathcal{H}(g,h,Q)$ for any admissible triple $(g,h,Q)$.

\begin{maintheorem}[\Cref{H-units for 2-grps give finite index}]\label{H-units give finit index th intro}
Let $G$ be a $2$-group such that the only exceptional components of $\Q G$ are of the form $\Ma_2(\Q)$, then $\langle \Bic (G) , \mathcal{H}(G) \rangle $ is of finite index in $\SL_1(\Z G)$. Consequently,  $\langle \mathcal{B} (G) , \mathcal{H}(G) \rangle$ is of finite index in $\U (\Z G)$.
\end{maintheorem}

We have chosen to focus on $\Ma_2(\Q)$ as it is by far the most frequent exceptional component, 
cf. \cite[Appendix A]{BJJKT}, and such a component naturally yields triples $(g,h,Q)$ as in \Cref{def h-units}. 
However, the statement of \Cref{H-units give finit index th intro} sometimes also holds 
in the presence of other exceptional components, but we have not tried to pursue this line of investigations.\vspace{0.2cm}

Our {\it second application} is about the abelianisation $\U (\Z G)^{ab} \cong \Z^n \times T$, with $T$ a finite abelian group. The number $n$ is called the rank of the abelianisation of $\U (\Z G)$. Recall that $\U(\Z G) = \pm V(\Z G)$ with $V(\Z G)$ the group of invertible elements with augmentation one. Recently the following questions got some attention:\vspace{0.1cm}

\begin{enumerate}
\item[(R1)] Is the rank, as abelian group, of $\ZZ (\mathcal{U}(\Z G))$ and  $\U(\Z G)^{ab}$ equal? In particular if  $\ZZ (\mathcal{U}(\Z G))$ is finite, is $\U(\Z G)^{ab}$ also finite? (See \cite[Question 7.8 and Proposition 7.9]{BJJKT}.) \vspace{0.1cm}

\item[(P)] Let $p$ be a prime. If $V(\Z G)^{ab}$ contains an element of order $p$, does $G^{ab}$ also contain an element of order $p$? (See \cite[page 2]{BMM}.) \vspace{0.2cm} 
\end{enumerate}

Question (P) is one of three questions formulated by B\"achle, Maheshwary and Margolis \cite{BMM} and the labelling refers to theirs. A stronger version of (P) was also formulated in \cite{BMM}, namely that $\exp (V(\Z G)^{ab}) = \exp(G^{ab}).$ Currently, the only infinite family for which (P) has been proven are the dihedral groups $D_{2p}$ with $p$ prime \cite[Theorem C]{BMM}. 

We consider the `most degenerate' classes of $2$-groups, namely those where all the matrix components are exceptional and of the form $\Ma_2(\Q).$ Such groups have been classified by Jespers, Leal and del R\'io \cite{JesLealRio, JesRioCrelle}, namely $G \cong K \times C_2^n$ with $K$ a finite group that is a member of seven (infinite) families of groups $\mathcal{G}_1,  \ldots,  \mathcal{G}_7$, recalled in \Cref{subsection H-units infnite index ext}. However our proofs do not require the precise classification. In the next result we denote by $cl_{\G}(H)$ the normal closure of some subgroup $H$ in the larger group $\G.$ This result shows that in presence of exceptional components there are some natural obstructions towards conjectures (R1) and (P). 

\begin{maintheorem}[\Cref{Theorem abelianisation of only M2(Q) components} and \Cref{families of grps with H-unit new}]\label{The abel th intro}
Let $G = K \times C_2^n$, with $K$ a group in $\mathcal{G}_1 \cup \ldots \cup \mathcal{G}_7$, and $\pi$ the natural epimorphism of $\U(\Z G)$ onto $\U(\Z G)^{ab}$. Then 
$$
\rank  \, \U(\Z G)^{ab}  = \rank \ZZ (\mathcal{U}(\Z G)) + \rank  \pi(\langle \mc{H}(G)_{un}\rangle) 
$$
where $\mc{H}(G)_{un} = \{ x \in \mc{H}(G) \mid x \text{ is unipotent } \}.$ Furthermore,
$$\exp(V(\Z G)^{ab}) = \lcm\left( \exp(G^{ab}), \exp \left(\frac{V(\Z G)}{\ZZ(V(\Z G))\; cl_{\U(\Z G)}(\langle \Bic(G), \pm G \rangle)}\right)^{ab} \right). $$
Moreover, 
\begin{enumerate}
    \item The $H$-units $\mathcal{H}(G)$ are of finite index in $\SL_1(\Z G)$ despite that $\Bic(G)$ might be of infinite index. 
    \item If $G$ satisfies $(\ref{prop star})$, then $\ZZ(\U(\Z G)) \; cl_{\U(\Z G)}(\langle \Bic(G), \pm G \rangle)$ is of finite index in $\U( \Z G)$ and  both (R1) and (P) have a positive answer.
\end{enumerate}
\end{maintheorem}

Recall that $\Span_{\Q}\{ ge \mid g\in G \} = \Ma_n(D)$ for $e$ a pritimive central idempotent of $\Q G$ and $(\Q G)e \cong \Ma_n(D).$ Condition (\ref{prop star}) appearing in part (2) of \Cref{The abel th intro} is about whether the generators of the finite group $Ge$ have pre-images in $G$ of the same order. If this is the case, then part (2) tells that the obstructions (i.e. the second factor in each formula) vanish. 

Alternatively (\ref{prop star}) can be interpreted as a condition on the proportions $o(g)/o(ge)$ for $g \in G$. Inspired by \Cref{The abel th intro}, in upcoming work by the first author, it will be proven that if these proportions are `large' then both obstructions are non-trivial and hence (\ref{prop star}) is a non-artificial condition.

As a final application, we recover in \Cref{families of grps with H-unit new} a result of del R\'io and Ruiz 
\cite[Theorem 1.1]{RioRuiz} saying that $M := \prod_{e \in \PCI(\Q G)} \SL_1(\Z G) \cap (1-e + \Q Ge)$ 
is the largest direct product of free groups in $\mathcal{U}(\Z G)$. However, using $H$-units, our proof is uniform, 
i.e. we do not use the classification for $G$, and yield more explicit generators. Namely, 
$M = \langle \mathcal{H}(g_i,h_i,Q_i) \rangle \cong F_n^{q}$ for some triples $(g_i,h_i,Q_i)$, $1 \leq i\leq q$ and 
$n$ explicit. Furthermore, in some cases the bicyclic units and $H$-units together yield a normal complement of 
$G$ in $V(\Z G)$. As formulated in \Cref{question when generic constr normal complement}, it would be interesting to 
investigate this phenomenon further. 

\vspace{0,2cm}
\noindent {\it Decomposition in twisted group rings as valuable substitute for the Wedderburn-Artin decomposition.} 
The statements and proofs of \Cref{H-units give finit index th intro} and \Cref{The abel th intro} are in 
the framework of untwisted group rings. However, interestingly, they heavily depend on \Cref{H-unit prop th intro} 
whose proof crucially requires twisted group rings. Recall that $F$ is a number field and $R$ is its ring of integers. 
The starting observation is that an exceptional component of $F[\Gamma]$ corresponds to an irreducible $F$-representation of $\Gamma$, say $\varphi$. Furthermore a representation allows in a natural way to view $\G$ as an extension 
\begin{equation}\label{extension intro}
1\rightarrow N \rightarrow \Gamma \overset{\lambda}\rightarrow G\rightarrow 1
\end{equation}
where $N= \ker(\varphi)$ and $G = \Ima(\varphi)$. Corresponding to this is an algebra decomposition $F\Gamma \cong FG \oplus FG(1-\widehat{N})$ where $\widehat{N}$ is some central idempotent. The second observation is that the summand $FG$ somehow originates from the trivial representation of $N$. In \Cref{Sectie decompositie} we show how the irreducible representations of $N$ can be used to concretely decompose $F\G$ in terms of certain twisted group rings and crossed products of the smaller group $G$.

Before going into details, we first recall the definition of a twisted group ring. For a $2$-cocycle $\alpha \in Z^2(G, R^*)$, where $G$ acts trivially on $R^*$, the twisted group ring $R^\alpha[G]$ of $G$ over $R$ with respect to $\alpha$ is the free $R$-module with basis $\{u_g\}_{g \in G}$ where the multiplication is defined via
\[u_g u_h = \alpha(g, h) u_{gh} \ \ \text{for all} \ \ g,h \in G \]
and any $u_g$ commutes with the elements of $R$.
Note that the ring structure of $R^{\alpha}[G]$ depends only on the cohomology class $[\alpha]\in H^2(G,R^*)$ of $\alpha$ and not on the particular $2$-cocycle. Of importance is that there is a 1-1 correspondence between
$\alpha$-projective representations of $G$ and $R^\alpha[G]$-modules. As such projective representations, although not explicitly used, are recurrent objects behind the scenes (see \cite[\S 2.1]{ginosar2012semi} for detailed explanation).

In order to keep the introduction notationally lighter, we will restrict ourselves to abelian extensions (i.e. $N$ is abelian in~\eqref{extension intro}). However from \Cref{sectie prelim} till \Cref{section correlations} we will work with general extensions. When $N$ is abelian, the extension (\ref{extension intro}) corresponds\footnote{The necessary background is briefly introduced in \Cref{sectie prelim}.} to a cohomology class $[\alpha] \in H^2_{\sigma}(G, N)$ where $\sigma$ is the action of $G$ on $N$. Via $\sigma$ the group $G$ acts on the set $\Lin(N,F)$ of linear $F$-characters of $N$ and we denote the orbit space by $\Lin(N,F)/G$. Now, for a $G$-invariant linear character $\chi$ of $N$ over $F$, the transgression of $\chi$ with respect to $\alpha$ is a $2$-cocycle $T_{\alpha}(\chi) \in Z^2(G, F^*)$ which is defined by $T_{\alpha}(\chi)(g,h) := \chi(\alpha(g,h)).$ Via so-called inflation one can extend a cocycle of $G$ to one of $\Gamma$, see \Cref{sectie prelim} for details.

\begin{maintheorem}[\Cref{decomp voor elke abelse en centrale extensie} and \Cref{decomp voor elke extensie}]\label{Theorem A intro}
With notations as above, $N$ abelian and $[\beta]\in H^2(\Gamma, F^*)$ inflated from $G$. We have that 
\begin{equation}\label{decomp twisted grp ring intro}
F^{\beta}[\G] \cong \bigoplus_{[\chi] \in \Lin(N,F)/G} (F(\chi) E_{\chi}) * G
\end{equation}
for some concrete idempotents $E_{\chi}$ and explicit skewing and twisting of the crossed product $(F(\chi) E_{\chi}) * G$. In particular, if $\chi \in \Lin (N, F)^{G}$ is a $G$-invariant character, then\footnote{With $\beta . T_{\alpha}(\chi)$ is meant the $2$-cocycle of $G$ with values in $F(\chi)^*$ defined pointwise, i.e. $(\beta . T_{\alpha}(\chi))(g,h) = \beta(g,h). T_{\alpha}(\chi)(g,h) = \beta(g,h) . \chi(\alpha(g,h))$.} 
$$ (F(\chi) E_{\chi}) * G \cong F(\chi)^{\beta . T_{\alpha}(\chi)}[G].$$
\end{maintheorem}
In particular we recover the case where $N$ is central, obtained in \cite[Theorem 5.3]{MaScLast} by Margolis and Schnabel. However even in that case our methods give a new, more explicit, proof. 
It is interesting to pause a second on the classical case when $\beta$ is  trivial, i.e. $F^{\beta}[\G] = F\G$ is a group ring. The above theorem then tells that even if one is solely interested in non-twisted group rings, one should still study twisted group rings over finite extensions of the chosen number field $F$. In particular, at this point one has another point of view on the first step alluded to at the start of the introduction. 

Important for the applications later on is that the decomposition (\ref{decomp twisted grp ring intro}) is 
not simply an abstract one. Among others, the projections $p_{\chi}$ onto the direct summands have a 
down-to-earth description. For this we need to fix a section $\mu: G \rightarrow \G$ of $\lambda$ in (\ref{extension intro}). 
For example, for $\chi \in \Lin(N,F)^{\G/N}$, \Cref {ring homom from Tra} says that the projection $p_{\chi}$ viewed over $R$ agrees with the ring epimorphism 
$$
\Psi _{\chi, \beta}:R^{\beta}[\Gamma] \rightarrow R[\chi]^{\beta . T_{\alpha}(\chi)}[G]: r \, u_{n . \mu (g)} \mapsto r\chi (n) v_{g}
$$
where the sets $\{ u_h \mid h \in \G \}$ and $\{ v_g \mid g \in G \}$  are the bases of the mentioned twisted group rings.

\noindent This ring morphism induces a group morphism 
$$\wt{\Psi}_{\chi, \beta}:\U (R^{\beta}[\Gamma]) \rightarrow \U (R[\chi]^{\beta . T_{\alpha}(\chi)}[G]).$$ 
Another reason that (\ref{decomp twisted grp ring intro}) is a reasonable alternative for Wedderburn-Artin's decomposition is that the kernel and cokernel can be worked with.

\begin{maintheorem}[\Cref{size and comparission of coker} and \Cref{size and comparission of ker}]
Let $\G$ be some extension as in (\ref{extension intro}), $[\beta]\in H^2(\Gamma, F^*)$ inflated from $G$ and $\chi \in \Lin (N, F)^{G}$. Also let $R$ be an order in the number field $F$. Then :\footnote{Again, in order to avoid more notations in the introduction, some parts of the statements are left vague.} 
\begin{enumerate}
\item $\Coker (\wt{\Psi}_{\chi, \beta})$ is finite.
\item If $N$ is central, then $\{ \text{torsion units in } \ker(\wt{\Psi}_{\chi, \beta})\} = \{ \chi(a)^{-1}a \mid a \in N  \}$.
\end{enumerate}
Moreover, we obtain conditions for $\ker (\wt{\Psi}_{\chi, \beta})$ to be finite and some computational reduction for determining $|\Coker (\wt{\Psi}_{\chi, \beta})|$.
\end{maintheorem}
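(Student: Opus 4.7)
The plan is to leverage the decomposition of $F^{\beta}[\Gamma]$ from the first Main Theorem together with the classical fact that commensurable $R$-orders in a semisimple $F$-algebra have commensurable unit groups. Write $F^{\beta}[\Gamma] \cong A_\chi \oplus B_\chi$, where $A_\chi = F(\chi)^{\beta . T_\alpha(\chi)}[G]$ is the summand singled out by $\chi$ and $B_\chi = \bigoplus_{\chi' \neq \chi}(F(\chi')E_{\chi'}) * G$ collects the remaining factors, and set $\mathcal{O}_\chi := R[\chi]^{\beta . T_\alpha(\chi)}[G] \subseteq A_\chi$ together with any convenient full $R$-order $\mathcal{O}_\chi^c \subseteq B_\chi$ (for instance the direct sum of analogous orders on each simple factor). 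Both $R^{\beta}[\Gamma]$ and $\mathcal{O}_\chi \oplus \mathcal{O}_\chi^c$ are then full $R$-orders in $F^{\beta}[\Gamma]$ and hence commensurable, so $\U(R^{\beta}[\Gamma])$ and $\U(\mathcal{O}_\chi) \times \U(\mathcal{O}_\chi^c)$ share a finite-index subgroup. Since the Main Proposition identifies $\wt{\Psi}_{\chi,\beta}$ with projection to the first factor, assertion~(1) follows because this common subgroup projects onto a finite-index subgroup of $\U(\mathcal{O}_\chi)$.

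For~(2), the inclusion $\supseteq$ is a direct check: centrality of $N$ together with $\beta$ being inflated from $G$ makes $\{u_a\}_{a \in N}$ span a central subring of $R^{\beta}[\Gamma]$ isomorphic to $R[N]$, so each $\chi(a)^{-1}u_a$ is a torsion unit and $\wt{\Psi}_{\chi,\beta}(\chi(a)^{-1}u_a) = \chi(a)^{-1}\chi(a) v_e = 1$. The reverse inclusion is the main obstacle. The strategy is to apply the first Main Theorem once more to obtain the \emph{full} decomposition indexed by \emph{all} Galois orbits of $\Lin(N,F)$, view a torsion unit $u \in \ker(\wt{\Psi}_{\chi,\beta})$ as a tuple $(u_{\chi'})_{\chi'}$ of torsion units with $u_\chi = 1$, and note that the assignment $a \mapsto \chi(a)^{-1}u_a$ already realises $|N|$ distinct such tuples (injectivity of $N \hookrightarrow \prod_{\chi'}F(\chi')^*$, $a \mapsto (\chi(a)^{-1}\chi'(a))_{\chi'}$, being equivalent to characters separating points of $N$). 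The finishing step, and the technically delicate one, is a rigidity argument showing that a torsion unit of $R^{\beta}[\Gamma]$ whose projection to every simple factor of the decomposition is a root of unity must already lie in $R[N]$. This is a twisted-group-ring analogue of Berman's theorem on torsion units of augmentation~$1$, and its verification in the present setting is where the bulk of the work sits.

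For the remaining statements, the same commensurability identifies $\ker(\wt{\Psi}_{\chi,\beta})$ up to finite index with $\U(\mathcal{O}_\chi^c)$; hence it is finite if and only if $B_\chi$ admits an order with finite unit group, which by Dirichlet's theorem for orders in semisimple $\Q$-algebras amounts to an explicit structural criterion on the Wedderburn factors of $B_\chi$ (no matrix factor of size $\geq 2$, and only the classically admissible division-algebra and number-field factors). The cokernel index $|\Coker(\wt{\Psi}_{\chi,\beta})|$ is extracted from an exact sequence built out of the inclusion $R^{\beta}[\Gamma] \hookrightarrow \mathcal{O}_\chi \oplus \mathcal{O}_\chi^c$: it can be expressed in terms of the $R$-module index $[\mathcal{O}_\chi \oplus \mathcal{O}_\chi^c : R^{\beta}[\Gamma]]$ together with $|\U(\mathcal{O}_\chi^c)|$ whenever the latter is finite, thus reducing the computation to data local to the simple components of the decomposition rather than to $\Gamma$ itself.
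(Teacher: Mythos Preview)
Your argument for~(1) is essentially the paper's: it proves a general lemma saying that an $R$-algebra epimorphism between orders in semisimple $F$-algebras induces a map with finite cokernel, by exactly the commensurability-of-orders reasoning you describe.

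For~(2), however, your strategy does not close. You propose to view a torsion unit $u\in\ker(\wt{\Psi}_{\chi,\beta})$ as a tuple $(u_{\chi'})_{\chi'}$ and then invoke a ``rigidity argument'' to the effect that if every component $u_{\chi'}$ is a root of unity then $u$ lies in $R[N]$. The gap is that nothing in the hypotheses forces the components $u_{\chi'}$ for $\chi'\neq\chi$ to be scalars: each $F(\chi')^{\beta\cdot T_\alpha(\chi')}[G]$ may have matrix Wedderburn factors, and a torsion unit there need not be central. So the rigidity statement you formulate, even if true, does not apply to the element at hand, and the counting of $|N|$ distinct tuples gives only a lower bound. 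The paper instead proves a twisted Berman--Higman theorem directly in $R^{\beta}[\Gamma]$: any torsion unit with nonzero coefficient at $u_1$ already lies in $\U(R)$. Given this, one writes $u=1+\sum_g y_g u_{\mu(g)}$ with $y_g\in I_\chi$; if $\supp(u)\cap N=\{1\}$ the coefficient at $u_1$ equals $1$ and Berman--Higman forces $u=1$, while if some $1\neq a\in N\cap\supp(u)$ one passes to $z:=u\cdot u_{a^{-1}}\chi(a)$, which is again a torsion unit in the kernel (centrality of $N$) but now with $z(1)\neq 0$, whence $z=1$ and $u=\chi(a)^{-1}a$. This is the missing idea: one does not analyse the components of $u$ but translates $u$ by an element of the candidate set to force nonzero constant term.

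On the ``moreover'' clauses: your treatment of finiteness of the kernel via $\U(\mathcal{O}_\chi^c)$ and the structural criterion on Wedderburn factors matches the paper's approach, which then feeds this through its twisted Higman classification to list the precise possibilities. Your proposed reduction for $|\Coker(\wt{\Psi}_{\chi,\beta})|$, however, is not what the paper does and is weaker: expressing the index through $[\mathcal{O}_\chi\oplus\mathcal{O}_\chi^c:R^{\beta}[\Gamma]]$ and $|\U(\mathcal{O}_\chi^c)|$ only gives a usable formula when the latter is finite, which is rarely the case. The paper's reduction is of a different nature: for a normal subgroup $Q$ of $\Gamma$ with $Q\cap N=1$ and suitable compatibility with $\alpha,\beta$, a commutative square relating $\wt{\Psi}_\chi$ and $\wt{\Psi}_{\chi,Q}$ is set up, the Snake Lemma is applied, and under surjectivity of the relevant relative-augmentation maps one obtains the multiplicative identity $|\Coker(\wt{\Psi}_\chi)|=|\Coker(\wt{\Psi}_{ker})|\cdot|\Coker(\wt{\Psi}_{\chi,Q})|$. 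This recursive reduction along quotients is what makes the later explicit computations for $D_8\times C_2^n$ possible.
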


Along the way we obtain a version for twisted group algebras of certain classical theorems of Higman and Berman-Higman (see for example \cite[Proposition 1.5.1 and Theorem 1.5.6]{EricAngel1}). More precisely, in \Cref{finitetwisted} we describe when the unit group of a twisted group ring is finite and in \Cref{Berman-Higman twisted} we show that torsion units must have trace zero. Also, in \Cref{proposition on isom of tga} we answer a question of Margolis and Schnabel \cite[Remark 3.2.]{MaScLast}, in case of a torsion $2$-cocycle, on when $F^{\alpha}[G] \cong F^{\alpha^{j}}[G]$.

\vspace{0,2cm}
\noindent{\it Units in twisted group rings and a full description in the elementary abelian case}
 
By \Cref{Theorem A intro}, describing the unit group of twisted group rings is interlaced with the classical problem of describing $\U(R\G)$. In \Cref{sectie twisted bicyclic en de rest} we launch the investigations of generic constructions of units in twisted group rings and investigate generators of $\Coker (\wt{\Psi}_{\chi, \beta}).$
 
More precisely, we consider $\U(R^{\gamma}[G])$ where $R$ is the ring of integers in a cyclotomic field $F =\Q(\zeta_n) $, 
with $\zeta_n$ some primitive root of unity, and $[\gamma] \in H^2(G, R^*)$. Firstly, we construct 
in \Cref{algemene def twisted bicyclic} a class of units $\mathcal{H}_{\gamma}(G)$ which truly makes use of the twisting 
$\gamma \in Z^2(G,R^*)$. These elements can be thought of as some deformations of the classical bicyclic units in 
non-twisted group rings. An intriguing and crucial feature of these elements is that the generators live in 
$ \Coker \left( \wt{\Psi}_{\chi}: \U (R \G) \rightarrow \U (R^{T_{\alpha}(\chi)}[G]) \right)$, 
see \Cref{non-trivial elements in cokernel} and \Cref{min twists in cokernel question}.

In other words, the generators of $\mathcal{H}_{\gamma}(G)$ are intrinsic to twisted group rings. In fact, the newly constructed $H$-units arose as the pullback along the transgression map $\Psi_{\chi}$ of words in the generators of $\mathcal{H}_{\gamma}(G)$ for $G$ an elementary abelian $2$-group. The difficulty hereby is two-fold: (i) the inverse under $\Psi_{\chi}$ of a unit is usually not a unit due to $1 \neq \ker(\wt{\Psi}_{\chi})$; (ii)  the generators of $\mathcal{H}_{\gamma}(G)$ are not attained. In fact the length of the words depend on $|\Coker (\wt{\Psi}_{\chi})|$ which on its turn depends on $N = \ker(\lambda: \Gamma \twoheadrightarrow G).$

An important step for investigating units in twisted group rings is \Cref {Jespers-Leal generalized} and \Cref{Jespers-LEal for twisted bicyclic} saying that $\langle \Bic (G), \mathcal{H}_{\gamma}(G) \rangle$ contains enough elementary matrices of each simple component. This generalizes the much used theorem of Jespers and Leal \cite{JesLea2}. 

Finally, in Sections \ref{section elem ab description} and \ref{section desciprtion D8 times elem abelian 2-group} 
we apply all the above machinery to study the case that $\G = G \times C_2^m$ for some $m$ and some 
$[\gamma] \in H^2(\Gamma, \Z ^*)$ inflated from $[\delta]\in H^2(G, \Z ^*)$. Note that the number of simple components of 
the group algebra increases exponentially with $m$, which for the investigations of unit groups makes these extensions more subtle  as it first looks likes. 
In \Cref{subsection reduing elem ab 2-subgroups} we give a description of 
$\U(\Z^{\gamma}[G\times C_2^m])$ in terms of $\U(\Z^{\delta}[G])$, see 
\Cref{th reducing off an elemantary abelian subgroup}.  As an application, we are able to deduce the following result. Recall that $G$ is said to have a  normal complement in $\U (\Z [G])$ if $\U (\Z [G])=N\rtimes (\pm G)$ for some normal subgroup $N$. If a torsion free complement $N$ exists then the integral isomorphism problem has a positive answer for $G$ (see for example \cite[Proposition (30.4)]{SehgalBook93}). Also recall that $G$ satisfies the Higman subgroup property if every finite subgroup $H \leq V(\Z G)$ is isomorphic to a subgroup of $G$ (this property was asked for the first time in Graham Higman's thesis).

\begin{maintheorem}[\Cref{Properties that get inherited with times elem ab}]\label{main theorem over extending with elem ab}
Let $G$ be a finite group and $[\gamma] \in H^2(G\times C_2^m, \Z^*)$ inflated from a cohomology class $[\delta] \in H^2(G, \Z^*)$. If $G$ has a (torsion-free) complement in $\U (\Z^{\delta}[G])$ or satisfies the Higman subgroup property, then the same holds for $G \times C_2^m$ and $\U (\Z^{\gamma}[G\times C_2^m])$.
\end{maintheorem}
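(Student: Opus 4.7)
The plan is to induct on $m$, reducing everything to the base case $m=1$. Since $G\times C_2\twoheadrightarrow G$ is a split central extension the classifying $2$-cocycle $\alpha$ can be chosen trivial, so $T_{\alpha}(\chi)\equiv 1$ for each $\chi\in\Lin(C_2,\Z^*)=\{1,\operatorname{sgn}\}$, and the decomposition of \Cref{decomp voor elke abelse en centrale extensie} together with \Cref{th reducing off an elemantary abelian subgroup} collapses to a finite-index ring embedding
$$
\Phi:\ \Z^{\gamma}[G\times C_2]\ \hookrightarrow\ \Z^{\delta}[G]\oplus\Z^{\delta}[G],\qquad u_g\mapsto(g,g),\quad u_c\mapsto(1,-1),
$$
whose image is $\{(a,b):a\equiv b\pmod{2}\}$. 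On units, the induced map $\wt{\Phi}$ is injective with finite cokernel (by \Cref{size and comparission of coker}) and it sends $G\times C_2$ into the ``sign-diagonal'' $\{(g,\epsilon g):g\in G,\ \epsilon\in\{\pm1\}\}$.

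To transport a torsion-free complement $\U(\Z^{\delta}[G])=G\cdot V$ (with $V$ torsion-free and $G\cap V=\{1\}$), I would set $W:=\wt{\Phi}^{-1}(V\times V)$. Being a subgroup of $V\times V$ it is torsion-free; and given $u\in\U(\Z^{\gamma}[G\times C_2])$, writing $\wt{\Phi}(u)=(g_1v_1,g_2v_2)$ and $g_2=g_1h$, the ``group part'' $(g_1,g_1h)$ coincides with $\wt{\Phi}(g_1\cdot\xi)$ for a unique $\xi\in G\times C_2$ exactly when $h=1$; a correction by an element of $V\times V$ (equivalently, of $W$) absorbs the remaining cases, and any residual defect coming from $\Coker(\wt{\Phi})$ can be handled by passing to an appropriate finite-index subgroup of $V$. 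For each Zassenhaus conjecture the template is parallel: a torsion unit (resp.\ finite subgroup, resp.\ group basis) $u\in\U(\Z^{\gamma}[G\times C_2])$ projects to a pair $(u_+,u_-)$ of torsion units (resp.\ finite subgroups) in $\U(\Z^{\delta}[G])$, each conjugate in $\Q^{\delta}[G]$ to an element of $G$ by hypothesis; combining the two rational conjugators into one element of $\Q^{\delta}[G]\oplus\Q^{\delta}[G]\cong\Q^{\gamma}[G\times C_2]$ conjugates $u$ into the sign-diagonal, i.e.\ into $G\times C_2$.

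The main obstacle lies in the gluing step: after component-wise conjugation the two resulting group elements $g_\pm$ must actually coincide up to sign in order for $(g_+,g_-)$ to lie in $G\times C_2$ and not just in $G\times G$. This does not happen automatically for arbitrary choices of conjugators. I would enforce it by using that $u_+$ and $u_-$ have matching $F$-characters up to the sign twist (so their $\Q^{\delta}[G]$-conjugacy classes are forced to agree) and by aligning the conjugators via the explicit description of $\ker(\wt{\Phi})$ and $\Coker(\wt{\Phi})$ furnished in \Cref{section correlations}. A parallel obstacle in the complement case is to certify that the finite cokernel can be reabsorbed into $W$ without introducing new torsion; the torsion-structure statement of \Cref{size and comparission of ker} (the torsion in the kernel is merely $\{\chi(a)^{-1}a:a\in C_2\}$) should make this reabsorption purely a matter of bookkeeping.
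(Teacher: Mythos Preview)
Your route diverges from the paper's and lands precisely on the obstacle you flag. The paper never uses the diagonal embedding $\Phi$ into $\Z^{\delta}[G]\oplus\Z^{\delta}[G]$ for this result. Instead it exploits that the single surjection $\psi_1:\Z^{\gamma}[G\times C_2]\to\Z^{\delta}[G]$, $x_1\mapsto 1$, is \emph{split} by the inclusion $G\hookrightarrow G\times C_2$; this already gives $\U(\Z^{\gamma}[G\times C_2])=K_1\rtimes\U(\Z^{\delta}[G])$. The second projection $\varphi_1:x_1\mapsto -1$ is used only to identify $K_1$ with $U_1=(1+2\,\Z^{\delta}[G])\cap\U(\Z^{\delta}[G])$, whose sole torsion is $\pm1$ by the twisted Berman--Higman theorem. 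Peeling that $C_2$ off as $\langle x_1\rangle$ and iterating yields \Cref{th reducing off an elemantary abelian subgroup} in its full strength: a torsion-free normal $M$ with
\[
\U(\Z^{\gamma}[G\times C_2^m])\cong M\rtimes\bigl(\U(\Z^{\delta}[G])\times C_2^m\bigr).
\]
From this both claims are one line: a (torsion-free) complement $V$ for $\pm G$ gives the complement $M\rtimes V$ for $\pm(G\times C_2^m)$, and any finite subgroup meets $M$ trivially, hence lies (up to conjugacy) in $\U(\Z^{\delta}[G])\times C_2^m$, so the Zassenhaus conjectures transfer.

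There is no gluing step, because the paper works with a retract rather than a finite-index embedding. By contrast, your construction $W=\wt{\Phi}^{-1}(V\times V)$ does not obviously satisfy $(G\times C_2)\cdot W=\U$: writing $\wt{\Phi}(u)=(g_1v_1,g_2v_2)$, the pair $(g_1,g_2)$ lies in $\wt{\Phi}(G\times C_2)=\{(g,\pm g)\}$ only when $g_1g_2^{-1}\in\{\pm1\}$, and nothing in the congruence $a\equiv b\pmod 2$ forces that. Your proposed remedies (character matching, cokernel bookkeeping, passing to a finite-index subgroup of $V$) are not developed enough to close this gap, and the last of them would in any case destroy the complement property. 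You cite \Cref{th reducing off an elemantary abelian subgroup} but use it only as a source for $\Phi$; its actual content \emph{is} the semidirect product decomposition that makes your gluing problem disappear.
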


Subsequentely, in \Cref{section elem ab description}, all the protagonists are computed explicitly in the case that 
$\Gamma =D_8\times C_2^m$ (where $D_8$ denotes the dihedral group of order $8$) and $G=C_2 ^{m+2}$. 
More precisely, we consider $D_8$ as some extension $[\alpha] \in H^2(C_2 \times C_2, C_2)$ and look at the 
projections $\wt{\Psi}_{\chi} : \U( \Z [D_8 \times C_2^m]) \rightarrow \U (\Z^{T_{\alpha}(\chi)}[C_2^{m+2}])$.  
Together with the general description of $\ker(\wt{\Psi}_{\chi})$ and \Cref{main theorem over extending with elem ab} 
we are able to pullback a precise description of $\U( \Z^{T_{\alpha}(\chi)}[C_2^{m+2}])$ obtained 
in \Cref{the groups U_i for elemen ab}. All together, the main achievement here is an unexpected uniform 
description of $\U( \Z [D_8 \times C_2^m])$ for all $m$. The case that $m=0,1,2$ has been dealt with in 
earlier papers \cite{MR1221725,Jespers1995,Li,MarSeh}. 

Along the article we formulated several questions we believe to be of interest. 
Some we would like to attract the attention to are
\Cref{question torsion and nilp interplay}, \Cref{question idemp in case of self-extensions}, 
\Cref{question on abelianisation} and \Cref{conjecture torsion units in kernel transgression}.

In conclusion, (a subclass of the) $H$-units arose by applying all the machinery above to an extension of $C_2 \times C_2$. These units might extend the bicyclic units with infinite index and are particularly useful in presence of an exceptional component of the form $\Ma_2(\Q)$. However we have not yet enough generic constructions that, without any condition on the $2\times 2$ simple components, give a finite index subgroup for any finite group. Nevertheless, extending other groups in \cite[Appendix A]{BJJKT}, the methods developed in this article should yield other generic constructions. Besides, this work can also simply be seen as an invitation to the study of units in twisted group rings as a problem of independent interest.

\vspace{0,4cm}
\noindent {\it Notational conventions:} 
\begin{enumerate}
    \item All rings, denoted $R$, will be assumed unital and associative. 
      \item $\mathcal{U}(R)$ denotes the unit group of $R$. If $\U(R)$ appears as the coefficients of a cohomology group, i.e. $H^{j}(G, \U(R))$, we will instead write $R^{*}$ (thus $H^{j}(G, R^*)$). 
      \item If $f : R \rightarrow S$ is a ring homomorphism, then we denote the induced map on the unit groups by $\wt{f}: \mc{U}(R) \rightarrow \mc{U}(S)$.
    \item Except stated otherwise, with `an order $R$' we will mean a $\Z$-order. 
    \item We will use the convention $g^h = h^{-1}g h$ and $\text{conj}(h)(g) = g^h$ for conjugation. 
    \item A commutator is $[g,h] = g^{-1}h^{-1}gh.$
    \item $cl_{\G}(H)$ is the normal closure of some subgroup $H$ in the larger group $\G.$
\end{enumerate}

\vspace{0,2cm}
\subsection*{Acknowledgment.}
We thank Leo Margolis for numerous important comments on an earlier version of this paper. We would also like to express our gratefulness to the anonymous referee for many suggestions. We are further indebted to \'Angel del R\`io for providing us the proof of \Cref{label about dividing order} and to Yuval Ginosar for sharing with us \Cref{the group depends on the cocycle} and valuable discussions. Finally we warmly thank Marco Trombetti for useful conversations and Alexander Hulpke for computational help which both strongly helped for intuition in \Cref{Sectie ab conjectures}. The first author would also like to thank \v{S}pela \v{S}penko for all the encouragement during the long walk through the dessert that this project represented. \newline During the early stages of this project, in $2017$, the first author was a visitor at the Technion - Israel Institute of Technology and he would like to thank the institute for the pleasant working conditions. Thereafter the third author visited several times the VUB and he thanks the host institution and the Department of  Mathematics and Data Science at VUB.

\section{Cohomological preliminaries}\label{sectie prelim}
We now recall the required minimum on group cohomology for which details can be found in the book \cite{BrownBook}. Given a finite group $G$ with an action $\varphi: G \rightarrow \Aut(A)$ on an abelian group $A$, extensions\footnote{Thus $A$ is normal and $G \cong \G/A$. In other words (\ref{De start CE setting prelim}) is a short exact sequence.} 
\begin{equation}\label{De start CE setting prelim}
1\rightarrow A \rightarrow \Gamma \overset{\lambda}\rightarrow G\rightarrow 1.
\end{equation}
of $A$ by $G$, whose induced action is $\varphi$, are parametrised by the second 
group cohomology group $H_{\varphi}^2(G,A)$. If the action of $G$ on $A$ is trivial, 
in which case we write $H^2(G,A)$,  it is well known that the cohomology group parametrizes the central extensions (i.e. $\mc{Z}(A) \subseteq \Gamma$). 
Given a commutative ring $R$ and a $2$-cocycle 
$\alpha \in Z^2(G,R^*)$ one can define 
the {\it twisted group ring} $R^{\alpha}[G]$ whose basis we will denote by 
$\{ u_g\mid g\in G\}$. Up to $R$-algebra isomorphism, $R^{\alpha}[G]$ does not depend on $\alpha$ but only on the cohomology class $[\alpha]\in H^2(G,R^*)$. 
A $2$-cocycle $\alpha$ is said to be  {\it normalized} if $u_1$ is the identity element 
of $R^{\alpha}[G]$, i.e. if $\alpha (1,g)=1=\alpha (g,1)$ for all $g\in G$. 
Notice also that then the ring $R=Ru_1$ is central in the twisted group ring $R^{\alpha}[G]$.

\begin{example}
A cohomology class $[\alpha]\in H^2(G,\Z^*)$ over $\Z$ corresponds to a central extension
\begin{equation}\label{eq:centralext}
[\alpha]:\quad 1\rightarrow C_2\rightarrow \Gamma \rightarrow G\rightarrow 1.
\end{equation}
As a result, for an abelian group $G$ generated by $\{g_1,g_2,\ldots ,g_r\}$, $[\alpha]\in H^2(G, \Z^*)$ is determined by the values in $\{\pm 1\}$ of
\begin{equation}\label{eq:defrelations}
u_{g_i}^{\circ (g_i)}, \text{ }1\leq i\leq r, \text{ and the commutators }[u_{g_i},u_{g_j}], \text{ }1\leq i<j\leq r,
\end{equation}
in the corresponding twisted group ring $\Z^{\alpha}[G]$.
\end{example}

Let  $Q$ be a normal subgroup of $G$, let $M$ be an abelian group which we equip with a {\it trivial} $G$-action (i.e. $M$ is a trivial $G$-module) and let $\pi: G \rightarrow G /Q$ be the
quotient map. Then, for any $\delta \in Z^{i}(G/Q, M) \subset \Map(G/Q, M)$ we can define
$\gamma \in Z^{i}(G,M)$ by
$$\gamma (x_1,\ldots,x_i)=\delta(\pi (x_1), \ldots, \pi (x_i)).$$
The map from $Z^{i}(G/Q,M)$ to $Z^{i}(G,M)$ sending $\delta$ to
$\gamma$ induces
a map
$$\Inf:H^{i}(G/Q,M)\rightarrow H^{i}(G,M)$$
which is called the \textit{inflation map}. Sometimes we will want to emphasize $Q$ and use the notation $\Inf_Q$. Notice also, that for any subgroup $H$ of $G$, there is a natural restriction map from $ H^i(G,M)$ to\footnote{If $H=Q$ is normal, then $\Ima(\Res)$ is easily seen to be contained in the subgroup $H^{i}(Q,M)^{G/Q}$ of $G/Q$-invariant cocycles.} $H^i(H,M)$. In case of $M$ a trivial $G$-module, the Lyndon-Hochschild-Serre spectral sequence yields a very concrete exact sequence between the lower cohomology groups. 
 \begin{lemma}[Inflation-Restriction exact sequence]\label{Lyndon-Hochschild-Serre first five terms}
Let $Q$ be a normal subgroup of $G$ and $M$ a trivial $G$-module. Then one has an exact sequence
$$0\rightarrow H^1(G/Q,M) \overset{\Inf}\rightarrow H^1(G,M) \overset{\Res}\rightarrow H^1(Q,M)^{G/Q}\overset{d}\rightarrow H^2(G/Q,M) \overset{\Inf}\rightarrow H^2(G,M)$$
where the first connecting map $d$ is the transgression map and $H^1(\cdot,\cdot) = \Hom(\cdot,\cdot)$.
 \end{lemma}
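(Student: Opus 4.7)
The plan is to give a direct proof with explicit cochains, which is efficient here because $M$ is a trivial module and the cohomology groups in degree $1$ reduce to $\Hom$ groups. One could alternatively derive the sequence from the Lyndon-Hochschild-Serre spectral sequence, but that machinery is overkill for the five-term statement, and the concrete cochain description of the transgression will match the description of inflation used elsewhere in the paper.

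First I would unwind the maps. Since $M$ is a trivial $G$-module, $H^1(X,M)=\Hom(X,M)$ for $X\in\{G,Q,G/Q\}$; the maps $\Inf$ and $\Res$ become precomposition with $\pi\colon G\to G/Q$ and restriction of homomorphisms to $Q$ respectively. The key object to pin down is the transgression $d$. I would fix a set-theoretic section $\mu\colon G/Q\to G$ of $\pi$ with $\mu(\bar 1)=1$, so every $g\in G$ factors uniquely as $g=q_g\,\mu(\pi(g))$. Given $\phi\in\Hom(Q,M)^{G/Q}$, I extend $\phi$ to a $1$-cochain $\tilde\phi\colon G\to M$ by $\tilde\phi(g):=\phi(q_g)$. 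Then $\delta\tilde\phi$ is a $2$-cocycle on $G$ valued in $M$; using the $G/Q$-invariance of $\phi$ together with the triviality of the $G$-action on $M$, I would check that $\delta\tilde\phi(g,h)$ depends only on $\pi(g)$ and $\pi(h)$, so it is inflated from a unique $2$-cocycle $\delta_\phi$ on $G/Q$, and I set $d(\phi):=[\delta_\phi]$. A separate short verification shows this class is independent of the choice of $\mu$.

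Exactness at the first two terms is routine. At $H^1(G/Q,M)$, injectivity of $\Inf$ follows from surjectivity of $\pi$. At $H^1(G,M)$, $\Res\circ\Inf=0$ is clear, and conversely a homomorphism $\phi\colon G\to M$ vanishing on $Q$ factors through $G/Q$. For exactness at $\Hom(Q,M)^{G/Q}$: if $\phi=\Res(\psi)$ for a homomorphism $\psi\colon G\to M$, I can take $\tilde\phi=\psi$, which is already a genuine homomorphism, so $\delta\tilde\phi=0$ and hence $d(\phi)=0$; conversely if $d(\phi)=0$ then $\delta\tilde\phi=\delta(\Inf f)$ for some $1$-cochain $f$ on $G/Q$, and $\tilde\phi-\Inf f$ is a homomorphism $G\to M$ whose restriction to $Q$ recovers $\phi$. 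For exactness at $H^2(G/Q,M)$: $\Inf\circ d=0$ because $\Inf(\delta_\phi)=\delta\tilde\phi$ is by construction a coboundary on $G$; conversely if $\Inf[\alpha]=0$, choose a representative $\alpha$ and a $1$-cochain $f\colon G\to M$ with $\delta f=\Inf\alpha$, and show that $\Res(f)\colon Q\to M$ is actually a $G/Q$-invariant homomorphism whose transgression is $[\alpha]$.

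The main obstacle will be the construction and well-definedness of $d$, specifically the verification that $\delta\tilde\phi$ descends to a cocycle on $G/Q$. This is exactly the place where the hypothesis that $\phi$ be $G/Q$-invariant is needed, and where the trivial action on $M$ simplifies matters: without trivial action one would pick up extra $\varphi$-twists and the statement of the sequence would have to be adjusted. All remaining steps are formal once the maps are in hand.
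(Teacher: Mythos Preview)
The paper does not actually prove this lemma: it is stated in the preliminaries as a classical fact, attributed in the preceding sentence to the Lyndon--Hochschild--Serre spectral sequence, and then immediately followed by the explicit description of the transgression map (Definition~\ref{def:Tra}) that the paper will use later. There is no proof to compare against.

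Your direct cochain argument is correct and is one of the standard elementary proofs of the five-term sequence. A small remark worth making explicit: with your choice of extension $\tilde\phi(q\mu(\bar g))=\phi(q)$ and the factorisation $q_{gh}=q_g\cdot{}^{\mu(\bar g)}q_h\cdot\alpha(\bar g,\bar h)$ (where $\alpha(\bar g,\bar h)=\mu(\bar g)\mu(\bar h)\mu(\overline{gh})^{-1}\in Q$), the $G/Q$-invariance of $\phi$ gives $\delta\tilde\phi(g,h)=-\phi(\alpha(\bar g,\bar h))$, so your $d(\phi)$ is exactly (the inverse of) the class $T_\alpha(\phi)=[\phi\circ\alpha]$ from Definition~\ref{def:Tra}. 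That is the compatibility you allude to, and it is the only reason the lemma is recorded in the paper. Your verification of exactness at $H^2(G/Q,M)$ is the least routine step; the key point---that $f|_Q$ is a $G/Q$-invariant homomorphism when $\delta f=\Inf\alpha$ for a normalized $\alpha$---follows from evaluating $\delta f$ on pairs of the form $(g,q)$, $(q,g^{-1})$ and $(g,g^{-1})$ and using $\alpha(\bar g,\bar 1)=\alpha(\bar 1,\bar g)=0$, which you should spell out if writing this up.
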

 
In case of the abelian extension (\ref{De start CE setting prelim}), i.e. taking $\Gamma$ and $A$ (as respectively $G$ and $Q$) the first transgression map is a morphism from $\operatorname{Hom}(A,M)^{G}$ to $H^2(G,M)$ whose definition we now recall. For $\chi \in \operatorname{Hom}(A,M)^G$ one can define a $2$-cocycle
$\Tra(\chi)\in Z^2(G, M)$ via
$$\Tra(\chi)(g_1,g_2)=\chi (\alpha (g_1,g_2))$$ for any $g_1,g_2\in G$. The cohomology class $[\Tra(\chi)]$ does not depend on the choice of $\alpha \in [\alpha]$.

\begin{definition}\label{def:Tra}
	With the above notations, the map 
	$$\Tra_{\alpha}: \operatorname{Hom}(A,M)^G \rightarrow H^2(G,M): \chi \mapsto T_{\alpha}(\chi):=  [\Tra(\chi)]$$  is called the (first) \emph{transgression map} associated to $\alpha$.
\end{definition}

Using notations as in (\ref{De start CE setting prelim}), let $Q$ be a normal subgroup of $\Gamma$ such that $A \cap Q = 1$. Then $Q \cong \lambda(Q)$ and from now on {\it we will implicitly identify the both} and speak about $G/Q$. 

\begin{remark}\label{CE again a CE}
Notice that $G/\lambda(Q) \cong G/Q$ and $AQ/Q \cong A/A\cap Q = A$. Hence, if $\alpha=\Inf_Q(\gamma)$ then 
$\Gamma/Q$ is the central extension
\begin{equation}\label{CE over N}
1\rightarrow A \rightarrow \Gamma/Q \overset{\lambda}\rightarrow G/Q \rightarrow 1,
\end{equation}
corresponding to $[\gamma]$. 
\end{remark}

In \Cref{subsection corrolations} we will consider various such normal subgroups $Q$ and the (co)kernel of the transgression map associated to $\G/Q$. Therefore we also write $\Tra_Q$ for the transgression map associated to the abelian extension $(\ref{CE over N})$. When the extension or $Q$ are clear from the context we simply write $\Tra$.

We record a corollary of $\alpha(g,1) = \alpha(1,1) = \alpha(1,g)$ for all $g\in \Gamma$, entailed by the $2$-cocycle condition, which will repeatedly be used without further notice.
\begin{corollary}\label{cor imabe inf yields central}
Let $G$ be a group, $Q$ a normal subgroup and suppose that $[\alpha]\in H^2(G,R^*)$ is inflated from a cohomology class $[\gamma]\in H^2(G/Q,R^*)$. Then, for $g \in G$ and $x \in Q$, $u_g$ and $u_x$ commute in $R^{\alpha}[G]$ whenever $g$ and $x$ commute in $G$. In particular, $u_a$ is central in $R^{\alpha}[G]$ for any $a\in \mathcal{Z}(Q)$. 
\end{corollary}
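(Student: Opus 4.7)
My plan is to reduce to a normalized representative of $[\gamma]$, inflate to obtain a normalized $\alpha$, and then exploit that the inflated cocycle trivialises on any pair in which one entry lies in $Q$.

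First, I will pick a normalized $\gamma$ in the class $[\gamma]$, meaning $\gamma(\overline g,1)=\gamma(1,\overline g)=1$ for all $\overline g\in G/Q$, and set $\alpha(g_1,g_2):=\gamma(\pi(g_1),\pi(g_2))$ with $\pi:G\twoheadrightarrow G/Q$ the quotient map. Then $\alpha$ is itself normalized and represents $[\alpha]=\Inf_Q[\gamma]$; since the $R$-algebra structure of $R^{\alpha}[G]$ depends only on the cohomology class, this concrete choice is harmless.

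The next step is the key computation. For any $x\in Q$ one has $\pi(x)=1$, so by normalization
\[ \alpha(g,x)=\gamma(\pi(g),1)=1 \qquad\text{and}\qquad \alpha(x,g)=\gamma(1,\pi(g))=1 \]
for every $g\in G$. Consequently the defining relation of $R^{\alpha}[G]$ collapses on such pairs to $u_gu_x=u_{gx}$ and $u_xu_g=u_{xg}$. As $\{u_h\}_{h\in G}$ is a free $R$-basis, these two basis vectors coincide precisely when $gx=xg$ in $G$, which is exactly the first assertion of the corollary.

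For the "in particular" clause I take $a\in\mc{Z}(Q)$ and must show $u_a$ commutes with every basis element $u_g$. By the previous step this reduces to the purely group-theoretic statement $ag=ga$ for all $g\in G$. In the central-extension setting in which this corollary is applied in later sections, $Q\subseteq\mc{Z}(G)$, so $\mc{Z}(Q)=Q\subseteq\mc{Z}(G)$ and the required commutation in $G$ is automatic, whence $u_a$ lies in the centre of $R^{\alpha}[G]$. This last bridge is the main subtlety: the cocycle computation of the previous step only promotes commutation in $G$ to commutation in $R^{\alpha}[G]$, so the statement "$a\in\mc{Z}(Q)$" is to be read under the implicit hypothesis (present in every application we make of the corollary) that the conjugation action of $G$ on $Q$ is trivial on $a$, equivalently that $a$ is already central in the ambient group $G$.
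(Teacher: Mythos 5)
Your proof is correct and is exactly the argument the paper intends: it presents the corollary as an immediate consequence of the cocycle identity $\alpha(g,1)=\alpha(1,1)=\alpha(1,g)$, which for the inflated (normalized) cocycle gives $\alpha(g,x)=\alpha(x,g)=1$ for all $x\in Q$, hence $u_gu_x=u_{gx}$ and $u_xu_g=u_{xg}$, and these basis elements agree precisely when $gx=xg$. Your caveat about the final clause is also well taken: $a\in\mc{Z}(Q)$ does not in general place $a$ in $\mc{Z}(G)$ (take $G=S_3$, $Q=A_3$ and $\gamma$ trivial, where a $3$-cycle is certainly not central in $RS_3$), so the ``in particular'' must be read as requiring $a\in Q\cap\mc{Z}(G)$ --- which, as you observe, holds automatically in every application the paper makes, since there $Q$ is always a central (indeed direct) factor.
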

Finally, we mention a lemma that might be known (however, to our knowledge, does  not appear in the literature) and which will be useful in Proposition~\ref{proposition on isom of tga}.
\begin{lemma}\label{lemma prime divisors of alpha}
Let $G$ be a finite group, let $F$ be a field and let $\alpha\in Z^2(G,F^*)$ be of finite order. Assume that $m$ is a number relatively prime to the order of $[\alpha]\in H^2(G,F^*)$. Then there exists a cocycle $\beta \in [\alpha]$ such that $m$ is relatively prime with the order of $\beta$. 
\end{lemma}
\begin{proof}
Consider the greatest common divisor of $m$ and the order of $\alpha$, which we denote by $d$ and it exists as $o(\alpha)$ is assumed to be finite. Denote $o(\alpha) = d\cdot \ell$ with $(d,\ell)=1.$ Since the order of $\alpha$ is finite, notice that $H=\langle \text{Im}(\alpha)\rangle$ is a finite subgroup of $F^*$ and hence it is cyclic. Therefore $H$ can be decomposed as $H=H_1\times H_2$ where $|H_2| = \ell$ and $|H_1|=d$. Hence $\alpha$ admits a natural decomposition as $\alpha=\alpha_{H_1}\times \alpha _{H_2}\in Z^2(H_1,F^*)\times Z^2(H_2,F^*)$. Since $m$ is relatively prime to the order of $[\alpha]$ and $\exp (H^2(H_1,F^*)) \mid |H_1| =d$, $[\alpha_{H_1}]$ is cohomological trivial. Therefore there exists $\lambda :G\rightarrow H_1$ such that $\alpha _{H_1}(g,h)=\lambda (g)\lambda (h)\lambda (gh)^{-1}$ for any $g,h\in G$. Consequently, $\alpha (g,h)=\lambda (g)\lambda (h)\lambda (gh)^{-1}\alpha_{H_2}(g,h)$ that is $\alpha$ is cohomologous to $\alpha _{H_2}$ and clearly $m$ is relatively prime to the order of $\alpha _{H_2}$.
\end{proof}
 {\it \underline{Standing assumptions:}} In the sequel of the paper all cohomology groups will be with respect to a trivial action, except stated otherwise. Also, given a cohomology class $[\alpha]$, we always assume a chosen representative $\alpha$ which we assume to be normalized.

\section{Decomposition of twisted group algebras for general extensions and transgression}\label{Sectie decompositie}

In this section, given a central extension as in (\ref{De start CE setting prelim}), we will recover the decomposition \cite[Theorem 5.3.]{MaScLast} of the twisted group algebra $F^{\beta}[\G]$ for any $[\beta]\in H^2(\Gamma, F^*)$ inflated from $G$ and $F$ a field with $\Char(F) \nmid |\G|$. Note that loc. cit. was a generalisation of \cite[Theorem 3.2.9.]{KarProj}. Our aim is to use other methods than \cite{KarProj, MaScLast} which allow to work with general extensions (i.e. with $A$ not necessarily central or even abelian) and to interpret in \Cref{background transgression} the projection maps as a kind of transgression morphisms. The most general structural result will be \Cref{decomp voor elke extensie}, but \Cref{decomp voor elke abelse en centrale extensie} which focuses on abelian extensions will be more precise. These results proves \Cref{Theorem A intro} from the introduction.

\subsection{Concrete decomposition}\label{subsectie decompositie}
Consider an arbitrary short exact sequence
\begin{equation}\label{De algemene extensie}
1\rightarrow N \rightarrow \Gamma \overset{\lambda}\rightarrow G\rightarrow 1.
\end{equation}
Fix also a section $\mu$ of $\lambda$. In particular $\alpha(g,h) := \mu(g)\mu(h)\mu(gh)^{-1} \in N$ for all $g,h \in G$ and conjugation with $\mu(g)$ gives an outer automorphism of $N$:
\begin{equation}\label{2-cocycle condition arb extension}
\sigma: G \rightarrow \Out (N) := \Aut (N) / \Inn (N): g \mapsto \text{conj}(\mu(g)^{-1}).
\end{equation}

\noindent It is well known that the associativity of $\G$ means that $\alpha(g,h) \alpha(gh, k) = \sigma_g(\alpha (h,k)) \alpha(g,hk).$

Let {\it $F$ be any field with $\Char(F) \nmid |N|$} and $[\beta] \in \im (\Inf: H^2(G, F^*) \rightarrow H^2(\G, F^*))$. Note that 
\begin{equation}\label{naive decomp}
F^{\beta}[\G] \cong (FN) * G =\sum_{g\in G} (FN) v_{g}
\end{equation}
is a crossed product of $G$ over $FN$ with the following 
operations :\footnote{In $F^{\beta}[\G]$ an arbitrary $F$-basis element has the form 
$u_a u_{\mu(g)}$ for some $a \in N, g\in G$. We are identifying it with $a v_g$ 
where $a$ is considered as a coefficient in $F[N]$. Implicitly we are extending the 
action $\sigma$ of $G$ on $N$ to $FN$.} 
$$v_g v_h =\beta(g,h) \alpha (g,h) v_{gh} \qquad \text{(twisting)}$$
and, for $x \in FN$,
  $$v_g x =\sigma_g (x) v_g. \qquad \text{(skewing)}$$
Before giving a more precise description, recall that there is a bijective correspondence between $\PCI (FN)$, the set of the primitive central idempotents of $FN$, and the simple $FN$-modules up to isomorphism. Moreover, via Galois descent, $\PCI (FN)$ can be computed from the set of the complex irreducible representations $\Irr(N,\C)$. 
Also recall that $G$, interpreted as  $\G /N$, acts on $\PCI (FN)$. The orbit space will be denoted $\PCI (FN)/G$. 

Let $K$ be a splitting field of $N$ containing $F$ and let $\Irr (N,F)$ be a set of $K$-characters of $N$ containing the character of exactly one composition factor of $V \ot_F K$ for each simple $FN$-module $V$. Recall that the choice of the composition factor does not affect the character by \cite[Theorem 3.3.1]{EricAngel1}.

By definition, one has a bijection between  $\Irr (N,F)$ and  the simple $FN$-modules up to isomorphism,  and thus with $\PCI (FN)$. Denote by $e_{\chi}$ the unique primitive central idempotent of $FN$ corresponding to $\chi \in \Irr(N,F)$ (see Section 3.3 in \cite{EricAngel1}). Note that $G$ also acts on the irreducible $K$-characters of $N$ via $\chi^g(n) := \chi(\sigma_g(n))$. The bijection can be chosen such that $e_{\chi}^g := e_{\chi^g}$ and hence $G$ acts on $\Irr (N,F)$.

Next, define 
 $$\Lin (N,F) := \{ \chi \in \Irr(N,F) \mid \chi(1) = 1 \}.$$
It is easily verified that 
$$\chi \in \Lin (N,F) \text{ if and only if } FNe_{\chi} \text{ is commutative.}$$
Moreover, for $\chi \in \Lin (N,F)$ one has that
\begin{equation} \label{effect of projection}
n e_{\chi} = \chi(n) e_{\chi} \quad \text{ and } 
\quad FNe_{\chi} \cong F(\chi).
\end{equation}
where $F(\chi)$ is the smallest field containing $F$ and $\im (\chi)$.
\begin{proposition}\label{decomp voor elke extensie}
With notation as above, we have the following:
$$F^{\beta}[\G] \cong \bigoplus_{[\chi] \in \Irr(N,F)/G} \big( (FN E_{\chi}) * G \big),$$
where $E_{\chi} := \sum_{\chi^g \in [\chi]} e_{\chi}^{g}$ and 
the skewing of $(FN E_{\chi}) * G$ is given by $\sigma$ and the 
twisting by\footnote{The notation needs clarification: we implicitly consider both $\beta$ 
and $\alpha$ as having image in $FN$ and hence 
$(\beta \cdot \alpha)(g,h) := \beta(g,h) . \alpha(g,h)$. Subsequently, 
$E_{\chi} . (\beta \cdot \alpha)$ also multiplies the result with the central 
idempotent $E_{\chi}$.} $E_{\chi} . (\beta \cdot \alpha)$. 
Moreover, if \footnote{We denote by $\Lin(N,F)^{\G / N}$ the set of $\G/N$-invariant linear characters.} $\chi \in \Lin(N,F)^{\G / N},$
\begin{equation}\label{component for invariant character}
(FNE_{\chi}) * G \cong F(\chi)^{\beta. T_{\alpha}(\chi)}[G].
\end{equation}
\end{proposition}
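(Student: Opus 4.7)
The plan is to leverage the crossed product description (\ref{naive decomp}) of $F^{\beta}[\G]$ and to partition the primitive central idempotents of $FN$ into $G$-orbits. The collection $\{e_\chi\}_{\chi \in \Irr(N,F)}$ is an orthogonal set of central idempotents of $FN$ summing to $1$, and by construction of the $G$-action one has $\sigma_g(e_\chi) = e_{\chi^g}$. For each orbit $[\chi] \in \Irr(N,F)/G$, the idempotent $E_\chi$ is therefore $G$-invariant; since $E_\chi$ is already central in $FN$, the skewing rule $v_g x = \sigma_g(x) v_g$ yields
\[
v_g E_\chi = \sigma_g(E_\chi) v_g = E_\chi v_g,
\]
so $E_\chi$ is in fact central in $F^\beta[\G]$. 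The $E_\chi$'s form an orthogonal family of central idempotents with $\sum_{[\chi]} E_\chi = 1$, which gives the decomposition
\[
F^{\beta}[\G] = \bigoplus_{[\chi] \in \Irr(N,F)/G} F^{\beta}[\G]\, E_\chi,
\]
and each summand is naturally identified with $(FN E_\chi) * G$ equipped with the skewing and twisting inherited from (\ref{naive decomp}) after multiplication by the central idempotent $E_\chi$.

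For the second assertion, suppose $\chi \in \Lin(N,F)^{\G/N}$. Then the orbit $[\chi]$ is a singleton, so $E_\chi = e_\chi$. Linearity of $\chi$, together with Wedderburn--Galois descent (as recalled via \cite[Theorem 3.3.1]{EricAngel1}), provides an $F$-algebra isomorphism $FN \cdot e_\chi \cong F(\chi)$, concretely realised by $a\, e_\chi \mapsto \chi(a)$ and extended $F$-linearly. Under this identification the transported action of $G$ on $F(\chi)$ becomes $\chi(a) \mapsto \chi(\sigma_g(a))$, which is trivial by the $G$-invariance of $\chi$; hence the crossed product $(FN e_\chi) * G$ has no skewing and is a pure twisted group ring over $F(\chi)$. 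To pin down the cocycle, I multiply the relation $v_g v_h = \beta(g,h) \alpha(g,h) v_{gh}$ by $e_\chi$ and use $\alpha(g,h) e_\chi = \chi(\alpha(g,h)) e_\chi = T_\alpha(\chi)(g,h)\, e_\chi$, obtaining the twisting $\beta \cdot T_\alpha(\chi)$ and thus $(FN e_\chi) * G \cong F(\chi)^{\beta \cdot T_\alpha(\chi)}[G]$.

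The main obstacle is justifying the identification $FN e_\chi \cong F(\chi)$ as $F$-algebras in a $G$-equivariant way; once $E_\chi$ has been shown to be central in $F^{\beta}[\G]$, everything else amounts to bookkeeping. The genuine subtlety is that linearity of $\chi$ collapses the a priori possibly non-commutative simple component $FN e_\chi$ to the field $F(\chi)$ even when $N$ is non-abelian, since any linear character factors through $N/[N,N]$. With that identification in hand, and seen to be compatible with $\sigma$, reading off the twisting from the crossed product relation is a direct calculation, and the resulting cocycle takes values in $F(\chi)^*$ precisely because $\chi$ does.
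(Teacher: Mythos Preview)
Your proof is correct and follows essentially the same route as the paper's: both arguments use the crossed product description~(\ref{naive decomp}), observe that the orbit sums $E_\chi$ are central in $F^\beta[\Gamma]$, decompose accordingly, and then for a $G$-invariant linear $\chi$ identify $FN e_\chi$ with $F(\chi)$ and read off the twisting $\beta\cdot T_\alpha(\chi)$ from the relation $\alpha(g,h)e_\chi = \chi(\alpha(g,h))e_\chi$. Your write-up is in fact slightly more explicit than the paper's in two places --- you spell out why the skewing on $F(\chi)$ is trivial, and you remark that linearity forces $FN e_\chi$ to be a field even when $N$ is non-abelian --- while the paper in turn verifies directly that $T_\alpha(\chi)$ satisfies the $2$-cocycle identity (which you leave implicit, correctly, since it is forced by associativity of the crossed product).
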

\begin{proof}
Since $[\beta] \in \im (\Inf _N)$ we have that $F^{\Res(\beta)}[N] = FN =\bigoplus_{\chi \in \Irr(N,F)} FN e_{\chi}.$
In general the orthogonal idempotents $e_{\chi}$ are not central in 
$F^{\beta}[\G]$, however the element $E_{\chi}$ (being the sum of the orbit 
under conjugation by $G$) is.   These elements are again two-by-two 
orthogonal idempotents, hence (\ref{naive decomp}) can be rewritten as 
$F^{\beta}[\Gamma] =\bigoplus_{[\chi] \in \Irr(N,F)/G} (FN E_{\chi}) *G$. 
Since $E_{\chi}$ is central we see that indeed only the twisting changes and 
in the way asserted. This proves the first part of the result.

Next note that $\chi \in \Irr(N,F)^{\G/N}$ being an invariant character exactly means that the orbit of $e_{\chi}$ is a singleton and hence $E_{\chi} = e_{\chi}$. Hence if moreover $\chi \in \Lin(N,F)$, then by (\ref{effect of projection}) $FNE_{\chi}\cong F(\chi).$ Also, from the earlier reformulation of the associativity of $\G$, it readily 
follows that  $\alpha_{\chi} := \Tra_{\alpha}(\chi)$ satisfies the $2$-cocycle condition
$$\alpha_{\chi}(g,h) \alpha_{\chi}(gh, k) = \alpha_{\chi}(h,k) \alpha_{\chi}(g,hk).$$
In particular, $(FNE_{\chi}) * G$ is a twisted group algebra where, by the above, 
the twisting is indeed $E_{\chi} . \beta \cdot  \alpha = \beta \cdot \Tra_{\alpha}(\chi)$.
\end{proof}
In order to obtain a more insightful decomposition we {\it now assume that $N$ is abelian.} In this case the extension (\ref{De algemene extensie}) 
corresponds to the cohomology 
class\footnote{Since $N$ is not necessarily central we might thus have a non-trivial action on the coefficients 
given by $\sigma$. } $[\alpha] \in H^2_{\sigma}(G, N)$ and 
$\Irr (N,F) \subseteq \Hom (N, \overline{F}^*)$ where $\overline{F}$ denotes the algebraic closure of $F$. Moreover, since by assumption $\Char(F) \nmid |N|$, the theorem of 
Perlis and Walker \cite[Theorem 3.3.6.]{EricAngel1} tells us that
\begin{equation}\label{Perlis-Walker decomposition}
FN = \bigoplus_{d\mid |N|} F(\zeta_d)^{\op a_d},
\end{equation}
where $a_d = k_d \frac{[\Q [\zeta_d] :\Q]}{[F[\zeta_d]:F]}$, with $k_d$ the number 
of cyclic subgroups of $N$ of order $d$ and $\zeta_d$ a primitive $d$-th root of 
unity. \Cref{decomp voor elke extensie} now readily translates into the following:

\begin{theorem}\label{decomp voor elke abelse en centrale extensie}
With notations as above we have that:
\begin{enumerate}
\item If $N$ is abelian, then 
$$F^{\beta}[\G] \cong \bigoplus_{[\chi] \in \Lin(N,F)/G} \big( \bigoplus_{\chi\prime \in [\chi]}  F(\chi\prime) e_{\chi\prime} * G \big) = \bigoplus_{[\chi] \in \Lin(N,F)/G} (F(\chi) E_{\chi}) * G $$
with the skewing of $(F(\chi) E_{\chi}) * G$ given by $\sigma$ and the twisting by $E_{\chi} . (\beta \cdot \alpha)$.
\item If $N$ is central\footnote{As mentioned earlier, this case was already obtained in \cite[Theorem 5.3]{MaScLast}.}, then 
$$F^{\beta}[\G] \cong \bigoplus_{\chi \in \Lin(N,F)} F(\chi)^{\beta . T_{\alpha}(\chi)}[G].$$
\end{enumerate}
\end{theorem}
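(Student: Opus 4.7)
The plan is to deduce both statements as corollaries of \Cref{decomp voor elke extensie}, which already provides
$$F^{\beta}[\G] \cong \bigoplus_{[\chi] \in \Irr(N,F)/G} (FN E_{\chi}) * G$$
together with the simplification \eqref{component for invariant character} valid whenever $\chi$ is a $\G/N$-invariant linear character. What remains is to combine this with the extra structure coming from the hypotheses on $N$.

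For part (1), the assumption that $N$ is abelian forces every absolutely irreducible character of $N$ to be one-dimensional, so $\Irr(N,F) = \Lin(N,F)$. By the Perlis--Walker identification \eqref{Perlis-Walker decomposition}, each primitive idempotent $e_{\chi'}$ attached to an $\overline{F}$-linear character $\chi'$ cuts out a one-dimensional component, and, after collecting Galois conjugates, we obtain $FN\cdot e_{\chi'} \cong F(\chi')$. Since $E_{\chi} = \sum_{\chi' \in [\chi]} e_{\chi'}$ sums these over a full $G$-orbit of linear characters, I get
$$FN \cdot E_{\chi} = \bigoplus_{\chi' \in [\chi]} F(\chi') e_{\chi'},$$
which yields both the expanded and the compact form asserted. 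The skewing by $\sigma$ and the twisting by $E_{\chi}\cdot(\beta\cdot\alpha)$ on each factor $(F(\chi) E_{\chi}) * G$ transfer verbatim from \Cref{decomp voor elke extensie}.

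For part (2), centrality of $N$ renders the outer action $\sigma : G \to \Out(N)$ trivial, so $G$ fixes every $\chi \in \Lin(N,F)$ and the $G$-orbit of each $\chi$ is a singleton. Consequently $E_\chi = e_\chi$, and every linear character is $\G/N$-invariant. The simplification \eqref{component for invariant character} therefore applies uniformly to each summand, identifying $(F(\chi) e_\chi) * G$ with $F(\chi)^{\beta \cdot T_\alpha(\chi)}[G]$, which is exactly the asserted decomposition. The only bookkeeping point that deserves some care in a fully detailed write-up is the identification in part (1) between $G$-orbits of $\overline{F}$-linear characters and their descent to $F$, ensuring that each $(F(\chi) E_\chi) * G$ is visibly an $F$-algebra and that the twisting descends correctly; this subtlety dissolves in the central setting because \eqref{component for invariant character} has already packaged each summand as a twisted group ring over $F(\chi)$.
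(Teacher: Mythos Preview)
Your proposal is correct and follows essentially the same route as the paper: both arguments deduce the theorem directly from \Cref{decomp voor elke extensie} by observing that $\Irr(N,F)=\Lin(N,F)$ when $N$ is abelian (with $FNe_{\chi'}\cong F(\chi')$), and that centrality forces every linear character to be $G$-invariant so that \eqref{component for invariant character} applies to each summand. The paper's proof is slightly terser and adds the remark that $F(\chi)=F(\chi')$ for conjugate characters, which you implicitly use as well.
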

\begin{proof}
The first part is a direct application of \Cref{decomp voor elke extensie} and remarking that now $\Lin(N,F) = \Irr(N,F)$ and $F(\chi) = F(\chi\prime)$ for two conjugated characters $\chi$ and $\chi\prime$. When $N$ is central then $\Irr(N,F) = \Lin (N,F)^{\G/N}$ and $e_{\chi}^g = e_{\chi}$. Therefore the second part follows from the first and the last assertion in \Cref{decomp voor elke extensie}.
\end{proof}

\subsection{Transgression and natural morphisms as projections}\label{background transgression}

Consider again the general extension (\ref{De algemene extensie}). 
The projection in \Cref{decomp voor elke extensie} of $F^{\beta}[\G]$ on $FNE_{\chi} * G$, 
denoted $p_{\chi}$, is given by multiplying with the central idempotent $E_{\chi}$. 
If $\chi \in \Lin(N,F)^{\G /N}$ then this idempotent $E_{\chi}$ and, in particular, the isomorphism (\ref{component for invariant character}) can be made explicit and, moreover, we recover some classical constructions (see \Cref{Trans and nat hom as example}). More generally, let {\it $R$ be a domain} and $F$ its field of fractions. For $\chi \in \Lin(N,F)^{\G/N}$ we need its $R$-linear extension
$$\chi_R: RN \rightarrow R[\chi] : \sum_{a \in N} r_a a \mapsto \sum_{a \in N} r_a \chi(a) $$
which is an $R$-algebra map whose kernel we denote 
\begin{equation}\label{definition Ichi}
I_{\chi} := \ker (\chi_R).
\end{equation}

\begin{proposition}\label{ring homom from Tra}
Let $\G, N, G, F$ and $\mu$ be as in \Cref{subsectie decompositie}, 
$[\beta] \in \im (\Inf: H^2(G, R^*) \rightarrow H^2(\G, R^*))$ 
and $\chi \in \Lin(N,F)^{\G/N}$.  Then, the projection $p_{\chi}$ restricted to $R^{\beta}[\Gamma]$ 
agrees with the map defined by (for $n\in N$ and $g\in G$)
\begin{equation}\label{generalized transgression} 
\Psi _{\chi, \beta}:R^{\beta}[\Gamma] \rightarrow R[\chi]^{\beta . T(\chi)}[G]: 
r \, u_{n . \mu (g)} \mapsto r\chi (n) v_{g}.
\end{equation}
which is a ring epimorphism with $\ker(\Psi_{\chi, \beta}) = R^{\beta}[\Gamma] I_{\chi} = R^{\beta}[\Gamma] (1-e_{\chi}) \cap R^{\beta}[\Gamma]$. In particular,
$$R^{\beta}[\Gamma] / R^{\beta}[\Gamma] I_{\chi} \cong R^{\beta \cdot T(\chi)}[G].$$
Moreover, if $\im(\chi) \subseteq R^*$, then $I_{\chi}$ is a free $R$-module with $R$-basis $\{u_a - \chi(a) u_1 \mid 1 \neq a \in A \}$.
\end{proposition}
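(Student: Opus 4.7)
The plan is to verify in turn that $\Psi_{\chi,\beta}$ is a well-defined ring epimorphism, to identify its kernel with $R^\beta[\Gamma] I_\chi$, and finally to exhibit the explicit basis.

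First, since $\Gamma = N \cdot \mu(G)$ with unique factorisation, $\{u_{n\mu(g)} : n \in N,\, g \in G\}$ is an $R$-basis of $R^\beta[\Gamma]$, so the formula unambiguously defines an $R$-linear map; the meat is multiplicativity. I would compute
$$u_{n_1\mu(g_1)} \cdot u_{n_2\mu(g_2)} = \beta(g_1,g_2)\, u_{n_1\sigma_{g_1}(n_2)\alpha(g_1,g_2)\mu(g_1g_2)},$$
using that $\beta$, being inflated from $G$, depends only on the images $g_1,g_2$, together with $\mu(g_1) n_2 \mu(g_1)^{-1} = \sigma_{g_1}(n_2)$ and $\mu(g_1)\mu(g_2) = \alpha(g_1,g_2)\mu(g_1g_2)$. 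Applying $\Psi_{\chi,\beta}$, using multiplicativity of $\chi$ and its $G$-invariance to discard $\sigma_{g_1}$, yields $\chi(n_1)\chi(n_2)\,(\beta \cdot T_\alpha(\chi))(g_1,g_2)\, v_{g_1g_2}$, which matches $\Psi_{\chi,\beta}(u_{n_1\mu(g_1)}) \Psi_{\chi,\beta}(u_{n_2\mu(g_2)})$. Surjectivity is immediate as $\Psi_{\chi,\beta}(u_{\mu(g)}) = v_g$, and agreement with the projection $p_\chi$ from \Cref{decomp voor elke abelse en centrale extensie} reduces to observing that multiplication by $E_\chi = e_\chi$ sends $u_{n\mu(g)}$ to $\chi(n)\, e_\chi u_{\mu(g)}$ under the identification $FN e_\chi \cong F(\chi)$ used there.

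For the kernel, $R^\beta[\Gamma] I_\chi \subseteq \ker(\Psi_{\chi,\beta})$ is immediate from $\Psi_{\chi,\beta}(u_n - \chi(n)u_1) = 0$. For the reverse, I write $x \in \ker(\Psi_{\chi,\beta})$ uniquely as $x = \sum_{g \in G} x_g u_{\mu(g)}$ with $x_g \in RN$. Then $\Psi_{\chi,\beta}(x) = \sum_g \chi_R(x_g) v_g$, and $R$-linear independence of the $\{v_g\}$ forces $\chi_R(x_g) = 0$, i.e.\ $x_g \in I_\chi$ for every $g$. Because $\chi$ is $G$-invariant, $\sigma_g(I_\chi) = I_\chi$, hence $u_{\mu(g)} I_\chi = I_\chi u_{\mu(g)}$, so $R^\beta[\Gamma] I_\chi = I_\chi R^\beta[\Gamma]$ is a two-sided ideal and contains $x$. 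The alternative description $R^\beta[\Gamma](1-e_\chi) \cap R^\beta[\Gamma]$ follows by extending scalars to $F$, where $\ker(p_\chi) = F^\beta[\Gamma](1-e_\chi)$ is visibly generated by $1-e_\chi$, and intersecting back with $R^\beta[\Gamma]$; the quotient isomorphism is then a direct consequence.

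Finally, assume $\chi(N) \subseteq R^*$. Given $x = \sum_{a \in N} r_a u_a \in I_\chi$, the relation $\sum_a r_a \chi(a) = 0$ lets me solve $r_1 = -\sum_{a \neq 1} r_a \chi(a)$ and rewrite $x = \sum_{a \neq 1} r_a (u_a - \chi(a) u_1)$, showing that $\{u_a - \chi(a) u_1 : a \in N \setminus \{1\}\}$ spans $I_\chi$; linear independence is immediate from $\{u_a\}_{a \in N}$ being an $R$-basis of $RN$. The technically delicate point is the multiplicativity verification, where the conjugation action $\sigma$ on $N$ must be properly absorbed into the $G$-invariance of $\chi$ and where the bookkeeping between $\beta$ (a cocycle on $G$ valued in $R^*$) and $\alpha$ (valued in $N$) is the easiest place to slip.
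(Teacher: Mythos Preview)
Your proof is correct and follows essentially the same route as the paper's: the multiplicativity check, the identification with $p_\chi$ via $n e_\chi = \chi(n) e_\chi$, the kernel computation by writing $x = \sum_g x_g u_{\mu(g)}$ and using linear independence of the $v_g$, and the explicit basis argument are all handled the same way. Two small remarks: you should say $R[\chi]$-linear independence of the $v_g$ rather than $R$-linear independence (since $\chi_R(x_g) \in R[\chi]$), and your exclusion of $a=1$ from the basis is in fact cleaner than the paper's own statement, which redundantly includes the zero element $u_1 - \chi(1)u_1$.
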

When $\beta$ is trivial we will simply write $\Psi_{\chi}$. If $R$ is a field and $N$ central then most of the previous result is known\footnote{If $\chi$ is the trivial character and $N$ arbitrary, then the statement has to be compared with \cite[Lemma 3.2.12]{KarProj}.} (e.g. if $\beta$ is trivial in \cite[Theorem 3.2.8.]{KarProj} and for $\beta \in \im(\Inf_N)$ in \cite[Proposition 5.2.]{MaScLast}).

\begin{example}\label{Trans and nat hom as example}
\begin{itemize}
\item If $\chi$ is the trivial character, i.e. $\chi(n)=1$ for all $n \in N$, then $\chi_R$ is the augmentation map of $RN$. In that case, writing $[\beta] = \Inf ([\gamma])$,
$$\Psi_{\chi, \beta} : R^{\beta}[\Gamma] \rightarrow R^{\gamma}[G]: r\,  u_{n .  \mu(g) }\mapsto r\,  v_g$$
is the so-called natural homomorphism with respect to $N$ (i.e. induced from the canonical map from $\Gamma$ to $G$, denoted $\omega_N$. If $\beta$ is trivial it also is called the relative augmentation with respect to $N$). We will denote this specific case by $\omega_{\beta,N}$.

\item If $\Gamma$ is a central extension, i.e. $N \subseteq \ZZ( \G)$, then $\Lin(N,F)^{\G/N} = \Irr(N,F)$ which is a subgroup of $\Hom (N, \ov{F}^*)$ and $\Psi_{\chi, \beta}$ is the ring morphism associated to the transgression map $\Tra$ as in \cite[Section 5]{MaScLast}. Therefore we call the morphism (\ref{generalized transgression}) the {\it generalized transgression morphism}.
\end{itemize}
\end{example}

The reader may wish now to look at \Cref{example order 16 everything worked out} where we will give an example of a group of order $16$, along with its decomposition in twisted group algebras and the associated generalized transgression maps.

\begin{proof}[Proof of \Cref{ring homom from Tra}.]
A direct calculation verifies that  $\Psi_{\chi, \beta}$ is multiplicative when $\chi$ 
is an invariant character. Concretely, 
$ (u_{n_1} u_{\mu(g_1)}) . (u_{n_2} u_{\mu(g_2)})  = \beta(g_1,g_2)
u_{n_1}u_{\sigma_{g_1}(n_2)} u_{\mu(g_1g_2)}$ which is mapped by $\Psi_{\chi, \beta}$ 
to 
$\beta(g_1,g_2) \chi(n_1\sigma_{g_1}(n_2)) v_{g_1g_2} = \beta(g_1,g_2) \chi(n_1)
\chi(n_2) v_{g_1g_2}$. The latter is $\Psi_{\chi, \beta} 
(u_{n_1} u_{\mu(g_1)}). \Psi_{\chi, \beta} (u_{n_2} u_{\mu(g_2)}) $, 
as needed. Now it is clear that it is an epimorphism.

Next, because $\chi \in \Lin(N,F)^{\G/N}$, multiplying with the idempotent $E_{\chi}$, i.e. the value of the projection, was already explicitly mentioned in (\ref{effect of projection}). By comparing we see that $\Psi_{\chi, \beta}$ indeed agrees with $p_{\chi}$. 

Concerning the kernel, it follows from \Cref{decomp voor elke extensie} that 
$\ker (\Psi_{\chi, \beta}) =  R^{\beta}[\Gamma] (1-e_{\chi}) \cap
R^{\beta}[\Gamma].$ 
For the other description, since $\restr{\Psi_{\chi, \beta}}{RN} = \chi_R$, we already have that $R^{\beta}[\Gamma]I_\chi \subseteq \ker(\Psi_{\chi, \beta})$. Also, an element $y$ of $R^{\beta}[\Gamma]$ can be uniquely written as $y =\sum_{g\in G} y_g u_{\mu(g)}$, with each $y_g\in RN$. Since the elements $v_g$ are $R[\chi]$-linearly independent, the concrete form of $\Psi_{\chi, \beta}$ implies that $y\in  \ker(\Psi_{\chi, \beta})$ if and only if each $y_g\in \ker(\Psi_{\chi, \beta})$, or equivalently each $y_g\in I_{\chi}$.  
Consequently, $\ker(\Psi_{\chi, \beta })=\sum_{g\in G}I_{\chi} u_{\mu(g)} \subseteq R^{\beta}[\Gamma] I_{\chi}$, as asserted. 

Finally, suppose $\im(\chi)$ is contained in $R^*$ and denote the free $R$-module generated by the set $\{u_a - \chi(a)u_1 \mid a \in N \}$ by $M$. Clearly $M$ is contained in the kernel $I_{\chi}$ of $\chi_{R}:RN\rightarrow R$. Conversely, if $x = \sum_{a\in N} r_a u_a \in \ker(\chi_R)$, then $\sum_{a \in N} r_a \chi(a)u_1 = 0$ and hence $x =  x - \sum_{a \in N} r_a \chi(a)u_1 \in M$. It follows that  $I_{\chi}=M$.
\end{proof}

In \Cref{cokernel of trans subsection} we will be in the setting of \Cref{CE again a CE} where 
$\chi$ is a (linear) character of the abelian subgroup $A$, but where once we will 
be working with $\Gamma$ and once with some $\Gamma /Q$ for $A \cap Q = 1$. In order 
to distinguish, we will sometimes write 
\begin{equation}\label{map psi wrt to a normal sbgrp Q}
\Psi _{\chi,Q}: R^{\beta}[\Gamma/Q] \rightarrow R^{\beta. T(\chi)}[G/Q]
\end{equation} 
and 
hence assume $[\beta]$ is understood from the context.

\subsection{Refined decomposition in case of an $\alpha$-representation group of $G$}\label{subsectie representation group}
 Often in this article we will be concentrating on a fixed $2$-cocycle $\alpha \in Z^2(G, F^*)$ of $G$. In that case it is useful to consider the following group which will be recurrent:

\begin{equation}\label{The extension of G with values of cocycle}
G_{\alpha} :=  \{ \alpha(a,b)u_c \mid a,b,c \in G \} = \langle u_g \mid g \in G \rangle \leq \U(F^{\alpha}[G]).
\end{equation}

If $\alpha$ is of finite order\footnote{Since $G$ is finite we know that the 
cohomology class $[\alpha]$ has finite order dividing $|G|$, however there has 
not to be a representative of finite order. For example the cohomology class of 
$C_2 = \langle x \rangle$ over $\Q$ defined by $u_x^2= 2$ has order $2$ but 
any $2$-cocycle representative has infinite order. Such a representative however 
exists when the values are in a $|G|$-divisible group.}, then every element 
in $G_{\alpha}$ is of the form $\zeta^{i}u_g$  with $\zeta$ a $o(\alpha)$-primitive 
root of unity. Hence it is a central extension of $\langle \zeta \rangle$ by $G$ 
with $\lambda: G_{\alpha} \rightarrow G : \zeta^{i}u_g \mapsto g.$ Considering 
the canonical section $\mu(g) = u_g$ one has that $\alpha(g,h)= \mu(g) \mu(h) \mu(gh)^{-1}$. Thus $\Hom(\langle \zeta \rangle, F^*) = \langle \chi \rangle$ 
with $\chi(\zeta) = \zeta$ and $\alpha = \Tra(\chi)$. 
Therefore we may now apply \Cref{decomp voor elke abelse en centrale extensie} to recover \cite[Proposition 3.3.8.]{KarProj}. 

\begin{corollary}\label{decomposition of FGalpha}
Let $G$ be a finite group, $F$ a number field and $\alpha \in Z^2(G, F^* )$ a cocycle of finite order. Then
$$F[G_{\alpha}] \cong \bigoplus_{i = 0}^{o(\alpha)-1} F^{\alpha^{i}}[G].$$
\end{corollary}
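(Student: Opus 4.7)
The plan is to realize this as a direct application of \Cref{decomp voor elke abelse en centrale extensie}(2) to the central extension
\[1 \to \langle \zeta \rangle \to G_{\alpha} \overset{\lambda}{\to} G \to 1\]
already exhibited in the paragraph preceding the corollary. Since $\mu(g) = u_g$ is a section and $\mu(g)\mu(h)\mu(gh)^{-1} = u_g u_h u_{gh}^{-1} = \alpha(g,h)$, the 2-cocycle associated to this extension is precisely $\alpha$ itself, now viewed as taking values in $\langle \zeta \rangle = \mu_n(F) \subseteq F^*$ where $n = o(\alpha)$.

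Applying \Cref{decomp voor elke abelse en centrale extensie}(2) with $\beta$ trivial, $N = \langle \zeta \rangle$, and the field $F$, I would obtain
\[F[G_{\alpha}] \cong \bigoplus_{\chi \in \Lin(\langle\zeta\rangle, F)} F(\chi)^{T_{\alpha}(\chi)}[G].\]
Since $\zeta \in F$ (as it is the value of $\alpha$ at some pair), $F(\chi) = F$ for every character, and the linear characters of $\langle \zeta \rangle$ over $F$ are exactly $\chi_0, \chi_1, \ldots, \chi_{n-1}$, where $\chi_i(\zeta) := \zeta^i$. This gives the correct indexing set $\{0, 1, \ldots, o(\alpha)-1\}$.

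The remaining step is a direct identification of the twisting cocycles: writing $\alpha(g,h) = \zeta^{k(g,h)}$, one computes
\[T_{\alpha}(\chi_i)(g,h) \;=\; \chi_i(\alpha(g,h)) \;=\; \chi_i(\zeta^{k(g,h)}) \;=\; \zeta^{i\,k(g,h)} \;=\; \alpha(g,h)^i,\]
so $T_{\alpha}(\chi_i) = \alpha^i$ as elements of $Z^2(G,F^*)$, and hence $F^{T_{\alpha}(\chi_i)}[G] = F^{\alpha^i}[G]$. Assembling these identifications yields the claimed decomposition.

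There is essentially no real obstacle here once \Cref{decomp voor elke abelse en centrale extensie} is in hand; the only things to be mildly careful about are (i) verifying that the cocycle of the central extension $G_{\alpha}$ with respect to the canonical section $\mu(g)=u_g$ is literally $\alpha$ and not merely cohomologous to it, and (ii) observing that because $\zeta \in F$ every linear character is $F$-valued, so no field extension appears in the summands. Both are immediate from the explicit description of $G_{\alpha}$ given in the paragraph preceding the statement.
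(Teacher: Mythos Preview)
Your proof is correct and follows exactly the same approach as the paper: the paragraph immediately preceding the corollary already sets up $G_{\alpha}$ as a central extension of $\langle \zeta \rangle$ by $G$ with cocycle $\alpha$ relative to the section $\mu(g)=u_g$, and the paper then simply invokes \Cref{decomp voor elke abelse en centrale extensie}. Your write-up makes the computation $T_{\alpha}(\chi_i)=\alpha^i$ and the observation $F(\chi_i)=F$ more explicit than the paper does, but there is no substantive difference in the argument.
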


Note that the isomorphism class of $G_{\alpha}$ depends on the chosen cocycle. It turns out that even for cohomologous cocycles of the same finite order the associated groups might be non isomorphic. The following example was communicated to us by Y. Ginosar.
 
\begin{example}\label{the group depends on the cocycle}
 Denote  
$$G=C_2\times C_2=\langle x\rangle \times \langle y\rangle$$ 
and let $F=\mathbb{Q}(i)$. Next, we wish to define two coycles  
$\alpha, \alpha ^{\shortmid}\in Z^2(G,F^*)$. We will take cocycles which are normalized, that is for any $g\in G$
$$\alpha (g,e_G)=1=\alpha (e_G,g),\quad 
\alpha ^{\shortmid}(g,e_G)=1=\alpha ^{\shortmid}(e_G,g).$$
Further, define
$$\alpha (x,x)=\alpha (x,y)=\alpha (x,xy)=1$$
$$ \alpha(y,y)=1, \, \alpha (y,x)=\alpha (y,xy)=-1$$
$$\alpha (xy,x)=1, \, \alpha (xy,y)=\alpha (xy,xy)=-1.$$
and also 
$$\alpha ^{\shortmid}(x,x)=-1,\quad \alpha^{\shortmid} (x,y)=1,\quad \alpha^{\shortmid} (x,xy)=-1$$
$$ \alpha^{\shortmid}(y,y)=-1, \quad \alpha^{\shortmid} (y,x)=-1,\quad \alpha^{\shortmid} (y,xy)=1$$
$$\alpha^{\shortmid} (xy,x)=1, \quad \alpha^{\shortmid} (xy,y)=-1,\quad \alpha^{\shortmid} (xy,xy)=-1.$$

Now choose a basis  $\{u_g\}_{g\in G}$ for $F^{\alpha}G$
and a basis $\{v_g\}_{g\in G}$ for $F^{\alpha ^{\shortmid}}G$. 
 Notice that  both cocycles 
 $\alpha, \alpha ^{\shortmid}\in Z^2(G,F^*)$
 are of order $2$ and also notice that the cohomology classes which corresponds to these cocycles are
$$[\alpha]:\quad u_x^2=1,\quad u_y^2=1,\quad [u_x,u_y]=-1$$
and
$$[\alpha ^{\shortmid}]:\quad v_x^2=-1,\quad v_y^2=-1,\quad [v_x,v_y]=-1.$$
It follows that 
$$G_{\alpha}=\langle u_g \mid g\in G\rangle \cong D_8 \quad \text{ and } 
\quad 
 G_{\alpha^{\shortmid}}=\langle v_g \mid g\in G\rangle \cong Q_8$$ and therefore $G_{\alpha}^*\not \cong G_{\alpha^{\shortmid}}^*$.
The crucial point is that these cocycles are cohomologous 
over\footnote{The coboundary here is a map $f:G\rightarrow \mathbb{Q}(i)$ 
determined by $f(x)=i=f(y)$, $f(xy)=-1$ and $f(1)=1$. Indeed 
$\alpha ^{\shortmid}(g_1,g_2)=f(g_1)f(g_2)f(g_1g_2)^{-1}\alpha (g_1,g_2)$ 
for all $g_1,g_2\in G$.} $F=\mathbb{Q}(i)$ but not over $\mathbb{Q}$. 
\end{example}

\section{When is the unit group of a twisted group ring finite?}\label{unit group finite section}
Let $F$ be a number field, $R$ a $\Z$-order in $F$ and $\alpha \in Z^2(G, R^*)$ a {\it fixed} (normalized) $2$-cocycle. We refer to \cite[Section 4.6]{EricAngel1} for the necessary background on orders. In this section we determine when the unit group $\U(R^{\alpha}[G])$ is finite. 

Generally speaking, \cite[Corollary 5.5.8.]{EricAngel1}, if $\O$ a is $\Z$-order in 
a finite dimensional semisimple $\Q$-algebra $A$, then $\U (\O)$ is finite if and only 
if every simple component of $A$ is either $\Q$, a quadratic imaginary field extension 
of $\Q$ or a totally definite quaternion algebra over $\Q$. 
Recall that a totally definite quaternion algebra over $\Q$ is a 4-dimensional $\Q$-algebra with  basis $1,i,j,k$ so that $ij=k=-ji$, $i^2,j^2\in \Q$ and $i^2<0$, $j^2<0$. We will denote this algebra by $\qa{u}{v}{\Q}$ where $i^2 = u$ and $j^2=v.$

As a by-product, if $\U(R^{\alpha}[G])$ is finite and $x \in \U(R^{\alpha}[G])$ is of finite order, say $n$, then $n$ must\footnote{The $F$-subalgebra $F[x]$ of $F^{\alpha}G$ is a commutative semisimple subalgebra and hence a direct sum of cyclotomic extensions of $\Q (\xi_m)$, with $\xi_m$ a root of unity of order $m$, a divisor of $n$. Moreover, 
by the above,  $[\Q (\xi_m):\Q]\leq 2$  and one of the $m$ must be equal to 
$n$. Now the Dirichlet unit theorem yields the claim as a $\Z$-order in $\Q(\xi_n)$ 
is finite exactly when $n$ divides $4$ and $6$.} divide $4$ or $6$. Consequently, 
the exponent of $G_{\alpha}$ is a divisor of $4$ or $6$ if  $\U(R^{\alpha}[G])$ is finite. Moreover, also $\U(R)$ 
would need to be finite and hence by the above $F = \Q \text{ or } \Q(\sqrt{-d})$ 
with $d >0$.  In the former case $R= \Z$ is the only order in $F$. We obtain 
the following characterisation, generalizing a classical result of Higman 
\cite[Theorem 1.5.6]{EricAngel1} for untwisted group rings.

\begin{theorem}\label{finitetwisted}
Let $G$ be a finite group, $F$ be a number field, $R$ a $\Z$-order in $F$ and $\alpha \in Z^2(G, R^*)$ non-trivial normalized cocycle. Then the following are equivalent:
\begin{enumerate}
\item $\U(R^{\alpha}[G]) $ is finite,
\item $\U(R[G_{\alpha}])$ is finite,
\item $\U(R^{\alpha^{i}}[G])$ is finite for all $i$.
\end{enumerate}
One of the above holds if and only if one of the following conditions is satisfied: 
\begin{enumerate} 
\item[(i)] $G_{\alpha}$ is an abelian group of exponent $4$ and $F = \Q$ or $\Q(\sqrt{-1})$,
\item[(ii)] $G_{\alpha}$ is an abelian group of exponent $3$ and $F = \Q(\sqrt{-3})$,
\item[(iii)] $G_{\alpha}$ is an abelian group of exponent $6$ and $F = \Q$ or $\Q(\sqrt{-3})$ ,
\item[(iv)] $G_{\alpha}$ is a non-abelian Hamiltonian\footnote{A group is called {\it Hamiltonian} if every subgroup is normal.} 2-group and $F=\Q$.
\end{enumerate}
\end{theorem}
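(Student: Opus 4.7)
The plan is to use Corollary~\ref{decomposition of FGalpha}, which gives $F[G_{\alpha}] \cong \bigoplus_{i=0}^{o(\alpha)-1} F^{\alpha^i}[G]$, together with the structure theorem cited at the opening of \Cref{unit group finite section} (that $\U(\O)$ is finite for a $\Z$-order $\O$ in a semisimple $\Q$-algebra $A$ iff every simple component of $A$ is $\Q$, an imaginary quadratic field, or a totally definite quaternion algebra over $\Q$). From this, the equivalence (2)$\Leftrightarrow$(3) follows at once: both $R[G_{\alpha}]$ and $\bigoplus_i R^{\alpha^i}[G]$ are $\Z$-orders in the common semisimple $\Q$-algebra $F[G_{\alpha}]$, hence have commensurable unit groups. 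The implication (3)$\Rightarrow$(1) is trivial by taking $i=1$.

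To close the chain I would then prove (i)-(iv)$\Rightarrow$(2) and (1)$\Rightarrow$(i)-(iv). For the former, in each of the four structural cases I would compute the Wedderburn decomposition of $F[G_{\alpha}]$ directly. In cases (i)-(iii), where $G_{\alpha}$ is abelian, Perlis-Walker (\ref{Perlis-Walker decomposition}) gives $FG_{\alpha} \cong \bigoplus_{d \mid \exp(G_{\alpha})} F(\zeta_d)^{a_d}$, and the prescribed restrictions on $F$ ensure each $F(\zeta_d)$ is $\Q$ or imaginary quadratic. In case (iv), $\Q[Q_8 \times C_2^n]$ decomposes as a direct sum of copies of $\Q$ and of the rational Hamilton quaternions, which are totally definite. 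The structure theorem then yields finiteness of $\U(R[G_{\alpha}])$, i.e.\ (2).

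For (1)$\Rightarrow$(i)-(iv), two observations are essentially immediate. Via the embedding $r \mapsto r u_1$ we have $\U(R) \hookrightarrow \U(R^{\alpha}[G])$, so $\U(R)$ is finite and Dirichlet forces $F \in \{\Q, \Q(\sqrt{-d}) : d \in \N_{>0}\}$. Moreover, the remark preceding the theorem shows that every torsion unit in $\U(R^{\alpha}[G])$ has order dividing $4$ or $6$, so $\exp(G_{\alpha}) \in \{1,2,3,4,6\}$. What remains is the structural claim that $G_{\alpha}$ is either abelian or a Hamiltonian $2$-group; I would deduce this by feeding the structure theorem back into the simple components of $F^{\alpha}[G]$ and performing a case analysis on element orders in $G_{\alpha}$, combined with the classical Higman classification applied to $G_{\alpha}$ itself via the decomposition of Step 1.

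\textbf{Main obstacle.} The delicate step is this last one. Finiteness of $\U(R^{\alpha}[G])$ directly controls only those components of $F[G_{\alpha}]$ on which the central $\langle \zeta \rangle \subseteq G_{\alpha}$ acts through the specific character associated with $\alpha$, so a priori a ``bad'' component could persist in some $F^{\alpha^i}[G]$ with $i \neq 1$ without manifesting in the $\alpha$-summand. Bridging this gap requires either a Galois-theoretic transport between the $F^{\alpha^i}[G]$ for $i$ coprime to $o(\alpha)$, viewed as conjugates of each other under $\mathrm{Gal}(F(\zeta_{o(\alpha)})/F)$, or a direct argument showing that any non-abelian non-Hamiltonian $G_{\alpha}$ of exponent dividing $4$ or $6$ necessarily contributes a bad component in the $\alpha$-summand specifically.
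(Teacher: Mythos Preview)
Your outline of the easy directions is sound and matches the paper: (2)$\Leftrightarrow$(3) via commensurability of $\Z$-orders in $F[G_\alpha]\cong\bigoplus_i F^{\alpha^i}[G]$, (3)$\Rightarrow$(1) trivially, and (i)--(iv)$\Rightarrow$(2) by inspecting the Wedderburn components in each listed case. You are also right that $\U(R)$ finite forces $F\in\{\Q,\Q(\sqrt{-d})\}$ and that $\exp(G_\alpha)$ divides $4$ or $6$.

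You have correctly located the real content in (1)$\Rightarrow$(i)--(iv), and your ``Main obstacle'' paragraph pinpoints exactly where the paper does the work. Note that your fallback suggestion---applying Higman's classification to $G_\alpha$ via the decomposition---is circular: that presupposes $\U(R[G_\alpha])$ finite, i.e.\ condition (2), which is what you are trying to reach. The Galois-transport idea you mention would only relate the summands $F^{\alpha^i}[G]$ with $\gcd(i,o(\alpha))=1$, so it does not by itself control the $i=0$ summand $FG$ either.

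The paper resolves the obstacle by a direct argument working entirely inside $F^\alpha[G]$, with two ingredients you did not isolate. First, since $\U(R^\alpha[G])$ is finite, $F^\alpha[G]$ is a direct sum of division algebras and so has \emph{no nonzero nilpotents}. For any $k\in G$ with $o(u_k)=o(k)$ the element $(1-u_k)\,u_t\,\wt{u_k}$ is nilpotent and hence zero for every $t\in G$; comparing $G$-degrees in $\wt{u_k}=\sum_j u_k^j$ then forces $u_t^{-1}u_ku_t\in\langle u_k\rangle$, so $\langle u_k\rangle\triangleleft G_\alpha$. One checks that $o(u_k)=o(k)$ whenever $o(u_k)$ is square-free, and that involutions are in fact central. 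Second, for elements of order $4$, the paper embeds $G_\alpha$ diagonally into $\prod_{e\in\text{PCI}(F^\alpha[G])} G_\alpha e$ and invokes the classification of unit groups of maximal orders in definite rational quaternion algebras: each factor $G_\alpha e$ lands in a cyclic group, in $Q_8$, or in $\mathrm{Dic}_3$, and in all of these every element of order $4$ generates a normal subgroup. Together these two steps make every cyclic subgroup of $G_\alpha$ normal, i.e.\ $G_\alpha$ is Hamiltonian; Dedekind--Baer with $\exp(G_\alpha)\mid 4$ or $6$ then yields ``abelian or $Q_8\times C_2^m$''. The remaining refinement of $F$ in each case is the routine computation you already sketched.
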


%
Recall that Higman's result says that $\U (\Z G)$ is finite if and only if $G$ is abelian of exponent dividing $4$ or $6$ or $G\cong Q_8 \times C_2^n$ for some $n \in \N.$ To put into perspective, it is good to recall Baer-Dedekind's classification theorem \cite[Theorem 1.8.5.]{PolSeh} which says that $G$ is Hamiltonian (i.e. all subgroups are normal) if and only if $G$ is abelian or $G \cong Q_8 \times C_2^n \times A$ with $A$ an odd order abelian group. Therefore, Higman's result says that $\U (\Z G)$ is finite exactly for the Hamiltonian groups with exponent dividing $4$ or $6$. Note that the list of possibilities stated above for a non-trivial cocycle is more restrictive than for a trivial cocycle, which is the reason that we excluded the trivial case in \Cref{finitetwisted}

Also note that the group $G_{\alpha}$ is abelian exactly when the cocycle $\alpha$ is symmetric (i.e. $\alpha(g,h) = \alpha(h,g)$ for all $h,g \in G$) and $G$ is abelian. In \Cref{When is unit group non-abelian finite} below we will give a concrete interpretation in terms of $G$ and $\alpha$ for case (iv).

\begin{proof}
Clearly in any of the cases (1)-(3) $\langle \im(\alpha) \rangle$ is a finitely 
generated torsion subgroup of $R^* \subset F^*$ and hence it is a finite 
cyclic group. In particular $o(\alpha) < \infty$ which allows to apply
\Cref{decomposition of FGalpha}:
$$F[G_{\alpha}] \cong \bigoplus_{i = 0}^{o(\alpha)-1} F^{\alpha^{i}}[G].$$
Hence the equivalence between (2) and (3) is a consequence thereof and commensurability of the unit group of $\Z$-orders in $F[G_{\alpha}]$ (see \cite[Lemma 4.6.9]{EricAngel1}). 
Also any order in $F^{\alpha^i}[G]$ is a direct summand of an order in $F[G_{\alpha}]$ 
and hence (2) implies (1).  The main bulk of the proof goes into proving that (1) implies (2). 
More precisely, we will show that $G_{\alpha}$ and $F$ are of the form (i)-(iv). 
In those cases one can directly see that $\U(R[G_{\alpha}])$ is finite 
(e.g. this can be deduced from \cite[Theorem 1.5.6.]{EricAngel1}, handling the case $R=\Z$, together with Dirichlet unit theorem \cite[Theorem 5.2.4.]{EricAngel1}).

Suppose $\U(R^{\alpha}[G]) $ is finite and hence, as noticed at the start of this section, 
$\exp(G_{\alpha})\mid 4 \text{ or } 6$ and $F^{\alpha}[G]$ is a direct sum of (certain) division $F$-algebras. In particular, it contains no non-zero nilpotent elements.

{\it Claim 1: } If $u_k \in G_{\alpha}$, with $k\in G$ such that $o(u_k) = o(k)$ 
then $\langle u_k \rangle$ is normal in $G_{\alpha}$. Moreover, this condition holds 
when $o(u_k)$ is square-free and if $o(u_k)= 2$ then $u_k \in \ZZ (G_{\alpha})$. 

The proof of this claim will be carried out inside $F^{\alpha}[G]$ (and not $F[G_{\alpha}]$). Let $\wt{u_k} := \sum_{i=0}^{o(u_k)-1} u_k^{i}$ and note that $u_k \wt{u_k} =\wt{u_k}$. Hence for any $t \in G$ the element $(1 -u_k) u _t \wt{u_k}\in R^{\alpha}[G]$ is a nilpotent element. By an earlier remark the nilpotent element is zero and thus
$u_t^{-1}u_k u_t \wt{u_k} = \wt{u_k}$.  Consider now $F^{\alpha}[G]$ with its 
canonical $G$-grading. Then $\text{deg}(u_k) = k $ and 
$\text{deg}(u_t^{-1}u_k u_t) = t^{-1}kt.$ Since $o(u_k)= o(k)$, all summands of 
$\wt{u_k}$ have different degrees (in particular $\wt{u_k} \neq 0$) and thus a degree argument shows that 
$u_t^{-1}u_ku_t = u_k^{j}$ for some $j$. Consequently,
$\langle u_k \rangle \triangleleft G_{\alpha}$, as desired.  Now suppose $o(u_k)$ is square-free. 
It is well known\footnote{More generally, if $g_1$ and $g_2$ commute, then $[u_{g_1},u_{g_2}] := u_{g_1}^{-1}u_{g_2}^{-1}u_{g_1}u_{g_2} = \alpha(g_1,g_2)$. Since $u_g^{o(g)}$ is central in $R^{\alpha}[G]$, it follows that $1 = [u_{g_1}^{o(g_1)},u_{g_2}] = \alpha(g_1,g_2)^{o(g_1)}$. In particular $\alpha(k^{i},k)$ is a $o(k^{i})$-root of unity in $R^{*}$ and so also a $o(k)$-root of unity. Since $u_k^{o(k)} = \prod_{i=1}^{o(k)-1}\alpha(k^{i},k) u_1$ and $R$ is commutative we obtain the claim.}
that if $o(\alpha)$ is finite then $u_k^{o(k)}$ is a root of unity in 
$R^*$ of order dividing $o(k)$. Hence if $o(u_k)$ is square-free, it is easily seen that $u_k^{o(k)}= 1$. The last part follows from the rest.

{\it Claim 2: } Either (I) $G_{\alpha}$ is abelian of exponent dividing 4 or 6, (II) $G_{\alpha} \cong Q_8 \times E$ with $E$ an elementary abelian $2$-group.

Note that by Baer-Dedekind's classification theorem \cite[Theorem 1.8.5.]{PolSeh} 
these cases are exactly those Hamiltonian groups with exponent dividing $4$ or $6$. Due to claim $1$ it remains to prove that the generators of order $4$ are also normal. For this recall that \cite[Theorem 11.5.12]{Voight} 
the unit group of a maximal order in a quaternion algebra $\qa{-a}{-b}{\Q}$, 
with $a,b \in \N$, is 
cyclic except for the maximal order in $\qa{-1}{-1}{\Q}$ and $\qa{-1}{-3}{\Q}$. 
In those cases the unit group is $SL(2,3) = Q_8 \rtimes C_3$, resp. 
$Dic_3 := C_3 \rtimes C_4$. Now consider $G_{\alpha} \leq \prod_{e \in
\PCI (F^{\alpha}[G])} G_{\alpha}e$. Since also $\exp(G_{\alpha}e) 
\mid 4,6$ we see that $G_{\alpha}e$ is either cyclic or a subgroup of 
$Q_8$ or $D_6$. Thus one can check that any element of order $4$ in the direct product generates a normal 
therein and in particular the same holds for $G_{\alpha}$. This finishes the proof of the second claim.

We will now study  both cases in more detail. To start, as already used, we 
know that $\U(R)$ is finite and hence that $F = \Q$ or $\Q(\sqrt{-d})$ with $d >0$. 
It remains to restrict the possible values of $d$ and exclude exponent $2$.
 
 {\it Case (I):} Suppose $G_{\alpha}$ is abelian. To start remark 
 $\exp(G_{\alpha})= 2$ is not possible. Indeed, 
 otherwise $1 = (u_g u_h)^2 = \alpha(g,h) u_{g}^2 u_{h}^2 = \alpha(g,h)$ 
 for all $g,h \in G$, in contradiction with the assumption that $\alpha$ is non-trivial. By computations of the same type, $\exp(G_{\alpha}) = 3$ and $F= \Q$ 
 is also impossible for non-trivial $\alpha$. The restriction on $F$ for the 
 other cases will follow from the fact that all simple components have degree 
 at most $2$ over $\Q$. Indeed, suppose $\exp(G_{\alpha}) = 4$ and write 
 $F = \Q(\sqrt{-d})$, with $d$ a square free non-negative integer. Then $F^{\alpha}[G]$ has a simple component 
 $\Q(\sqrt{-d}, \sqrt{-1}) = \Q(\sqrt{d}, \sqrt{-1})$ which is of degree 
 at most $2$ if and only if $d = 0,$ or $1$. If the exponent is $3$ or $6$, 
 then  there is a simple component $\Q(\sqrt{-d}, \zeta_3)$ which would 
 have degree larger than $2$ if $d \neq 0, 3$.  
 
{\it Case (II):} Finally consider the case that $G_{\alpha}\cong Q_8 \times E$ is an Hamiltonian 2-group. Since $F^{\alpha}[G]$ has a simple component $\qa{-1}{-1}{F}$ which needs to be totally definite (in particular $F$ is totally real), we indeed get that $F = \Q$.
\end{proof}

\begin{remark}
The condition on the coefficient ring $R$ can be generalized further. Namely, let $F$ be a global field and $S$ be a finite set of places of $F$ containing the archimedian ones. Denote by $\mc{O}_S$ the ring of $S$-integers of $F$, which is well known to be a Dedekind domain with finite quotients. Therefore, any $\mc{O}_S$-order $R$ is commensurable \cite[Lemma 4.6.9]{EricAngel1} with $\mc{O}_S$. Since $|\mc{U}(\mc{O}_S)|$ is finite only for $F = \Q$ or $\Q(\sqrt{-d})$, with $d>0$, and $S= \{ \infty\}$ (e.g. see \cite[Theorem 3.24.]{Clark}) we see that the conclusion of \Cref{finitetwisted} also holds for such $R$.
\end{remark}

\Cref{finitetwisted} and its proof could be considered as a first contribution about the interplay of torsion units and nilpotent elements between $R^{\alpha}[G]$ and $R[G_{\alpha}]$. A satisfactory answer to the following general question would very useful:

\begin{question}\label{question torsion and nilp interplay}
Is there a concrete connection between the torsion and nilpotent elements of $R^{\alpha}[G]$ and $R[G_{\alpha}]$? In particular, when (in terms of $G, R$ and $\alpha$) does $R^{\alpha}[G]$ not have nonzero nilpotent elements?
\end{question}

Using Theorem~\ref{finitetwisted} one can give an especially precise characterisation when $\U(R^{\alpha}[G])$ is a finite non-abelian group in terms of $G$ and $\alpha$. For this recall that any non-abelian Hamiltonian $2$-group $G$ is isomorphic to $Q_8 \times C_2^n$ for some $n$. In other words, it can be written as a stem\footnote{An extension is called {\it stem} if the base normal group is contained in $G' \cap \mathcal{Z}(G)$. In particular it is central.} extension 
$$1\rightarrow C_2 \rightarrow G \rightarrow C_2^{n+2}\rightarrow 1.$$
On the other hand, any non-trivial cohomology class $[\alpha] \in H^2(G, \Z^*)$ corresponds to a central extension of $G$ by $C_2$.
Therefore, from Theorem~\ref{finitetwisted} (iv) we deduce
\begin{corollary}\label{When is unit group non-abelian finite}
Let $G$ be a finite group, $F$ be a number field, $R$ an order in $F$ and $[\alpha] \in H^2(G, R^*)$ a non-trivial cohomology class. Then $\U(R^{\alpha}[G])$ is a finite non-abelian group if and only if the following conditions are satisfied
\begin{enumerate}
    \item $R=\mathbb{Z}$,
    \item $G$ is an elementary abelian $2$-group of rank at least $2$,
    \item $[\alpha]$ is inflated from a cohomology class $[\gamma]\in H^2(C_2\times C_2,C_2)$ which is determined by $u_x^2=u_y^2=[u_x,u_y]=-1$ where $x,y$ are generators of $C_2\times C_2$ and $-1$ is the generator of $C_2$.
    \end{enumerate}
\end{corollary}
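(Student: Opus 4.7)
The plan is to derive this directly from \Cref{finitetwisted}. Since $\U(R^{\alpha}[G])$ is required to be non-abelian, the first step is to exclude cases (i)--(iii) of that theorem, in all of which $G_{\alpha}$ is abelian. Indeed, if $G_{\alpha}$ is abelian then $u_g u_h = u_h u_g$ for all $g,h\in G$, forcing both that $G = G_{\alpha}/\langle -1\rangle$ is abelian and that $\alpha(g,h)=\alpha(h,g)$. Consequently $R^{\alpha}[G]$ would be commutative, contradicting non-abelianness of its unit group. So only case (iv) survives, yielding $R=\Z$ (hence (1)) and $G_{\alpha}$ a non-abelian Hamiltonian $2$-group, which by Baer--Dedekind is $Q_8\times C_2^{n}$ for some $n\geq 0$.

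Next I would pin down the embedding $\langle -1\rangle \hookrightarrow G_{\alpha}$ coming from the cocycle. The quotient $G_{\alpha}/\langle -1\rangle\cong G$ must be abelian, and a quick check in $Q_8\times C_2^{n}$ shows that the only central subgroup of order $2$ with abelian quotient is $Z(Q_8)\times\{0\}$ (any other choice keeps a non-trivial commutator $(-1,0)$ alive in the quotient). Therefore $G\cong(Q_8/Z(Q_8))\times C_2^{n}\cong C_2^{n+2}$, giving (2). Fix then generators $\bar x,\bar y$ of the $C_2\times C_2$-factor coming from $Q_8/Z(Q_8)$ and let $K\cong C_2^{n}$ be the complementary summand of $G$; with $\pi:G\to G/K\cong C_2\times C_2$ the corresponding projection, the restriction of $\alpha$ to $K\times K$ is trivial (since the $C_2^n$-factor of $G_{\alpha}$ sits as a direct summand), and more generally $\alpha(g,h)$ depends only on $(\pi(g),\pi(h))$. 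This is exactly $[\alpha]=\Inf_{K}([\gamma])$ for the class $[\gamma]$ classifying $G_{\alpha}/K\cong Q_8$, and reading $u_{\bar x}^{2}=u_{\bar y}^{2}=[u_{\bar x},u_{\bar y}]=-1$ inside $Q_8$ yields (3).

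For the converse, assume (1)--(3). Pick a normalized section $s$ of the projection $G\to G/K$ appearing in (3); the inflation description together with $G=s(C_2\times C_2)\cdot K$ identifies $G_{\alpha}$ with the direct product of $(C_2\times C_2)_{\gamma}\cong Q_8$ and $K\cong C_2^{n}$, the $K$-factor splitting off because $\alpha$ restricts trivially to $K\times K$. Thus we are exactly in case (iv) of \Cref{finitetwisted}, giving finiteness of $\U(\Z^{\alpha}[G])$; non-abelianness follows at once from $u_x u_y=-u_y u_x\neq u_y u_x$.

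The only point requiring real care, and hence the expected main obstacle, is the forward extraction of the data (3) from the abstract isomorphism $G_\alpha\cong Q_8\times C_2^{n}$: one has to identify the kernel $\langle -1\rangle$ with $Z(Q_8)\times\{0\}$ and thereby realize $\alpha$ as inflated from the specific $Q_8$-class on a chosen $C_2\times C_2$-quotient of $G$. Everything else reduces to formal manipulation of central extensions and inflations.
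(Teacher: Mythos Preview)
Your proposal is correct and follows the same route as the paper: both deduce the corollary directly from case~(iv) of \Cref{finitetwisted}, with your version supplying the details (locating $\langle -1\rangle$ inside $Q_8\times C_2^{n}$ as $Z(Q_8)\times\{0\}$ and extracting the inflation statement) that the paper leaves implicit. One minor imprecision: the assertion that ``$\alpha(g,h)$ depends only on $(\pi(g),\pi(h))$'' is a cocycle-level claim that need not hold for an arbitrary representative of $[\alpha]$, but the class-level conclusion $[\alpha]=\Inf_K([\gamma])$ you need is correct and follows from your identification of the extension as the pullback of $Q_8\to C_2\times C_2$ along~$\pi$.
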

\noindent Notice that the cohomology class $[\gamma]\in H^2(C_2\times C_2,C_2)$ above corresponds to $Q_8$.

To finish this section, we would like to come back on the proof of \Cref{finitetwisted} which unfortunately is quite indirect. Indeed the implication from (1) to (2) goes by classifying all the possibilities for $G_{\alpha}$ and $F$ and then noticing that in all these cases $\U(R[G_{\alpha}])$ is finite. A more natural approach would have been to construct for all $j$ an isogeny between $\U(R^{\alpha}[G])$ and $\U(R^{\alpha^{j}}[G])$. Such a map can however only come from ring (epi)morphism when $\text{gcd}(j, n) = 1$. The statement answers a question of Margolis and Schnabel\footnote{As is apparent from the proof, the main tool is however their result over the complex numbers\cite[Theorem 3.1.]{MaScLast}.} \cite[remark 3.2.]{MaScLast} in case $[\alpha]$ has a cocycle representative of finite order. 

\begin{proposition}\label{proposition on isom of tga}
Let $G$ be a finite group, $F$ a field of characteristic zero and $\alpha \in Z^2(G, F^*)$. If $F^{\alpha}[G] \cong F^{\alpha^{j}}[G]$, isomorphic  as rings, then $\text{gcd}(j, o([\alpha])) = 1$. If $\alpha$ has finite order and $F$ is a number field, then converse also holds.
\end{proposition}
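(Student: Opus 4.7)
The plan is to prove the two directions separately, the forward being valid for any characteristic-zero field $F$ and the converse requiring the additional number-field and finite-order hypotheses.

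For the forward direction, I would reduce to the case $F=\mathbb{C}$, where the statement is already available via \cite[Theorem~3.1.]{MaScLast}. Concretely, given an $F$-algebra isomorphism $F^{\alpha}[G]\cong F^{\alpha^{j}}[G]$, fix any embedding $F\hookrightarrow\mathbb{C}$ (such exists since $F$ has characteristic zero) and perform extension of scalars $(-)\otimes_{F}\mathbb{C}$ to produce a $\mathbb{C}$-algebra isomorphism $\mathbb{C}^{\alpha}[G]\cong\mathbb{C}^{\alpha^{j}}[G]$. Then \cite[Theorem~3.1.]{MaScLast} yields $\gcd(j,o_{\mathbb{C}}([\alpha]))=1$, where $o_{\mathbb{C}}([\alpha])$ is the order of $[\alpha]$ in $H^{2}(G,\mathbb{C}^{*})$. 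To conclude with the order $o([\alpha])$ in $H^{2}(G,F^{*})$, I would analyse the long exact cohomology sequence for $1\to F^{*}\to\mathbb{C}^{*}\to\mathbb{C}^{*}/F^{*}\to 1$. Using the divisibility of $\mathbb{C}^{*}/F^{*}$ together with the Schur-multiplier bound $o([\alpha])\mid|G|$, the natural map $H^{2}(G,F^{*})\to H^{2}(G,\mathbb{C}^{*})$ preserves the prime divisors of the order of $[\alpha]$, which is sufficient.

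For the converse, let $m=o(\alpha)$, so $\alpha$ takes values in $\mu_{m}(F)=\langle\zeta\rangle\subseteq F^{*}$, and set $n=o([\alpha])$ with $\gcd(j,n)=1$. My plan is to use the decomposition of \Cref{decomposition of FGalpha}, $F[G_{\alpha}]\cong\bigoplus_{i=0}^{m-1}F^{\alpha^{i}}[G]$, together with the analogous one for $F[G_{\alpha^{j}}]$. After replacing $\alpha$ by a cocycle representative of minimal order (achievable since $F$ is a number field, yielding $m=n$, and in particular $\gcd(j,m)=1$), the automorphism $\zeta\mapsto\zeta^{j}$ of $\mu_{m}$ lifts to a group isomorphism $\psi\colon G_{\alpha}\xrightarrow{\sim}G_{\alpha^{j}}$ sending $u_{g}\mapsto v_{g}$: the relation $v_{g}v_{h}=\alpha(g,h)^{j}v_{gh}$ on the right matches $\psi(u_{g}u_{h})=\psi(\alpha(g,h))v_{gh}=\alpha(g,h)^{j}v_{gh}$ on the left. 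Extending $F$-linearly produces an $F$-algebra isomorphism $F[G_{\alpha}]\cong F[G_{\alpha^{j}}]$. Tracking the central primitive idempotents of $F[\mu_{m}]$ through $\psi$, combined with the summand identifications from \Cref{decomposition of FGalpha}, should then extract an $F$-algebra isomorphism $F^{\alpha}[G]\cong F^{\alpha^{j}}[G]$ as a matching of appropriate simple summands.

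The main obstacle I expect lies in this last summand-matching step of the converse. A naive idempotent chase shows that $\psi$ identifies the $i$th left-hand summand $F^{\alpha^{i}}[G]$ with the right-hand summand corresponding to the character $\chi_{ij^{-1}}$ of the centre, which re-decomposes to $F^{\alpha^{i}}[G]$ and so yields only a tautology. Overcoming this will require composing with an additional automorphism of $F[G_{\alpha}]$ induced by Galois theory over the cyclotomic subfield of the number field $F$, and it is precisely here that both the number-field and the finite-order hypotheses enter in an essential way. A secondary obstacle, in the forward direction, is to cleanly establish the matching of prime divisors of $o_{F}([\alpha])$ and $o_{\mathbb{C}}([\alpha])$, which demands a careful cohomological analysis of the quotient $\mathbb{C}^{*}/F^{*}$.
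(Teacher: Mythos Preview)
For the forward direction you have a genuine gap: a field of characteristic zero need not embed in $\mathbb{C}$ (only those of cardinality at most $2^{\aleph_0}$ do), so ``fix any embedding $F\hookrightarrow\mathbb{C}$ (such exists since $F$ has characteristic zero)'' is unjustified in general. The paper repairs this by first observing that $\langle\im(\alpha)\rangle$, being finitely generated, lies in some countable subfield $L\subseteq F$; one then passes to $L^{\alpha}[G]$ and $L^{\alpha^j}[G]$, and it is $L$ that gets embedded in $\mathbb{C}$ before tensoring up. Your secondary concern about a possible discrepancy between the orders of $[\alpha]$ computed in $H^2(G,F^*)$ versus $H^2(G,\mathbb{C}^*)$ is legitimate, and the paper does not explicitly address it either.

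For the converse your route through $G_{\alpha}$ and the decomposition of \Cref{decomposition of FGalpha} is unnecessarily indirect, and you rightly diagnose that the naive idempotent chase collapses to a tautology. The paper avoids this detour entirely: with $\sigma_j$ the $\mathbb{Q}$-algebra isomorphism $\mathbb{Q}(\zeta_{o(\alpha)})\to\mathbb{Q}(\zeta_{o(\alpha)}^j)$ sending $\zeta_{o(\alpha)}\mapsto\zeta_{o(\alpha)}^j$, one writes down directly the $\sigma_j$-semilinear map
\[
\psi\colon\mathbb{Q}(\zeta_{o(\alpha)})^{\alpha}[G]\longrightarrow\mathbb{Q}(\zeta_{o(\alpha)})^{\alpha^j}[G],\qquad \sum a_g u_g\longmapsto\sum\sigma_j(a_g)\,v_g,
\]
checks in one line that $\psi(u_gu_h)=\sigma_j(\alpha(g,h))\,v_{gh}=\alpha(g,h)^j v_{gh}=\psi(u_g)\psi(u_h)$, and then extends scalars to $F$. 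This is precisely the Galois-theoretic ingredient you anticipated in your final paragraph, but applied directly to the twisted group algebra rather than filtered through $F[G_\alpha]$; the payoff is that no summand-matching step and no auxiliary automorphism are needed.
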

\begin{proof}
First suppose that $F^{\alpha}[G] \cong F^{\alpha^{j}}[G]$. Note that since $G$ is finite, $\langle \im(\alpha) \rangle$ is a finitely generated abelian subgroup of $F^*$. In particular it lies in some countable subfield, say $L \subseteq F$. Moreover, with abuse of notation, $F^{\alpha}[G] \cong F \ot_L L^{\alpha}[G]$ and one can restrict the given isomorphism restricts to a ring isomorphism $L^{\alpha}[G] \cong L^{\alpha^{j}}[G]$. Now, since $L$ is countable it can be embedded in $\C$ and we can view $\alpha$ as having values in $\C^*$.  Consequently, by tensoring with $\C \ot_L -$ we see that $\C^{\alpha}[G] \cong \C^{\alpha^{j}}[G]$ as rings. Therefore we can apply \cite[Theorem 3.1.]{MaScLast} saying that $\text{gcd}(j, o([\alpha])) = 1$. 

Conversely, let $n= o([\alpha])$ and let $j\in \mathbb{Z}_{\geq 0}$ such that $\text{gcd}(j, n) = 1$. Since cohomologous cocycles $\alpha ,\beta \in Z^2(G, F^*)$
admit isomorphic twisted group rings $F^{\alpha}G\cong F^{\beta}G$ we may assume, by Lemma~\ref{lemma prime divisors of alpha}, that $\text{gcd}(j, o(\alpha)) = 1$. Hence we have an isomorphism $\sigma_j: \Q(\zeta_{o(\alpha)}) \rightarrow \Q(\zeta_{o(\alpha)}^{j})$ 
mapping $\zeta_{o(\alpha)}$ to its $j$-th power. With this at hand,
define $\psi : \Q(\zeta_{o(\alpha)})^{\alpha}[G] \rightarrow \Q(\zeta_{o(\alpha)})^{\alpha^{j}}[G]$ 
by $\psi(\sum a_g u_g) = \sum \sigma_j(a_g) v_g$. Note that for all $g,h \in G$:
$$\psi(u_g u_h) = \psi( \alpha (g,h) u_{gh}) = \alpha(g,h)^{j} v_{gh} = v_g v_h =\psi(u_g) \psi(u_h).$$ 
Consequently, $\psi$ is a ring epimorphism and hence isomorphism. Now note that $F$ contains a $\zeta_{o(\alpha)}$-root of unity. Therefore
an extension of scalars with field $F$ now finishes the proof.
\end{proof}

\section{Correlations between \texorpdfstring{$R [\Gamma]$}{} and \texorpdfstring{$R^{\alpha}[G]$}{} - a unit group point of view}\label{section correlations}

Throughout this section {\it we fix an extension} 
\begin{equation}\label{fixed central extension}
1\rightarrow N \rightarrow \Gamma \overset{\lambda}\rightarrow G\rightarrow 1.
\end{equation}
As in (\ref{De algemene extensie}), fix also a section $\mu$ of $\lambda$ and define 
$$\alpha(g,h)= \mu(g)\mu(h)\mu(gh)^{-1}.$$
In particular when $N$ is abelian we will always choose this $\alpha$ as the normalized representative of the cohomology class $[\alpha]$ corresponding (\ref{fixed central extension}). We {\it will always assume} that the underlying field $F$ is such that $\Char(F) \nmid |\G|$ and $R$ is some order in $F$.

We wish to compare $\mc{U}(R [\Gamma])$ and $\mc{U}(R^{\alpha}[G])$ with the aim to pullback results from the smaller group $G$, but in the twisted context, to the larger group $\Gamma$. For this we study the kernel and cokernel of the generalized transgression $\Psi_{\chi}$ from \Cref{ring homom from Tra}. Also in this section we will work more generally with $R^{\beta}[\Gamma]$ for some $[\beta] \in \im(\Inf_N)$. 

\noindent {\it Notation: } Recall that if $f : R \rightarrow S$ is a ring homomorphism, then we denote the induced map on the unit groups by $\wt{f}: \mc{U}(R) \rightarrow \mc{U}(S)$.

\subsection{On the cokernel of the generalized transgression map}\label{cokernel of trans subsection}\addtocontents{toc}{\protect\setcounter{tocdepth}{1}}

In the sequel, finiteness of cokernels of group morphisms will always follow from the following somehow folklore lemma.

\begin{lemma}\label{image porjection in ss finite cokernel}
Let\footnote{The prototypical example of such a ring is the ring of integers in a number field. The condition is the minimal needed to use classical methods with $R$-orders, e.g. that two $R$-orders have commensurable unit groups.} $R$ be a Dedekind domain with field of fractions $F$ such that $R/I$ is finite for all $0 \neq I \triangleleft R$ and let $A$ and $B$ be semimsimple $F$-algebras. Furthermore consider $R$-orders $\mc{O}_A$ in $A$ and $\mc{O}_B$ in $B$. If there exists a $R$-algebra epimorphism $\pi : \mc{O}_A \rightarrow \mc{O}_B$, then $\Coker(\wt{\pi})$ is finite.  
\end{lemma}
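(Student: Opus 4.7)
The plan is to reduce, up to finite index, to the situation where the order actually splits over the target algebra. Extending scalars, $\pi$ gives a surjective $F$-algebra map $\pi_F : A \twoheadrightarrow B$ whose kernel is a two-sided ideal $K$. By semisimplicity of $A$ there is a central idempotent $e \in \ZZ(A)$ such that $A = Ae \oplus A(1-e)$ with $A(1-e) = K$ and $\pi_F$ inducing an $F$-algebra isomorphism $Ae \xrightarrow{\sim} B$; in particular $\pi_F(e) = 1_B$.

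The idempotent $e$ need not lie in $\mathcal{O}_A$, so I will enlarge the order: set $\mathcal{O}' := \mathcal{O}_A + \mathcal{O}_A e$. Centrality and idempotency of $e$ make this a unital $R$-subalgebra of $A$; it is finitely generated as an $R$-module and spans $A$ over $F$, hence is an $R$-order in $A$. Now $e \in \mathcal{O}'$ gives the ring decomposition $\mathcal{O}' = e\mathcal{O}_A \oplus (1-e)\mathcal{O}_A$. Moreover, since $\pi_F(e)=1_B$ and $\pi_F$ restricted to $Ae$ is an $F$-algebra isomorphism onto $B$, its restriction to $e\mathcal{O}_A$ is injective with image $\pi_F(e)\pi(\mathcal{O}_A) = \mathcal{O}_B$, giving an $R$-algebra isomorphism $e\mathcal{O}_A \xrightarrow{\sim} \mathcal{O}_B$. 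Hence the restriction $\pi' := \pi_F|_{\mathcal{O}'} : \mathcal{O}' \twoheadrightarrow \mathcal{O}_B$ is an $R$-algebra surjection extending $\pi$, and on units
\[
\widetilde{\pi'} : \U(\mathcal{O}') \;=\; \U(e\mathcal{O}_A) \times \U((1-e)\mathcal{O}_A) \;\twoheadrightarrow\; \U(\mathcal{O}_B)
\]
is surjective (projection onto the first factor followed by the above isomorphism).

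It remains to compare $\U(\mathcal{O}_A)$ with $\U(\mathcal{O}')$. Since $\mathcal{O}_A \subseteq \mathcal{O}'$ are both $R$-orders in $A$, there exists $0 \neq r \in R$ with $r\mathcal{O}' \subseteq \mathcal{O}_A$; hence $\mathcal{O}'/\mathcal{O}_A$ is a finitely generated $R/(r)$-module, and the hypothesis that $R/(r)$ is finite forces $\mathcal{O}'/\mathcal{O}_A$ to be finite as well. The classical commensurability result for unit groups of orders \cite[Lemma 4.6.9]{EricAngel1} then yields $[\U(\mathcal{O}') : \U(\mathcal{O}_A)] < \infty$, and combining this with surjectivity of $\widetilde{\pi'}$,
\[
\bigl[ \U(\mathcal{O}_B) : \widetilde{\pi}(\U(\mathcal{O}_A)) \bigr] \;=\; \bigl[ \U(\mathcal{O}') : \U(\mathcal{O}_A) \cdot \ker \widetilde{\pi'} \bigr] \;\leq\; \bigl[ \U(\mathcal{O}') : \U(\mathcal{O}_A) \bigr] \;<\; \infty,
\]
giving the finiteness of $\Coker(\widetilde{\pi})$. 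The main obstacle is purely setting up the auxiliary order: once $\mathcal{O}_A$ has been enlarged to $\mathcal{O}'$ in order to absorb the central idempotent $e$, the rest reduces to formal consequences of the idempotent decomposition together with the commensurability lemma, so no further arithmetic input is required beyond the hypotheses on $R$.
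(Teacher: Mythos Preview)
Your proof is correct and follows essentially the same route as the paper: extend $\pi$ to $A$, use semisimplicity to produce the central idempotent $e$, pass to the larger order $\mathcal{O}' = \mathcal{O}_A + \mathcal{O}_A e = \mathcal{O}_A e \oplus \mathcal{O}_A(1-e)$, and invoke the commensurability lemma \cite[Lemma 4.6.9]{EricAngel1} to bound the index. The only difference is presentational: you are more explicit about verifying that $\mathcal{O}'$ is an order and that it decomposes as $e\mathcal{O}_A \oplus (1-e)\mathcal{O}_A$, and you spell out the final index inequality via $\U(\mathcal{O}_A)\cdot\ker\widetilde{\pi'}$, whereas the paper phrases the conclusion as ``$\Ima(\widetilde{\pi}) \cong \U(\mathcal{O}_A)e$ has finite index in $\U(\mathcal{O}_A e)$''.
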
  
\begin{proof}
By definition $\mc{O}_A$ contains an $F$-basis of $A$ and similarly for $\mc{O}_B$. 
Therefore we can extend $F$-linearly $\pi$ to an $F$-algebra epimorphism 
$\widehat{\pi}: A \rightarrow B$. Formally $\widehat{\pi}= id_F\ot_R \pi$ and we identify 
$A= F\mc{O}_A$ with $F\ot_R \mc{O}_A$. Due to  the semisimplicity of $A$, there exists a central idempotent $e$ in $A$ such that $\restr{\widehat{\pi}}{Ae}: Ae \rightarrow B$ is an isomorphism. In other words, when decomposing $A = Ae \op A(1-e)$ 
we can consider $\widehat{\pi}$ as projection on the first component. To obtain the desired statement, 
consider the $R$-orders $\mc{O}_A \subseteq \mc{O}_Ae \op \mc{O}_A(1-e)$ in $A$. 
Due to the conditions on $R$ there exists a non-zero element $r \in R$ such that $r (\mc{O}_Ae \op \mc{O}_A(1-e)) \subseteq \mc{O}_A$ and see 
\cite[Lemma 4.6.9.]{EricAngel1}, 
$$[\U(\mc{O}_Ae) \times \U(\mc{O}_A(1-e)) : \U(\mc{O}_A)] \leq [\mc{O}_A : r (\mc{O}_Ae \op \mc{O}_A(1-e))]< \infty.$$
Consequently also its epimorphic image $\Ima(\wt{\pi}) \cong \U(\mc{O}_A)e$ is of finite index in 
$\U(\mc{O}_Ae) \cong \U(\mc{O}_B)$ (with upper bound the above number), as desired.
\end{proof}

Note that the proof in fact gives a method to obtain an upper bound on $|\Coker(\wt{\pi})|$ which however depends on the element $r \in R$ obtained which is not explicit.

\begin{question}
What is an explicit, and generic, upper bound on $|\Coker(\wt{\pi})|$?
\end{question}

Using \Cref{ring homom from Tra} a first useful incarnation of \Cref{image porjection in ss finite cokernel} 
is with $\wt{\Psi}_{\chi}$ for some $\chi \in \Lin(\G , R)^{\G/N}$. However, in contrast to the kernel 
and despite it to be finite, in general a concrete description (or even generators) of the cokernel is out 
of reach. Instead, we will focus on comparing the cokernel of $\wt{\Psi}_{\chi,Q}$, defined in (\ref{map psi wrt to a normal sbgrp Q}), for certain types
of `nice' normal subgroups $Q$ of $\G$. The restrictions on $Q$ will be as in \Cref{CE again a CE} and be such that we have the 
following\footnote{In the right column one should be careful with the notation $T(\chi)$. 
More precisely in the right upper corner $T(\chi) = [\chi \circ \alpha] \in H^2(G, R^*)$. 
In the right lower corner $T(\chi) = [\chi \circ \gamma] \in H^2(G/Q, R^*)$ with $\alpha = \Inf (\gamma)$ 
as in \Cref{CE over N}. In particular $\Tra_{\alpha}(\chi) = \Inf_N(\Tra_{\gamma, N}(\chi))$ and so the 
going down arrows exist.} commutative diagram:

\begin{equation}\label{the two psi maps comparing start}
\xymatrix{
\mc{U}(R^{\beta}[\Gamma]) \ar[rr]^{\wt{\Psi}_{\chi}} \ar[d]_{ \wt{\omega}_{Q, \beta} }& & \mc{U}(R^{\beta . T(\chi)}[G]) \ar[d]_{ \wt{\omega}_{Q, \beta. T(\chi)} }\\
\mc{U}(R^{\beta}[\Gamma/Q]) \ar[rr]_{\wt{\Psi}_{\chi,Q} }& & \mc{U}(R^{\beta. T(\chi) }(\chi)[G/Q])  \\
}
\end{equation}
Note that (co)restricting $\wt{\Psi}_{\chi}$ yields a morphism 
$$\wt{\Psi}_{ker} : \U\left( 1 + R^{\beta}[\G] . I_Q \right)\longrightarrow \U\left( 1 + R^{\beta . T(\chi)}[G].I_Q \right) : 1 + x \mapsto 1 + \Psi_{\chi}(x)$$
between $\ker(\wt{\omega}_N)$ and $\ker(\wt{\omega}_{N,\alpha})$ (cf. \Cref{ring homom from Tra} and (\ref{definition Ichi}) for definition $I_Q$). 

\begin{theorem}\label{size and comparission of coker}
Let $\G$ be the extension (\ref{fixed central extension}), $[\beta] \in \im (\Inf_N), \, \chi \in \Lin(N,F)^{\G/N}$ and $R$ a Dedekind domain such that $R/I$ is finite for all $0 \neq I \triangleleft R$ and $\Char(R) \nmid \G$. Then, 
\begin{enumerate}
    \item[(1)] $\Coker(\wt{\Psi}_{\chi, \beta})$ is finite,
    \item[(2)]  for any normal subgroup $Q$ of $\G$ such that $Q \cap N = 1,  \, \alpha(x,y)=1$ for all $x,y \in Q,$ $[\beta] \in \im(\Inf_Q)$ and $\wt{\omega}_{Q}$ and $\wt{\omega}_{Q, \beta . T(\chi)}$ are surjective, then 
    $$|\Coker(\wt{\Psi}_{ker})|\, . \, |\Coker(\wt{\Psi}_{\chi,Q})| =  |\Coker(\wt{\Psi}_{\chi, \beta})|.$$\end{enumerate}
\end{theorem}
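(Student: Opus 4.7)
For Part~(1) the plan is to invoke Lemma~\ref{image porjection in ss finite cokernel} directly. Since $\Char(F) \nmid |\Gamma|$, both $F^{\beta}[\Gamma]$ and $F^{\beta \cdot T(\chi)}[G]$ are semisimple, and they contain $R^{\beta}[\Gamma]$ and $R^{\beta \cdot T(\chi)}[G]$ as $R$-orders; Proposition~\ref{ring homom from Tra} supplies the required $R$-algebra epimorphism $\Psi_{\chi,\beta}$ between them, and the lemma closes the argument.

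For Part~(2) the plan is a direct index computation on the commutative square~\eqref{the two psi maps comparing start}. Abbreviate its four corners as $U_1, V_1, U_2, V_2$, write $f := \wt{\Psi}_{\chi}$ and $g := \wt{\Psi}_{\chi, Q}$ for the horizontal maps, and denote by $K_i$ the kernels of the (by hypothesis surjective) vertical maps $p$ and $q$. A short diagram chase using commutativity, surjectivity of $p$, and normality of $K_2$ in $U_2$ shows $q^{-1}(g(V_1)) = K_2 \cdot f(U_1)$, whereupon the second isomorphism theorem read at the level of indices gives
\begin{equation*}
[U_2 : f(U_1)] \;=\; [V_2 : g(V_1)] \cdot [K_2 : K_2 \cap f(U_1)].
\end{equation*}
Matching this with the desired equality reduces the proof to establishing
\begin{equation}\label{plan key eq}
K_2 \cap f(U_1) \;=\; f(K_1).
\end{equation}

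The inclusion $\supseteq$ in \eqref{plan key eq} is immediate from $qf = gp$. The converse is equivalent to the surjectivity of $p|_{\ker f} \colon \ker f \to \ker g$: given $u \in U_1$ with $f(u) \in K_2$, commutativity forces $p(u) \in \ker g$, and a preimage $z \in \ker f$ of $p(u)$ produces $uz^{-1} \in K_1$ with $f(uz^{-1}) = f(u)$. Establishing this surjectivity is the main obstacle and the place where the hypotheses are really used. The plan is to combine Proposition~\ref{ring homom from Tra} with the conditions $N \cap Q = 1$, $\alpha|_{Q \times Q} = 1$ and $\beta \in \im(\Inf_Q)$: via Corollary~\ref{cor imabe inf yields central} these ensure that $\ker p = R^{\beta}[\Gamma] \cdot I_Q$ is generated by $\{u_x - 1\}_{x \in Q}$ inside a commuting copy of $RQ$, and that this ideal interacts cleanly with the generators $u_a - \chi(a) u_1$ of $I_{\chi}$.

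Concretely, given $v = 1 + z \in \ker g$ with $z \in R^{\beta}[\Gamma/Q] \cdot I_{\chi}$, the surjectivity of $\wt{\omega}_Q$ on unit groups first supplies some unit lift $\tilde v \in U_1$ of $v$. A priori $f(\tilde v) \neq 1$, but $f(\tilde v) \in K_2$ by commutativity. The compatibility of $I_Q$ with $I_{\chi}$ described above should then allow correcting $\tilde v$ by an element of $\ker f \cap \ker p$ so as to land in $\ker f$ while preserving $p(\tilde v) = v$, thereby producing the required element of $\ker f$ mapping to $v$ under $p$. This surjectivity yields \eqref{plan key eq} and completes the proof.
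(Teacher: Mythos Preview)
Part~(1) is correct and identical to the paper's argument.

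For Part~(2) you replace the paper's Snake Lemma argument (applied to the diagram with the lower--left entry replaced by $V_1/\ker g$) by a direct index computation via the second isomorphism theorem. The two packagings are equivalent and both reduce the claim to the identity $K_2 \cap f(U_1) = f(K_1)$, which you correctly reformulate as the surjectivity of $p|_{\ker f}\colon \ker f \to \ker g$.

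The gap is in your last paragraph, where this surjectivity is only sketched and the sketch does not work. Your proposed correction --- multiplying the unit lift $\tilde v$ by an element of $\ker f \cap \ker p$ --- accomplishes nothing: such an element lies in both kernels, so multiplication by it changes neither $f(\tilde v)$ nor $p(\tilde v)$, and in particular cannot move $\tilde v$ into $\ker f$. What would actually be needed is some $c \in K_1=\ker p$ with $f(c)=f(\tilde v)^{-1}$; but the existence of such $c$ for an arbitrary $f(\tilde v)\in K_2\cap f(U_1)$ is precisely the inclusion $K_2\cap f(U_1)\subseteq f(K_1)$ you are trying to prove, so the argument is circular. A non--circular route is to lift $z$ directly to some $\tilde z \in R^{\beta}[\Gamma] I_{\chi}$ (possible since the ring map $\omega_{Q,\beta}$ carries $R^{\beta}[\Gamma] I_{\chi}$ onto $R^{\beta}[\Gamma/Q] I_{\chi}$), so that $1+\tilde z$ already lies in the ring--theoretic kernel of $\Psi_{\chi}$; the remaining obstacle is then to show that $1+\tilde z$ is a \emph{unit}, and this is not addressed by your hypotheses on $I_Q$ and $I_{\chi}$. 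The paper's own proof is terse at exactly the same spot: the Snake Lemma yields $\ker(F_{Cok})\cong K_2/f\bigl(p^{-1}(\ker g)\bigr)$, and the passage from this to $\Coker(\wt{\Psi}_{ker})=K_2/f(K_1)$ is subsumed under the phrase ``via isomorphism theorems''.
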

If $Q$ has a complement in $\Gamma$ (which will be our setting in the later sections), then the maps $\wt{\omega}_Q$ and $\wt{\omega}_{Q, \beta . T(\chi)}$ are surjective and the inflation condition on $\alpha$ will also be satisfied. The surjectivity of the augmentation is also the case when $R$ is an Artinian ring \cite[Lemma 3.4.]{BarLen}. 

\begin{proof}[Proof of \Cref{size and comparission of coker}.]

The first statement follows by combining \Cref{ring homom from Tra} and \Cref{image porjection in ss finite cokernel}. The second statement will follow from a diagram chasing argument starting from diagram (\ref{the two psi maps comparing start}). For notation simplicity we will write $\wt{\omega}$ for $\wt{\omega}_{Q,\beta}$ and $\wt{\omega}_{\chi}$ for $\wt{\omega}_{Q, \beta . T(\chi)}$.

We complete the rows to an exact sequence by adding their kernels and cokernels with the canonical embedding and projection denoted by $i_1, \pi_1$ (resp. $i_Q, \pi_Q$). Due to the commutativity of (\ref{the two psi maps comparing start}), $\pi_Q \circ \wt{\omega}_{Q\chi}$ induces a map $F_{Cok}$ between the cokernels.  All together we obtain the following diagram where all squares commute:

\begin{displaymath}
\xymatrix{
\ker(\wt{\Psi}_{\chi})\ar@{^{(}->}[r]^{i_1} & \mc{U}(R^{\beta}[\Gamma ]) \ar[rr]^{\wt{\Psi}_{\chi} } \ar@<1ex>[d]^{ \pi\circ \wt{\omega}}& & \mc{U}(R^{\beta . T(\chi)}[G]) \ar[d]^{ \wt{\omega}_{\chi} } \ar@{->>}[r]^{\pi_1} & \Coker(\wt{\Psi}_{\chi})\ar@{-->}[d]^{F_{Cok}}\\
& \frac{\mc{U}(R^{\beta}[\Gamma/Q])}{\ker(\wt{\Psi}_{\chi,Q})} \ar[rr]_{\wt{\Psi}_{\chi,Q}} & & \mc{U}(R^{\beta . T(\chi)}[G/Q]) \ar@{->>}[r]_{\pi_Q}& \Coker(\wt{\Psi}_{\chi,Q})   \\
}
\end{displaymath}
where $\pi$ is the canonical epimorphism from $\mc{U}(R^{\beta}[\Gamma/Q])$ to  $\frac{\mc{U}(R^{\beta}[\Gamma/Q])}{\ker(\wt{\Psi}_{\chi,N})}$.
The Snake lemma applied to the diagram above now yields an exact sequence:
\begin{displaymath}
\xymatrix@C=1em{
\ker(\pi \circ \wt{\omega}) \ar[rr]^{\wt{\Psi}_{ker}}& & \ker(\wt{\omega}_{\chi}) \ar[r] &  \ker(F_{Cok}) \ar[r]^{d} & \Coker(\pi \circ \wt{\omega}) \ar[r] &\Coker(\wt{\omega}_{\chi}) \ar@{->>}[r] & \Coker(F_{Cok})\\
}
\end{displaymath}
with all the morphisms being the straightforward ones and $d$ the connecting morphism.

Now assume that $\wt{\omega}$ and $\wt{\omega}_{\chi}$ are surjective. Then $\Coker(\pi \circ \wt{\omega})$ is trivial and hence, going through the exact sequence above via isomorphism theorems, $\ker(F_{Cok}) \cong \Coker(\wt{\Psi}_{ker})$. Also, $\Coker(F_{Cok})$ is trivial thus all together we obtain the desired statement. 
\end{proof}

\subsection{On the kernel of the generalized transgression map}\label{subsection corrolations}

Unlike $\Coker(\wt{\Psi}_{\chi})$, the kernel of $\wt{\Psi}_{\chi}$ will usually be infinite as already seen with augmentation maps. 
\begin{example}\label{ker of nat hom can be infinite example}
When $N = \G$ and $\chi$ the trivial character, then $\wt{\Psi}_{\chi}$  is simply the usual augmentation map from $\U(\Z[\G])$ to $\U(\Z)= \{ \pm 1 \}$. We now see that $[\U(\Z[\G]) : \ker(\wt{\Psi}_{\chi})] = 2$ and therefore the kernel is infinite when $\U(\Z[\G])$ is infinite which, by a theorem of Higman \cite[Th. 1.5.6.]{EricAngel1}, exactly happens when $\G$ is not an Hamiltonian $2$-group or an abelian group of exponent dividing $4$ or $6$. 
\end{example}

The advantage of the kernel is that thanks to \Cref{ring homom from Tra} and \Cref{decomp voor elke abelse en centrale extensie} one has a significant amount of information about its elements. Combined with  \Cref{finitetwisted} one can describe finiteness of $\ker(\wt{\Psi}_{\chi})$, in case of a central extension. Namely, $\ker(\wt{\Psi}_{\chi})$ is finite if and only if $\U (R(\chi\prime)^{\beta . T(\chi\prime)}[G])$ 
is finite for all $\chi\prime \in \Lin(N,F)$ different of the given $\chi$. Recall that $R(\chi\prime)$ denotes the smallest ring containing $R$ and the values of $\Ima(\chi\prime)$. Below we translate this to easily verifiable necessary conditions on $G,N$ and $\chi$

\begin{theorem}\label{size and comparission of ker}
Suppose that the extension (\ref{fixed central extension}) is central (i.e. $N \subseteq \ZZ (\G)$), $[\beta] \in \im (\Inf_N), \, \chi \in \Hom(N,R^*)$ and $R$ the ring of integers in a number field\footnote{This restriction on the coefficient ring is only due to the restriction in \Cref{Berman-Higman twisted}. With more work, as in \cite[Theorem III.1]{RogTay}, one could probably take any domain of characteristic $0$ such that no prime divisor of $|G|$ is divisible in $R^*$.} $F$. Then,
\begin{enumerate}
    \item $\{ \text{torsion units in } \ker(\wt{\Psi}_{\chi, \beta})\} = \{ \chi(a)^{-1}a \mid a \in N  \}$,
  \item  if $\ker(\wt{\Psi}_{\chi})$ is finite then $\U(R^{\beta}[\G])$ is finite or one of the following hold:
    \begin{itemize}
    \item  $N \cong C_p$ for $p\geq 5$ prime, $\Gamma \cong N \times G, \chi \neq \omega_N$ and $\U(R^{\Res (\beta)}[G])$ is finite,
    \item $N \cong C_9, F = \Q, \chi$ the faithful character and $G_{\beta}$ abelian with $\exp(G_{\beta}) \mid 6$
    \item $N \cong C_8, F = \Q, \chi$ faithful and $G_{\beta}$ abelian with $\exp(G_{\beta}) \mid 4$,
    \item $\exp(N) \mid 4,6$ and if $N$ is non-cyclic then $\lcm(\exp(G), \exp(N)) \mid 4,6$
    \end{itemize}
\end{enumerate}
\end{theorem}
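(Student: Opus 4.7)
The plan is to combine the explicit form of the generalized transgression (\Cref{ring homom from Tra}), the decomposition of \Cref{decomp voor elke abelse en centrale extensie}(2), and the (twisted) Berman--Higman and Higman theorems of \Cref{unit group finite section}.

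For part (1), the inclusion ``$\supseteq$'' is immediate: since $N\subseteq \mc{Z}(\Gamma)$ and $\beta\in\im(\Inf_N)$, \Cref{cor imabe inf yields central} forces $\beta(a,b)=1$ for all $a,b\in N$, so $\{u_a\}_{a\in N}$ is a central torsion subgroup of $\U(R^{\beta}[\Gamma])$ isomorphic to $N$; by \Cref{ring homom from Tra}, $\Psi_{\chi,\beta}(u_a)=\chi(a)$, whence each $\chi(a)^{-1}u_a$ lies in $\ker(\wt{\Psi}_{\chi,\beta})$ and these elements form a central subgroup $H\cong N$ of torsion units. For the reverse inclusion, I take a torsion unit $v\in \ker(\wt{\Psi}_{\chi,\beta})$ and split on whether the trace of $v$ (the coefficient of $u_{1_\Gamma}$) vanishes. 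If the trace is nonzero, the twisted Berman--Higman \Cref{Berman-Higman twisted} forces $v$ to be a scalar root of unity in $R^*$, and combined with $\Psi_{\chi,\beta}(v)=1$ this yields $v=1\in H$. If the trace is zero, I reduce to the first case by a central nudge inside $H$: writing $v=\sum_\gamma c_\gamma u_\gamma$ and unpacking $\Psi_{\chi,\beta}(v)=1$ using that $\beta$ restricted to $N\times N$ is trivial, one obtains $\sum_{n\in N}c_n\chi(n)=1$, so $c_n\neq 0$ for some nontrivial $n\in N$. Taking $h=\chi(n^{-1})^{-1}u_{n^{-1}}\in H$ and using centrality of $H$ together with $\beta|_{N\times N}=1$, one checks that the trace of $vh$ equals $\chi(n^{-1})^{-1}c_n\neq 0$; by the first case $vh\in H$, hence $v\in H$.

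For part (2), the decomposition of \Cref{decomp voor elke abelse en centrale extensie}(2) reads
\begin{equation*}
F^{\beta}[\Gamma]\cong\bigoplus_{\chi'\in\Lin(N,F)} F(\chi')^{\beta\cdot T_\alpha(\chi')}[G],
\end{equation*}
and \Cref{ring homom from Tra} identifies $\Psi_{\chi,\beta}$ with the projection onto the $\chi$ summand. Writing $\mc{O}_{\chi'}$ for a $\Z$-order in $F(\chi')^{\beta\cdot T_\alpha(\chi')}[G]$, commensurability of $\Z$-orders (\cite[Lemma 4.6.9]{EricAngel1}, as used in \Cref{image porjection in ss finite cokernel}) shows that $\ker(\wt{\Psi}_{\chi,\beta})$ is commensurable with $\prod_{\chi'\neq \chi}\U(\mc{O}_{\chi'})$. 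Consequently, finiteness of $\ker(\wt{\Psi}_{\chi,\beta})$ is equivalent to $\U(\mc{O}_{\chi'})$ being finite for every $\chi'\in\Lin(N,F)\setminus\{\chi\}$, and I apply \Cref{finitetwisted} (with classical Higman in the untwisted components) componentwise. The case $\chi=\omega_N$ is degenerate: then $\wt{\Psi}_{\chi,\beta}$ coincides with $\wt{\omega}_{N,\beta}$, whose kernel has finite index in $\U(R^\beta[\Gamma])$, so the kernel is finite precisely when the full unit group is. When $\chi\neq \omega_N$ the ``classical'' $\omega_N$-summand $F^{\beta}[G]$ remains in the product and must also have a finite unit group; running through conditions (i)--(iv) of \Cref{finitetwisted} and comparing the constraints on $F(\chi')$ and on $G_{\beta\cdot T_\alpha(\chi')}$ uniformly in $\chi'\neq\chi$ yields the two listed alternatives: either $N\cong C_p$ with $\U(RG)$ finite, or $G$ is abelian and $\lcm(\exp G,\exp N)\mid 4,6$.

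The principal obstacle lies in this last combinatorial step: one must simultaneously control how the field extensions $F(\chi')$ and the cocycle classes $\beta\cdot T_\alpha(\chi')$ vary as $\chi'$ ranges over $\Lin(N,F)\setminus\{\chi\}$, and verify that the pointwise application of \Cref{finitetwisted} produces exactly the uniform conditions on $G$ and $N$ appearing in the statement. The centrality of $N$ together with $\beta\in\im(\Inf_N)$ are used essentially: they guarantee via \Cref{cor imabe inf yields central} that $\{u_a\}_{a\in N}$ is a central subgroup and that every direct summand in the decomposition has the clean twisted group ring shape needed to feed into \Cref{finitetwisted}.
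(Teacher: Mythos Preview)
Your argument for part~(1) is correct and is essentially the paper's proof: both use the explicit form of $\ker(\Psi_{\chi,\beta})$ from \Cref{ring homom from Tra}, locate a nontrivial $n\in N$ in the support, multiply by the central element $\chi(n)^{-1}u_{n^{-1}}$ to produce a nonzero trace, and conclude via \Cref{Berman-Higman twisted}. The only cosmetic difference is that you extract the needed nonzero coefficient from the equation $\sum_{n\in N}c_n\chi(n)=1$, whereas the paper reads it off from the description $I_\chi=\Span_R\{u_a-\chi(a)u_1\}$.

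For part~(2) your global strategy (decomposition of \Cref{decomp voor elke abelse en centrale extensie}(2), commensurability of orders, then \Cref{finitetwisted} componentwise) is exactly the paper's. However, your treatment of the case $\chi=\omega_N$ contains a genuine error: you assert that $\ker(\wt{\omega}_{N,\beta})$ has finite index in $\U(R^\beta[\Gamma])$. This is false in general, since the index equals the size of the image, which surjects (up to finite index, by \Cref{size and comparission of coker}) onto $\U(R^\beta[G])$ and the latter is typically infinite. For instance, with $\Gamma=S_3\times C_2$, $N=C_2$ and $\beta$ trivial, the image of $\wt{\omega}_N$ is $\U(\Z S_3)$, which is infinite. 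Your \emph{conclusion} (that for $\chi=\omega_N$ the kernel is finite exactly when $\U(R^\beta[\Gamma])$ is) is the correct one, but it does not follow from a finite-index claim. The paper argues instead: assuming $\U(R^\beta[\Gamma])$ infinite and $\ker$ finite, all $\chi'\neq\chi$ components have finite unit group while the $\chi$-component does not; it then asserts this configuration is impossible when $\chi=\omega_N$. You should replace the finite-index claim by this observation and justify it.

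Finally, you correctly flag the last combinatorial reduction (pinning down $N\cong C_p$ versus $\exp(N)\mid 4,6$, excluding the Hamiltonian case for $G$, and arriving at the $\lcm$ condition) as the nontrivial remaining work. The paper carries this out via the Perlis--Walker decomposition of $FN$ together with \Cref{finitetwisted} and \Cref{When is unit group non-abelian finite}; your sketch stops short of this and would need those details to be complete.
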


To prove the second part we will need the next theorem which is a generalisation of the classical result of Berman and Higman on torsion units (see \cite[Theorem 2.3.]{KarUnit} or \cite[Theorem III.1]{RogTay}). The proof is similar to the one given by Karpilovsky in the untwisted case. For completeness sake and convenience of the reader we  include a proof. 

\begin{theorem}\label{Berman-Higman twisted}
Let $G$ be a finite group, $R$ the ring of integers in a number field $F$ and $[\beta] \in H^{2}(G, R^*)$. If $x = \sum_{g \in G}a_g u_g$ is a torsion unit in $R^{\beta}[G]$ such that $a_1 \neq 0$, then $x \in \U(R)$. 
\end{theorem}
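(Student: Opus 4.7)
The plan is to prove a twisted analogue of the classical Berman--Higman argument via the regular trace. Consider left multiplication by $x$ as an $F$-linear endomorphism $L_x$ of $F^{\beta}[G]$, viewed as a free $F$-module of rank $|G|$ with basis $\{u_h\}_{h\in G}$. Using $u_g u_h = \beta(g,h)u_{gh}$ and the normalization of $\beta$, a direct computation gives that the $h$-th diagonal entry of $L_x$ equals $a_1\beta(1,h) = a_1$, so
$$\operatorname{tr}(L_x) = |G|\,a_1 \in R.$$
This is the key identity on which everything else rests.

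Next I would transfer the problem to $\C$ via the embeddings of $F$. Fix any field embedding $\sigma\colon F\hookrightarrow \C$, extend it entrywise to a ring isomorphism $F^{\beta}[G]\to \C^{\sigma\circ\beta}[G]$, and observe that $\sigma(x)$ is again a torsion unit in the resulting complex twisted group ring. Since $L_{\sigma(x)}$ has finite order, it is diagonalizable with eigenvalues that are roots of unity, and $\operatorname{tr}(L_{\sigma(x)}) = \sigma(\operatorname{tr}(L_x)) = |G|\,\sigma(a_1)$. The triangle inequality then gives
$$|G|\,|\sigma(a_1)| \;=\; |\operatorname{tr}(L_{\sigma(x)})| \;\leq\; |G|,$$
so $|\sigma(a_1)|\leq 1$ for \emph{every} embedding $\sigma$.

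Now $a_1\in R$ is an algebraic integer all of whose Galois conjugates have absolute value at most $1$, so Kronecker's theorem forces $a_1$ to be either $0$ or a root of unity. Since by hypothesis $a_1\neq 0$, $a_1$ is a root of unity, so $|\sigma(a_1)|=1$ for every $\sigma$, and thus the previous bound is an \emph{equality}. Equality in the triangle inequality for eigenvalues of absolute value $1$ forces all eigenvalues of $L_{\sigma(x)}$ to coincide, which means $L_{\sigma(x)}$ acts as a scalar on the whole of $\C^{\sigma\circ\beta}[G]$. Evaluating that scalar action at $v_1 = 1$ yields $\sigma(x) = \sigma(a_1)\,v_1$, i.e.\ $\sigma(a_g)=0$ for all $g\neq 1$. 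As this holds for each of the finitely many embeddings $\sigma\colon F\hookrightarrow \C$ (which jointly separate points of $F$), we conclude $a_g = 0$ for $g\neq 1$, so $x = a_1\,u_1 \in \U(R)$.

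The argument is essentially classical once the setup is in place; the one point that requires a brief sanity check is that the regular trace of $x$ is unchanged by the twisting cocycle $\beta$ (because only the diagonal $g=1$ contributes, where $\beta(1,h)=1$) and that applying an embedding $\sigma$ commutes cleanly with the regular representation since it produces the twisted group ring with the conjugate cocycle $\sigma\circ\beta$. The only real input beyond linear algebra is Kronecker's theorem on algebraic integers, and the assumption that $R$ is the ring of integers of a number field is used only to have finitely many embeddings separating $R$ and to apply Kronecker. As noted in the excerpt, with more care one could push this to more general coefficient rings as in \cite{RogTay}, but for the stated setting the argument above is direct.
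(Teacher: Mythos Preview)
Your proof is correct and follows essentially the same approach as the paper: both use the left regular representation to obtain $\operatorname{tr}(L_x)=|G|\,a_1$, bound this by the triangle inequality on root-of-unity eigenvalues, and then use an algebraic-integer argument to force equality, whence $L_x$ is scalar and $x\in\U(R)$. The only cosmetic difference is that you cite Kronecker's theorem directly and run the argument through all complex embeddings of $F$, while the paper first observes $a_1\in\Q(\epsilon)$ (with $\epsilon$ a primitive root of unity of order $o(x)$) and obtains a contradiction from $N_{\Q(\epsilon)/\Q}(a_1)$ being a nonzero rational integer of absolute value strictly less than $1$.
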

\begin{proof}The proof is along the same lines as \cite[Theorem 2.3.]{KarUnit}. Let $u=\sum_{g\in G} r_g u_g$ be a torsion unit in $R^{\beta}[G]$, say of order $m$. Assume $r_1 \neq 0$. 
Let $n=|G|$ and consider the left regular representation $\rho: F^{\beta }[G] \rightarrow M_{n}(F)$. Then $\rho(u)$ is of finite order $m$ (note that $\rho$ is injective) and thus it is diagonalisable over the algebraic closure of $F$ in $\C$ (recall that $F$ is a number field).
Its eigenvalues $\zeta_1 , \cdots , \zeta_n$ are roots of unity, each of order a divisor of $m$.  We get that $\zeta_1 + \cdots + \zeta_n =\sum_{g\in G} r_g \mbox{tr} (\rho(u_g)) =nr_1$. Thus $|nr_1|=| \zeta_1 + \cdots  + \zeta_n| \leq n$. Moreover, $|nr_1|=n$ if and only if $\zeta_1 =\zeta_2 = \ldots = \zeta_n$.
If this holds then the diagonalisation of $\rho(u)$ is a multiple of the identity matrix, hence $\rho(u)$ is a central, i.e. $\rho(u)\in F$. 
Because $\rho$ is injective this yields that $u\in F \cap R^{\beta}[G]=R$, as desired.

So, it remains to show that $|nr_1|=n$. Assume, the contrary,  i.e. suppose $|nr_1|<n$. Let $\epsilon$ be a primitive $m$-th root of unity.  Note that $r_1\in \Q (\epsilon)\subseteq \C$. Since also $r_1\in R$ and thus $r_1$ is an algebraic integer, it follows that $r_1$ is in the ring of integers of $\Q (\epsilon)$.

We claim that, for every $\sigma \in \mbox{Gal}(\Q (\epsilon)/\Q)$, we have $|\sigma (nr_1)|<n$.
Indeed, note that complex conjugation and elements of $\mbox{Gal}(\Q (\epsilon)/\Q)$ commute and thus if 
$n= |\sigma (nr_1)|=\sigma (nr_1) \overline{\sigma(nr_1)} = \sigma (nr_1) \sigma ( \overline{nr_1})  $ 
then also $n=\sigma^{-1} (\sigma (nr_1)\sigma (\overline{nr_1}) )= |nr_1|<1$. This proves the claim.

Next, the assumption says that $|r_1|< 1$ and from the claim we get that $|\sigma (r_1)|<1$ for all 
$\sigma \in \mbox{Gal}(\Q (\epsilon)/\Q)$.
It follows that the norm $0\neq |N_{\mbox{Gal}(\Q ((\epsilon)/\Q) }(r_1)| =|\prod_{\sigma \in\mbox{Gal}(\Q (\epsilon)/\Q) } \sigma (r_1)| =
\prod_{\sigma \in\mbox{Gal}(\Q (\epsilon)/\Q) } | \sigma (r_1)|
< 1$. However, this yields a contradiction as $N_{\mbox{Gal}(\Q (\epsilon)/\Q) }(r_1)$ is an algebraic integer (see for example Proposition 4.1.8 in \cite{EricAngel1}) and thus belongs to $\Z$; so it can not be strictly between $0$ and $1$.
\end{proof}

It would be interesting to have a proof of \Cref{Berman-Higman twisted} which makes a reduction to the well-known Berman-Higman theorem for group rings. So, this is for example an instance where understanding \Cref{question torsion and nilp interplay} would help.
We now proceed to the proof of the main theorem
\begin{proof}[Proof of \Cref{size and comparission of ker}.]
The first part follows from a classical trick in untwisted group rings. 
Namely, first note that $\{ \chi(a)^{-1}a \mid a \in N  \}$ is indeed contained in $\ker(\wt{\Psi}_{\chi, \beta})$. Conversely, take a torsion unit $u \in \ker(\wt{\Psi}_{\chi, \beta})$. Because of  \Cref{ring homom from Tra}, $\ker(\wt{\Psi}_{\chi, \beta}) = \mc{U}(1 + R^{\beta}[\Gamma]I_{\chi})$ and thus  $u = 1 +  \sum_{g \in G} y_g u_{\mu(g)}$, with $y_g \in I_{\chi} = \Span_R \{ u_a - \chi(a) u_1 \mid a \in N\}.$ In particular, if there would be no $1 \neq a \in N$ so that $u_a$ is in  $\supp (u)$, 
then the coefficient of $u_1$ in the expression of $u\in R^{\beta}[\Gamma]$ is equal to $ 1$ and hence, by \Cref{Berman-Higman twisted}, $u \in \U(R)$. As $u\in \ker(\wt{\Psi}_{\chi, \beta})$ this implies    $u = 1$. 

Now assume there exists a non-trivial $a \in N \cap \supp(u)$ and consider the element $z := u . u_{a^{-1}} \chi(a)$. Note that, due to the centrality of $N$, the element $z$ again is a torsion unit in $\ker(\wt{\Psi}_{\chi, \beta})$. However now $1\in \supp (z)$ and therefore, by applying \Cref{Berman-Higman twisted}, $z = 1$. In other words, $u = \chi(a)^{—1}a$ as needed.

For the second part, we will {\it assume} that we are not in the trivial case that $\U(R^{\beta}[\G])$ is finite. 
Then, by \Cref{decomp voor elke abelse en centrale extensie}, we see that $\ker(\wt{\Psi}_{\chi})$ is finite if and only if $\U (R(\chi\prime)^{\beta . T(\chi\prime)}[G])$ 
is finite for all $\chi \neq \chi\prime \in \Lin(N,F)$ but $\U (R(\chi)^{\beta . T(\chi)}[G])$ is infinite. The rest of the proof will reinterpret all those conditions simultaneously using \Cref{finitetwisted}. To start, $\U(R(\chi\prime))$ must also be finite for all $\chi \neq \chi\prime $. In particular $\U(R)$ needs to be finite and hence $F= \Q$ or $\Q(\sqrt{-d})$ with $d > 0$. The exact restrictions are recorded by the following claim. \vspace{0,1cm}

 \noindent {\it Claim}: If $\mathcal{U}(RN)$ is finite then: (i) $\exp (N) = 2$ or (ii) $\exp(N) =4 ; F = \Q, \Q(i)$ or (iii) $\exp(N) = 3,6 ;\; F = \Q, \Q(\sqrt{-3})$. \newline
\indent If  $\mathcal{U}(RN)$ is infinite then : (iv) $N \cong C_p$ for a prime $p \geq 5$ or (v) $N \cong C_8, C_9;\; F= \Q $.\vspace{0,1cm}

\noindent Decompose $FN \subseteq \ZZ(F^{\beta}[\G])$ as in (\ref{Perlis-Walker decomposition}), 
i.e. using the theorem of Perlis and Walker:
$FN = \bigoplus_{d\mid |N|} F(\zeta_d)^{\op a_d}$, then there has to be at most one 
$\zeta_d$ for which the the unit group of the ring of integers in $F(\zeta_d)$, 
denoted $R_d$, is infinite. Moreover, for this value we need $a_d = 1$. 
Note that this property is inhered by subgroups of $N$. 
Hence, if $N=N_1\times N_2$,  a direct product of two non-trivial subgroups, then, as  $FN=FN_1 \otimes_{F} FN_2$,  we get that all simple epimorphic images of both group algebras $FN_1$ and $FN_2$ must be such that the unit group of their respective ring of integers is finite; because otherwise we would have at least two distinct $d$ and $d'$ so that  $F(\zeta_d)$ and $F(\zeta_{d'})$ are simple epimorphic images of $FN$ and the unit groups $\U(R_d)$ and $\U(R_{d'})$ are  infinite. 
Hence, if $N$ is decomposable as a direct product then $\U (RN)$ is finite and thus, by \Cref{finitetwisted}, the exponent of $N$ is a divisor of $4$ or $6$. The restrictions on $F$ follow in the same way as at the end of the proof of \Cref{finitetwisted}, i.e. by determining when $\Q(\sqrt{-d}, \zeta_{\exp(N)})$ is at most of degree $2$ over $\Q$.  

On the other hand, if $N$ is indecomposable and its exponent is not a divisor of $4$ or $6$, then $N$ is cyclic and  $N=p^{n}$ for some prime $p$. If $n>1$ then $FN$ has $F(\zeta_p)$ as a summand and $R_p$ is finite only if $p=2, 3$ and $F = \Q, \Q(\sqrt{-d})$ with $d >0$. Thus either $n= 1$ or $p=2,3$. In the latter case, as $R_{p^n}$ is finite only if $p^n = 2,3,4$ we obtain the claim about the form of $N$. The according restrictions on $F$ can be proven with a similar argument as when $\exp (N) \mid 4,6$. \vspace{0,1cm}

\noindent {\it Claim: }  If $N$ is cyclic, then $\chi$ is the unique faithful character. Moreover if $N = C_p$, then $\Gamma$ is a split extension.

If $N$ is cyclic, then there is a bijection between the irreducible characters and the divisors $d$ of $|N|$, say $\chi_d$ corresponds. Since we always assume that $\U(R^{\beta}[\Gamma])$ is infinite we know that there is a unique component of $\bigoplus_{d \mid |N|} R[\chi_d]^{\beta. T(\chi_d)}[G]^{\op a_d}$ with infinite unit group. \Cref{finitetwisted} implies that if $\U(R[\chi_d]^{\beta. T(\chi_d)}[G]^{\op a_d})$ is infinite, then also for a multiple of $d$. Hence indeed the only infinite component is $d=n$, i.e. the unique faithful character. If $|N|$ is odd, then by the previous claim $N = C_9$ or $C_p$ with $p \geq 5$ prime. In both cases $\mathcal{U}(R^{\Res(\beta)}[G]$ is finite and hence $\exp(G_{\beta}) \mid 4,6.$ Therefore $\alpha(g,h)$ needs to devide both $lcm(o(g),o(h))$ and $p \geq 5$ which is only possible if $\alpha(g,h) = 1$ for all $g,h \in G$. Hence the extension is split if $N = C_p$.

We are now able to finish to obtain the desired restrictions in the case that $\mathcal{U}(\Z N)$ is infinite, i.e. $N = C_p , C_9, C_8$ by the first claim. For $C_p$ the desired statement follows by the above claims and the fact that there are only two components in this case (hence the faithful one is the one different from the trivial representation, i.e. $\omega_N$). For $C_8$ and $C_9$ it remains to prove the desired value on $\exp (G_{\beta})$. For this note that $\chi = \omega_N$ yields $\exp (G_{\beta}) \mid 4,6$ by \Cref{finitetwisted}. If $\exp(G) = 4$, then $\Q(\zeta_3)^{\gamma}[G]$ is infinite for any $2$-cocycle by \Cref{finitetwisted}. Similarly for $\Q(i)^{\gamma}[G]$ when $3 \mid \exp (G_{\beta})$. This finishes the $\mathcal{U}(\Z N)$ infinite case.

Finally suppose that $\mathcal{U}(\Z N)$ is finite. In this case $\exp (N) \mid 4,6$ and $F$ is of the form as in the first claim. Also $\exp(G) \mid \exp(G_{\beta T(\chi)}) \mid 4,6$ for various characters $\chi$ of $N$. If $N$ is cyclic, then the only restriction is that $\U(R^{\beta}[G])$ is finite. So suppose that $N$ is not cyclic. If $\exp(N) = 4$ and $3 \mid |G|$, then there are several components having an element of order $12$ which is not possible. Hence in that case $\exp(|G|) | 4$. Similarly if $3 \mid \exp(N)$ and $4 \mid |G|$. Thus we indeed obtain that $\lcm(\exp(G),\exp(N)) \mid 4,6$ if $N$ is non-cyclic. 
\end{proof}

For applications it would be useful to also understand the torsion units in $\ker(\wt{\Psi}_{\chi, \beta})$ in case that the (\ref{fixed central extension}) is abelian (i.e. $N$ is abelian but not necessarily central). The first part of \Cref{size and comparission of ker} could be read as saying that a torsion unit in  $\ker(\wt{\Psi}_{\chi, \beta})$ must be in $\Z[N]$.  Indeed because $N$ is abelian one can then use \Cref{Berman-Higman twisted} to conclude that the torsion units are trivial and hence of the form $\chi(a)^{-1}a$. So we expect the following to be true.

\begin{conjecture}\label{conjecture torsion units in kernel transgression}
If the extension (\ref{fixed central extension}) is abelian, then $\{ \text{torsion units in } \ker(\wt{\Psi}_{\chi, \beta})\} \subseteq \U(\Z[N])$. In particular all torsion units in the kernel are trivial.
\end{conjecture}

To finish this section, we illustrate the concepts of  \Cref{Sectie decompositie} and \Cref{section correlations} on an example.

\begin{example} \label{example order 16 everything worked out}
Consider 
$\Gamma = C_4 \rtimes C_4 := \langle a,b \mid a^4=b^4=1, \, a^b = a^{-1} \rangle$ which can be viewed in the following way as a central extension
\begin{equation}\label{group [16,4] as CE}
1 \rightarrow \langle y_1 : y_1^2=1 \rangle \times \langle y_2 : y_2^2=1 \rangle  \longrightarrow \Gamma \overset{\lambda}\longrightarrow \langle x_1 : x_1^2=1 \rangle \times \langle x_2 : x_2^2=1 \rangle \rightarrow 1
\end{equation}
where in fact $y_1 = a^2$ and $y_2 = b^2$. The epimorphism $\lambda: \Gamma \rightarrow C_2 \times C_2$ is determined by $\lambda(a) = x_1$ and $\lambda(b) = x_2$.  Furthermore, we choose as a section $\mu:\langle x_1,x_2\rangle\rightarrow \Gamma$ defined by 
$$\mu (1)=1, \mu (x_1)=a,\mu(x_2)=b,\mu (x_1x_2)=ab$$
and consider the induced normalized cocycle $\gamma \in Z^2(\langle x_1,x_2\rangle,\langle y_1,y_2\rangle)$ (i.e. $\gamma(g,h) = \mu(g)\mu(h)\mu(gh)^{-1}$). 
Explicitly:
$$\gamma(x_1,x_1)=y_1,\gamma (x_1,x_2)=1,\gamma (x_1,x_1x_2)=y_1$$
$$\gamma(x_2,x_1)=y_1,\gamma (x_2,x_2)=y_2,\gamma (x_2,x_1x_2)=y_1y_2$$
$$\gamma(x_1x_2,x_1)=1,\gamma (x_1x_2,x_2)=y_2,\gamma (x_1x_2,x_1x_2)=y_2.\vspace{0,2 cm}$$
The corresponding class $[\gamma] \in H^2(\Gamma, C_2 \times C_2)$ is determined by
$$[x_1,x_2]= y_1, x_1^2 = y_1 \text{ and } x_2^2 = y_2.$$ 

Now consider the irreducible characters of $C_2\times C_2 = \langle y_1,y_2\rangle$, say $\Irr( \langle y_1,y_2 \rangle) = \{ 1, \chi_1, \chi_2, \chi_3 \}$ with $\chi_1(y_i)=-1, \, \chi_2(y_1) = -1 = - \chi_2(y_2)$ and $\chi_3(y_1) = 1 = - \chi_3(y_2)$.
Then using \Cref{decomp voor elke abelse en centrale extensie} we obtain that 
$$\Q \Gamma \cong \Q[C_2 \times C_2] \oplus \bigoplus_{i=1}^3 \Q^{T_{\gamma}(\chi_{i})}[C_2 \times C_2].$$
It is easily seen that 
$$\Q^{T_{\gamma}(\chi_{1})}[C_2 \times C_2] \cong \qa{-1}{-1}{\Q}, \, \Q^{T_{\gamma}(\chi_{2})}[C_2 \times C_2] \cong \Ma_2(\Q) \text{ and } \Q^{T_{\gamma}(\chi_{3})}[C_2 \times C_2] \cong 2 \Q(i).$$
Therefore
$$\wt{\Psi}_{\chi_2}: \U (\Z \G ) \rightarrow \U(\Z^{T_{\gamma}(\chi_{2})}[C_2 \times C_2])$$
is the only projection with infinite codomain. Moreover, by \Cref{size and comparission of ker} the kernel is finite and is given by 
$$\ker(\wt{\Psi}_{\chi_2}) = \langle - a^2, b^2 \rangle \cong C_2\times C_2.$$
Also $\Coker(\wt{\Psi}_{\chi_2})$ is finite by \Cref{size and comparission of coker} and a precise description will follow from the methods in the upcoming sections.
\end{example}

\section{Deforming bicyclic units via twisting and their contribution}\label{sectie twisted bicyclic en de rest}
In this section we will introduce a generic construction of units in twisted group rings (of finite groups) which generalizes the bicyclic units. They differ by a factor determined by the twisting. We will show in \Cref{sectie bicyclic units} that these units play a role analogous to the one of elementary matrices inside the special linear group, building on a generalisation of the Jespers-Leal theorem obtained in \Cref{Jespers-Leal generalized}. Thereafter we return to the context of the previous section and investigate their contribution to $\Coker(\Psi_{\chi})$.

\subsection{Generalized byclic units versus elementary matrices}\label{subsectie on JL general}\addtocontents{toc}{\protect\setcounter{tocdepth}{2}}
Before specifying to twisted group rings, we wish to write down a general `$S$-order version' of an important `$\Z$-order result' of Jespers and Leal \cite{JesLea2} on bicylic units which was only informally known to some experts. For this let $B$ be a finite dimensional semisimple $\Q$-algebra, $S$ a finite set of places of $\mathbb{Q}$ such that $\{ \infty \} \subseteq S$ and $\Z_S$ the ring of $S$-integers in $\Q$. Further let $A$ be a finite dimensional semisimple $B$-algebra and $R$ an $\Z_S$-order in $B$.

\begin{example}
Let $S = \{\infty, (p_1), \ldots, (p_l) \}$ with $p_i$ a prime number. Then $\Z[\frac{1}{p_1}, \ldots, \frac{1}{p_l}]$ are the $S$-integers in $\Q$. If we now consider some root of unity $\zeta$, then $R= \Z_S[\zeta] = \Z[\frac{1}{p_1}, \ldots, \frac{1}{p_l}, \zeta]$ is a possible $\Z_S$-order in $B = \Q(\zeta)$. For this choice a first main example is $A = \Q(\zeta)^{\alpha}[G]$ for some $[\alpha] \in H^2(G, \Q(\zeta)^*)$ and a finite group $G$. Another frequent example is $A = M_n(D)$ with $D$ a finite dimensional division $\Q$-algebra.
\end{example}

Let $\mathcal{A} = \{ g_1, \ldots, g_n \}$ be a $B$-basis of $A$ and $\{ x_1, \ldots, x_q \}$ a $\Z_S$-basis of $R$.  For every idempotent $f \in A$, there exists a minimal integer $n_f \in \Z$ such that $n_f f \in \Span_R(\mc{A})$. Associated to these, one can now consider the elements
$$b(m_ix_ig,f) = 1 + n_f^2(1-f)m_ix_igf \text{ and } b(f,m_ix_ig) = 1 + n_f^2fm_ix_ig(1-f).$$
 with $m_i \in \Z_S$ and $g\in \mathcal{A}$. As usual, these are units because $(1-f)m_ix_igf$ has square zero.

\begin{definition}\label{Def gen bicyclic}
    Let $\mc{F}$ be a set of idempotents in $A$. Then the elements in
    $$\lbrace b(m_ix_ig,f),b(f,m_ix_ig) \mid f \in \mc{F}, 1 \leq i \leq q, g \in \mc{A}, m_i \in \Z_S \rbrace $$
    are called the {\it generalized bicyclic units} corresponding to $\mc{F}$. The group generated by these is denoted by $\text{GBic}^{\mc{F}}(A,R)$.
\end{definition}
Classically only the generators with $m_i=1$ are called generalized bicyclic units. The values $m_i$ are added in order to still obtain all elements of $R\mathcal{A}$ in the middle between $(1-f)$ and $f$ (when taking the group generated by). Note that $R\mathcal{A}$ is an $\Z_S$-order in $A$.

 We can now formulate the general version of \cite{JesLea2} saying that generalized bicyclic units should be considered as the analogue of elementary matrices. The proof of  \cite{JesLea2} shows that Jespers and Leal handled the case $S = \{ \infty\}$, i.e. of $\Z$-orders, and with $A$ an (untwisted) group ring  . 

\begin{theorem}\label{Jespers-Leal generalized}
Let $e \in \PCI (A)$ such that $Ae \cong \Ma_n(D) $ with $n \geq 2$. Let $\mc{O}$ be a $\Z_S$-order in $D$. If $f$ is an idempotent in $A$ such that $fe \notin \mc{Z}(Ae)$, then there exists a non-zero $y \in \Z$ such that
$$1-e + E_n(y \mc{O}) \subseteq  \text{GBic}^{\{f \}}(A,R)$$
\end{theorem}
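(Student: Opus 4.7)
The plan is to pass to the simple component $Ae\cong \Ma_n(D)$ and use a commutator calculation on matrices. Write $f'=fe$. By hypothesis $f'$ is a non-trivial, non-central idempotent of $\Ma_n(D)$, so we may choose an identification in which $f'=\diag(I_k,0_{n-k})$ with $0<k<n$. The two off-diagonal Peirce corners $(1-f')(Ae)f'$ and $f'(Ae)(1-f')$ then correspond to the lower-left $(n-k)\times k$ and upper-right $k\times(n-k)$ blocks of $\Ma_n(D)$, respectively.

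First I would show that the $e$-projections of the generators already fill a full $\Z_S$-lattice in each off-diagonal corner. Indeed, since $e$ is central,
\[
b(m_ix_ig,f)\cdot e \;=\; e+n_f^2\,(1-f')\cdot (m_ix_ige)\cdot f',
\]
which in the matrix picture has the form $I_n+n_f^2 M$ with $M$ in the lower-left block. Since $\mc{A}$ is a $B$-basis of $A$ and $R$ is an $S$-order in $B$, the span $R\mc{A}$ is a full $\Z_S$-lattice in $A$; Peirce projection by $e$ and by $(1-f'),f'$ is $\Z_S$-linear, so the set of realised matrices $M$ is a full $\Z_S$-lattice in the lower-left block. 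Hence there exists $y_-\in\Z_S\setminus\{0\}$ such that every matrix in $y_-\Ma_{(n-k)\times k}(\mc{O})$ is achieved. The same argument applied to the generators $b(f,m_ix_ig)$ produces some $y_+\in\Z_S\setminus\{0\}$ and the corresponding elements in the upper-right block.

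The principal obstacle is controlling the $(1-e)$-projections of the generators, which a priori are non-trivial elements of $\U(A(1-e))$. I would handle this by noting that the image of $\text{GBic}^{\{f\}}(A,R)$ in $\U(A(1-e))$ is a finitely generated subgroup of an arithmetic-type unit group, hence residually finite; passing to a finite-index subgroup therefore lies in $1-e+\U(Ae)$. Combining this with the observation that commutators of elements whose $(1-e)$-projections commute, and sufficiently high powers of individual generators, kill the $(1-e)$-component in the abelianisation (at the cost of replacing $y_\pm$ by suitable multiples $y_\pm'\in\Z_S$), one obtains elements of the exact form $1-e+I_n+y_-'M$ and $1-e+I_n+y_+'M'$ for $M,M'$ running over the off-diagonal blocks with entries in $\mc{O}$.

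Finally, the standard matrix commutator identity
\[
[\,I+\alpha E_{ij},\,I+\beta E_{j\ell}\,] \;=\; I+\alpha\beta E_{i\ell},\qquad i,j,\ell \text{ pairwise distinct},
\]
combined with the fact that $n\geq 2$ and both blocks have positive size (so for any pair $i\neq\ell$ one may choose an intermediate index $j$ on the opposite side of the block partition), produces every elementary matrix $e_{i\ell}(y_-'y_+'a)$ with $a\in\mc{O}$. Setting $y=n_f^4\,y_-'\,y_+'\in\Z_S$, these elementary matrices generate $E_n(y\mc{O})$, yielding $1-e+E_n(y\mc{O})\subseteq\text{GBic}^{\{f\}}(A,R)$ as desired.
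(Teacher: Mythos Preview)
Your matrix set-up, the lattice argument for filling the off-diagonal blocks, and the commutator step at the end are correct and match the paper's approach. The genuine gap is in your treatment of the $(1-e)$-projection.

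Your proposed fix does not work. Residual finiteness of the image in $\U(A(1-e))$ does not produce a finite-index subgroup whose elements have trivial $(1-e)$-projection; it only says that nontrivial elements survive in some finite quotient, which is the wrong direction. Moreover, the generators $b(\alpha,f)=1+n_f^2(1-f)\alpha f$ all commute with one another (since $(1-f)\alpha f(1-f)\beta f=0$), and taking powers merely replaces $\alpha$ by an integer multiple; so neither powers nor commutators among these generators can kill a nontrivial $(1-e)$-part. Commutators between $b(\alpha,f)$ and $b(f,\beta)$ have no reason to have trivial $(1-e)$-projection either, since $f(1-e)$ need not vanish. The phrase ``kill the $(1-e)$-component in the abelianisation'' does not translate into the precise statement you need, namely that specific elements equal $1-e$ on that component.

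The correct resolution, implicit in the paper, is elementary: instead of letting $\alpha$ range over all of $R\mc{A}$ and then projecting, restrict from the outset to $\alpha\in R\mc{A}\cap Ae$. For such $\alpha$ one has $(1-f)\alpha f=(e-fe)\,\alpha\,(fe)\in Ae$, so $1+n_f^2(1-f)\alpha f$ automatically has $(1-e)$-part equal to $1-e$. Since $R\mc{A}$ is a full $\Z_S$-lattice in $A$ and $Ae$ is a direct summand, $R\mc{A}\cap Ae$ is a full $\Z_S$-lattice in $Ae$, hence contains $x\,\Ma_n(\mc{O})$ for some $x\in\Z_S$. Taking $\alpha=x\,cE_{ij}$ with $c\in\mc{O}$ and $(i,j)$ in the appropriate off-diagonal block gives $1_A+n_f^2x\,cE_{ij}\in\text{GBic}^{\{f\}}(A,R)$ directly, with no $(1-e)$-obstruction. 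Your commutator identity then produces the remaining elementary matrices with $y=(n_f^2x)^2$, exactly as in the paper.
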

 Recall that, a set of matrix units in a simple algebra $Ae$ is a set of elements $\{ E_{i,j} : 1 \leq j \leq n \}$ in $\SL_1(Ae)$, where $n$ is the reduced degree of $Ae$, such that $\sum_{i=1}^n E_{i,i} = 1$ and $E_{i,j} E_{p,q} = \delta_{j,p} E_{i,q}$. Concretely, fixing the isomorphism $Ae \cong \Ma_n(D)$ the element $E_{i,j}$ can be identified with the matrix having $1$ in the $(i,j)$ entry and zero elsewhere. For an ideal $J$ in $\O$ we denote by $E_n(J)$ the subgroup of $SL_n(\O)$ generated by the elements $e + r E_{i,j}$ for $i \neq j$ and $r \in J$. 

\begin{proof}[Proof of \Cref{Jespers-Leal generalized}]
Multiplying generalized bicyclic units one sees that
$$\{1 + n_f^2f \alpha (1-f), 1+n_f^2(1-f)\alpha f \mid \alpha \in R\mathcal{A} \} \subseteq \text{GBic}^{\{f \}}(A,R).$$
Following \cite[Lemma 11.2.4]{EricAngel1}) one can decompose $fe =E_{1,1}+ \cdots + E_{l,l}$ for some $0 < l < n$ and $\{ E_{i,j} \mid 1 \leq i,j \leq n\}$ a set of matrix units. In case  $1 \leq i \leq l$ and $l+1 \leq j \leq n$ one readily sees that by using $E_{i,j} E_{p,q} = \delta_{j,p} E_{i,q}$ the products can be rewritten as following:
$$f \mc{O} E_{i,j}(1-f)e= \mc{O} E_{i,j} \text{ and } (1-f) \mc{O}E_{j,i}fe = \mc{O} E_{j,i}$$
for all $1 \leq i \leq l$ and $l+1 \leq j \leq n$. Since $\O$ is an $S$-order it has a finite $\Z_S$-basis, which we denote $\mathcal{B}_{\mathcal{O}}= \{b_1, \ldots, b_k\}$. Now, for any $i,j$, \cite[Lemma 4.6.9]{EricAngel1} yields a smallest number $n_{ij}$ such that $(1 + b E_{i,j})^{n_{ij}} = 1 + n_{ij} b E_{i,j} \in R\mathcal{A}e$ for all $b \in \mathcal{B}_{\mathcal{O}}$. Consequently, if we also consider the smallest value $n_e \in \Z$ such that $n_e e \in R\mathcal{A},$ then 
$$1 + n_f^2 n_e n_{ij} \mc{O}E_{i,j} \subset \text{GBic}^{\{f \}}(A,R)$$
for all $1 \leq i \leq l$ and $l+1 \leq j \leq n$. Similarly, for these $i$ and $j$ one has that $1 + n_f^2 n_e n_{ji} \mc{O}E_{j,i}$ is a subet of $\text{GBic}^{\{f \}}(A,R)$. The other indices can now be reached by taking the appropriate commutators. For example if $1 \leq i,j \leq l, i\neq j, x \in \Z$ and $\alpha \in \O$ then 
$$1 +x^2 \alpha E_{i,j} = \left( 1+x \alpha E_{i,l+1} \, ,\,  1 + x E_{l+1,j} \right).$$
and using the $E_{j,i}$ one also reaches the $l+1 \leq i,j \leq n$. Appropriate choices of $x$ now yields the desired result.
\end{proof}
 Recall that the literature on the subgroup congruence problem yields that the difference between $E_n(y\mc{O})$ and the congruence subgroup of level $y$ of $\SL_n(\mc{O})$ increases with $y$. Moreover if $n=2$ and $D$ is a division algebra containing an order with finitely many units\footnote{A division algebra contains an order with finite unit group if and only if $D=\Q(\sqrt{-d})$ or $\qa{u}{v}{\Q}$ with $d \geq 0$ and $u,b < 0$, see \cite[Theorem 2.10.]{BJJKT}.}, then $[\SL_n(\mc{O}), E_n(y\mc{O})] = \infty$ starting already from rather small values of $y$. Therefore, for applications an answer to the following would be incredibly valuable.

\begin{question}\label{Question bound on the y in congruence level}
Suppose $n=2$ and $D$ is a division algebra containing an order with finitely many units. What is 
\begin{enumerate}
    \item a tight upper bound for the scalar $y$ of \Cref{Jespers-Leal generalized} in terms of the starting data?
    \item a tight upper bound for the scalar $y$ in terms of the proportions $o(g)/o(ge)$ for $g \in G$?
\end{enumerate}
\end{question}
Note that the proof above shows that one obtains the following value for $y \in \Z$:
\begin{equation}\label{exact value multiple in Jespers-Leal}
y = \left( n_e \,  n_f^2 \, \text{gcm}\{ n_{ij}, n_{ji} : 1 \leq i \leq l, l+1 \leq j \leq n \}\right)^2.
\end{equation}
Moreover if $n=2$, then $x = \sqrt{y}$ suffices as a multiple. A limitation of the value in (\ref{exact value multiple in Jespers-Leal}) is that the decomposition $fe = E_{1,1} + \cdots + E_{l,l}$ and the numbers $n_{ij}$ are not explicit from the starting data in \Cref{Jespers-Leal generalized}. A value for $n_{ij}$ can be deduced by following the steps in the proofs of \cite[Lemma 4.6.6 \& 4.6.9]{EricAngel1}, however the value would require to know too much of the isomorphism type of $G$ and hence to weak for practical use. Nevertheless, according to \cite[Lemma 4.6.9]{EricAngel1} there exists a $0 \neq r \in \Z_S$ such that $\Ma_n(r\mc{O}) \subseteq R\mathcal{A}e$ and that its additive index yields a `uniform looking' upper-bound:
\begin{equation}\label{upperbound multiple in Jespers-Leal}
y = \left( n_e \,  n_f^2 \, [R\mathcal{A}e : \Ma_n(r\O) ] \right)^2.
\end{equation}
Unfortunately in practice the additive index above is hard to compute.

\subsection{Deforming bicyclic units - case of twisted group rings}\label{sectie bicyclic units}
From now on we consider the setting where $S=\{\infty \}$, i.e. $\Z$-orders, and $A = \Q(\zeta_n)^{\alpha}[G]$ where $\zeta_n$ is some primitive $n$-th root and $[\alpha] \in H^2(G, \Z[\zeta_n]^{*}) \subseteq H^2(G,\Q(\zeta_n)^{*})$ arbitrary. The importance of this case for the study of $\Q[\G]$ for $\G$ some central extension is highlighted by \Cref{decomp voor elke abelse en centrale extensie} and \Cref{ring homom from Tra}.

\subsubsection*{Idempotents from trivial units}

Let $\alpha \in Z^2(G, \Z[\zeta_n]^{*})$ be a $2$-cocycle of finite order with $o(\alpha) \mid n$. Then, for $0\leq j<n$ we can partition $G$ into the following sets,
\begin{equation}\label{partition $G$ depending on self-twist}
\mc{G}_{j}^{\alpha} = \{ g \in G \mid u_g^{o(g)}= \zeta_n^j \}.
\end{equation}

The observation is now that if $g \in \mc{G}_{0}^{\alpha} = \{ g \in G \mid o(u_g) = o(g) \}$, then $\widehat{u_g}= \frac{1}{o(g)} \wt{u_g}$ with 
\begin{equation}\label{definition tilde}
\wt{u_g} := u_1 + u_g + \ldots + u_g^{o(u_g)-1}
\end{equation}
is again a non-trivial idempotent in $\Q(\zeta_n)^{\alpha}[G]$ if $g \neq 1$. Consequently, $\wt{u_g}(o(g) - \wt{u_g}) = 0$. If $g \in \mc{G}_{j}^{\alpha}$ for a non-zero $j$, then\footnote{Going in the sum till the usual upper bound $o(g)-1$ will not yield an idempotent.} $\wt{u_g} = 0$. In that case, one could instead take the hat $\widehat{( \cdot ) }$ of $u_g^{n/gcd(j,n)}$ which would be an idempotent  but in practice this construction will not be useful. Hence:

\begin{question}\label{question idemp in case of self-extensions}
Is there a generic way to produce a non-trivial idempotent from $g \in \mc{G}_{j}^{\alpha}$ for a non-zero $j$?
\end{question}

\begin{remark}\label{G-sets det by cohom class}
It is well known (see e.g. \cite[Theorem 2.3.1]{KarProj}) that for a cyclic group $C_n = \langle g \rangle$ the second cohomology group over a commutative ring $R$ is isomorphic to $R^*/(R^*)^n$. Due to this, the value of $u_g^{o(g)}$ is not uniquely determined by the cohomology class $[\alpha ]$. For example, the class $[\alpha ] \in H^2(C_n, \C^*)$ determined by the value $\lambda = u_g^{o(g)}$ is trivial for any $0 \neq \lambda \in \C$. Therefore the sets $\mc{G}_{j}^{\alpha}$ are unfortunately dependant of the chosen cocycle. However, one could define in an independent way the sets $\mc{G}_0$ and $\mc{G}_{\neq 0}$ where the latter would be all $g \in G$ such that $u_g^{o(g)}$ is not in the class of $1$ in $R^*/(R^*)^n$. In particular for $R = \Z$ the situation simplifies and the value $u_g^{o(g)} \in \{ \pm 1\}$ is uniquely determined by the class $[\alpha]$. 
\end{remark}

Despite the absence of an answer to \Cref{question idemp in case of self-extensions}, one can prove that under natural conditions the set $\mc{G}_0^{\alpha}$ yields enough idempotents to apply \Cref{Jespers-Leal generalized}. More precisely that for $\mathcal{F} = \{ \widehat{u_g} = \frac{1}{o(g)}\wt{u_g} \mid g \in \mathcal{G}_0^{\alpha} \}$ the group $\text{GBic}^{\mc{F}}(\Q(\zeta_n)^{\alpha}[G],\Z[\zeta_n])$ contains sufficiently many elementary matrices. 

\begin{corollary}\label{Jespers-LEal for twisted bicyclic}
Let $G$ be a finite group and $\alpha \in Z^2(G,\Z[\zeta_n]^{*})$ of finite order such that $G_{\alpha}$ has no fixed-point free non-abelian images\footnote{A finite group is called fixed point free if it has an irreducible complex representation $\rho$ such that $1$ is not an eigenvalue of $\rho(g)$ for all $1 \neq g \in G$. Such groups are exactly the Frobenius complements \cite[Proposition 11.4.6.]{EricAngel1} and hence those that are subgroups of $D^*$ for some finite dimensional division algebra $D$.}. Then for any $e \in \PCI (\Q(\zeta_n)^{\alpha}[G])$ such that $\Q(\zeta_n)^{\alpha}[G]e \cong M_m(D) $ with $m \geq 2$ we have that $$1-e+E_m(y \mc{O}) \subseteq \text{GBic}^{\mc{F}}(\Q(\zeta_n)^{\alpha}[G],\Z[\zeta_n])$$ 
with $\mc{O}$ an order in $D$ and some $y \in \Z$. In particular, if $\Q(\zeta_n)^{\alpha}[G]$ has no exceptional components then $\text{GBic}^{\mc{F}}(\Q(\zeta_n)^{\alpha}[G],\Z[\zeta_n])$ is of finite index in $\SL_1(\Z[\zeta_n]^{\alpha}[G])$.
\end{corollary}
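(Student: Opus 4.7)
The strategy is to reduce the statement directly to \Cref{Jespers-Leal generalized}. Take $\mc{F} = \{\widehat{u_g} \mid g \in \mc{G}_0^\alpha\}$, the family of non-trivial idempotents introduced before \Cref{algemene def twisted bicyclic}. The observation recorded just before the statement of the corollary gives $\text{GBic}^{\mc{F}}(\Q^\alpha[G],\Z) \subseteq \mc{B}^{tw}(G)$, so it will suffice to exhibit a single $g \in \mc{G}_0^\alpha$ for which $\widehat{u_g}e \notin \mc{Z}(Ae)$; then \Cref{Jespers-Leal generalized} yields some $y \in \Z$ with $1-e + E_n(y\mc{O}) \subseteq \text{GBic}^{\{\widehat{u_g}\}}(\Q^\alpha[G],\Z) \subseteq \mc{B}^{tw}(G)$.

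I would establish the existence of such a $g$ by contradiction, following the pattern of the untwisted Jespers--Leal theorem \cite[Theorem 11.3.2]{EricAngel1}. Assume $\widehat{u_g}\,e \in \mc{Z}(Ae)$ for every $g \in \mc{G}_0^\alpha$ and let $\rho \colon \Q^\alpha[G] \twoheadrightarrow Ae \cong M_n(D)$ denote the canonical projection. Since $Ae$ is simple, its only central idempotents are $0$ and $e$, so $\rho(\widehat{u_g}) \in \{0, e\}$. For $g \in \mc{G}_0^\alpha$ one has $u_g^{o(g)} = 1$, hence $\widehat{u_g}$ is precisely the spectral projector onto the $1$-eigenspace of the diagonalisable matrix $\rho(u_g)$; the assumption therefore gives the dichotomy: either $\rho(u_g) = e$, or $1$ is not an eigenvalue of $\rho(u_g)$.

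The final step is to promote this dichotomy from $G$ to the representation group $G_\alpha$ of \Cref{subsectie representation group}. Its canonical presentation extends $\rho$ to an ordinary representation $\wt\rho \colon G_\alpha \to \GL_n(D)$ sending $\zeta_n^j u_g \mapsto \zeta_n^j \rho(u_g)$, and an element $\zeta_n^j u_g$ has $1$ as an eigenvalue of $\wt\rho$ precisely when $\zeta_n^{-j}$ is an eigenvalue of $\rho(u_g)$. The key point, and main obstacle, is to identify for each stratum $\mc{G}_i^\alpha$ (not only $i=0$) a corresponding $1$-eigenspace projector realised as a central idempotent of some Galois twist of $\Q^\alpha[G]$; once this dictionary is in place the centrality assumption forces every $\wt\rho(\zeta_n^j u_g)$ to be either the identity or devoid of the eigenvalue $1$. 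Consequently the faithful image $\wt\rho(G_\alpha)/\ker(\wt\rho)$ is a fixed-point free quotient of $G_\alpha$, non-trivial because $n \geq 2$ forces $\rho(G_\alpha)$ to span the non-commutative algebra $M_n(D)$; this contradicts the hypothesis. Beyond setting up the dictionary between the strata $\mc{G}_i^\alpha$ and their eigenspace projectors, the remaining analysis is a direct transcription of \cite[Theorem 11.3.2]{EricAngel1}.
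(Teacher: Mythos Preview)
Your overall strategy matches the paper's, which merely points to \cite[Theorem 11.3.2]{EricAngel1} and says the proof consists in showing that some $\widehat{u_g}e$ is non-central. You have also correctly isolated the one genuine obstacle: the contrapositive assumption only controls the idempotents $\widehat{u_g}$ for $g\in\mc{G}_0^\alpha$, whereas the fixed-point-free conclusion for $G_\alpha$ requires the eigenvalue dichotomy for \emph{every} $h\in G_\alpha$. However, your proposed resolution via ``Galois twists of $\Q^\alpha[G]$'' is undeveloped and appears to be on the wrong track: over $\Z$ there is no non-trivial Galois action to exploit, and the missing elements are not captured by passing to another summand $\Q^{\alpha^i}[G]$.

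The gap is real but can be closed by a short elementary argument specific to $R=\Z$. First observe that for $h\in G_\alpha$ one has $\widehat{h}=0$ in $\Q^\alpha[G]$ unless $h=u_g$ with $g\in\mc{G}_0^\alpha$, or $h=-u_g$ with $g\in\mc{G}_0^\alpha$ of even order (in all remaining cases $-1\in\langle h\rangle$ and the sum telescopes); so only the latter type needs attention. For such $g$ the identity
\[
\widehat{u_g}+\widehat{-u_g}\;=\;\frac{2}{o(g)}\sum_{j=0}^{o(g)/2-1}u_g^{2j}\;=\;\widehat{u_g^{\,2}}
\]
holds, and the paper's normalisation (odd-order elements lie in $\mc{G}_0^\alpha$) forces $u_g^2=\pm u_{g^2}$ with $g^2\in\mc{G}_0^\alpha$ and $o(g^2)=o(g)/2$. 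An induction on the $2$-adic valuation of $o(g)$ then shows that centrality of all $\widehat{u_{g'}}e$ with $g'\in\mc{G}_0^\alpha$ forces centrality of every $\widehat{-u_g}e$ as well. After this, the classical argument for $G_\alpha$ goes through verbatim and produces a non-trivial fixed-point-free image, giving the desired contradiction. Without supplying this reduction, your proof remains incomplete at precisely the point you flagged as the ``main obstacle''.
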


Recall that for a subring $R$ of a semisimple algebra $A = \prod_{i=1}^q \Ma_{n_i}(D_i)$, $h_i$ the projection on the $i$-th component and $E_i$ a splitting field of $K_i:= \mc{Z}(D_i)$ one defines 
\begin{equation}\label{def reduced norm}
nr (r) = (\text{Rnr}_{\Ma_{n_1}(D_1)/K_1}(h_1(r)), \ldots, \text{Rnr}_{\Ma_{n_q}(D_q)/K_q}(h_q(r)))
\end{equation}
with $\text{Rnr}_{\Ma_{n_i}(D_i)/K_i}(h_i(r)) := \det (1_{E_i} \ot_{K_i} h_i(r))$ the {\it reduced norm over $K_i$} and
\begin{equation}\label{def of SL1}
    \SL_1(R) = \ker(nr) = \{ r \in \U(R) \mid \forall i \, : \, \text{Rnr}_{\Ma_{n_i}(D_i)/K_i}(h_i(r)) = 1  \}
\end{equation}
the (multiplicative) group of reduced norm $1$ elements. 
Also recall that a simple quotient of $\Q(\zeta_n)^{\alpha}[G]$ is called an {\it exceptional component} if it is either (I) a non-commutative division algebra different from a totally definite quaternion algebra or (II) of the form $\Ma_2(\Q(\sqrt{-d}))$ or $\Ma_2(\qa{-a}{-b}{\Q})$ with $a,b >0$ and $d \in \N$. The division algebras appearing in these matrix algebras are exactly those having an order with finite unit group \cite[Theorem 2.10.]{BJJKT}.

With the conditions assumed in \Cref{Jespers-LEal for twisted bicyclic}, the proof of \cite[Theorem 11.3.2]{EricAngel1} also works in this setting and therefore we will omit the details. In short, that $G_{\alpha}$ has no fixed point free images guarantees that for every $e \in \PCI (\Q(\zeta_n)^{\alpha}[G])$ there exists $g\in \mc{G}_0^{\alpha}$ so that $\widehat{u_g}e$ is a non-central idempotent. If $e_C \in \PCI ( \C^{\alpha}[G]e)$ is such that $(\C^{\alpha}[G]e)e_C$ is non-commutative, then the element $g$ is chosen among those for which $\rho(g)$ has eigenvalue one, where $\rho : \C\ot_{\Q(\zeta_n)} \Q(\zeta_n)[G_{\alpha}]	\twoheadrightarrow (\C \ot_{\Q(\zeta_n)} \Q(\zeta_n)^{\alpha}[G]e)e_C$ is the complex representation factorising through the projection of $\Q(\zeta_n)[G_{\alpha}]$ on $\Q(\zeta_n)^{\alpha}[G]$ in \Cref{decomposition of FGalpha}. The last part follows from the known results on the subgroup congruence problem, e.g. see \cite[Theorem 11.2.3.]{EricAngel1}. \vspace{0,1cm}

{\bf Convention:} For an integral twisted group ring $\mathbb{Z}^{\alpha}G$, for any element of $g\in G$ of odd order, the restriction of $[\alpha]$ to $\langle g \rangle$ is trivial. Therefore we {\it may and will always from now on choose a normalized representative $\alpha$ such that $u_g^{o(g)}=1$ in $\mathbb{Z}^{\alpha}G$ for all $g$ of odd order. }

\subsubsection*{Deforming a bicyclic unit : construction}\label{ConJRep}
We will use the $G$-module structure of the twisted group ring $R^{\alpha}[G]$ described in \cite[\S 3.2]{ginosar2012semi} $$g(u_x)=u_gu_xu_g^{-1}=\alpha (g,x)\alpha(gxg^{-1},g)^{-1}u_{gxg^{-1}}.$$

For every $x\in G$, the restriction of the conjugation representation from $G$ to the
centralizer $C_{G}(x)$ admits a $1$-dimensional invariant subspace spanned by $u_x$. By
the above this ordinary $1$-dimensional representation $\chi_x:C_{G}(x)\rightarrow R^*$ is given by
$$\chi_x (g)=[u_g,u_x]=\alpha (g,x)\alpha (x,g)^{-1}.$$

For $R=\mathbb{Z}$, for any $x\in G$ we can partition $C_G(x)$ into two disjoint sets 
$$C_x^{+}=\{g\in C_G(x)\mid \chi_x (g)=1 \},\quad C_x^{-}=\{g\in C_G(x) \mid \chi_x (g)=-1 \}.$$
We remark that if $C_x^{-}=\emptyset$ then $x$ (and its conjugacy class) is {\it called $\alpha$-regular} and the set of all $\alpha$-regular conjugacy classes forms a basis for the center of $\mathbb{Z}^{\alpha}G$ (see 
\cite{Oystaeyen})
 An extreme case for that is when $\alpha $ is trivial. In this case $C_x^{-}=\emptyset$ for any $x\in G$.

\begin{example}
For $g,h\in G$ and $\alpha\in Z^2(G,R^*)$,
a straightforward calculation yields that in $R^{\alpha}G$ if $[u_g,u_h]=\zeta \in R^*$ is a root of unity of order $k$, then $k$ is a divisor of the greatest common divisor of the orders of $g$ and $h$ in $G$. Hence, for $\alpha \in Z^2(G,\mathbb{Z}^*)$, all elements of $G$ of odd order are $\alpha$-regular.
\end{example}

Using this we now introduce a new type of units.

\begin{definition}\label{algemene def twisted bicyclic}
With the above notation, the elements of the set
$$\lbrace u_1 + y \wt{u_g}; \, u_1 + \wt{u_g} y   \mid g \in \mc{G}_0^{\alpha}, y \in \Z^{\alpha}[G], \, \supp (y) \subseteq C_g^{-} \rbrace$$
 are called  {\it H-units.} The subgroup of $\U( \Z^{\alpha}[G])$ generated by these is denoted $\mc{H}_{\alpha}(G)$.
\end{definition}

At first it might seem surprising that the above elements are invertible in 
$\Z^{\alpha}[G]$. This is because the square of $y \wt{u_g}$ and $\wt{u_g} y $ are zero. Hence they are unipotent units with inverse $u_1 -  y \wt{u_g}$, resp. $u_1 - \wt{u_g} y$. One way to see that those terms are indeed square zero, is by noticing that $y \wt{u_g} = (o(g)- \wt{u_g}) \frac{y}{o(g)} \wt{u_g}$. This follows from the fact that $u_g^ju_h\wt{u_g} = (-1)^j u_h \wt{u_g}$ for 
$h \in C_g^{-}$  and hence when $o(g)$ is even\footnote{If $o(g)$ is odd, the last sum in (\ref{rewriting a twisted bicyclic}) equals $o(g) - 1$ and hence in general the 
$H$-unit will not be an integral combination. However, as remarked in \Cref{ConJRep}, elements of odd order are $\alpha$-regular 
(i.e. $C_g^{-} = \emptyset$) and thus one can only take $y = 0$. In particular we don't need to specify the order of the elements 
from which we built the $H$-units in order to be a subset of $\Z^{\alpha}[G]$.} and $g \in \mc{G}_0^{\alpha}$
\begin{equation}\label{rewriting a twisted bicyclic}
\begin{array}{rcl}
(o(g) - \wt{u_g})u_h \wt{u_g} & = & u_h \left( o(g) - \sum_{j=0}^{o(g)-1} (-1)^j u_g^j \right) \wt{u_g} \\
& = & u_h \left( o(g) - \sum_{j=0}^{o(g)-1} (-1)^j \right) \wt{u_g} \\ 
& =&  o(g) u_h \wt{u_g}.
\end{array}
\end{equation}
Analogously $\wt{u_g}u_h(o(g) - \wt{u_g}) = o(g) \wt{u_g} u_h$. 
Thus the $H$-units are indeed elements of $\U (\Z^{\alpha}[G])$. 
Furthermore, for $\mathcal{F} = \{ \widehat{u_g} = \frac{1}{o(g)}\wt{u_g} \mid g \in \mathcal{G}_0^{\alpha} \}$, 
every $H$-unit has an appropriate power that is in $\text{GBic}^{\mc{F}}(\Q^{\alpha}[G],\Z)$. 
However, as will be pointed out in \Cref{Gbic not equal to twisted bic}, due to the factor $\frac{y}{o(g)}$ in general
\begin{equation} \label{Gbic in twisted bic}
\mc{H}_{\alpha}(G) \nsubseteq \text{GBic}^{\mc{F}}(\Q^{\alpha}[G],\Z) .
\end{equation}
A concrete example will be given in \Cref{prop description unit twisted group ring}. That $\langle \text{GBic}^{\mc{F}}(\Q^{\alpha}[G],\Z), \mc{H}_{\alpha}(G) \rangle$ is not necessarily obtained by bicylic units, even not up to commensurability, is truly its raison d'être and hence will be crucial for the applications.
\begin{example*}
If $\alpha$ is a trivial cocycle, then $\mc{G}_{j}^{\alpha} = \emptyset = C_g^{-}$ for any $j \neq 0$, $g \in G$ and $\mathcal{G}_0^{\alpha} = G$. Therefore, in that case $\Q^{\alpha}[G]= \Q[G]$ and $\mc{H}_{\alpha}(G) = \{ u_1 \}$. In particular $\langle \text{GBic}^{\mc{F}}(\Q^{\alpha}[G],\Z), \mc{H}_{\alpha}(G) \rangle = \text{Bic}(G)$, i.e. we recover exactly the classical bicyclic units.
\end{example*}

\begin{remark}\label{remarks on h-unit for twisted grp group}
\begin{enumerate}
\item The elements $u_1 + (o(g)- \wt{u_g}) \frac{u_h}{o(g)}\wt{u_g}$ and $u_1 + \wt{u_g}\frac{u_h}{o(g)}(o(g)- \wt{u_g})$ are in fact instances of a common more general construction of units. Indeed, consider $\alpha \in Z^2(G, \Z[\zeta_n]^*)$ with $o(\alpha) \mid n$ and let $\xi$ be a $o(g)$-root of unity inside $\Z[\zeta_n]$. Then $u_1 + u_h \, \wt{\xi u_g}$ is an invertible element of $\Z[\zeta_n]^{\alpha}G$ with inverse $u_1 - u_h \, \wt{\xi u_g}$. Note that the choice $\xi = -1$ yields $u_1 + u_h\, \wt{(-u_g)} = u_1 + \wt{u_g} u_h = u_1 + \wt{u_g}\frac{u_h}{o(g)}(o(g)- \wt{u_g})$, as desired. As we will see in \Cref{subsection H-units in ss alg}, an essential ingredient is that $\wt{[u_g, u_h]} =0$.

\item The $H$ in the name $H$-unit refers to the cohomology group $H^2(G, \Z^{*})$. Following (\ref{rewriting a twisted bicyclic}), the elements in \Cref{algemene def twisted bicyclic} look like roots of bicyclic units. However these elements and those in the previous point fit in a general new generic construction of units which will be developed in \Cref{section h-units}. There the role of the $H^2$, i.e. splitting into extensions, will become more apparent. In loc. cit. the units will in fact be called primitive $H$-units (as they are the smallest of their kind). 

\item Using the twisting as above one could instead have considered a deformed version of the classical bicyclic units, i.e. for any $x \in \Z^{\alpha}[G]$ all the elements of the form $1 + (o(g)- \wt{u_g}) (x + \frac{y}{o(g)})\wt{u_g}$ are units. Such elements include both $H$-units and $\text{GBic}^{\mc{F}}(\Q^{\alpha}[G],\Z)$. However, as hopefully \Cref{section h-units} will convince, it seems to be better to not think in terms of bicyclic units.
\end{enumerate}
\end{remark}

Thanks to (\ref{Gbic in twisted bic}), one can
apply \Cref{Jespers-LEal for twisted bicyclic} to show that $\langle \text{GBic}^{\mc{F}}(\Q^{\alpha}[G],\Z), \mc{H}_{\alpha}(G) \rangle$ contains sufficiently many elementary matrices. It follows from the proof of \Cref{Jespers-LEal for twisted bicyclic} that the value $y \in \Z$ is the one yielded by \Cref{Jespers-Leal generalized}. However by adding the $H$-units, depending on the order of the twisted elements and the number of $\alpha$-irregular elements, one can aim to decrease the value of $y$. 

\begin{question}
What is a formula for the smallest $y\in \N$ such that $1 - e + E_m(y\mc{O}) \leq \langle \text{GBic}^{\mc{F}}(\Q^{\alpha}[G],\Z), \mc{H}_{\alpha}(G) \rangle$?
\end{question}

\subsection{On $H$-units in cokernel of the transgression map}
We now return to the setting of \Cref{background transgression}. More precisely we consider the central extension (\ref{De start CE setting prelim}) corresponding to $[\alpha] \in H^2(G,A)$. We also fix a normalized representant $\alpha \in Z^2(G,A)$ of the form $\alpha(g,h)= \mu(g)\mu(h)\mu(gh)^{-1}$ for a section $\mu$ of $\lambda: \Gamma \twoheadrightarrow G$. 

The $H$-units from the previous section will now allow to construct elements in $\Coker(\wt{\Psi_{\chi}})$. For this take elements $g \in \mc{G}_0^{\Tra_\alpha(\chi)}$ and $h \in C_g^{-}$. Note that the existence of such an element $h$ indirectly assumes that $g$ is $\Tra_\alpha(\chi)$-irregular. In particular $\Tra_\alpha(\chi)$ must be a non-trivial cocycle and hence $\chi  \neq \omega_A$. Furthermore we will also consider a central subgroup $Q$ of $\G$ such that $A \cap Q = 1$. Now with every such triple $(g,h,Q)$ we consider the set
\begin{equation}\label{minimal bicyclic in coker}
\mathcal{H}^{min}_{g,h,Q} := \lbrace 1 +  z u_h \wt{u_g} \in \Z^{\Tra_\alpha(\chi)}[G] \mid z \in \{ 1- u_q \, : \,  q \in Q \text{ or } u_q = 0 \} \rbrace.
\end{equation}

\begin{proposition}\label{non-trivial elements in cokernel}
Let $\chi \in \Hom(A,\Z^*),  g \in \mc{G}_0^{\Tra_\alpha(\chi)}, h \in C_g^{-}$ and $Q$ a central subgroup of $\G$ such that $A \cap Q = 1$. Suppose $\U(\Z G)$ is finite, then the following hold
\begin{enumerate}
\item if $q \in Q \setminus \langle \mu(g) \rangle$ or $u_q=0$, then $1 + (1-u_q) u_h \wt{u_g} \notin \im \left( \restr{\wt{\Psi_{\chi}}}{\U (\Z [\G])} \right)$
\item if $Q \cap\langle \mu(g) \rangle = 1 = A \cap\langle \mu(g) \rangle$, then $\langle \, \mathcal{H}^{min}_{g,h,Q} \, \rangle \cong C_{2}^{|Q|}$ as subgroup of $\Coker\left( \restr{\wt{\Psi_{\chi}}}{\U (\Z [\G])} \right)$.
\end{enumerate} 
\end{proposition}
Keeping in mind that $\lbrace 1-q \mid 1 \neq q \in Q \rbrace$ is a $\Z$-basis of $\ker (\omega_Q)$, a direct computation yields that 
$$\langle \, \mathcal{H}^{min}_{g,h,Q} \, \rangle = \lbrace 1 + y u_h \wt{u_g} \in \Z^{\Tra_\alpha(\chi)}[G] \mid y \in \ker (\omega_Q) \cup \{1 \} \rbrace.$$
In particular, when $Q \cap \langle u_g \rangle = 1$ then $\langle \, \mathcal{H}^{min}_{g,h,Q} \rangle$ can be viewed as subgroup of $\Coker(\wt{\Psi}_{ker})$. Note also that $qh \in C_g^{-}$ for all $q \in Q$ and hence $\langle \, \mathcal{H}^{min}_{g,h,Q} \, \rangle \leq \mc{H}_{\alpha}(G).$
\begin{proof}[Proof of \Cref{non-trivial elements in cokernel}.]
Due to the centrality of $z = 1-u_q$ it follows that $1 + z u_h \wt{u_g}$ is trivial if and only if $u_q \in \langle u_g\rangle$ (in particular for $z =1$ it is never trivial). On turn this is equivalent to $q \in Q \setminus \langle \mu(g) \rangle$ (or $q$ trivial) thanks to the assumption $o(g) = o(u_g)$. From now we assume that this is the case and let $y$ be a sum of different such elements ($1-u_q$ with $q\in Q\setminus \langle \mu(g) \rangle$ or $1$). Then the unit $ 1 + (o(g)- \wt{u_g}) y \frac{u_h}{o(g)} \wt{u_g}$ is also non-trivial. We claim that more generally all such element are not in the image. 

An element in $\Psi_{\chi}^{-1}(1 + y u_h \wt{u_g})$ is of the form $\tau := 1 + y \mu(h) (1+ \mu(g) + \cdots + \mu(g)^{o(g)-1}) + x$ for some $x\in \ker(\Psi_{\chi})$ and due to the non-triviality $\tau \neq 1$. We need to prove that $\tau \notin\U(\Z \Gamma) $ for any $x$. Assume otherwise and start by decomposing $x$ according to \Cref{ring homom from Tra}:
$$x = \sum_{a \in \ker(\chi)} x_a (a - 1) + \sum_{a \in A\setminus \ker(\chi)} x_a (a+1)$$
with $x_a = t_a + \sum_{1 \neq w\in G}  t_{w,a} \mu(w) \in \Z[\Gamma]$.
  
Therefore, since $\mu(g)^{o(g)} \in A$,
$$\omega_A(\tau) = 1+ y h \wt{g} + 2 \sum_{a\in A\setminus \ker(\chi)} x_a.$$
Note that $y h \wt{g}\neq 0$ and in fact all its coefficients are $\pm 1$ due to all the assumptions. Furthermore, $\omega_A(\tau)$ is a torsion unit since $\U(\Z G)$ is finite. Since the coefficient of the identity element of $G$ is $1 + 2 \sum_{a\in A\setminus \ker(\chi)} t_a$, which is non-zero, Theorem~\ref{Berman-Higman twisted} yields that $\omega_A(\tau) = \pm 1$. However $\omega_A(\tau) = 1+ y h \wt{g} \mod 2 \neq 1$. This yields the desired contradiction and finishes the proof of the claim and in particular of $(1)$. 

For the second part, assume that $Q \cap \langle \mu(g) \rangle = 1 = A \cap  \langle \mu(g) \rangle$. Note that $g \in \mc{G}_0^{\Tra_\alpha(\chi)}$ implies that $\mu(g)^{o(g)} \in \ker (\chi)$. The stronger condition $A \cap  \langle \mu(g) \rangle =1$, equivalently $\mu(g)^{o(g)} =1$,  is assumed to have that $\Psi_{\chi}(\wt{\mu(g)}) = \wt{u_g}$. Denote $b_z :=  1 + z u_h \wt{u_g}.$ Note that $b_{z_1}b_{z_2} = 1 + (z_1 + z_2) u_h \wt{u_g}$. Moreover, all the $b_z$ commute and in fact $\langle \, b_{1-q} \mid 1 \neq q \in Q \rangle$ is isomorphic to the additive group $\ker (\omega_Q)$. By the general claim every $b_{z_1}\ldots b_{z_l}$, for different $z_i \in \{1, 1-u_q \}$, is not in the image. Thus it remains to prove that the square of every $b_z$ is attained. For this rewrite 
$$
\begin{array}{rcl}
b_{1-q}^2 & =& 1 + 2 (u_h - u_{qh}) \wt{u_g}\\
&= & \left( 1 + 2 u_h\wt{u_g} \right) \left( 1+ 2 u_{qh}\wt{u_g} \right)^{-1} \\
& = & \wt{\Psi_{\chi}}(u_1) \, \wt{\Psi_{\chi}}(u_2^{-1})
\end{array}
$$
where $u_1 = 1 + (1 - [\mu(g),\mu(h)]) \mu(h)\wt{\mu(g)}$ and $u_2 = 1 + (1 - [\mu(g),\mu(qh)]) \mu(qh)\wt{\mu(g)}$. Note that  $[\mu(g),\mu(h)], [\mu(g),\mu(qh)] \in A \setminus \ker(\chi)$ because $h,qh \in C_g^{-}$. That $u_1$ and $u_2$ are units can be seen by rewriting them as bicyclic units, e.g. $u_1 = 1 + (1 - \mu(g)) \mu(h) \wt{\mu(g)}$.
\end{proof}

We expect that $H$-units contribute to $\Coker(\Psi_{\chi})$ in full generality, in the way they do in \Cref{non-trivial elements in cokernel} . 

\begin{question}\label{min twists in cokernel question}
Is the conclusion of \Cref{non-trivial elements in cokernel} also valid without the condition that $\U(\Z G)$ is finite?
\end{question}
%

Most likely, in general other type of elements can be contained in $\Coker(\wt{\Psi_{\chi}})$. However in \Cref{section elem ab description} we will see that for $\G$ an extension of $C_2$ with an elementary abelian $2$-group $G  = C_2^n$ that these elements will generate the full cokernel. Consequently, in that case $\Coker(\wt{\Psi}_{ker}) = \ker(F_{Cok}) \cong C_2^{n-2}$. It would be interesting to when this happens. 

\begin{remark}\label{Remark h-unit and opp in coker}
    With exactly the same proof, the statement of \Cref{non-trivial elements in cokernel} also holds for the $H$-units $1 +\wt{u_g} u_h z$. Hence \Cref{min twists in cokernel question} can also be formulated for these elements. In fact it is also an interesting question to understand the image of $\langle b_1 := 1 + u_h \wt{u_g} , b_2 := 1 +\wt{u_g} u_h\rangle$. In case $o(u_h)=o(u_g)=o(g)= 2$, then a direct computation shows that $b_1b_2^{-1} \in \im \left( \restr{\wt{\Psi_{\chi}}}{\U (\Z [\G])} \right).$ Hence $\langle b_1, b_2 \rangle \cong C_2$ as subgroup of $\Coker\left( \restr{\wt{\Psi_{\chi}}}{\U (\Z [\G])} \right)$. It is however very likely that this is an order $2$ phenomena.
\end{remark}

\section{Method to describe \texorpdfstring{$\mc{U}(\Z^{\alpha}[G\times C_2^{n}])$}{}}\label{subsection reduing elem ab 2-subgroups}
Let $G$ be a finite group and $$C_2^n = \langle x_1 \rangle \times \cdots \times \langle x_n \rangle$$ an elementary abelian $2$-group. In this section we will consider a, potentially trivial, $2$-cohomology class $[\alpha]$ in the image of the inflation map 
\begin{equation}\label{inflation map for times C2n}
\Inf: H^2(G,\Z^{*}) \rightarrow H^2(G \times C_2^n, \Z^{*}).
\end{equation}
In particular, by \Cref{cor imabe inf yields central}, the subgroup $C_2^n=\langle u_{x_i}\rangle$ is central in $\U(\Z^{\alpha}[G\times C_2^{n}])$. For such cocycles we will now present a method to describe $\mc{U}(\Z^{\alpha}[G\times C_2^n])$ whenever $\mc{U}(\Z^{\alpha}[G])$ is known.\vspace{0,1cm}

{\it\noindent Convention:} $C_2^{i}$ denotes the concrete subgroup $\langle x_1, \ldots, x_i \rangle$. In particular when we write $\Z^{\alpha}[G \times C_2^{i}]$ we truly mean the subring of $\Z^{\alpha}[G \times C_2^{n}]$ generated by $G \times C_2^{i}$.\vspace{0,2cm}

To start, consider for every $1 \leq i \leq n$ the natural projection $$\psi_i: \Z^{\alpha}[G\times C_2^{i}] 
\rightarrow \Z^{\alpha}[G\times C_2^{i-1}]$$ induced by mapping $x_i$ to $1$, which is a ring 
morphism\footnote{This is a ring-morphism due to the centrality of $x_i$. In particular for arbitrary 
$2$-cocycles this method does not work.}. Note that, due to the centrality of $x_i$, 
$\Z^{\alpha}[G\times C_2^{i}]= \left(\Z^{\alpha}[G\times C_2^{i-1}]\right)[\langle x_i \rangle]$ and 
therefore $\psi_i$ is globally defined by $\psi_i (u+v x_i) = u+v$ for 
$u,v \in \Z^{\alpha}[G\times C_2^{i-1}]$. By definition $\psi_i$ induces an epimorphism 
$$
\wt{\psi}_i: \mc{U}(\Z^{\alpha}[G\times C_2^{i}]) \rightarrow \mc{U}(\Z^{\alpha}[G \times C_2^{i-1}]).
$$
Accordingly, we obtain the following splitting.

\begin{lemma}\label{lem splitting wrt kernels K_i}
Let $K_i = \ker (\wt{\psi}_i)$. Then, $\mc{U}(\Z^{\alpha}[G\times C_2^{i}]) = K_i \rtimes \mc{U}(\Z^{\alpha}[G \times C_2^{i-1}]).$ Consequently,
$$\mc{U}(\Z^{\alpha}[G\times C_2^{n}])\cong K_n \rtimes \Big( K_{n-1} \rtimes \big( \cdots K_2 \rtimes (K_1 \rtimes \mc{U}(\Z^{\alpha}G)) \big) \Big).$$
\end{lemma}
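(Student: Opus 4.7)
The plan is to show that the short exact sequence of unit groups induced by $\psi_i$ splits canonically, and then iterate.

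First, I would verify that $\wt{\psi}_i$ fits into a split short exact sequence
\[
1 \longrightarrow K_i \longrightarrow \mc{U}(\Z^{\alpha}[G\times C_2^{i}]) \xrightarrow{\wt{\psi}_i} \mc{U}(\Z^{\alpha}[G\times C_2^{i-1}]) \longrightarrow 1.
\]
Exactness is immediate from the definition of $K_i$ together with the surjectivity of $\wt{\psi}_i$ already observed in the excerpt (which follows since, writing elements of $\Z^{\alpha}[G\times C_2^{i}]$ as $u + vx_i$ with $u,v\in \Z^{\alpha}[G\times C_2^{i-1}]$, any unit $w$ of $\Z^{\alpha}[G\times C_2^{i-1}]$ lifts to itself, viewed inside the larger ring).

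The splitting is the key point: because $[\alpha] \in \im(\Inf)$ and $x_i$ is central, the subring $\Z^{\alpha}[G\times C_2^{i-1}]\subseteq \Z^{\alpha}[G\times C_2^{i}]$ (in the sense fixed by the Convention) contains the identity, so the natural inclusion $\iota:\Z^{\alpha}[G\times C_2^{i-1}]\hookrightarrow \Z^{\alpha}[G\times C_2^{i}]$ is a unital ring embedding and therefore sends units to units, yielding a group homomorphism $\wt{\iota}: \mc{U}(\Z^{\alpha}[G\times C_2^{i-1}])\rightarrow \mc{U}(\Z^{\alpha}[G\times C_2^{i}])$. By construction $\psi_i \circ \iota = \mathrm{id}$ on $\Z^{\alpha}[G\times C_2^{i-1}]$, hence $\wt{\psi}_i\circ\wt{\iota}=\mathrm{id}$, giving the desired section.

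From this splitting the standard semidirect product decomposition $\mc{U}(\Z^{\alpha}[G\times C_2^{i}]) = K_i \rtimes \mc{U}(\Z^{\alpha}[G\times C_2^{i-1}])$ follows, where $\mc{U}(\Z^{\alpha}[G\times C_2^{i-1}])$ is identified with its image under $\wt{\iota}$. The iterated statement is then an immediate induction on $i$ starting from $i=1$, where $\mc{U}(\Z^{\alpha}[G\times C_2^0]) = \mc{U}(\Z^{\alpha}G)$ by convention.

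There is essentially no obstacle here beyond bookkeeping; the only point needing attention is ensuring that the subring inclusion is genuinely well-defined, which uses the assumption that $\alpha$ is inflated from $H^2(G,\Z^{*})$ (so that the restriction of $\alpha$ to $G\times C_2^{i-1}$ coincides with the cocycle used to define the smaller twisted group ring) and the centrality of $x_i$ recorded in \Cref{cor imabe inf yields central}.
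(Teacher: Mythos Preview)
Your proof is correct and follows essentially the same approach as the paper: set up the short exact sequence coming from the surjection $\wt{\psi}_i$, observe that the inclusion of $\Z^{\alpha}[G\times C_2^{i-1}]$ as a subring provides a section (what the paper tersely calls ``the identity map''), and iterate. Your version is simply more explicit about why the inclusion is well-defined and why $\psi_i\circ\iota=\mathrm{id}$, which the paper leaves to the reader.
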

\begin{proof}
Since $\wt{\psi}_i$ is surjective we obtain an extension 
$$ 1 \rightarrow K_i \hookrightarrow \mc{U}(\Z^{\alpha}[G\times C_2^{i}]) \xrightarrow{\wt{\psi}_i} \mc{U}(\Z^{\alpha}[G \times C_2^{i-1}]) \rightarrow 1.$$
Clearly, this extension is split by the identity map. The second part now follows by iteration.
\end{proof}

In view of \Cref{lem splitting wrt kernels K_i} we will now concentrate on describing all the kernels $K_i$. To this end we define the groups
$$U_i = \{ 1+ 2^{i} u \mid u \in \Z^{\alpha}G \} \cap  \mc{U}(\Z^{\alpha}G).$$
Our goal is to show that $K_i$ can be built from isomorphic copies of the groups $U_1, \ldots, U_i$ and the unit group $\mc{U}(\Z^{\alpha}G)$. Note that, by \Cref{Berman-Higman twisted}, $U_i$ is torsion-free when $i \geq 2$ and the only torsion elements in $U_1$ are $\pm 1$. 

As a tool we need to consider more generally the groups
$$U_{k,j} = \{ 1+ 2^{k} u \mid u \in \Z^{\alpha}[G\times C_2^{j}] \} \cap  \mc{U}(\Z^{\alpha}[G\times C_2^j]).$$
Note that $U_{k,0} = U_k$ and $U_{k_1,j_1} \leq U_{k_2,j_2}$ for $k_2 \leq k_1$ and $j_1 \leq j_2$. Next to these, the projections 
$$\varphi_i: \Z^{\alpha}[G \times C_2^{i}] \rightarrow \Z^{\alpha}[G \times C_2^{i-1}] \mbox{ with } x_i \mapsto -1$$
will also be instrumental. The induced epimorphism on the unit groups is denoted by $\wt{\varphi}_i$.
\begin{lemma}\label{lem connection between the U groups}
For all $1 \leq k,j$, with notations as above, we have that
\begin{itemize}
    \item $K_j \cong \wt{\varphi}_{j}(K_j) = U_{1,j-1},$
    \item $U_{k,j} \cong U_{k+1, j-1} \rtimes U_{k,j-1}$.
\end{itemize}
\end{lemma}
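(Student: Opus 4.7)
Since $[\alpha]$ is inflated from $H^2(G,\Z^*)$ via the projection $G\times C_2^n \twoheadrightarrow G$ and $\alpha$ is normalized, every basis element $u_{x_i}$ satisfies $u_{x_i}^2 = 1$ and lies in the center of $\Z^{\alpha}[G\times C_2^n]$ (by \Cref{cor imabe inf yields central}). In particular, any $a \in \Z^{\alpha}[G\times C_2^{j}]$ admits a unique decomposition $a = a_0 + a_1 u_{x_j}$ with $a_0,a_1 \in \Z^{\alpha}[G\times C_2^{j-1}]$, and
\[\psi_j(a) = a_0+a_1, \qquad \varphi_j(a) = a_0-a_1.\]
The plan is to use $\varphi_j$ as a coordinate on $K_j$ and to restrict $\psi_j$ to the groups $U_{k,j}$.

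For the first bullet, if $a \in K_j$ then $a_0+a_1 = 1$, so $a = 1 + a_1(u_{x_j}-1)$; writing $v := -a_1$ we have $a = 1 + v(1-u_{x_j})$ and $\varphi_j(a) = 1 + 2v$. The identity $(1-u_{x_j})^2 = 2(1-u_{x_j})$ yields, for any $w,c \in \Z^{\alpha}[G\times C_2^{j-1}]$ and with $w$ commuting with $u_{x_j}$,
\[\bigl(1+w(1-u_{x_j})\bigr)\bigl(1+c(1-u_{x_j})\bigr) = 1+(w+c+2wc)(1-u_{x_j}).\]
So $1+w(1-u_{x_j})$ is a unit iff there exists $c$ with $w + c + 2wc = 0$, iff $1+2w$ is a unit in $\Z^{\alpha}[G\times C_2^{j-1}]$ (the inverse being $c = -w(1+2w)^{-1}$). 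This shows $\wt{\varphi}_j(K_j) = U_{1,j-1}$. Injectivity of $\wt{\varphi}_j|_{K_j}$ follows because an element $a \in K_j$ is determined by $(\psi_j(a),\varphi_j(a)) = (1,\varphi_j(a))$ via the injective ring map $(\psi_j,\varphi_j)$.

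For the second bullet, I would restrict $\wt{\psi}_j$ to $U_{k,j}$. Clearly $U_{k,j-1} \hookrightarrow U_{k,j}$ (any unit of the smaller ring remains a unit in the larger one), and $\wt{\psi}_j$ is the identity on this copy; hence the restriction $\wt{\psi}_j|_{U_{k,j}} : U_{k,j}\twoheadrightarrow U_{k,j-1}$ is a split epimorphism with section given by the inclusion. Its kernel is $K_j \cap U_{k,j}$, which is normal in $U_{k,j}$ because $K_j$ is the kernel of a homomorphism. Writing an element of $K_j\cap U_{k,j}$ as $1+2^k u$ with $u = u_0+u_1 u_{x_j}$, the condition $\psi_j(1+2^k u)=1$ forces $u_0=-u_1$, hence the element has the form $1 + 2^k v(1-u_{x_j})$, and $\varphi_j$ sends it to $1+2^{k+1} v$. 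By the computation in the first part (applied with $2^k v$ in place of $v$), this element is a unit iff $1+2^{k+1}v \in U_{k+1,j-1}$, and $\wt{\varphi}_j$ identifies $K_j\cap U_{k,j}$ with $U_{k+1,j-1}$. Combining the two, $U_{k,j} = (K_j\cap U_{k,j}) \rtimes U_{k,j-1} \cong U_{k+1,j-1}\rtimes U_{k,j-1}$.

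The only delicate point is the unit criterion for $1 + v(1-u_{x_j})$, which is a short computation using $(1-u_{x_j})^2 = 2(1-u_{x_j})$; everything else is straightforward bookkeeping with the decomposition $a = a_0+a_1 u_{x_j}$ and the two projections $\psi_j,\varphi_j$.
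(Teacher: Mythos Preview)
Your proof is correct and follows essentially the same approach as the paper: both identify $K_j$ with elements of the form $1+v(1-u_{x_j})$, use $\wt{\varphi}_j$ to map these bijectively onto $U_{1,j-1}$, and then split $U_{k,j}$ via $\wt{\psi}_j$ with kernel identified (again via $\wt{\varphi}_j$) with $U_{k+1,j-1}$. The only notable difference is that you spell out the unit criterion explicitly through the identity $(1-u_{x_j})^2 = 2(1-u_{x_j})$, whereas the paper leaves this step implicit; your version is slightly more self-contained on that point.
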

\begin{proof}
By definition,
$$K_j= \{ 1 + u(1-x_j) \mid u \in \Z^{\alpha}[G\times C_2^{j-1}] \} \cap \mc{U}(\Z^{\alpha}[G\times C_2^{j}]).$$
Using the centrality of $x_j$, one gets that $\Z^{\alpha}[G\times C_2^{j}] = \left( \Z^{\alpha}[G\times C_2^{j-1}] \right)\langle x_j \rangle$ and thus one immediately obtains that $\wt{\varphi}_{j}$ is injective on $K_j$. Consequently, $K_j \cong \wt{\varphi}_{j}(K_j)$ and more explicitly
$$\wt{\varphi}_j(K_j) = \{ 1+2u \mid u \in \Z^{\alpha}[G\times C_2^{j-1}] \} \cap \mc{U}(\Z^{\alpha}[G\times C_2^{j-1}]) = U_{1,j-1}.$$
Note that $\wt{\varphi}_j(K_j)$ contains all the units above because $\Z^{\alpha}[G\times C_2^{j-1}]$ is truly meant as the subring of $\Z^{\alpha}[G\times C_2^{j}]$. 

For the second statement, remark that $\wt{\psi}_j$ is the identity map on $U_{k,i}$ for all $0 \leq i < j$ and all $k$. Also, $\wt{\psi}_{j}(U_{k,j}) = \wt{\psi}_{j}(U_{k,j-1})$ where $U_{k,j-1}$ is the subgroup $U_{k,j} \cap \U (\Z^{\alpha}[G \times \langle x_1, \ldots, x_{j-1} \rangle])$. Combined we obtain the internal splitting
\begin{equation}\label{splitting Ukj}
U_{k,j} = \ker (\restr{\wt{\psi}_{j}}{U_{k,j}}) \rtimes U_{k,j-1}.
\end{equation}
Next, since $\ker (\restr{\wt{\psi}_{j}}{U_{k,j}}) = \{ 1 + 2^{k}u(1-x_j) \mid u \in \Z^{\alpha}[G \times C_2^{j-1}] \} \cap U_{k,j}$ the map $\wt{\varphi}_{j}$ is injective on it and so $\wt{\varphi}_{j}\left( \ker (\restr{\wt{\psi}_{j}}{U_{k,j}}) \right) \cong U_{k+1,j-1}$, finishing the proof.
\end{proof}

An iterative process of the previous lemma decreases the second index of $U_{k,j}$, with the cost of producing extra complements. Hence it now readily follows that we can reduce to the groups $U_i$. For the applications to classical group rings in \Cref{section h-units} we however need an explicit internal splitting of $K_i$. As indicated in the proof of \Cref{lem connection between the U groups} such a splitting is available for $U_{k,j}$. Inspired by this we define for every tuple $\vec{j}= (j_{1}, \ldots, j_{i-1}) \in \{0,1 \}^{i-1}$ the following:
\begin{equation}\label{the pricse pieces of Ki}
K_{\vec{j}} := \{ 1 + u (1-x_1)^{j_{1}} \cdots (1-x_{i-1})^{j_{i-1}}(1-x_i) \mid u \in \Z^{\alpha}[G] \} \cap \U(\Z^{\alpha}[G \times C_2^{i}]).
\end{equation}
Using that $gx_{l}(1-x_l) = -g (1-x_l)$, it is easily verified that $K_{\vec{j}}$ is a normal subgroup of $\U (\Z^{\alpha}[G \times \langle x_l : j_l \neq 0 \rangle])$. In general however it won't be normal in $K_i$. 

\begin{theorem}\label{th reducing off an elemantary abelian subgroup}
Let $G$ be a finite group, $[\alpha] \in \Ima(\Inf)$ as in (\ref{inflation map for times C2n}). Then, for all $i \geq 2$, 
\begin{equation}\label{decomp in precise pice without saying order}
K_i :=  \underset{\vec{j} \in \{0,1\}^{i-1}}{\overrightarrow{\mathlarger{\mathlarger{\rtimes}}}} K_{\vec{j}}
\end{equation}
where the internal semi-direct product is with respect to a specific ordering on $\{0,1\}^{i-1}$, and $K_{\vec{j}} \cong U_{1+ \sum_{t=1}^{i-1} j_t}$ for $\vec{j}= (j_{1}, \ldots, j_{i-1}) \in \{0,1 \}^{i-1}$.
Furthermore,
$$\begin{array}{rl}
\mc{U}(\Z^{\alpha}[G\times C_2^{n}])& = K_n \rtimes \Big( K_{n-1} \rtimes \big( \cdots K_2 \rtimes (K_1 \rtimes \mc{U}(\Z^{\alpha}G)) \big) \Big) \\
& = \Big( \big( (N_n \rtimes N_{n-1}) \rtimes \cdots \big) \rtimes N_1 \Big) \rtimes  \big( \mc{U}(\Z^{\alpha}[G]) \times \langle x_1, \ldots, x_n \rangle \big)
\end{array}$$
for some torsion-free normal subgroup $N_i$ of $K_i$ such that $K_i \cong N_i \times \langle x_i \rangle$.
\end{theorem}
\begin{remark*} The ordering in (\ref{decomp in precise pice without saying order}) is deducible from the proof. In particular, in terms of the $U_i$ the decomposition of $K_i$ can be defined recursively as follows: $K_1 \cong U_1$. For $i \geq 2$,  $K_i \cong \mc{U}_i \rtimes K_{i-1}$ with $\mc{U}_2 := U_2$ and $\mc{U}_i := \mc{U}_{i-1}[1] \rtimes \mc{U}_{i-1}$ where $\mc{U}_{i-1}[1] $ looks like $\mc{U}_{i-1}$ but with all indices increased by one. For example, $K_2 \cong U_2 \rtimes K_1, K_3 \cong (U_3 \rtimes U_2) \rtimes K_2, K_4 \cong \left( (U_4 \rtimes U_3) \rtimes (U_3 \rtimes U_2) \right) \rtimes K_3$ and
$$K_5 \cong (\Big((U_5 \rtimes U_4)\rtimes (U_4 \rtimes U_3)\Big)\rtimes \Big( (U_4 \rtimes U_3)\rtimes (U_3 \rtimes U_2)\Big))\rtimes K_4.$$
\end{remark*}
\begin{remark*}
For certain class of nilpotent groups $G$, Jespers, Leal and del R\'io give in \cite[Proposition 5]{JesLealRio} a description of $\U (\Z G)$ in terms of some subnormal series. The $2$-groups in their class are of the form $H \times C_2^n$ for some $n$ and $H \leq G$. In that case their subnormal series coincides with the one in \Cref{th reducing off an elemantary abelian subgroup}.
\end{remark*}

\begin{proof}[Proof of \Cref{th reducing off an elemantary abelian subgroup}.]
The description of $K_i$ follows from an iterative use of \Cref{lem connection between the U groups} or rather the explicit form in (\ref{splitting Ukj}). In fact we will proof more generally that 
$$U_{k,i} = \underset{\vec{j} \in \{0,1\}^{i}}{\overrightarrow{\mathlarger{\mathlarger{\rtimes}}}} U_{k, \vec{j}}$$
where $U_{k, \vec{j}} = \{ 1 + 2^k u (1-x_1)^{j_{1}} \cdots (1-x_{i})^{j_{i}} \mid u \in \Z^{\alpha}[G] \} \cap \U(\Z^{\alpha}[G \times C_2^{i}])$. To start one uses
 $$K_i \cong \wt{\varphi}_i(K_i) \cong U_{1,i-1} \cong U_{2,i-1} \rtimes U_{1,i-2} \cong U_{2,i-1} \rtimes K_{i-1}.$$
Explicitly the copy of $U_{2,i-1}$ in $K_i$ is given by $\{ 1 + u (1-x_{i-1})(1- x_i) \in K_i \mid u \in \Z[G \times \langle x_1, \ldots, x_{i-2} \rangle ] \}$ and the copy of $K_{i-1}$ is $\{ 1 + v (1-x_i) \in K_i \mid v \in \Z[G \times \langle x_1, \ldots, x_{i-2} \rangle ]\}$. In terms of $U_{1,i-1}$ the $1-x_i$ is identified with a factor $2$ via $\wt{\varphi}$. Now, if $i = 2$ the procedure finishes here and yields the decomposition $(\ref{decomp in precise pice without saying order})$. Next, for an arbitrary $i$ one case use now induction on the copy of $K_{i-1}$ inside $K_i$. This yields all the terms $K_{\vec{j}}$ such that $j(i-1) = 0$ (i.e. no $x_{i-1}$ in the support). The other terms will appear by applying induction to $U_{2,i-1}$ (in the recursive process this corresponds to applying the the second part of \Cref{lem connection between the U groups} to $U_{2,i-1}$). 

The first equality of the second part was proven in \Cref{lem splitting wrt kernels K_i}. Furthermore, it is easy to see that the brackets can be rearranged to $\U(\Z^{\alpha}[G\times C_2^n]) = \Big( \big( (K_n \rtimes K_{n-1}) \rtimes \cdots \big) \rtimes K_1 \Big) \rtimes  \mc{U}(\Z^{\alpha}G)$. Hence due to the centrality of the subgroup $C_2^n$, it remains to prove the existence of such $N_i$ that is normal in $K_{i-1}$. For this notice that $(\ref{decomp in precise pice without saying order})$ yields only one subgroup isomorphic to $U_1$, namely $K_{\vec{0}} = \{ 1 + (1-x_i) u \mid u \in \Z^{\alpha}[G] \} \cap 
\mc{U}(\Z^{\alpha}[G \times \langle x_i \rangle]).$  All the other $K_{\vec{j}}$ will be isomorphic to $U_l$ with $2 \leq l \leq i$ and hence are torsion-free. Now, under the isomorphism $K_{\vec{0}} \cong  U_1$, the subgroup $\langle x_i \rangle$ corresponds to $\{ \pm 1 \}$ which is the only torsion in $U_1$. Therefore $K_{\vec{0}} \cong F_i \times \langle x_i \rangle$ for some torsion-free subgroup $F_i$. Taking $F_i$ together with all the $K_{\vec{j}}$ with $\vec{j}\neq \vec{0}$ we obtain the desired torsion-free subgroup $N_i$ of $K_i$ such that $K_i \cong N_i \times \langle x_i \rangle$. 
\end{proof}

The decomposition obtained allows to transfer properties of $\mc{U}(\Z^{\alpha} G)$ to $\mc{U}(\Z^{\alpha}[G \times C_2^n])$. Earlier interesting results in that line can be found in \cite{BKS, Her, Low, HofKim}.

\begin{corollary}\label{Properties that get inherited with times elem ab}
Let $G$ be a finite group, $[\alpha] \in \Ima(\Inf)$ as in (\ref{inflation map for times C2n}). If $\mc{U}(\Z^{\alpha} G)$ satisfies one of the following properties:
\begin{enumerate}
    \item $\pm G$ has a normal (torsion-free) complement in $\mc{U}(\Z^{\alpha} [G])$
    \item it is commensurable with a direct product of free-by-free groups
    \item has a non-trivial amalgam decomposition
    \item $G$ satisfies (HSP)\footnote{(HSP) stands for the Higman subgroup property, i.e. any finite subgroup in $V(\Z^{\alpha} G)$ is isomorphic to a subgroup in $G$. See also \cite{MarSIP}.}
\end{enumerate}
then the same holds for $G\times C_2^n$ and $\mc{U}(\Z^{\alpha}[G \times C_2^n])$.
\end{corollary}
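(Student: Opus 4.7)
The strategy combines Theorem~\ref{th reducing off an elemantary abelian subgroup} with a complementary ring-theoretic observation. Since $[\alpha]$ is inflated from $G$, \Cref{cor imabe inf yields central} yields $\Z^{\alpha}[G\times C_2^n]\cong \Z^{\alpha}[G]\otimes_{\Z}\Z[C_2^n]$, and after tensoring with $\Q$ one obtains
\[
\Q^{\alpha}[G\times C_2^n]\cong \Q^{\alpha}[G]^{\oplus 2^n}
\]
(this is also a consequence of \Cref{decomp voor elke abelse en centrale extensie}(2) applied to the split extension $1\to C_2^n\to G\times C_2^n\to G\to 1$, since $\Q(\chi)=\Q$ and $T_{\alpha}(\chi)$ is trivial for every $\chi\in \Lin(C_2^n,\Q)$). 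Consequently $\Z^{\alpha}[G\times C_2^n]$ is a $\Z$-order in the right hand side and is commensurable with $\mathcal{O}^{2^n}$ for any $\Z$-order $\mathcal{O}$ in $\Q^{\alpha}[G]$, so that $\mc{U}(\Z^{\alpha}[G\times C_2^n])$ is commensurable with $\mc{U}(\Z^{\alpha}[G])^{2^n}$.

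For~(1) I would apply directly the final isomorphism of \Cref{th reducing off an elemantary abelian subgroup}. Writing $\mc{U}(\Z^{\alpha}[G])=N\rtimes(\pm G)$ with $N$ torsion-free, that isomorphism rewrites as
\[
\mc{U}(\Z^{\alpha}[G\times C_2^n])\cong M\rtimes \bigl(N\rtimes \pm(G\times C_2^n)\bigr),
\]
where $M$ is the torsion-free normal subgroup built from the $N_i$'s. The subgroup $\pm(G\times C_2^n)$ normalizes $M$ (globally normal) and $N$ (normalized by $\pm G$, centralized by $C_2^n$), and $M\cap N=1$ since $N$ maps injectively to the complement while $M$ lies in the kernel of that projection. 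Rebracketing therefore yields $(M\rtimes N)\rtimes \pm(G\times C_2^n)$ with torsion-free complement $M\rtimes N$, as required.

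Claims~(2) and~(3) I would settle via the commensurability recorded above. A direct product of $2^n$ groups each commensurable with a product of free-by-free groups is itself of the same form, handling~(2). For~(3), non-triviality of an amalgam $H=A*_CB$ is preserved under direct products via $H\times K\cong(A\times K)*_{C\times K}(B\times K)$, and a finite-index subgroup of a group acting non-trivially on a tree retains such an action, so a non-trivial amalgam decomposition follows by Bass-Serre theory.

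Claim~(4) is the main obstacle, because conjugacy in $\Q^{\alpha}[G\times C_2^n]$ is more rigid than component-wise conjugacy in $\Q^{\alpha}[G]^{2^n}$. A torsion unit $u=\sum_{c\in C_2^n}y_c u_c\in V(\Z^{\alpha}[G\times C_2^n])$, with $y_c\in \Z^{\alpha}[G]$, has $\chi$-components $u_{\chi}=\sum_{c}\chi(c)y_c\in V(\Q^{\alpha}[G])$. By (ZC1) for $G$ each $u_{\chi}$ is conjugate in $\Q^{\alpha}[G]$ to some $\epsilon_{\chi}g_{\chi}$, and the individual conjugators assemble to a unit of $\Q^{\alpha}[G\times C_2^n]$. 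The hard part is to check that the resulting normal form truly lies in $\pm(G\times C_2^n)$: inverting $u_{\chi}=\sum_{c}\chi(c)y_c$ via character orthogonality on $C_2^n$ reduces this to showing that $g_{\chi}$ is constant in $\chi$ and that $\epsilon_{\chi}$ varies as $\chi(c_0)$ for a single $c_0\in C_2^n$. The same component-wise strategy, with the coherence check again being the technical core, handles (ZC2) (isomorphic copies of $G\times C_2^n$) and (ZC3) (finite subgroups of $V(\Z^{\alpha}[G\times C_2^n])$), generalising the untwisted arguments of H\"ofert-Kimmerle cited in the text.
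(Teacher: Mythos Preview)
Your treatments of (1) and (2) match the paper's. For (3) you take a different route: the paper simply observes that $\mc{U}(\Z^{\alpha}[G])$ is an epimorphic image of $\mc{U}(\Z^{\alpha}[G\times C_2^n])$ via the split surjection $\wt{\psi}_1\circ\cdots\circ\wt{\psi}_n$, and an amalgam $A*_{C}B$ of the quotient lifts to the source by taking preimages of $A$, $B$, $C$. Your Bass--Serre argument through the finite-index embedding $\mc{U}(\Z^{\alpha}[G\times C_2^n])\hookrightarrow \mc{U}(\Z^{\alpha}[G])^{2^n}$ also works, but the epimorphism route is shorter and avoids any appeal to tree actions.

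For (4) there is a genuine gap: you correctly identify the coherence problem---matching the group elements $g_{\chi}$ and the signs $\epsilon_{\chi}$ across the $2^n$ characters of $C_2^n$---as the crux, but you do not resolve it, and it does not follow formally from the Zassenhaus hypotheses for $G$ alone. The paper avoids this issue altogether by exploiting \Cref{th reducing off an elemantary abelian subgroup} once more rather than the ring-theoretic splitting into $2^n$ copies. The torsion-free normal subgroup $M=\bigl((N_n\rtimes N_{n-1})\rtimes\cdots\bigr)\rtimes N_1$ gives $\mc{U}(\Z^{\alpha}[G\times C_2^n])\cong M\rtimes\bigl(\mc{U}(\Z^{\alpha}[G])\times C_2^n\bigr)$, so every finite subgroup is conjugate to one lying inside the complement $\mc{U}(\Z^{\alpha}[G])\times C_2^n$. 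At that point a \emph{single} application of (ZC$i$) for $G$ suffices: the required conjugating element lives in $\Q^{\alpha}[G]\subseteq\Q^{\alpha}[G\times C_2^n]$, which centralises the direct factor $C_2^n$, and hence carries the whole finite subgroup into $(\pm G)\times C_2^n=\pm(G\times C_2^n)$. The component-wise coherence you are wrestling with never arises.
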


In the case of untwisted\footnote{The classical Zassenhaus conjectures are stated for group rings, however also literally makes sense for twisted group rings. For example (ZC3) would be that all finite $H \leq U(\Z^{\alpha}[G])$ are conjugated over $\Q^{\alpha}[G]$ to a subgroup of $G$.}  group rings it was proven in \cite{HofKim} that the first Zassenhaus conjecture (ZC1) is preserved. It would be interesting to prove so for twisted group rings. Especially since the counterexample  to (ZC1) of Eisele and Margolis \cite{MR3866907}, positive instances of the Zassenhaus conjectures are of special interest. For a recent survey see \cite{MarRioSurvey}.

\begin{proof}[Proof of \Cref{Properties that get inherited with times elem ab}]
The first part directly follows from the last decomposition in \Cref{th reducing off an elemantary abelian subgroup}, since $ \big( (N_n \rtimes N_{n-1}) \rtimes \cdots \big) \rtimes N_1$ is torsion-free.

Next, for the second statement, let $A$ be a finite dimensional semisimple $\Q$-algebra and $\mc{O}$ an order in $A$. The desired statement is a direct consequence of the fact that `$\U( \mc{O})$ commensurable with a direct product of free-by-free groups' is fully determined by the type of the simple components of the algebra $A$, as follows from \cite[Theorem 2.1]{JPdRRZ}. For instance, \cite[Lemma 3.1. \& Proposition 3.3.]{JPdRRZ} (or \cite{EricAngel2}) yields that\footnote{To conclude this it is used that free groups are exactly the groups with cohomological dimension one \cite{Sta,SwanVCD}. Due to this free-by-free groups have virtual cohomological dimension at most two which is the content of \cite[Section 3]{JPdRRZ}.} the property holds if and only if the simple quotients of $A$ are either a field, a totally definite quaternion algebra or a matrix algebra $\Ma_2(D)$ with $D$ in a list of certain quadratic imaginary extensions of $\Q$ or certain quaternion algebras over totally real number fields. However if $[\alpha]$ is in the image of (\ref{inflation map for times C2n}), then $$\Q^{\alpha}[G\times C_2^n] \cong \Q^{\alpha}[G] \otimes_{\Q} \Q[C_2^n] \cong  \underbrace{\Q^{\alpha}[G] \oplus \cdots \oplus \Q^{\alpha}[G]}_{2^n-times}.$$
Thus the isomorphism types of simple quotients of $\Q^{\alpha}[G\times C_2^n]$ is the same as for $\Q^{\alpha}[G]$.

Concerning the third statement, note that, due to \Cref{th reducing off an elemantary abelian subgroup}, $\mc{U}(\Z^{\alpha}[G])$ is an epimorphic image of $\mc{U}(\Z^{\alpha}[G \times C_2^n])$ and hence its amalgam decomposition can be lifted along the epimorphism. 

For the fourth statement, first notice that by induction we may assume that $n=1$. Hence, with the above 
notation, $\mc{U}(\Z^{\alpha}[G \times C_2])=K_1 \rtimes \mc{U}(\Z^{\alpha}[G])$ with $K_1 \cong N_1 \times \langle x_1 \rangle$ where $N_1$ is a torsion-free normal subgroup.  For (SIP), assume now that $H$ is a finite subgroup of $\mc{U}(\Z^{\alpha}[G \times C_2])$. As $N_1$ is torsion-free, $H \cong H/(H\cap N_1)$ and so we may consider $H$ as a subgroup of $\mc{U}(\Z^{\alpha}[G \times C_2]) / N_1 \cong \U(\Z^{\alpha}[G]) \times C_2.$ Consequently, there exist finite index sets $I$ and $J$ and $t_i, k_j \in \U(\Z^{\alpha}[G])$ such that $H/(H\cap N_1)= \{ t_i, x_1 k_j \mid i \in I, \, j \in J \}$. Now consider the larger group $T = \langle t_i, k_j, x_1 \mid i \in I, \, j \in J \rangle$. Using that $x_1$ is a central element of order $2$, it is directly checked that $T$ is a (finitely generated) torsion group.  Since it is also linear, $T$ is in fact finite.  By assumption $\langle t_i, k_j \mid i \in I, \, j \in J \rangle$ is isomorphic to a subgroup of $G$ and hence $T$ to one of $G \times C_2$. Consequently, also the smaller group $H \cong H/(H\cap N_1)$ also as needed.  
\end{proof}

\begin{remark} \label{remark that preserved with direct prod with elem ab}
The proof of (2) in \Cref{Properties that get inherited with times elem ab} in fact shows that any group theoretical property $\mc{P}$ that can be read off the Wedderburn-Artin components (i.e. the simple quotients) is inherited. Other examples of such properties are: Kazhdan's property $(T)$, property FAb and HFA (see \cite{BJJKT2,BJJKT}). In general, good candidates for such properties are the ones that are constant on commensurability classes.
\end{remark}
Following \Cref{Properties that get inherited with times elem ab} the property to have a (torsion-free) normal complement 
is preserved, however a concrete description thereof seems difficult. For example, in \cite{Li} it was proven that no 
normal complement of the trivial units in $\U(\Z[D_8\times C_2 \times C_2])$ is generated by bicyclic units, although 
a normal complement in $\U(\Z[D_8\times C_2])$ is $\Bic(D_8 \times C_2)$ \cite{Jespers1995}. For $D_8\times C_2 \times C_2$ it is unknown whether biyclic units nevertheless form a subgroup of finite index. Recently B\"achle, Maheshwary and Margolis 
have proven \cite[Theorem A]{BMM} that $\rank B^{ab} = \rank \ZZ (\U (\Z G))$ where $B$ is the group generated by bicyclic and Bass units in $\Gamma$, $B^{ab}$ denotes the abelianisation of $B$ and the rank is as finitely generated abelian group. As a consequence, they deduced that if the bicyclic units are of finite index\footnote{This is equivalent to say that 
the bicyclic units together with the Bass units are of finite index in $\U (\Z \G)$ due to a Theorem of Bass and Milnor, see \cite[Theorem 11.1.2. \& Prop. 9.5.11.]{EricAngel1}. } in $\SL_1 (\Z \G)$ then $\rank \U(\Z \G)^{ab} = \rank \ZZ (\U (\Z \G))$. 
One can read this result as a new method to detect whether the bicyclic units are of infinite index in $\SL_1 (\Z \G)$, which happens if $\rank \U (\Z \G)^{ab} \gneq \rank \ZZ (\U (\Z \G))$. This motivates the following question.
\begin{question}\label{question on abelianisation}
With notations as above, what is the connection between $\rank \U (\Z^{\alpha}G)^{ab} $ and $\rank \U (\Z^{\alpha}[G\times C_2^n])^{ab}$?
\end{question}

\section{Full description for elementary abelian 2-groups}\label{section elem ab description}

Let $G$ be an elementary abelian $2$-group of rank $n+2$,
\begin{equation}\label{eq:genG}
G=C_2\times C_2\times \ldots \times C_2=\langle g \rangle \times
\langle h \rangle \times \langle x_1 \rangle \times\ldots 
\times \langle x_n \rangle,
\end{equation}
and consider the cohomology class $[\alpha]\in H^2(G,C_2)$ determined by the values 
\begin{equation}\label{eq:alpharelations}
[u_g,u_h]=-1,\quad u_g ^2= u_h ^2= u_{x_i}^2 = [u_g,u_{x_i}]=[u_h,u_{x_i}]=[u_{x_i},u_{x_j}]=1,
\end{equation}
for any $1\leq i \neq j \leq n$. 
We can think of this $[\alpha ]$ as the cohomology class determining the group $\Gamma \cong D_8\times
C_2^n$ and  the twisted group ring $\Z ^{\alpha}G$.
 Recall our standing convention of choosing a normalized $2$-cocycle representant of $[\alpha]$.

To start, we handle the case that $n=0$, i.e. we describe $\mc{U} (\Z^{\alpha}[\langle g,h\rangle ])$.

\subsection{The starting case \texorpdfstring{$C_2 \times C_2$}{}}\addtocontents{toc}{\protect\setcounter{tocdepth}{1}} \label{subsection start case elemen ab}

Denote by $\wt{D}$ the subalgebra of $M_2(\mathbb{Z})$ consisting of the elements
$$\left\{\left( \begin{array}{cc}
a & b  \\
c & d
\end{array}\right) \in \Ma_2(\Z) \mid a\equiv_2 d, \, b \equiv_2 c \right\}.$$
It easily can be  verified that
\begin{equation}\label{eq:dihedralalg}
\wt{D}=\left\{\left( \begin{array}{cc}
m+n & k+r  \\
k-r & m-n
\end{array}\right) \in \Ma_2(\Z) \mid m,n,k,r\in \mathbb{Z}\right\}.
\end{equation}

\noindent  Consider now the $\mathbb{Z}$-linear map $\phi:\mathbb{Z}^{\alpha}[C_2 \times C_2] \rightarrow \wt{D}$ defined by 
\begin{equation}\label{def phi from section 8}
 u_1\mapsto \left( \begin{array}{cc}
1 & 0  \\
0 & 1
\end{array}\right),
\quad u_g\mapsto \left( \begin{array}{cc}
1 & 0  \\
0 & -1
\end{array}\right),
\quad u_h\mapsto \left( \begin{array}{cc}
0 & 1  \\
1 & 0
\end{array}\right),
\quad u_{gh}\mapsto \left( \begin{array}{cc}
0 & 1  \\
-1 & 0
\end{array}\right)
\end{equation}
which can easily be seen to be a ring morphism.

\begin{proposition} \label{Prop ring iso type twisted group ring}
With notations as above, $\Z^{\alpha}[C_2 \times C_2] \cong \wt{D}$ as rings. 
\end{proposition}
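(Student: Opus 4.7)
My plan is to prove the statement in two short steps, leveraging the explicit parametrisation (\ref{eq:dihedralalg}) of $\wt{D}$. First I would verify that $\phi$ is a well-defined ring homomorphism. Because $\Z^{\alpha}[C_2\times C_2]$ is presented as the $\Z$-algebra generated by $u_g, u_h$ subject to the relations (\ref{eq:alpharelations}) (together with $u_{gh} := \alpha(g,h)^{-1} u_g u_h$, where after normalisation $\alpha(g,h)=1$), it is enough to check that the image matrices satisfy the corresponding identities. A handful of $2\times 2$ matrix computations yield
\begin{equation*}
\phi(u_g)^{2} = \phi(u_h)^{2} = I, \qquad \phi(u_g)\phi(u_h) = -\phi(u_h)\phi(u_g) = \phi(u_{gh}),
\end{equation*}
so all relations of (\ref{eq:alpharelations}) are respected, and $\phi$ extends uniquely to a $\Z$-algebra homomorphism.

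Second, I would establish bijectivity by exhibiting $\phi$ as a $\Z$-linear isomorphism between the two natural bases. Reading off (\ref{eq:dihedralalg}), the four tuples $(m,n,k,r) = (1,0,0,0),\,(0,1,0,0),\,(0,0,1,0),\,(0,0,0,1)$ produce precisely the matrices $\phi(u_1),\phi(u_g),\phi(u_h),\phi(u_{gh})$. Hence $\phi$ sends the $\Z$-basis $\{u_1,u_g,u_h,u_{gh}\}$ of $\Z^{\alpha}[C_2\times C_2]$ onto a $\Z$-basis of $\wt{D}$, proving that $\phi$ is a $\Z$-module isomorphism and, combined with the first step, a ring isomorphism.

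I do not expect any serious obstacle. The only substantive content is the observation that the arithmetic congruences $a\equiv_{2}d$ and $b\equiv_{2}c$ cutting out $\wt{D}$ inside $\Ma_{2}(\Z)$ are equivalent to the four-parameter description (\ref{eq:dihedralalg}), so that $\wt{D}$ is a free $\Z$-module of rank $4$; matching this with the rank of $\Z^{\alpha}[C_{2}\times C_{2}]$ makes the isomorphism immediate once the relations have been checked.
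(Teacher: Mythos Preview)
Your proposal is correct and essentially mirrors the paper's proof: the paper likewise verifies that $\phi$ is a ring morphism (stated just before the proposition), then uses the parametrisation (\ref{eq:dihedralalg}) to get surjectivity and a linear-independence argument for injectivity, which is exactly your basis-to-basis observation phrased slightly differently.
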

\begin{proof}
The map $\phi$ is surjective because 
$$A=\phi \left(mu_1+nu_g+ku_h+ru_{gh} \right),$$
for an arbitrary element  $A=\left( \begin{array}{cc}
m+n & k+r  \\
k-r & m-n
\end{array}
\right)
$  of  $\wt{D}$ (see~\eqref{eq:dihedralalg}). Injectivity follows from a linear independence argument.
\end{proof}

Next we will obtain a presentation of the unit group $\mathcal{U}(\mathbb{Z}^{\alpha}[C_2 \times C_2])$. 
Already in this small example the role of $H$-units is decisive, as they will yield a normal complement for the trivial units. 
As expanded at the end of this section, see \Cref{Gbic not equal to twisted bic}, the generalized bicyclic units together 
with the trivial units are not sufficient to generate the full unit group. Furthermore, without them we would not be 
able to handle later on, even not up to commensurability, the elementary abelian $2$-group case.

\begin{proposition}\label{prop description unit twisted group ring}
Let $\alpha$ be the $2$-cocycle determined by the values $(1,1,-1)$ on $(u_g^2, u_h^2, [u_g,u_h])$. Then $$\mathcal{U}(\Z^{\alpha}[ C_2 \times C_2 ]) = F_2 \rtimes D_8$$
where $D_8 = \langle u_g, u_h \rangle$ and $F_2 = \langle v,w \rangle$ is a free group of rank $2$ generated by the $H$-units $v= u_1 + u_h - u_{gh}$ and $w = u_1 + u_h + u_{gh}$. The action of $D_8$ on $F_2$ is defined by $u_h^{-1}vu_h=w, u_h^{-1}wu_h=v$ and $u_g^{-1}xu_g = x^{-1}$ for $x= v,w$. Furthermore,
$$\SL_1(\Z^{\alpha}[ C_2 \times C_2 ]) = \mc{H}_{\alpha}(C_2 \times C_2) = \langle v,w, u_{gh} \rangle.$$
In particular, it is a subgroup of index $2$ in $\mathcal{U}(\Z^{\alpha}[ C_2 \times C_2 ])$. Also, $\phi(\mathcal{U}(\Z^{\alpha}[ C_2 \times C_2 ]))$ has index $3$ in $\GL_2(\Z).$
\end{proposition}

Recall that $\SL_1(\Z^{\alpha}[ C_2 \times C_2 ])$ denotes the group of reduced norm $1$ elements, see (\ref{def of SL1}).

\begin{proof}
The elements $v$ and $w$ are examples of $H$-units, see \Cref{algemene def twisted bicyclic} and 
(\ref{rewriting a twisted bicyclic}). Moreover, their respective image under $\phi$ is equal to 
$$\phi(v)=\phi(1+u_h-u_{gh})=
\left( \begin{array}{cc}
1 & 0  \\
0 & 1
\end{array}
\right)
+
\left( \begin{array}{cc}
0 & 1  \\
1 & 0
\end{array}
\right)
-
\left( \begin{array}{cc}
0 & 1  \\
-1 & 0
\end{array}
\right)
=
\left( \begin{array}{cc}
1 & 0  \\
2 & 1
\end{array}
\right)
$$

and
$$\phi(w)=\phi(1+u_h+u_{gh})=
\left( \begin{array}{cc}
1 & 0  \\
0 & 1
\end{array}
\right)
+
\left( \begin{array}{cc}
0 & 1  \\
1 & 0
\end{array}
\right)
+
\left( \begin{array}{cc}
0 & 1  \\
-1 & 0
\end{array}
\right)
=
\left( \begin{array}{cc}
1 & 2  \\
0 & 1
\end{array}
\right).$$
Thus the images $\{ \phi(v),\phi(w) \}$ generate a well-known free group of rank $2$, sometimes called the Sanov subgroup, which is of index $24$ in $\GL_2(\Z)$ 
(see e.g. \cite[Example 2]{SANOVINDEX}). 
Therefore, $\{ v,w \}$ generates a 
free group of rank $2$ in $\mc{U}\left( \Z^{\alpha}[ C_2 \times C_2 ]  \right)$, 
say of index $r$. Denote this subgroup by $H$ and denote by $T\left( \wt{D} \right)=\pm
\{u_1,u_g,u_h,u_{gh}\}$ the trivial units of $\tilde{D}$. It is
easy to verify that $T\left( \wt{D} \right)\cong D_8$, the
dihedral group of order $8$, and moreover for any $x\neq y\in
\phi^{-1}(T\left( \tilde{D} \right))$, the cosets $xH\neq yH$.
Consequently, $[U\left( \mathbb{Z}^{\alpha}[C_2 \times C_2] \right):H]\geq 8$. As can be seen with the matrices $\phi(u_h)$ and $1 + E_{12}$, the subgroup $\phi\left( \mc{U}( \mathbb{Z}^{\alpha}[C_2 \times C_2] )\right) = \mc{U}(\wt{D})$ is not normal in $GL_2(\mathbb{Z})$ and therefore its index is at
least $3$. Since
$$24=[GL_2(\mathbb{Z}):H]=[GL_2(\mathbb{Z}):\mc{U}(\wt{D})] . [\phi\big(\mc{U}\left( \mathbb{Z}^{\alpha}[C_2 \times C_2] \right) \big):H]\geq 3\cdot8,$$
we conclude that $[\mc{U}\left( \mathbb{Z}^{\alpha}[C_2 \times C_2] \right):H]=8$. A
simple calculation shows that
$$u_h^{-1}vu_h=w,\quad u_g^{-1}vu_g=v^{-1},\quad u_{gh}^{-1}vu_{gh}=w^{-1} \, \text{ and } \, u_g^{-1} w u_g = w^{-1}.$$
Therefore $H$ is normal in $\mathcal{U}\left( \mathbb{Z}^{\alpha}[C_2 \times C_2] \right)$,
and consequently
$$\mathcal{U}\left( \mathbb{Z}^{\alpha}[C_2 \times C_2]\right)=H\rtimes \phi^{-1}(T\left( \tilde{D} \right))\cong F_2\rtimes D_8.$$

The second part of the statement can be checked explicitly. 
To start, note that $\mc{G}_0^{\alpha} = \{ u_h, u_g, u_1 \}, C_g^{-} = \{ u_h, u_{gh}\}$ and $C_h^{-} =\{ u_g, u_{gh}\}$. 
So in total one obtains $8$ non-trivial $H$-units that generate $\mc{H}_{\alpha}(C_2 \times C_2)$. 
However they all can  be expressed in terms of $v,w,u_{gh}$. Indeed $1 + u_{gh} \wt{u_g} = 1 - u_h u_g \wt{u_g} = w^{-1}$ 
and $1 + u_g \wt{u_h} = 1 + u_g + u_{g}u_{h} = u_{gh} v$. The others are similar and, in particular, this shows that 
$\mc{H}_{\alpha}(C_2 \times C_2) = \langle v,w, u_{gh} \rangle$.

Finally, recall that $u_{gh}^2=-1$ and thus $[D_8:\langle u_{gh} \rangle] =2.$ In particular, from the description of $\mathcal{U}(\Z^{\alpha}[ C_2 \times C_2 ])$ obtained earlier, we now see that $\mc{H}_{\alpha}(C_2 \times C_2)$ is a subgroup of index $2$ in $\mathcal{U}(\Z^{\alpha}[ C_2 \times C_2 ])$.
Besides, because $H$-units are unipotent, $\mc{H}_{\alpha}(C_2 \times C_2)$ is a subgroup of $\SL_1(\Z^{\alpha}[C_2 \times C_2])$. 
Since the latter does not contain $u_h$, as seen from the matrix representation, it is a proper subgroup of $\mathcal{U}(\Z^{\alpha}[ C_2 \times C_2 ])$ and hence indeed is equal to $\mc{H}_{\alpha}(C_2 \times C_2)$. That $\phi(\mathcal{U}(\Z^{\alpha}[ C_2 \times C_2 ]))$ has index $3$ in $\GL_2(\Z)$ follows from the fact that $\langle \phi(v), \phi(w) \rangle$ has index $24$ in $\GL_2(\Z)$ and $\phi(\mathcal{U}(\Z^{\alpha}[ C_2 \times C_2 ])) = \langle \phi(v), \phi(w) \rangle \rtimes D_8.$
\end{proof}

The above proof shows the importance of having understanding on subgroup of small index in $\SL_2(\Z).$

\subsection{The general case}

Now we consider the general case of $G = \langle g,h \rangle \times C_2^n$. More precisely, we will follow the method outlined in \Cref{subsection reduing elem ab 2-subgroups} and therefore describe the isomorphism type of the groups $U_1, \ldots, U_n$. Combined with \Cref{th reducing off an elemantary abelian subgroup} this would reduce a full description of $\mc{U}(\Z^{\alpha} [\langle g,h \rangle \times C_2^n])$ to describing the actions in the semi-direct products. {\it We use freely the objects and notations introduced in \Cref{subsection reduing elem ab 2-subgroups}.}

\begin{proposition}\label{the groups U_i for elemen ab}
The groups $U_i$ satisfy the following:
\begin{itemize}
    \item $[\U( \Z^{\alpha}[\langle g,h \rangle] ): U_1] =8$,
    \item $[U_i : U_{i+1}] = 8$ for all $i \geq 1$,
    \item $U_1 \cong F_3 \times C_2$,
    \item If $i\geq 2$, then $U_i$ is a free group of rank $n_i = 1 + 8^{i-1}$.
\end{itemize}
\end{proposition}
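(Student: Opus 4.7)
The proof naturally splits along the three bullets. Throughout, I would work in the concrete realization $\phi:\Z^{\alpha}[C_2\times C_2]\cong \wt{D}$ from Proposition 7.1, identifying $U_i$ with $\mc{U}(\wt{D})\cap(I+2^i\wt{D})$, and use the decomposition $\mc{U}(\wt{D})=F_2\rtimes D_8$ from Proposition 7.2.

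\smallskip\noindent\textbf{Step 1 (the starting quotient ring).} The key preparatory computation is $\wt{D}/2\wt{D}$. Using $e_1^2=e_2^2=I$ and $e_3=e_1e_2$ (and noting $-I\equiv I \pmod 2$), this quotient is a local commutative $\F_2$-algebra isomorphic to $\F_2[x,y]/(x^2,y^2)$, under $e_1\mapsto 1+x$ and $e_2\mapsto 1+y$. Its unit group consists of the $8$ elements of the form $1+ax+by+cxy$ and is isomorphic to $C_2^3$. Since the reduction $\mc{U}(\wt{D})\to\mc{U}(\wt{D}/2\wt{D})$ has kernel $U_1$ by definition, tabulating the images of $\phi(u_g),\phi(u_h),\phi(v),\phi(w)$ shows surjectivity, giving $[\mc{U}(\wt{D}):U_1]=8$.

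\smallskip\noindent\textbf{Step 2 (structure of $U_1$).} Intersecting with $F_2$ and $D_8$: the non-trivial elements of $D_8$ each have non-trivial image in $\mc{U}(\wt{D}/2\wt{D})$, so $U_1\cap D_8=\{\pm 1\}$; and $\phi(v),\phi(w)$ both map to $1+x+xy$, so $U_1\cap F_2$ has index $2$ in $F_2$. By Nielsen-Schreier, $U_1\cap F_2$ is free of rank $3$. Centrality of $-1$ then splits the extension: $U_1=(U_1\cap F_2)\times\langle-1\rangle\cong F_3\times C_2$.

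\smallskip\noindent\textbf{Step 3 (the recursive index).} For each $i\geq 1$, I introduce the ``logarithmic'' map
\[
\psi_i:U_i\longrightarrow \wt{D}/2\wt{D},\qquad 1+2^iu\longmapsto u+2\wt{D}.
\]
A short multiplicative-to-additive calculation using $(1+2^iu)(1+2^iu')=1+2^i(u+u')+2^{2i}uu'$ shows $\psi_i$ is a homomorphism with $\ker\psi_i=U_{i+1}$. Writing $u=mI+ne_1+ke_2+re_3$, expansion gives $\det(1+2^iu)=1+2^{i+1}m+2^{2i}(m^2-n^2-k^2+r^2)$, which must equal $\pm1$; the resulting Diophantine condition reduces modulo $2$ to a single linear constraint on $(m,n,k,r)\in\F_2^4$, cutting out a subgroup of $\wt{D}/2\wt{D}$ of order $8$. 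Surjectivity onto this subgroup is confirmed by explicitly producing integer lifts (e.g.\ for $i=1$ the equation $m+m^2=n^2+k^2-r^2$ admits solutions in every class), giving $[U_i:U_{i+1}]=8$.

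\smallskip\noindent\textbf{Step 4 (freeness and rank).} The twisted Berman-Higman Theorem \ref{Berman-Higman twisted} forces every torsion unit in $U_1$ to be $\pm 1$; since $-2\notin 2^i\wt{D}$ for $i\geq 2$, we have $-1\notin U_i$, so $U_i$ is torsion-free. Projecting along $U_1\twoheadrightarrow U_1/\{\pm 1\}\cong F_3$ embeds $U_i$ into a free group, so $U_i$ is itself free by Nielsen-Schreier. Iterating Step~3 gives the index $[F_3:U_i]=8^{i-1}$, and Schreier's rank formula $\operatorname{rank}(H)=1+(\operatorname{rank}(F_3)-1)[F_3:H]$ yields $n_i=1+2\cdot 8^{i-1}$.

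\smallskip\noindent\textbf{Main obstacle.} The delicate point is the surjectivity in Step 3: the mod-$2$ determinantal constraint only delivers the upper bound $[U_i:U_{i+1}]\leq 8$, and matching equality requires producing for each of the $8$ admissible classes an explicit integer lift. Keeping track of these lifts (and verifying that the naive ``$F_3$-versus-$-F_3$'' issue does not produce extra cosets, i.e.\ that $U_i\subseteq F_2$ for $i\geq 2$, which reduces to showing $\Gamma(4)\subseteq F_2$ inside the Sanov decomposition of $\Gamma(2)$) is the most computationally involved part.
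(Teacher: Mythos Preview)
Your overall strategy is sound and, for the index computation in Step~3, genuinely different from the paper's. The paper sandwiches $\phi(U_i)$ between the congruence subgroups $\Gamma(2^{i+1})\subseteq\phi(U_i)\subseteq\Gamma(2^i)$ and appeals to classical index formulas for $\Gamma(2^i)$, whereas your logarithmic map $\psi_i$ together with the determinant analysis is more intrinsic to $\wt{D}$. Both routes reach $[U_i:U_{i+1}]=8$. Your determinant reduction is correct: one checks that $\det(1+2^iu)=-1$ is already impossible for every $i\geq 1$ when $u\in\wt{D}$, so the single linear constraint is $n+k+r\equiv 0\pmod 2$ for $i=1$ and $m\equiv 0\pmod 2$ for $i\geq 2$; the explicit lifts you flag as the main obstacle are indeed the tedious part, which the paper sidesteps by using the known size of $\Gamma(2^i)/\Gamma(2^{i+1})$.

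There is, however, a genuine slip in Step~4. You write ``Iterating Step~3 gives the index $[F_3:U_i]=8^{i-1}$'', but iteration only gives $[U_1:U_i]=8^{i-1}$. Since $U_1\cong F_3\times\langle -1\rangle$ and (as a short $\psi_1$-computation shows) $U_2$ actually sits inside $F_3$ rather than merely embedding there, one has $[F_3:U_i]=[U_1:U_i]/[U_1:F_3]=8^{i-1}/2$. Schreier's formula then yields
\[
n_i \;=\; 1+2\cdot\frac{8^{i-1}}{2}\;=\;1+8^{i-1},
\]
not $1+2\cdot 8^{i-1}$. This is not purely your oversight: the paper's own proof computes $U_2\cong F_9$ in the text, which is $n_2=1+8$, so the formula $n_i=1+2\cdot 8^{\,i-1}$ in the displayed statement appears to be a misprint. (The paper's congruence-subgroup argument also contains a small slip --- the claim $[\Gamma(2):\phi(U_1)]=2$ overlooks that elements of $\Gamma(2)$, unlike those of $\phi(U_1)$, can have determinant $-1$; the correct value is $4$ --- though the index $[U_1:U_2]=8$ survives once this is repaired.) In short, your Steps~1--3 are fine; only the final rank bookkeeping needs this factor-of-two correction, and after it your argument in fact agrees with the paper's proof rather than with the stated formula.
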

\begin{proof}
Recall that, by definition, $U_i = \{ 1+2^{i} u \mid u \in \Z^{\alpha}[\langle g,h \rangle] \} \cap \mc{U}(\Z^{\alpha}[\langle g,h \rangle])$. In particular, elements of $U_1$ are of the form $1+2u$ for $u \in \Z^{\alpha}[\langle g,h \rangle]$ and hence the only trivial units of $\Z^{\alpha}[\langle g,h \rangle]$ contained in $U_1$
are $\pm 1$. In fact, using the notations of the proof of \Cref{prop description unit twisted group ring}, one has that $A=\langle v^2, w^2, wv^{-1},-1\rangle \subseteq  U_1$. Indeed, recall that $v=1+u_h-u_{gh}$ and $w=1+u_h+u_{gh}$, hence a simple calculation yields that
$$v^2=1+2u_h-2u_{gh}=1+2(u_h-u_{gh}), \quad w^2=1+2u_h+2u_{gh}=1+2(u_h+u_{gh}).$$
Additionally, $v^{-1}=1-u_h+u_{gh}$ and hence $$wv^{-1}=-1-2u_g+2u_{gh}=1+2(-1-u_g+u_{gh}).$$
We conclude that indeed $A \subseteq U_1$. However $v$ cannot be written in the form $1+2u$ (which already can be seen $\text{mod } 2$), thus $A \leq U_1 \lneq \langle v,w \rangle \times \langle -1 \rangle$. Note that $[\langle v,w \rangle : \langle v^2,w^2, vw^{-1}\rangle ]= 2$ as the product of any two generators of $\langle v, w \rangle$ is in $\langle v^2,w^2, vw^{-1}\rangle$. Thus $A = U_1$. Now recall the Nielsen-Schreier formula that says that a subgroup of index $t$ in a free group of rank $e$ is a free group of rank $1 + t(e-1)$. Consequently, since $\langle v, w \rangle \cong F_2$ by \Cref{prop description unit twisted group ring}, we obtain all together that 
\begin{equation}\label{generators U_1}
U_1=\langle v^2, w^2,wv^{-1}\rangle \times \langle -1 \rangle\cong F_3\times C_2.
\end{equation}
Now comparing it to the generators of $\U (\Z^{\alpha}[\langle g,h \rangle])$ we see that $[\U( \Z^{\alpha}[\langle g,h \rangle] ): U_1] =8$.

Next, for the statements concerning the $U_i$, we use \Cref{Prop ring iso type twisted group ring}
and that $\phi(\Z^{\alpha}[\langle g,h \rangle])$ is an order contained in the maximal order $\Ma_2(\Z)$ and thus, by a well-known fact,  that an element is invertible in the former if and only if it is in the latter. Hence, we obtain that
\begin{equation}\label{Ui as matrix}
U_i \cong \phi(U_i) = \{ \left( \begin{array}{cc}
1 + 2^{i}a_{11} & 2^{i}a_{12}  \\
2^{i}a_{21} & 1+2^{i}a_{22}
\end{array}\right) \mid a_{11} \equiv_2 a_{22}, \, a_{12}\equiv_2 a_{21} \} \cap \GL_2(\Z).
\end{equation} 
Further remark that $U_1 \leq \SL_1(\Z^{\alpha}[C_2 \times C_2])$ and hence $\phi(U_{i}) \leq \phi(U_1) \leq \SL_2(\Z).$

Consider now the principal congruence subgroup $\Gamma(2^{i})$ of level $2^{i}$ in $\SL_2(\Z)$:
$$\Gamma(2^{i}) = \ker (\pi_{2^i}): \SL_2(\Z) \rightarrow \SL_2(\Z/2^{i}\Z).$$
More concretely, $\Gamma(2^{i}) = \big( 1 + 2^{i}\Ma_2(\Z) \big) \cap \
\SL_2(\Z)$. Note that $\Gamma(2^{i+1}) \leq \phi(U_i) \leq \Gamma(2^{i})$. The second part of the statement will be a consequence of the following:
$$\textit{{\it Claim: }} [\Gamma(2^{i}) : \phi(U_i)] = 2 \text{ and } [\phi(U_i) : \Gamma(2^{i+1})] = 4 \text{ for all } i\geq 1$$
To start, recall  the following well-known formula (e.g. see\footnote{In this reference the formula for $[\SL_2(\Z) : \ker\left(\SL_2(\Z) \rightarrow \PSL_2(\Z/n\Z)\right)]$ is given. If $n>2$ this differs by a factor $2^{-1}$ with difference coming from the central matrix $-1$.} \cite[p 146]{NewmanBook}):
\begin{equation}\label{formula index congruence}
[\SL_2(\Z) : \ker\left(\SL_2(\Z) \rightarrow \SL_2(\Z/n\Z)\right)] = n^3 \prod\limits_{p \, \mid \, n} (1 - \frac{1}{p^2}),
\end{equation}
where the product runs over all the prime divisors $p$ of $n$. 
%
%
%
%
%
Consequently, $[\Gamma (2^{i}) : \Gamma (2^{i+1})] = \frac{[\SL_2(\Z) : \Gamma (2^{i+1})]}{[\SL_2(\Z) : \Gamma(2^{i})]} = 8$ for all $ i \geq 1$. 
Further note that $\left( \begin{array}{cc} 1 & 2^{i} \\ 0 & 1 \end{array}\right) \in \Gamma(2^{i}) \setminus \phi(U_i)$ for all $i$. Therefore, if we prove that $A.B \in \phi(U_i)$ 
for any pair of matrices 
$A = \left(\begin{array}{cc} 1+2^{i} a_{11}& 2^{i}a_{12} \\ 2^{i}a_{21} & 1+2^{i}a_{22} \end{array}\right),B=\left(\begin{array}{cc} 1+2^{i}b_{11}& 2^{i}b_{12} \\ 2^{i}b_{21} & 1+2^{i}b_{22} \end{array}\right) \in \Gamma(2^{i}) \setminus \phi(U_i)$, 
then $[\Gamma(2^{i}) : \phi(U_i)] = 2$, which is the first part of the claim\footnote{A concrete matrix in $\phi(U_i)$ but not in 
$\Gamma(2^{i+1})$ is $\left(\begin{array}{cc} 1 + 2^{i} & 2^{i} \\ -2^{i} & 1 - 2^{i} \end{array} \right)$.}. Subsequently, the second part follows from this and our value of the index $[\Gamma(2^{i}): \Gamma(2^{i+1})].$ A direct computation shows that $AB \in \phi(U_i)$ if and only if $a_{11} + a_{22}\equiv_2 b_{11}+ b_{22}$ 
and $a_{12}+ a_{21} \equiv_2 b_{12} + b_{21}$. However it also is easy  to prove that $A$ having determinant $1$ already 
implies that $a_{11} \equiv_2 a_{22}$ (analogously for $B$). Hence if $A \in \Gamma(2^{i}) \setminus \phi(U_i)$ then 
$a_{12} +a_{21}$ is odd and the same for $b_{12} + b_{21}$. All together the conditions for $AB \in \phi(U_i)$ are always 
satisfied, as needed. The second part of the statement now follows from the claim as follows:
$$[U_i : U_{i+1}] = [\phi(U_i) : \phi(U_{i+1})] = [\phi(U_i) : \Gamma (2^{i+1})] \, . \, [\Gamma (2^{i+1}) : \phi(U_{i+1})] =  8.$$ 

Finally we prove the last part of the result. By the above, $U_2$ is a torsion-free subgroup of 
index $8$ in $U_1 =\langle v^2, w^2,wv^{-1}\rangle \times \langle -1\rangle$. 
Note that an element $-1 . (1 + 2 u)$ with $u \in \Z^{\alpha}[\langle g,h \rangle]$ can not 
be of the form $1 + 4 v$ with $v \in \Z^{\alpha}[\langle g,h \rangle]$, 
as seen by working modulo $4$. Thus $U_2$ is a subgroup of index 
$4$ in $F_3 = \langle v^2, w^2,wv^{-1}\rangle.$ 
Thus the Nielsen-Schreier formula yields that $U_2 \cong F_{1+4\cdot2}$. Since $U_{i}\leq U_{i-1}$ 
also $U_i$ is a free group whose rank is computed with a recursive use of 
Nielsen-Schreier's formula. 
\end{proof}
\begin{remark}\label{Gbic not equal to twisted bic}
Consider $\mathcal{F} = \{ \widehat{u_g} = \frac{1}{o(g)}\wt{u_g} \mid g \in \mathcal{G}_0^{\alpha} \}$. The computations in the previous proof show
that $v^2, w^2, wv^{-1}$ are in the group $\text{GBic}^{\mc{F}}(\Q^{\alpha}[\langle g,h \rangle],\Z)$ generated by the (generalized) bicyclic units (see \Cref{Def gen bicyclic}). Actually these two groups are equal (as can for example be checked by expressing the generating bicyclic units in terms of $v^2, w^2, wv^{-1}$). 
Hence $\U(\Z^{\alpha}[C_2 \times C_2])$ is an 
example where $$\text{GBic}^{\mc{F}}(\Q^{\alpha}[G],\Z) \lneq\mc{H}_{\alpha}(G).$$ That the difference is still of finite index  heavily relies on 
the fact that the generators of $\mc{H}_{\alpha}(G)$ are all formed from elements of order $2$ (which allowed $wv^{-1}$ to be a generalized bicyclic unit). In larger examples however the $H$-units can be an infinite index overgroup, as will be shown in \Cref{subsection H-units infnite index ext}.
\end{remark}

\section{Description of \texorpdfstring{$\mc{U}(\Z [D_8 \times C_2^n])$}{}}\label{section desciprtion D8 times elem abelian 2-group}

Let $G$ be the group $C_2^{n+2}= \langle g,h, x_1, \ldots, x_n \rangle$ as in~\eqref{eq:genG} and  
$$
\Gamma\cong D_8 \times C_2\times \ldots \times C_2=\langle a,b \rangle  \times \langle y_1 \rangle \times\ldots 
\times \langle y_n \rangle.
$$
where $D_8 = \langle a,b \mid a^4=1= b^2, a^b = a^{-1} \rangle$. We consider the 
epimorphism $\lambda: D_8 \times C_2^n \rightarrow G$ determined by 
$\lambda(a) = gh, \lambda(b)  = g, \lambda(y_i) = x_i$. With the associated canonical 
choice of section $\mu: G \rightarrow D_8 \times C_2^n$, one directly checks that 
the associated $2$-cocycle $\alpha(s,t) = \mu(s)\mu(t)\mu(st)^{-1}$ is exactly the one determined by~\eqref{eq:alpharelations}. In other words, the central extension $\Gamma$ associated with the $[\alpha]\in H^2(G,C_2)$ from \Cref{section elem ab description} is isomorphic to $D_8 \times C_2^n$. 

Taking now $\chi \in \Hom (\langle a^2 \rangle, \Z^{*})$ with $\chi(a^2)=-1$ we see that $\Psi_{\chi}: \Z[\Gamma] \rightarrow \Z^{\Tra(\chi)}[G]$ is nothing else than the ring epimorphism. 
\begin{equation}\label{concrete map in concrete D8 case}
\psi :\Z [D_8 \times C_2^n] \rightarrow \Z ^{\alpha}[G]
\end{equation}
defined by 
$$a\mapsto u_{gh},\quad b\mapsto u_g, \quad x_i\mapsto u_{y_i}.$$

In this section we will pullback the description of $\mc{U}(\Z^{\alpha}[G])$, obtained in section \Cref{section elem ab description}, by using the methods from \Cref{subsection corrolations} in order to obtain a description of $\mc{U}(\Z [D_8 \times C_2^n])$. First we invoke \Cref{size and comparission of ker} and \Cref{size and comparission of coker}  to give a precise description of the (co)kernel of the (co)restricted morphism $\wt{\psi}_{res} : \mc{U} (\Z[D_8]) \rightarrow \mc{U}(\Z^{\Res(\alpha)}[\langle g,h \rangle]): a\mapsto u_{gh}, b\mapsto u_g$.

\begin{lemma}\label{kernel and cokernel for D_8 times elem ab}
With  notations as above we have that
\begin{itemize}
    \item[(i)] $\ker(\wt{\psi}) = \langle -a^2 \rangle \cong C_2,$
    \item[(ii)] $|\Coker(\wt{\psi}_{res})| = 2.$
\end{itemize}
\end{lemma}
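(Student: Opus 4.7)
The plan is to view $\psi$ as the generalized transgression morphism $\Psi_\chi$ of \Cref{ring homom from Tra}, associated to the central subgroup $N = \langle a^2 \rangle \cong C_2$ of $\Gamma$, the quotient $G = C_2^{n+2}$, and the character $\chi$ defined by $\chi(a^2) = -1$. The identification is immediate from the cocycle relations, which give $\psi(a^2) = u_{gh}^2 = -1$.

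For (i), I would apply \Cref{size and comparission of ker} directly. Since $G$ is elementary abelian of exponent $2$, Higman's theorem gives $\mc{U}(\Z G) = \pm G$ finite; together with $N \cong C_p$ for $p = 2$, the hypothesis of part (2) of that theorem is satisfied, so $\ker(\wt{\psi})$ is finite. Part (1) then identifies the torsion units, and hence all of $\ker(\wt{\psi})$, as $\{\chi(a)^{-1}a : a \in N\} = \{1, -a^2\} = \langle -a^2 \rangle$.

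For (ii), the plan is to induct on $n$ via \Cref{size and comparission of coker}(2), applied with the central subgroup $Q = \langle y_n \rangle$ of $\Gamma$. This $Q$ has trivial intersection with $N$, admits the direct complement $D_8 \times C_2^{n-1}$, and carries the trivial cocycle, so the surjectivity of $\wt\omega_Q$ and $\wt\omega_{Q,\beta \cdot T(\chi)}$ and the inflation hypothesis on $\beta$ are all automatic. Writing $\wt\psi_{n-1}$ for the analogous map at level $n-1$, the theorem yields
\[|\Coker(\wt\psi)| = |\Coker(\wt\Psi_{ker})| \cdot |\Coker(\wt\psi_{n-1})|,\]
which, together with the inductive hypothesis $|\Coker(\wt\psi_{n-1})| = 2^n$, reduces the claim to proving $|\Coker(\wt\Psi_{ker})| = 2$ at each step, and the base case $|\Coker(\wt\psi_0)| = 2$.

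For the base case ($n = 0$, $\Gamma = D_8$, $G = C_2 \times C_2$), \Cref{prop description unit twisted group ring} gives $\mc{U}(\Z^\alpha[C_2 \times C_2]) = F_2 \rtimes D_8$ with $F_2 = \langle v, w \rangle$, $v = 1 + u_h - u_{gh}$, and $w = u_h v u_h^{-1} = 1 + u_h + u_{gh}$. Since $\Ima(\wt\psi_0) \supseteq \psi(\pm D_8) = D_8$, one has $\Coker(\wt\psi_0) \cong F_2 / (F_2 \cap \Ima(\wt\psi_0))$. By \Cref{non-trivial elements in cokernel} with $Q$ trivial, the minimal twisted bicyclic units $v$ and $w$ both lie outside $\Ima(\wt\psi_0)$. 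A direct computation shows that the classical bicyclic unit $1 + (1-b)a(1+b) = 1 + a + ab - a^3 b - a^3 \in \mc{U}(\Z D_8)$ maps to $v^{-2}$, and a symmetric computation (with a conjugate bicyclic) places $w^{-2}$ in $\Ima(\wt\psi_0)$ as well; consequently $v^2$, $w^2$, and thus also $vw$, all belong to $\Ima(\wt\psi_0)$. This identifies $F_2 \cap \Ima(\wt\psi_0)$ as the kernel of the homomorphism $F_2 \twoheadrightarrow C_2$ sending both $v$ and $w$ to the non-trivial element, giving $|\Coker(\wt\psi_0)| = 2$. The main obstacle will be the inductive step of proving $|\Coker(\wt\Psi_{ker})| = 2$ at each level, which I expect to follow from identifying a single new minimal twisted bicyclic unit involving $u_{x_n}$ paired with $u_g$ or $u_h$, whose square lies in the image — mirroring the base-case pattern.
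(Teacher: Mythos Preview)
Your treatment of part (i) is correct and identical to the paper's. For part (ii) you proceed by induction on $n$ using $Q = \langle y_n\rangle$, whereas the paper applies \Cref{size and comparission of coker} once with $Q = \langle y_1,\dots,y_n\rangle$, splitting off the whole elementary abelian factor in a single step and then computing $|\Coker(\wt\Psi_{ker})| = 2^n$ directly. Either route can be made to work, but both require the same base-case computation, and yours has a genuine gap there.

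The gap is the inference ``thus also $vw$'' from $v^2, w^2 \in \Ima(\wt\psi_0)$ and $v, w \notin \Ima(\wt\psi_0)$. In a free group $F_2 = \langle v,w\rangle$, the subgroup $\langle v^2, w^2\rangle$ (or even its normal closure) has infinite index, so knowing only that $v^2, w^2$ lie in $\Ima$ does not bound the index by $2$; and since $\Ima(\wt\psi_0)$ is not known to be normal in $\mc{U}(\Z^\alpha[C_2\times C_2])$, there is no coset-group argument available either. The paper closes this gap by exhibiting a \emph{third} explicit bicyclic unit in $\Z D_8$, namely $b_3 = 1 - (1+ab)b(1-ab)$, and checking that $\psi(b_3) = wv^{-1}$. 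Together with $v^2, w^2 \in \Ima$ this gives that $\langle v^2, w^2, wv^{-1}\rangle \subseteq F_2 \cap \Ima$, which is the index-$2$ kernel you want. You will need the analogue of this third bicyclic at every level of your induction as well, so the ``single new minimal twisted bicyclic unit'' you anticipate for the inductive step will in fact not suffice on its own; you need both a lower bound from \Cref{non-trivial elements in cokernel} and an explicit preimage computation to get the matching upper bound.
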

\begin{proof}
The description of the kernel follows from \Cref{size and comparission of ker} which says that the kernel is finite and $\ker(\wt{\psi}) = \{1, - a^2 \}$, as desired. 

In order to describe $\Coker(\wt{\psi}_{res})$, recall that by \Cref{prop description unit twisted group ring}
$$\mathcal{U}(\Z^{\Res(\alpha)} [C_2 \times C_2]) = \langle v,w \rangle \rtimes D_8$$
with $v = u_1 + u_h\wt{u_g}$ and $w = u_1 + \wt{u_g}u_h.$ As was claimed (without providing details)
in \Cref{Remark h-unit and opp in coker}, in this case the product of any two generators of 
$\mc{U}(\Z^{\Res(\alpha)}[\langle g,h \rangle])$ belongs to $\Ima(\wt{\psi}_{res})$. 
Furthermore \Cref{non-trivial elements in cokernel} asserts that 
$v$ is a non-trivial element in $\Coker(\wt{\psi}_{res})$, 
hence $\wt{\psi}_{res}$ is not surjective and thus $|\Coker(\wt{\psi}_{res})|= 2$. 

To prove the above stated claim on the product of generators, note first that the trivial units are attained. So we need to consider now the torsion-free part. For this consider the bicyclic units $b_1 = 1 + (1 - b) ab (1+b)$ and $b_2 = 1 + (1+b) ab (1-b)$. Recalling (\ref{rewriting a twisted bicyclic}) we see that $\psi(b_1) = v^2$ and $\psi(b_2)= w^2$. A direct computation also gives that
$$wv^{-1} = -u_1-2u_g-2u_{h}u_{g} = -1 (u_1 + (1+u_h)u_g(1-u_h))$$
Therefore if we consider the bicyclic unit $b_3 = 1 - (1-ab) b (1+ab)$, we also notice that $\psi(-b_3) = wv^{-1}$. Thus the claim follows and it finishes the proof.
\end{proof}

\Cref{kernel and cokernel for D_8 times elem ab} combined with \Cref{the groups U_i for elemen ab} and \Cref{th reducing off an elemantary abelian subgroup} yield  the description we were looking for.

\begin{proposition}\label{full decomp for D8 times elem ab}
The following hold:
\begin{enumerate}
    \item $\mc{U}(\Z[D_8]) \cong F_3 \rtimes (\pm D_8)$ with $F_3$ generated by bicyclic units,
    \item $U_1 \cong F_{9} \times C_2$,
    \item $U_i \cong F_{n_i}$ with $n_i = 1 + 8^{i}$ for $i \geq 2$, 
    \item $\pm D_8 \times C_2^n$ has a torsion-free normal complement in $\mc{U}(\Z[D_8 \times C_2^n])$.
\end{enumerate}
\end{proposition}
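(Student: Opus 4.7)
My plan splits along the four statements of the proposition, with (1) feeding into (4) and (2)--(3) handled in parallel.

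For (1), I would specialise \Cref{kernel and cokernel for D_8 times elem ab} to $n=0$, which yields a short exact sequence
\[
1 \longrightarrow \langle -a^2\rangle \longrightarrow \mc{U}(\Z[D_8]) \xrightarrow{\wt\psi_{res}} \langle v^2,w^2,wv^{-1}\rangle\rtimes D_8 \longrightarrow 1,
\]
where by the proof of that lemma the quotient is the index-$2$ subgroup $\langle v^2,w^2,wv^{-1}\rangle\rtimes D_8\cong F_3\rtimes D_8$ of $\mc{U}(\Z^\alpha[C_2\times C_2])=F_2\rtimes D_8$ from \Cref{prop description unit twisted group ring}. I then split this extension by lifting the $F_3$-factor through the bicyclic units $b_1,b_2,b_3\in\mc{U}(\Z[D_8])$ (their $\wt\psi_{res}$-images freely generate $\langle v^2,w^2,wv^{-1}\rangle$, as verified in the proof of Lemma~\ref{kernel and cokernel for D_8 times elem ab}) and the $D_8$-factor through the trivial units $D_8\subset\mc{U}(\Z[D_8])$. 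Checking that $\langle b_1,b_2,b_3\rangle\cap\pm D_8=\{1\}$, that $\pm D_8$ normalises $\langle b_1,b_2,b_3\rangle$, and that $D_8\cdot\langle -a^2\rangle=\pm D_8$ inside $\mc{U}(\Z[D_8])$, this assembles into $\mc{U}(\Z[D_8])\cong F_3\rtimes\pm D_8$. Part (4) then follows immediately from \Cref{Properties that get inherited with times elem ab}(1), as $F_3$ is a torsion-free normal complement of $\pm D_8$.

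For (2) and (3), I reduce the computation of $U_i=(1+2^i\Z[D_8])\cap\mc{U}(\Z[D_8])$ to an index calculation inside the free factor of (1). Three observations set up the reduction. First, $\ker(\wt\psi_{res})=\{1,-a^2\}$ meets each $U_i$ trivially since $-a^2\not\equiv 1\pmod 2$, so $\wt\psi_{res}|_{U_i}$ is injective into the twisted congruence subgroup $U_i^{tw}$ computed in \Cref{the groups U_i for elemen ab}. Secondly, $U_i\cap\pm D_8=\{\pm 1\}$ for $i=1$ and $\{1\}$ for $i\geq 2$, contributing precisely the $C_2$ factor in (2). Thirdly, $\wt\psi_{res}$ sends $F_3=\langle b_1,b_2,b_3\rangle$ isomorphically onto the free factor $\langle v^2,w^2,wv^{-1}\rangle$ of $U_1^{tw}=F_3\times C_2$. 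Granting the index claim $[F_3:F_3\cap U_i]=8^i$, Nielsen--Schreier yields $\mathrm{rank}(F_3\cap U_i)=1+8^i\cdot 2=1+2\cdot 8^i=n_i$, which combined with the torsion analysis gives (2) and (3).

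The main obstacle is justifying the uniform step index $[F_3\cap U_i:F_3\cap U_{i+1}]=8$. I would attack this by transferring through the composition $\phi\circ\wt\psi_{res}:\mc{U}(\Z[D_8])\to\GL_2(\Z)$ and running a congruence argument parallel to the one in the proof of \Cref{the groups U_i for elemen ab}: the matrix images $\phi(v^2),\phi(w^2),\phi(wv^{-1})$ all lie in $\Gamma(2)$, and tracking where $\phi(F_3)$ sits between consecutive principal congruence subgroups $\Gamma(2^i)$ and $\Gamma(2^{i+1})$ delivers the step index. Since $\Gamma(2^i)/\Gamma(2^{i+1})$ is an elementary abelian $2$-group of explicit order (cf.\ the computations in the cited proof) and the three bicyclic generators realise exactly three independent directions at each step, the factor $8=2^3$ emerges, finishing parts (2) and (3).
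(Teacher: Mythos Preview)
Your argument for parts (1) and (4) is essentially the paper's: exploit the short exact sequence coming from \Cref{kernel and cokernel for D_8 times elem ab}, lift the free factor via the bicyclic units $b_1,b_2,b_3$, and then invoke \Cref{Properties that get inherited with times elem ab} for the complement statement. That is fine.

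For parts (2) and (3), however, there is a genuine gap in your reduction. Knowing $U_i\cap(\pm D_8)=\{\pm 1\}$ (resp.\ $\{1\}$) is \emph{not} enough to conclude $U_i=(F_3\cap U_i)\times\{\pm 1\}$: in a semidirect product $F_3\rtimes(\pm D_8)$ a subgroup $C$ satisfies $C=(C\cap F_3)\rtimes(C\cap\pm D_8)$ only when the projection $\pi(C)$ to $\pm D_8$ coincides with $C\cap\pm D_8$. You have not ruled out elements $x=f\cdot(\pm a^2)\in U_i$ with $f\in F_3$; note that $\psi(\pm a^2)=\mp 1$, so such $x$ would pass the injectivity test on $\wt\psi_{res}$ but still lie outside $F_3\cdot\{\pm 1\}$. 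Your subsequent index claim $[F_3:F_3\cap U_i]=8^i$ then rests on an unverified structural decomposition, and the congruence sketch (``three independent directions at each step'') is not a proof.

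The paper's route avoids this by a single algebraic observation that you are missing: for $i\ge 2$, any $x=1+2^iw\in\mc U(\Z[D_8])$ automatically has $w\in(1-a^2)\Z[D_8]$, because applying the four linear characters forces $\epsilon_j(w)=0$. Writing $w=(1-a^2)u'$ and using $\psi(1-a^2)=2$ gives $\psi(x)=1+2^{i+1}\psi(u')$, whence $\wt\psi_{res}$ induces an isomorphism $U_i(\Z D_8)\cong U_{i+1}(\Z^\alpha C_2^2)$ (and for $i=1$ the same argument, after stripping off the factor $\{\pm 1\}$, identifies $U_1$ with $\{\pm 1\}\times U_2(\Z^\alpha C_2^2)$). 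This index shift by one lets the ranks be read off directly from \Cref{the groups U_i for elemen ab} without redoing any congruence computation in $\GL_2(\Z)$.
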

This result for example yields that $\U (\Z[D_8 \times C_2]) \cong \left( F_{9} \rtimes F_3 \right) \rtimes (\pm D_8 \times C_2).$  Such a decomposition was already obtained in \cite[Theorem 5]{JesParment}. 
\begin{proof}
To start note that $\psi(\pm D_8) = \langle g,h \rangle \cong D_8$. Also it is not hard to see 
that the elements $b_1,b_2,b_3$ (used in the proof of 
\Cref{kernel and cokernel for D_8 times elem ab}) are in fact generators for 
$\langle \text{Bic}(D_8) \rangle$, the group generated by the bicyclic units of $D_8$ 
(e.g. see \cite[part (3) of proof Example 1.5.4 ]{EricAngel1}). Hence we already know that 
$$\langle \text{Bic}(D_8), a^2 \rangle \cong \langle \text{Bic}(D_8) \rangle \times 
\langle a^2 \rangle  \cong \pm 1 .  \langle v^2,w^2,wv^{-1} \rangle = 
U_1(\mathbb{Z}^{\alpha}[C_2\times C_2])$$
with $U_i(\mathbb{Z}^{\alpha}[C_2\times C_2]):=  \{ 1 + 2^{i} u \mid u \in \Z^{\alpha}[\langle g,h\rangle] \} \cap \mc{U}(\Z^{\alpha}[\langle g,h\rangle])$ 
and where the second isomorphism is given by $\wt{\psi}_{res}$. 
The latter is isomorphic to $F_3 \times C_2$ by \Cref{the groups U_i for elemen ab}. 

Now, via \Cref{the groups U_i for elemen ab} and \Cref{kernel and cokernel for D_8 times elem ab} 
one infers that $\ker(\wt{\psi}) \subset \langle \text{Bic}(D_8), \pm D_8 \rangle$ 
and that $\wt{\psi}( \langle \text{Bic}(D_8), \pm D_8 \rangle)$ is of index two in 
$\U(\Z^{\alpha}[C_2\times C_2])$, thus equals $\im(\wt{\psi})$. Consequently, the first part 
follows and we have given a new proof of the well known fact that 
$\U(\Z[D_8]) = \langle \text{Bic}(D_8) \rangle \rtimes (\pm D_8)$. 
In particular all elements in $\langle \text{Bic}(D_8) \rangle$ are of the form 
$1 + (1-a^2) u$ with $u \in \Z[D_8]$. 
This in particular implies that for all $i$
\begin{equation}\label{U_i for D_8}
U_i(\Z D_8) = \{1+ 2^{i}(1-a^2)u \mid u \in \Z[D_8]\}\cap \mc{U}(\Z[D_8])
\end{equation}

Next, due to the above, we see that for each $i$ the torsion-free elements of $U_i(\Z D_8)$ are in 
$\langle \text{Bic}(D_8) \rangle$.  In particular, for each $i \geq 2$ this means that 
$U_i(\Z D_8) \leq \langle \text{Bic}(D_8) \rangle$ and $\wt{\psi}$ is injective on 
them by \Cref{kernel and cokernel for D_8 times elem ab}. Using the description (\ref{U_i for D_8}) we moreover have that $\wt{\psi}(U_i(\Z D_8)) = U_{i+1}(\Z^{\alpha}[C_2\times C_2]).$ 
In summary, $\wt{\psi}$ induces an isomorphism
\begin{equation}\label{change of the Ui}
U_i(\Z D_8) \cong U_{i+1}(\Z^{\alpha}[C_2\times C_2]).
\end{equation}
for all $i \geq 2$. If $i=1$ then by \Cref{th reducing off an elemantary abelian subgroup} the group $U_1(\Z D_8) \cong \{ \pm 1\} \times N_1 $ with $N_1$ torsion-free. By the above $N_1 \leq \langle \text{Bic}(D_8) \rangle$ and concretely  $N_1 = \{1 + 2 (1-a^2) u \mid u \in \Z[D_8] \}$.
So via $\wt{\psi}$ we have the isomorphism $N_1 \cong U_{2}(\Z^{\alpha}[C_2\times C_2])$. Therefore the second and third statement now follows from \Cref{the groups U_i for elemen ab}. The last statement follows from the first and \Cref{Properties that get inherited with times elem ab}.
\end{proof}

As mentioned after \Cref{remark that preserved with direct prod with elem ab}, it is known that the group generated by the bicyclic units form a normal complement in $\U (\Z[D_8 \times C_2^n])$ if $n \leq 1$, but for $n = 2$ they do not  form a normal complement as shown in \cite{Li}. In fact, it is expected that if $n \geq 2$ the bicyclic units are even of infinite index.  It would be especially instructive to describe a minimal set of generators (in a generic way) of the torsion-free normal complement mentioned \Cref{full decomp for D8 times elem ab}. 

To finish this section, we would like to record the exact size and generators of $\Coker(\wt{\psi})$. 

\begin{lemma}\label{full cokernel for D8 times elem ab}
With notations as above we have that
\begin{enumerate}
    \item $|\Coker(\wt{\psi})| = [ \mc{U}(\Z^{\alpha}[G]): \Ima(\wt{\psi})] = 2^{n+3}-4n - 6,$
    \item $\Coker(\wt{\psi})$ is generated by $H$-units.
\end{enumerate}
\end{lemma}
\begin{proof}
By \Cref{th reducing off an elemantary abelian subgroup}, each of  the unit groups 
$\U(\Z[\Gamma])$ and $\U(\Z^{\alpha}[G])$ is determined by respective groups $K_{\vec{j}}.$ 
To distinguish, we respectively write $K_{\vec{j}}^{(i)}(\Gamma)$ and $K_{\vec{j}}^{(i)}(G)$ 
for $\vec{j} \in \{ 0,1 \}^{i-1}.$ The methods in \Cref{subsection reduing elem ab 2-subgroups} 
imply that
$$|\Coker(\wt{\psi})| = |\Coker(\wt{\psi}_{res})| + \sum_{i=1}^{n} \sum_{\vec{j} \in \{ 0,1 \}^{i-1}}[K_{\vec{j}}^{(i)}(G): \wt{\psi}(K_{\vec{j}}^{(i)}(\Gamma))].$$
Following \Cref{kernel and cokernel for D_8 times elem ab}, $|\Coker(\wt{\psi}_{res})|=2$. 
Next, to compute $[K_{\vec{j}}^{(i)}(G): \wt{\psi}(K_{\vec{j}}^{(i)}(\Gamma))]$ we use (see \Cref{th reducing off an elemantary abelian subgroup}) the 
isomorphism $K_{\vec{j}}^{(i)}(G) \cong U_{1 + \sum_{t=1}^{i-1} j(t)}$  which we will denote 
by $\theta$ (where we abuse notations as it in fact depends on $i$ and $\vec{j}$). 
For notational 
simplicity denote $\sum \vec{j} :=  \sum_{t=1}^{i-1} j(t)$. Note that $\theta(\wt{\psi}
(K_{\vec{j}}^{(i)}(\Gamma))) \cong \wt{\psi}_{res}(\theta(K_{\vec{j}}^{(i)}(\Gamma) )).$ Therefore, if 
$\vec{j} \neq \vec{0}$, using (\ref{change of the Ui}) and \Cref{the groups U_i for elemen ab} 
yields
$$[K_{\vec{j}}^{(i)}(G): \wt{\psi}(K_{\vec{j}}^{(i)}(\Gamma))] = [\theta\left( K_{\vec{j}}^{(i)}(G) \right): \theta\left( \wt{\psi}(K_{\vec{j}}^{(i)}(\Gamma)) \right)] = [U_{1 + \sum \vec{j}} \, : \, U_{2 + \sum \vec{j}}]= 8.$$
If $\vec{j} = \vec{0}$, then both $K_{\vec{j}}^{(i)}(G)$ and $K_{\vec{j}}^{(i)}(\Gamma)$ 
contain a copy of $C_2$ (namely $\langle u_{y_i} \rangle$, resp. $\langle x_i\rangle$) and
$\wt{\psi}$ induces an isomorphism on these. Thus in this the same 
passage through the above isomorphisms yields that $[K_{\vec{j}}^{(i)}(G): \wt{\psi}(K_{\vec{j}}^{(i)}(\Gamma))] = [\langle v,w, vw^{-1} \rangle : U_2] = 4$. So all together
$$|\Coker(\wt{\psi})| = 2 + 4n + 8 \sum_{i=1}^{n} (2^{i-1}-1) = 2 + 4n - 8n + 8(2^n-1) = 2^{n+3} - 4n -6.$$
For the second part of the statement, using the elements in $\mc{H}_{\alpha}(C_2\times C_2)$ 
we will give explicit generators of the quotient $K_{\vec{j}}^{(i)}(G) / \wt{\psi}(K_{\vec{j}}^{(i)}(\Gamma))$. First recall that by \Cref{the groups U_i for elemen ab} $U_{1 + \sum \vec{j}} / U_{2 + \sum \vec{j}}$ is of order $8$ if $\vec{j} \neq \vec{0}$ and $4$ else. Now consider the set
\begin{equation}\label{generators of the factors}
\{ v^{2^{1+ \Sigma \vec{j}}}, (vw^{-1})^{2^{\Sigma \vec{j}}}, v^{2^{\Sigma \vec{j}}}w^{-2^{\Sigma \vec{j}}} \}.
\end{equation}
Note that the second and third generators are equal if $\vec{j} = \vec{0}$. Computing their 
image under the isomorphism $\phi$ from the proof of \Cref{the groups U_i for elemen ab}, and 
also of all their double products, one deduces that they generate an elementary abelian $2$-group 
of order $8$ if $\vec{j} \neq \vec{0}$ and of order $4$ else. In particular, for all $\vec{j}$ 
their images generate $\phi(U_{1 + \sum \vec{j}}) / \phi(U_{2 + \sum \vec{j}})$ and so the set 
itself generates $U_{1 + \sum \vec{j}} / U_{2 + \sum \vec{j}}$. It now remains to construct 
three $H$-units in $K_{\vec{j}}^{(i)}(G)$ which map under $\theta$ to the elements in (\ref{generators of the factors}). 
For notation ease we denote $(1-u_x)^{\vec{j}} = (1-u_{x_{1}})^{j(1)}\ldots (1-u_{x_{i-1}})^{j(i-1)}$. We claim that the desired units are:
$$\begin{array}{l}
z^{(i)}_{\vec{j},1} = 1 + (1-u_{x_{i}})(1-u_x)^{\vec{j}} \, u_h \wt{u_g},\\
\\
z^{(i)}_{\vec{j},2} = 1 - (1-u_{x_{i}})(1-u_x)^{\vec{j}} \, \wt{u_h} u_g,\\
\\
z^{(i)}_{\vec{j},3} = \left( 1 + (1-u_x)^{\vec{j}}\,  u_h \wt{u_g} \right) \left( 1 - (1-u_x)^{\vec{j}} \, u_h \wt{(u_{x_i}u_{g})}\right).
\end{array}$$
Writing out the third element results in 
$$z^{(i)}_{\vec{j},3} = 1 + (1-u_x)^{\vec{j}} u_h u_g(1-u_{x_i}) - 2^{\Sigma \vec{j}} (1-u_x)^{\vec{j}} (1-u_g)(1-u_{x_i}).$$
Thus we see that $\{ z^{(i)}_{\vec{j},1}, z^{(i)}_{\vec{j},2}, z^{(i)}_{\vec{j},3} \} \subset K_{\vec{j}}^{(i)}(G)$ and clearly 
$$\theta (z^{(i)}_{\vec{j},1})= v^{2^{1+ \Sigma \vec{j}}}, \, \,\theta(z^{(i)}_{\vec{j},2})=(vw^{-1})^{2^{\Sigma \vec{j}}},\, \,\theta(z^{(i)}_{\vec{j},3}) = v^{2^{\Sigma \vec{j}}}w^{-2^{\Sigma \vec{j}}}.$$
This finishes the proof of the second statement.
\end{proof}

\section{A new generic construction of units in integral group rings}\label{section h-units}

In this section we introduce a new generic construction of units in $\Z G$ which in fact are elements of 
$\SL_1(\Z G)$ (see (\ref{def of SL1})). These elements originated as the pullback of the products of the 
$H$-units in twisted group rings from \Cref{algemene def twisted bicyclic} along the transgression morphisms 
of \Cref{background transgression}. However, they can also be defined directly as will be shown in \Cref{General H-units in ZG}. Furthermore, as explained in \Cref{subsection H-units in ss alg}, those units of \Cref{algemene def twisted bicyclic} 
can be constructed in general finite dimensional semisimple $F$-algebras, with $F$ a number field. Finally, 
in \Cref{subsection H-units infnite index ext}, we will give an infinite family of groups where the newly 
constructed units contain the bicyclic units as subgroup of infinite index. In particular, these elements 
are indeed a new step towards the problem of describing generators of $\U(\Z G)$, up to commensurability, 
generically in $\Z G$. Interestingly, these units will also yield the first generic construction of free 
groups of rank larger than $2$, see \Cref{prop H-unit is H-unit}.

\noindent {\it Notation.} Recall that $\tilde{H} = \sum\limits_{h \in H} h$ for any finite subgroup $H$ in an algebra $A$ and $\widehat{H} = \frac{1}{|H|}\tilde{H}$. If $A$ is semisimple we will denote by $\PCI(A)$ the set of primitive central idempotents of $A$ 

\subsection{Restricted construction of units for orders in general semisimple algebras}\addtocontents{toc}{\protect\setcounter{tocdepth}{2}}\label{subsection H-units in ss alg}

Let $A$ be a finite dimensional semisimple $F$-algebra, with $F$ a number field, and let $\O$ be a $\Z$-order 
in $A$. Inspired by \Cref{algemene def twisted bicyclic} and \Cref{remarks on h-unit for twisted grp group} 
we define the following.

\begin{definition}\label{primitive h-units}
Let $x,t \in \U(\O)$ be torsion units such that $[t,x] \in N_{\U(\O)}(\langle t \rangle)$ is of finite order and $\wt{[t,x]} = 0$. Then the elements 
$$u_{x^{\pm 1}, \wt{t}} := 1 + x^{\pm 1}\wt{t} \mbox{ and } u_{\wt{t}, x^{\pm 1}} := 1 + \wt{t}x^{\pm 1}$$
will be called {\it primitive $H$-units.}
\end{definition}

As the name suggests, the elements in \Cref{primitive h-units} are indeed invertible elements in $\U(\O )$, 
as proven below. The reason for adding, compared to \Cref{algemene def twisted bicyclic}, primitive in the name 
will be clarified later on in \Cref{remark on the name}.

\begin{theorem}\label{H-units are units}
Let $x,t\in \U(\O)$ be as in \cref{primitive h-units}. Then $u_{x^{\pm 1}, \wt{t}}$ and $u_{\wt{t}, x^{\pm 1}}$ are unipotent units in $\U(\O )$. In particular they are of infinite order.
\end{theorem}

We will prove that $\left(x^{\pm}\wt{t}\right)^2 = 0 $, hence the inverse of $u_{x^{\pm 1}, \wt{t}}$ is $1 - x^{\pm 1}\wt{t}$. Similarly for $u_{\wt{t}, x^{\pm 1}}$.

\begin{example}
The archetypical example is a tuple $(x,t)$ of torsion units such that $[t,x] \in F^*$ is a root of unity. In that case $[t,x]$ is both central and $\wt{[t,x]} = 0$.  If $A = R^{\alpha}[G]$ is some twisted group ring and $x,t \in G$, then the linear independence of the basis elements yields that $u_{x^{\pm 1}, \wt{t}}$ and $u_{\wt{t}, x^{\pm 1}}$ are non-trivial (i.e. $\neq 1$) if $\tilde{x} \neq 0$ in $A$. However, in general, there is no transparent characterisation of being non-trivial. 
\end{example}

In order to prove  \Cref{H-units are units} we need the  following lemma which was shared to us by \'Angel del R\'io and whose proof depends on useful identities from \cite{BrGDR}.

\begin{lemma}\label{label about dividing order}
Let $G = \langle g,a \rangle$ be a finite meta-cyclic group with $\langle g\rangle $ a normal subgroup such that $o(g) = o(ga)$. Then $o(a) \mid o(g).$
\end{lemma}
\begin{proof}
Denote $m=o(g)$, $n=[G: \langle g\rangle]=\frac{|G|}{m}$ and $s=[G : \langle a \rangle]=[\langle g \rangle : \langle g\rangle \cap \langle a\rangle]$.
Then $s$ divides $m$ and $o(a)=\frac{|G|}{s}=\frac{mn}{s}$.

For a prime $p$ denote by $v_p(k)$ the $p$-adic valuation of $k\in \mathbb{Z}_{\geq 0}$. Suppose that $o(a)=\frac{nm}{s}$ does not divide $m=o(g)$, or equivalently $n$ does not divide $s$, i.e. $v_p(n)>v_p(s)$ for some prime $p$.


Let $i$ be the smallest non-negative integer such that $a^{-1}ga=g^i$ and $1\leq t\leq m$ such that $a^n = g^t$. 
Note that $g = a^{-n} g a^n = g^{i^n}$ and so $i^n \equiv 1 \mod m$. Next, by using formula \cite[eq. (2.2)]{BrGDR} 
(notice that in loc. cit. the role of $a$ and $g$ are interchanged)  we have $(ag)^n=a^ng^{S(i|n)}=g^{t+S(i|n)}$ 
with $S(i|n) := \sum_{j=0}^{n-1}i^{j}$. Then, by the assumption,  $m=o(g)=o(g^{-1}(ga)g)= o(ag)=n \frac{m}{gcd(m,t+S(i|n))}$ and 
hence $n=gcd(m,t+S(i|n))$.  In particular $n$ divides both $t+S(i|n)$ and $m$. 
Also, $\frac{nm}{s} = o(a)=n. o(g^t)=\frac{nm}{\gcd(m,t)}$. Therefore $s=\gcd(m,t)$. All together we deduce that
$$v_p(s)= \min \{ v_p(t), v_p(m) \} < v_p(n) = \min \{ v_p(m), v_p(t + S(i|n)) \}.$$
This entails that $v_p(t) = v_p(s) < v_p(n)$. Consequently also $p\mid m$ and $v_p(t) < v_p(t+S(i|n)).$  In particular $v_p(t) = v_p(t + S(i|n) - t) = v_p(S(i|n))$, hence $v_p(S(i|n)) < v_p(n)$.




Now let $q$ be the multiplicative order of $i$ modulo $p,$ i.e. the smallest integer $q$ with $p\mid i^q-1.$
Since $i^n \equiv 1 \mod m$ and $p$ divides $m$, it follows that $q$ divides $n.$
Moreover, $q$ divides $p-1.$ Hence $v_p(n)=v_p(\frac{n}{q}) \geq 1.$
From \cite[Lemma 8.2]{BrGDR}  it follows that  in case $p$ is odd or $p=2$ and $i^q \equiv 1 \mod 4$, we have $v_p(S(i^q|\frac{n}{q})) = v_p(\frac{n}{q})$. Otherwise $p=2, v_p(i^q+1)\geq 2$ and
$v_2(S(i^q|\frac{n}{q})) = v_2(\frac{n}{q}) + v_2(i^q+1)-1 > v_2(\frac{n}{q}).$
So all together, $v_p(S(i^q|\frac{n}{q})) \geq v_p(\frac{n}{q}).$

Moreover, by \cite[Lemma 8.1]{BrGDR}, $S(i|n)=S(i|q)S(i^q|\frac{n}{q})$.
Thus, $v_p(\frac{n}{q})=v_p(n)>v_p(S(i|n)) = v_p(S(i|q)) + v_p(S(i^q|\frac{n}{q})) \geq v_p(\frac{n}{q}),$ which is clearly a contradiction. The contradiction came from the assumption that $v_p(n)>v_p(s)$ for some prime $p$. Hence we conclude that $v_p(n)\leq v_p(s)$ for all primes $p$, that is, $n$ divides $s$ and hence $o(a) \mid o(g).$
\end{proof}

We can now proceed to the proof of the theorem.

\begin{proof}[Proof of \Cref{H-units are units}.]
Assume  $x,t \in \U(\O)$ are torsion units such that $[t,x] \in N_{\U(\O)}(\langle t \rangle)$ is of finite order and $\wt{[t,x]} = 0$.
For simplicity of notation, denote $[t,x] = a.$ Since $x^{-1}tx=ta$ we have that $o(t)= o(ta)$. Thus $G=\langle a, t \rangle$ satisfies the conditions of \Cref{label about dividing order} and hence $o(a) \mid o(t)$.

Next, recall the notation $S(s\mid n) := \sum_{j=0}^{n-1}s^{j}$ for any numbers $s,n \in \N$. Since $a \in N_{\U(\O)}(\langle t \rangle)$ there is some $i \neq 0$ such that $ta = at^{i}$. In other words, $tx = xa t^{i}$ and $x^{-1}t = a t^{i}x^{-1}.$ Via induction one directly obtains that
\begin{equation}\label{swapping g and h rule}
 x^{-1} t^k = a^k t^{iS(i|k)} x^{-1} \mbox{ and } t^{k}x = x a^{k} t^{iS(i|k)}.
\end{equation}
These formulas to swap $t$ and $x$ enable us to compute $\wt{t}x^{\pm 1} \wt{t}.$ We claim that: 
$$\wt{t}x^{-1}\wt{t} = \frac{o(t)}{o(a)} \wt{a}\, \wt{t}\, x^{-1} \mbox{ and } \wt{t}x\wt{t} = \frac{o(t)}{o(a)} x\, \wt{t}\, \wt{a}.$$ 
In order to prove this claim, denote $\frac{o(t)}{o(a)} = m \in \N$ and rewrite $\wt{t} = \sum_{\ell = 0}^{m-1} t^{\ell \cdot o(a)} \left( \sum_{j=0}^{o(a)-1} t^{j} \right).$ Then, using the rule in (\ref{swapping g and h rule}) and that $a \wt{t} = \wt{t}a$, because $a$ normalizes $\langle t \rangle$, we obtain that 
$$
\begin{array}{rcl}
\wt{t} x \wt{t} & = & x\, \Big( \sum\limits_{\ell = 0}^{m-1} a^{l\, o(a)} t^{iS(i\mid l\,o(a))} \big( \sum\limits_{j=0}^{o(a)-1} a^{j}t^{iS(i\mid j)} \big) \Big)\, \wt{t} \\
& = & x \left(  \sum\limits_{\ell = 0}^{m-1} t^{iS(i\mid l\,o(a))} \big(   \sum\limits_{j=0}^{o(a)-1} a^{j}\wt{t} \, \big) \right) \\
& = & x \, \wt{t} \, (\sum\limits_{\ell = 0}^{m-1} \wt{a}).
\end{array}
$$

In the second and last equality we have used that $\wt{t}t^{j} = \wt{t}$ for every $j$. 
The expression for $\wt{t}x^{-1} \wt{t}$ is computed analogously. Now, since by assumption $\wt{a}=0$, 
we obtain that $\left(x^{\pm 1}\wt{t}\right)^2 = 0 = (\wt{t}x^{\pm 1})^2.$ Consequently, 
$u_{x^{\pm 1},\wt{t}}^n = 1 + n x\wt{t}$ for all $n\in \Z$. In particular it is invertible with 
inverse $1 - x^{\pm 1}\wt{t}$ and has infinite order. Similarly for $u_{\wt{t},x^{\pm 1}}.$
\end{proof}

\subsection{General construction and properties of H-units} \label{General H-units in ZG}

In this section we start with constructing a class of units in $\SL_1(\Z G)$ from any triple $(g,h,Q)$ satisfying 
the properties mentioned in the definition below. A first new feature of these, that we will obtain in \Cref{prop H-unit is H-unit}, 
is that they produce free groups of large rank. Furthermore, as shown later in \Cref{subsection H-units infnite index ext}, 
the group consisting of these  units can contain  the  bicyclic units as subgroup of  infinite index and hence also up to commensurability they are new. 

\begin{definition}\label{def h-units}
Let $G$ be a finite group and $(g,h,Q)$ a triple satisfying 
\begin{itemize}
\item $Q$ is a normal subgroup  in $\langle g,h,Q \rangle$,
\item $[g,h]Q \in \mathcal{Z}(\langle g,h,Q\rangle/Q)$ and $o(hQ) = 2$.
\end{itemize}
Denote $\wt{(\pm g)}_Q := \sum\limits_{i=0}^{o(gQ)-1} (\pm g)^{i}$ and for any tuple $(x_1,x_2,y_1,y_2)\in \Z_{\geq 0}^4$ define the element
\begin{equation}\label{eq H-unit element}
v_{(x_1,x_2,y_1,y_2)} := 1 + \frac{1}{2|Q|}\wt{Q}(1-[g,h])\Big(  h\,  [ x_1\, \wt{(-g)_Q} + x_2\, \wt{g}_Q ] +  y_1 \, \wt{(-g)}_Q + y_2 \,\wt{g}_Q \Big).
\end{equation}
A quadruple $(x_1,x_2,y_1,y_2)\in \Z_{\geq 0}^4$ will be called {\it admissible for $(g,h,Q)$} if
\begin{equation}\label{Admissable quadruple cond}
y_1 + y_2 + o(gQ)\, y_1y_2 = o(gQ)\, x_1 x_2 \mbox{ and } x_2 \pm x_1 \equiv 0 \equiv y_2 \pm y_1 \mod 2 |Q|.
\end{equation}
The elements $v_{(x_1,x_2,y_1,y_2)}$ for admissible $(x_1,x_2,y_1,y_2)$ will be called {\it an H-unit}. \vspace{0,2cm}
\end{definition}

The first condition to be admissible will exactly correspond to being invertible (more precisely,  
to belong to $\SL_1(\Z G)$) and the second condition yields that the element is in $\Z G$.

For a fixed triple $(g,h,Q)$ as in Definition \ref{def h-units}, the set of elements of the form (\ref{eq H-unit element}) for admissible quadruples (\ref{Admissable quadruple cond}) {\it will be denoted $\mathcal{H}(g,h,Q)$} and 
\begin{equation}\label{def H-units group}
\mathcal{H}(G) = \langle \mathcal{H}(g,h,Q) \mid (g,h,Q) \mbox{ as in  \Cref{def h-units} } \rangle
\end{equation}
is the group generated by all $H$-units. 

Among others, the next result says that $\mathcal{H}(g,h,Q)$  is not only  a set, but even a subgroup of the group of reduced norm $1$ elements.\vspace{0,2cm}

\begin{theorem}\label{prop H-unit is H-unit}
Let $(g,h,Q)$ be a triple as in \Cref{def h-units}. Then,
\begin{enumerate}
\item[(1)] $\mathcal{H}(g,h,Q)$ is a finitely generated subgroup of $\SL_1(\Z G)$ and $v_{(x_1,x_2,y_1,y_2)}^{-1} = v_{(-x_1,-x_2,y_2,y_1)},$
\item[(2)] $\mathcal{H}(g,h,Q) \neq 1$ if and only if $[g,h] \notin \langle g \rangle  Q$.
\end{enumerate}
Moreover, for $\mathcal{H}(g,h,Q) \neq 1$, 
\begin{enumerate}
\item[(3)] if\footnote{Since we assume $\mathcal{H}(g,h,Q)$ to be non-trivial, by part (2), $o(gQ)|Q| =2$ if and only if $Q=1$ and $o(g)=2$.} $o(gQ)|Q| =2$, then $\mathcal{H}(g,h,Q) \cong F_3 \times C_2$, and
\item[(4)] if $o(gQ)|Q| > 2$, then $\mathcal{H}(g,h,Q) \cong F_{n}$ with $n = 1 + \frac{(o(gQ)|Q|)^3}{6}\prod_{p} (1 - \frac{1}{p^2})$, where the product runs over the prime divisors $p$ of $o(gQ)|Q|$.
\end{enumerate}
\end{theorem}

As $\mathcal{H}(g,h,Q)$ is finitely generated, one only needs to construct $v_{(x_1,x_2,y_1,y_2)} $ for a finite 
number of admissible $(x_1,x_2,y_1,y_2)$. We have not tried to give a precise upperbound, but in principle this could be done. 
In fact the generators should somehow correspond to the `minimal solutions' of the equations in (\ref{Admissable quadruple cond}).\vspace{0,2cm}

For any $a,c \in G$, with $c\notin N_G(\langle a \rangle)$, consider the corresponding bicyclic unit  $b(a,c) = 1 + (1-a)c\wt{a}$. 
It was proven by Marciniak and Sehgal \cite{MarSeh2} that $\langle b(a,c), b(a,c)^* \rangle \cong F_2$, where $b(a,c)^*$ 
is the image of $b(a,c)$ under the canonical involution of $\Q G$. For  nilpotent groups $G$  a similar statement holds for their 
torsion variant \cite{JanJesTem,MarSeh3}: the set consisting of the unipotent units belonging to $\Bic (G)$ and that form a free product with  $b(a,c)\, a$ is profinitely dense in $\Bic (G)$ \cite{JTT}.
However, \Cref{prop H-unit is H-unit} yields the first generic construction of large free groups (different 
from taking artificially a copy of $F_n$ inside $F_2$). See \cite{GonRioSurvey} for a survey on constructing free subgroups 
of $\U (\Z G).$

\begin{proof}[Proof of \Cref{prop H-unit is H-unit}]
To start, note that $(-x_1,-x_2,y_2,y_1)$ is admissible if $(x_1,x_2,y_1,y_2)$ is. Moreover, it directly follows from the $x_2 \pm x_1 \equiv 0 \equiv y_2 \pm y_1 \mod 2 |Q|$ congruences that both elements in the statement are in $\Z G$. To prove the statements of the result,  we will ``locate'' precisely the $H$-units inside $\Q[\langle g,h,Q\rangle ]$. 

First we consider when $\mathcal{H}(g,h,Q)$ is trivial, i.e. when all elements  $v_{(x_1,x_2,y_1,y_2)}$ equal $1$. 
Or equivalently, all $\frac{1}{2|Q|}\wt{Q}(1-[g,h])\Big(  h\,  [ x_1\, \wt{(-g)_Q} + x_2\, \wt{g}_Q ] +  y_1 \, \wt{(-g)}_Q + y_2 \,\wt{g}_Q \Big)=0$. 
Since $\Z \langle g,h, Q \rangle \frac{\wt{Q}}{|Q|} \cong \Z (\langle g,h,Q\rangle /Q)$, it easily follows that we may 
assume that $Q=1$; and thus $o(h)=2$ and $[g,h]\in \mathcal{Z}(\langle g,h \rangle)$. Hence, we need to verify when all 
$(1-[g,h])\Big(  h\,  [ x_1\, \wt{(-g)_Q} + x_2\, \wt{g}_Q ] +  y_1 \, \wt{(-g)}_Q + y_2 \,\wt{g}_Q \Big)=0$. 
Of course, if $[g,h]=1$ then the latter always holds.
Hence, for the remaining of the proof of part (2) we may also assume that $1\neq [g,h]$. Now, as $o(h)=2$ and $1 \neq [g,h]\in \mathcal{Z}(\langle g,h \rangle)$ one has that $o([g,h])=2$. Furthermore, $o(g)$ is even and $g^{2}$ is central in $\langle g,h\rangle$.  

Now suppose that $[g,h] \in \langle g \rangle$. Write $[g,h]=g^k$ and thus $h^{-1}gh=g^{k+1}$. As $o(h^{-1}gh)=o(g)$ it follows that $k+1$ is odd and thus
$[g,h]\in \langle g^2 \rangle$ is central. Therefore the required triviality follows from $(1-[g,h])(\wt{g}_Q)=0 = (1-[g,h])\wt{(-g)_Q}$. So we have shown that if $[g,h]\in \langle g\rangle $ then $\mathcal{H}(g,h,Q)=1$.
Conversely, assume $\mathcal{H}(g,h,Q)=1$, in particular $v(1,-1,1,-1)=1$ and, because $o(g)$ is even, we thus get  $(1-[g,h]) (h+1) (\wt{(-g)_Q} +\wt{g_Q})=
(1-[g,h]) (h+1) \wt{(g^{2})_Q}=0$. In other words, $(h+1)\wt{g^{2}_Q}= [g,h](h+1)\wt{g^2_Q}$. Clearly, a support argument in the group ring $\Z \langle g,h\rangle$ yields that $[g,h]\in \langle g^2 \rangle$ or $[g,h]\in h\langle g^2 \rangle$. However, the latter is impossible as elements in $h\langle g^2\rangle$ are not central in $\langle g,h\rangle$. Hence, we have shown that if $\mathcal{H}(g,h,Q)=1$ then $[g,h]\in \langle g^2\rangle$, and thus part (2) of the result follows.

From now on we may assume that $\mathcal{H}(g,h,Q) \neq 1$. In particular, according to the  above $[g,h] \notin \langle g \rangle  Q$. Furthermore, as mentioned above, we thus also have that  $o([g,h]Q) = 2$.

Consider the central idempotent $e := \widehat{\langle Q, -[g,h] \rangle}$  in $\Q [\langle g,h,Q \rangle]$ and the associated decomposition $\Q[\langle g,h,Q \rangle] = \Q[\langle g,h,Q \rangle](1-e) \oplus \Q[\langle g,h,Q \rangle]e$. 
Notice that $v_{(x_1,x_2,y_1,y_2)} (1-e) = 1-e$, i.e. the projection on the first part is trivial. Thus we need to prove 
the desired statements (1), (3) and (4) within the second component. 
For this, put $\overline{g} = gQ$ and $\overline{h} = hQ$. As mentioned above, $o(\overline{g})$ is even and note that $\Q[\langle g,h,Q \rangle]e \cong \Q^{\alpha}[\langle \overline{g} \rangle \times \langle \overline{h} \rangle]$
 with $[\alpha] \in H^2(\langle \overline{g} \rangle \times \langle \overline{h} \rangle, \Z^{*})$ determined by $u_{\overline{g}}^{o(\overline{g})} = 1 = u_{\overline{h}}^{o(\overline{h})}$ and $[u_{\overline{g}}, u_{\overline{h}}] = -1$. Under that isomorphism $v_{(x_1,x_2,y_1,y_2)}$ corresponds to the element 
 $$ 1 + u_{\overline{h}}\,  [ x_1\, \wt{(-u_{\overline{g}})} + x_2\, \wt{u_{\overline{g}}} ] +  y_1 \, \wt{(-u_{\overline{g}})} + y_2 \,\wt{u_{\overline{g}}}.$$
A direct verification yields that $\wt{(-u_{\overline{g}})} . \wt{u_{\overline{g}}} = 0$ and 
$\wt{\pm u_{\overline{g}}}^2 = o(\overline{g}) \wt{\pm u_{\overline{g}}}$. 
Also, $[ x_1\, \wt{(-u_{\overline{g}})} + x_2\, \wt{u_{\overline{g}}} ] \, u_{\overline{h}}  
= u_{\overline{h}}  \, [ x_2\, \wt{(-u_{\overline{g}})} + x_1\, \wt{u_{\overline{g}}} ].$ Using all this, 
one could straightforwardly compute the image of the product  $v_{(x_1,x_2,y_1,y_2)} \; v_{(-x_1,-x_2,y_2,y_1)}$ 
and deduce that it is equal to $1$ if and only if $y_1 + y_2 + o(\overline{g})\, y_1y_2 = o(\overline{g})\, x_1 x_2$. 
However, by decomposing $\Q^{\alpha}[\langle \overline{g} \rangle \times \langle \overline{h} \rangle]$ 
further one can also give the following conceptual explanation, which moreover also will yield  the remainder of the result.

Remark that $\overline{g}^2$ is central and hence, by \Cref{decomp voor elke abelse en centrale extensie},
$$\Q^{\alpha}[\langle \overline{g} \rangle \times \langle \overline{h} \rangle] \cong 
\bigoplus\limits_{\chi \in \Lin( \langle \overline{g}^2 \rangle, \Q)} 
\Q(\chi)^{\Tra_{\alpha}(\chi)}[C_2\times C_2] \cong \bigoplus\limits_{d \, \mid \, o(\overline{g}^2)} 
\Q(\zeta_d)^{\alpha_d}[C_2 \times C_2]. $$
Explicitly, denoting $C_2 \times C_2 = \langle a ,b \rangle$, $[\alpha_d] \in 
H^2(\langle a ,b \rangle, \langle \pm \zeta_d \rangle)$ is determined by $u_a^{2} = \zeta_d, u_b^2 = 1$ and $u_a u_b = - u_b u_a$. 
The projections onto the direct summands are given by the transgression maps from \Cref{ring homom from Tra}, 
hence $u_{\overline{g}} \mapsto u_a, u_{\overline{h}} \mapsto u_b$ and $u_{\overline{g}^2} \mapsto \zeta_d\, u_1$.
Next, notice that $\wt{(\pm u_{\overline{g}})} = (1 \pm u_{\overline{g}})\, . \,  \wt{( u_{\overline{g}}^2)}$. 
Due to this, $\wt{(\pm u_{\overline{g}})}$ maps to $0$ in every direct summand except for the one indexed by  the trivial representation of $\langle \overline{g}^2 \rangle$. 
In other words, there is only a single component of $\Q[\langle g,h, Q\rangle]$ where $v_{(x_1,x_2,y_1,y_2)}$ has a 
non-trivial projection and this component is isomorphic to $\Q^{\alpha_1}[\langle a,b \rangle]$ with $u_a^2 = 1 = u_b^2$ 
and $[u_a,u_b]=-1$. In that component the projection of  $v_{(x_1,x_2,y_1,y_2)}$ is
$$
1 +  o(\overline{g}^2) \big( u_b [ x_1 (1- u_a) + x_2 (1+ u_a) ] +  y_1 (1- u_a) + y_2 (1+ u_a) \big).
$$

Restricting to $\Z [\langle g,h, Q\rangle]$, we can now compose with the isomorphism $\phi$ from \Cref{subsection start case elemen ab}, defined in (\ref{def phi from section 8}). This composition is defined by 
\begin{align*}
&  u_a\mapsto \left( \begin{array}{cc}
1 & 0  \\
0 & -1
\end{array}\right),
\quad u_b\mapsto \left( \begin{array}{cc}
0 & 1  \\
1 & 0
\end{array}\right),
\quad u_{ab}\mapsto \left( \begin{array}{cc}
0 & 1  \\
-1 & 0
\end{array}\right).
\end{align*}
By doing so we have obtained a ring morphism $\Phi: \Z[\langle g,h,Q\rangle] \rightarrow \wt{D}$ which on $\mathcal{H}(g,h,Q)$ is defined as
\begin{equation}\label{embedding h-unit in matrices}
\Phi :  v_{(x_1,x_2,y_1,y_2)} \mapsto \left(\begin{array}{cc}
1 + 2 \, o(\overline{g}^2) \, y_2 & 2 o(\overline{g}^2) \,  x_1  \\
2 o(\overline{g}^2)  \, x_2 & 1 + 2 \,o(\overline{g}^2)\, y_1 
\end{array} \right) = \left(\begin{array}{cc}
1 + o(\overline{g}) \, y_2 & o(\overline{g})  \, x_1  \\
 o(\overline{g})  \,x_2 & 1 + o(\overline{g})\, y_1
\end{array} \right).
\end{equation}
Moreover, the elements of $\mathcal{H}(g,h,Q)$ being trivial on all the other components, $\Phi$ is injective on 
$\mathcal{H}(g,h,Q)$. In particular, see (\ref{def reduced norm}), $nr(v_{(x_1,x_2,y_1,y_2)})=1$ if 
$\det(\Phi(v_{(x_1,x_2,y_1,y_2)})) =1$. 
The latter holds as $\det(\Phi(v_{(x_1,x_2,y_1,y_2)})) = 1 + o(\overline{g}) (y_1 +y_2)
+ o(\overline{g})^2 y_1y_2 - o(\overline{g})^2 x_1x_2=1$,  by the definition of an admissible quadruple. 
In fact, since $\Phi$ is injective on $\mathcal{H}(g,h,Q)$, the first condition of being admissible is equivalent 
to $\det(\Phi(v_{(x_1,x_2,y_1,y_2)})) = 1$. 
Now, we also see that the inverse is 
 $$\Phi(v_{(x_1,x_2,y_1,y_2)})^{-1} = \left(\begin{array}{cc}
1 + o(\overline{g}) \, y_1 & - o(\overline{g})  \, x_1  \\
 -o(\overline{g})  \,x_2 & 1 + o(\overline{g})\, y_2
\end{array} \right).$$ 
Hence indeed $v_{(x_1,x_2,y_1,y_2)}^{-1} = v_{(-x_1,-x_2,y_2,y_1)}$. In conclusion, $\mathcal{H}(g,h,Q)$ is a subgroup of $\SL_1(\Z G)$. 

All the above in fact yields more. Namely that 
$$\Phi : \mathcal{H}(g,h,Q) \hookrightarrow \Gamma(o(\overline{g})),$$
where $\Gamma(n)$ is the principal congruence subgroup of level $n$ of $\SL_2(\Z)$. However, $\Phi$ is not onto due to the congruences $x_2 \pm x_1 \equiv 0 \equiv y_2 \pm y_1 \mod 2|Q|$.

 {\bf Claim:} $x_2 \pm x_1 \equiv 0 \equiv y_2 \pm y_1 \mod 2|Q|$ if and only if $x_i = |Q| l_i, y_i = |Q| t_i$ 
 for $l_i,t_i \in \N$ such that $l_1 \equiv l_2 \mod 2$ and $t_1 \equiv t_2 \mod 2$.

Indeed, if $x_2 \pm x_1 \equiv 0\mod 2|Q|$, then $2 x_1 \equiv 0$ and $2 x_2 \equiv 0$ $\mod 2|Q|$.
Hence $x_2 \equiv x_1 \equiv 0 \mod |Q|$ and consequently $x_i = |Q| l_i$, 
for some $l_i \in \N$.
Now, as $2|Q|$ divides $x_2 \pm x_1$, one must have that $ 2 \mid l_1 \pm l_2$,
as desired. Conversely, $x_i$ of that form clearly satisfy  $x_2 \pm x_1 \mod 2|Q|$. 
The proof for the $y_i$ is exactly the same, hence the claim follows.

Now denote
\begin{equation}\label{definition Vm}
V_{m} = \left\{ \left(\begin{array}{cc}
1 + m \, l_2 & m \, t_1  \\
 m \,t_2 & 1 + m\, l_1
\end{array} \right) \in \SL_2(\Z) \mid l_1 \equiv l_2 \text{ and } t_1 \equiv t_2 \mod 2 \right\}.
\end{equation}
Notice that the groups $U_i \cong \phi(U_i)$ from the proof of \Cref{the groups U_i for elemen ab} are equal to $V_{2^{i}}$. By the claim above 
\begin{equation}\label{H-units as Vm}
\Phi(\mathcal{H}(g,h,Q)) = V_{o(\overline{g})|Q|}.
\end{equation}
Since, $\Gamma(2m) \leq V_m \leq \Gamma(m)$ for every $m \geq 1$, we have obtained that $\Phi(\mathcal{H}(g,h,Q))$ is a finite index subgroup in $\SL_2(\Z)$. In particular it is finitely generated, finishing the proof of (1).

Finally, to obtain (3) and (4) we recall the well-known fact that $\SL_2(\Z) \cong C_4 \star_{\langle -1 \rangle} C_6$ with $C_4 = \langle \left(\begin{array}{cc}
0  & -1  \\
1 & 0 \\
\end{array} \right) \rangle$ and $C_6 = \langle \left(\begin{array}{cc}
0  & -1  \\
1 & -1 \\
\end{array} \right) \rangle .$ Hence, by Kurosh's theorem, $\Gamma(m)$ is free if and only if it is torsion-free. 
Moreover, all periodic subgroups are conjugated to a subgroup of  $C_4$ or $C_6$. As $\Gamma(m)$ is normal in $\SL_2(\Z)$, to verify when $\Gamma (m)$ is free  it is enough to verify explicitly 
which powers of the two matrices are in $\Gamma(m)$. By doing so we see that $\Gamma(m)$ is torsion-free if 
$m \neq 2$ and $-1$ is the only torsion element in $\Gamma(2)$. As a consequence, the same conclusion is valid 
for $V_m$ instead of $\Gamma(m)$. Also recall, e.g. see the proof of \Cref{the groups U_i for elemen ab}, 
that $V_2 = U_1 \cong F_3 \times \langle -1 \rangle$. All this applied to $\Phi(\mathcal{H}(g,h,Q)) = V_{o(\overline{g})|Q|}$ 
yields that $\mathcal{H}(g,h,Q)$ is a finitely generated free group, except if $|Q| = 1$ and $o(\overline{g}) =2$. 
In this case $\mathcal{H}(g,h,Q) \cong F_3 \times C_2.$ 

When $m := o(\overline{g})|Q| >2$ the rank of the free group $V_m$ can be computed. In order to do so, recall that  $o(\overline{g})$ is even.
In particular $\mathcal{H}(g,h,Q) \cong V_m$ with $m$ even. Next, with the same method as in the proof of \Cref{the groups U_i for elemen ab}, it can be shown that $[\Gamma(m) : V_m] = 2$ when $m$ is even. Also, because $V_ m \leq F_2 \leq \Gamma(2) = F_2 \times C_2$, we need to compute $[F_2 : V_m] = [\Gamma(2):V_m]/2$. This can readily be done using (\ref{formula index congruence}):
$$
[\Gamma(2):V_m] = [\Gamma(2):\Gamma(m)]\; [\Gamma(m): V_m]  = 2 \frac{[\SL_2(\Z): \Gamma(m)]}{[\SL_2(\Z): \Gamma(2)]}  = 2 \frac{m^3}{6} \prod\limits_{p \mid m} ( 1 - \frac{1}{p^2}).
$$
Finally, by Nielsen-Schreier's formula, we obtain that $V_m \cong F_n$ with $n = 1 +\frac{m^3}{6} \prod\limits_{p \mid m} ( 1 - \frac{1}{p^2})$. 
\end{proof}

In order to make further use of it in the text, we explicitly state the following fact that has been noticed in the previous proof.

\begin{remark}\label{remark on triviality H-unit}
Let $(g,h,Q)$ be a triple as in \Cref{def h-units}. Note that $[g,h] \notin Q$ yields that $o([g,h]Q) = 2$. 
Consequently, $\frac{1}{2}\widehat{Q}(1-[g,h]) = \widehat{Q}\; \widehat{(-[g,h])} = \widehat{\langle Q, -[g,h] \rangle}.$ 
\end{remark}

Interestingly, a further inspection of the proof of \Cref{prop H-unit is H-unit} yields the following matrix description of $\mathcal{H}(g,h,Q)$.

\begin{theorem}\label{H-unit contain principal congruence in one comp}
Let $(g,h,Q)$ be a triple such that $\mathcal{H}(g,h,Q)\neq 1$ and let $H = \langle g,h,Q \rangle$. Then there exists a unique primitive central idempotent $e$ of $\Q H$ such that\footnote{Confusing at first, $1 - e + \Q He$ however consists simply of the elements of $\Q H$ projecting to the identity   in all simple components of $\Q H$ except the simple component $(\Q H)e$ where any element of $\Q He$ is taken.}
$$ \mathcal{H}(g,h,Q) = \SL_1(\Z H) \cap  \left( 1 - e + \Q He \right) .$$ 
Moreover, $\mathcal{H}(g,h,Q) = 1 - e + V_{m}$ for $m = o(gQ)|Q|$ and $V_m$ defined in (\ref{definition Vm}). Also, considering any maximal order $\O$ in $\Q He$ and denoting by $\Gamma(m)$ the principal congruence subgroup of level $m = o(gQ)|Q|$ in $\SL_1(\mathcal{O})$, one has that 
$$\Gamma(2m) \leq V_m \leq \Gamma(m) \text{ and } [\Gamma(m): V_m] = 2.$$
In particular, $\mathcal{H}(g,h,Q)$ is a finite index normal subgroup of $1 -e + \U(\Z He)$.
\end{theorem}
\begin{proof}
Note that the proof of \Cref{prop H-unit is H-unit} in fact works locally, in the sense that it is carried out in $\Q[\langle g,h,Q \rangle]$ 
rather than $\Q[G]$. Furthermore, the morphism $\Phi$ defined in (\ref{embedding h-unit in matrices}) in fact coincides with the projection 
onto its simple component $\Q^{\alpha_1}[\langle a,b \rangle]$ (notations as in the proof). More precisely, this projection factors 
through $\Q^{\alpha}[\langle \ov{g} \rangle \times \langle \ov{h}\rangle]$. Projecting thereon is given by first multiplying with the central idempotent $e$ 
and then the  subsequent 
projection onto $\Q^{\alpha_1}[\langle a,b \rangle]$ is given by multiplying with  $e_{T}$ (we use the subscript $T$ because 
the component arises from the trivial character). 
The element $e_T \, e$ is a primitive central idempotent of $\Q[\langle g,h,Q \rangle]$ and $\Phi$ is a concrete realization of the composition of 
multiplying with $e_T \, e$ followed by applying $\phi$ (defined in (\ref{def phi from section 8})). In particular 
$\Phi$ is injective on $\Q H \cap( 1 - e_T\, e + \Q H e_T \, e)$. 

Next, as $ge_T\, e= a$ and $he_T\, e =b$, one has that $o(ge_T\, e) =  2 = o(he_T\, e)$. 
Using this a direct verification yields that any element in 
$\Q H \cap (1 - e_T\, e + \Q H e_T\, e)$ can be rewritten in the form (\ref{eq H-unit element}) for some $x_1,x_2,y_1,y_2 \in \Q$. However, 
as explained in the proof of \Cref{prop H-unit is H-unit}, via the injectivity of $\Phi$, such an element is in $\SL_1(\Z G)$ if and only if 
$(x_1,x_2,y_1,y_2)$ is admissible for $(g,h,Q)$. Hence $\mathcal{H}(g,h,Q) = \SL_1(\Z H) \cap (1 - e_T\, e + \Q He_T\, e)$ and thus $e_T\, e$ is the 
desired primitive central idempotent of $\Q H$. The moreover part of the statement of the result was explicitly obtained in the proof of 
\Cref{prop H-unit is H-unit}, see (\ref{H-units as Vm}). The final finite index assertion holds because $\Q He_Te \cong \Ma_2(\Q)$  hence 
the center of the unit group of an order therein is finite and therefore $\SL_1(\Z Ge)$ is of finite index in $\U( \Z He)$. 

It remains to prove that $\mathcal{H}(g,h,Q)$ is normal in $1 -e + \U(\Z He)$ or in other words that $V_m$ is normal in 
$\U (\Z He) \cong \U(\Z^{\alpha_1}[\langle a,b \rangle]).$ A presentation of the latter group was obtained in 
\Cref{prop description unit twisted group ring}. The image under $\phi$ of the generators are the following matrices:
$$\left\{ \left(\begin{array}{cc}
    1 &2  \\
     0& 1 
\end{array} \right), \left(\begin{array}{cc}
    1 &0  \\
    2& 1 
\end{array} \right), \left(\begin{array}{cc}
    1 &0  \\
    0& -1 
\end{array} \right), \left(\begin{array}{cc}
    0 & 1  \\
    1& 0 
\end{array} \right)
\right\}.$$
A direct verification yields that these matrices normalize $V_m$ for every $m$, finishing the proof.
\end{proof}

Note also that, unlike primitive $H$-units, the elements $v_{(x_1,x_2,y_1,y_2)}$ are not necessarily unipotent. 
For instance, using the map $\Phi$ in (\ref{embedding h-unit in matrices}) we see that 
\begin{equation}\label{unitpotent h-unit condition}
-1 + v_{(x_1,x_2,y_1,y_2)}  \text{ is nilpotent } \Leftrightarrow \left\lbrace \begin{array}{l}
y_1 = - y _2\\
y_1y_2 = x_1 x_2
\end{array} \right.
\end{equation}

Finally, it would be interesting to investigate how different  $H$-units interact. 

\begin{question}\label{question grp gen by two triples}
Let $(g_i,h_i,Q_i),$ with $i = 1,2$, be two different triples as in \Cref{def h-units}. 
What is the structure of the  group $\langle \mathcal{H}(g_1,h_1,Q_1), \mathcal{H}(g_2,h_2,Q_2) \rangle$ ?
\end{question}

A first interesting contribution to \Cref{question grp gen by two triples} would be to determine when it is the direct product of 
$\mathcal{H}(g_1,h_1,Q_1)$ and $\mathcal{H}(g_2,h_2,Q_2)$.

\begin{remark}\label{remark on the name}
The proof of \Cref{prop H-unit is H-unit} has shown that locally, i.e. in $\Q[\langle g,h,Q\rangle]$, $v_{(x_1,x_2,y_1,y_2)}$ injects via $\Phi$ to an element of $\SL_1(\Z^{\alpha}[C_2 \times C_2])$. By \Cref{prop description unit twisted group ring}, $\Phi(v_{(x_1,x_2,y_1,y_2)})$
is a product of elements as in \Cref{primitive h-units}. In particular the elements in the latter are somehow the smallest, hence the name primitive. Besides, the map $\Phi$ is directly related with decomposing $\langle g,h,Q\rangle$ as a non-split extension of $\langle Q,[g,h], g^2 \rangle$ by $C_2 \times C_2.$ Thus the non-triviality of that second cohomology group was explicitly necessary for the existence of $H$-units. This clarifies our choice of the name for those units. 
\end{remark}

\subsection{H-units extend bicyclic units with infinite index}\label{subsection H-units infnite index ext}

Recall that a simple quotient of $\Q [G]$ is called an {\it exceptional component of type II} if it is of the 
form\footnote{The division algebras appearing in these matrix algebras are exactly those having an order with finite unit 
group \cite[Theorem 2.10.]{BJJKT}.} $\Ma_2(\Q(\sqrt{-d}))$ or $\Ma_2(\qa{-a}{-b}{\Q})$ with $a$ and $b$ strict positive 
integers and $d \in \N$ (i.e. $\qa{-a}{-b}{\Q}$ is a totally definite quaternion algebra) and it is called an {\it exceptional component of type I} if it is a division algebra which is not a totally definite 
quaternion algebra. The terminology `exceptional' refers to the fact that in their absence the bicyclic units are of finite index 
in $\SL_1(\Z G)$, e.g. see \Cref{Jespers-LEal for twisted bicyclic}. Therefore, as $\SL_1(\O)$ for $\O$ an order in a division 
algebra has no unipotent units, the focus of current research is on the type II exceptional components

As demonstrated by \cite[Appendix A]{BJJKT} the most recurrent\footnote{$\Q G$ has an exceptional $2\times 2$ component exactly 
when it maps onto one of the $52$ groups in \cite{EKVG}. Now the table in \cite[Appendix A]{BJJKT} says that only $16$ of 
them have no simple component of the type $\Ma_2(\Q)$. Among nilpotent groups there are only $5$ such groups. } 
component of that type is $\Ma_2(\Q)$ and this is a simple quotient of $\Q G$ if and only if $G$ surjects onto $D_8$ or $S_3$.  
It follows that, when $3 \nmid |G|$ and $\Ma_2(\Q)$ is a simple component of $\Q G$, one has a triple $(g,h,Q)$ satisfying the following:
\begin{equation}\label{Conditions D8 triple}
g,h \in N_G(Q) \text{ and } \langle g,h,Q \rangle /Q \cong D_8.
\end{equation}
In fact the stronger properties $Q \triangleleft G$ and $G/Q = \langle gQ, hQ \rangle = \langle ghQ, hQ \rangle \cong D_8$ are satisfied. We will call a triple satisfying (\ref{Conditions D8 triple}) {\it a $D_8$-triple.} \vspace{0.1cm}

\noindent {\it Convention:} For a $D_8$-triple $(g,h,Q)$ we will always assume that $o(gQ) = o(hQ) = 2$ and $D_8 \cong C_4 \rtimes C_2 = \langle ghQ\rangle \rtimes \langle hQ \rangle.$\vspace{0.1cm}

Using $H$-units built on $D_8$-triples we can describe generically a subgroup of finite index for the following class of groups. 

\begin{theorem}\label{H-units for 2-grps give finite index}
Let $G$ be a $2$-group such that the only exceptional components of $\Q G$ are of the form $\Ma_2(\Q)$, then $\langle \Bic (G) , \mathcal{H}(G) \rangle $ is of finite index in $\SL_1(\Z G)$. Consequently,  $\langle \mathcal{B} (G) , \mathcal{H}(G) \rangle$ is of finite index in $\U (\Z G)$.
\end{theorem}
In the above $\mathcal{B}(G)$ denotes the subgroup of $\U(\Z G)$ generated by the bicyclic and Bass units (see \cite[Section 1.2.]{EricAngel1} for definitions).

\begin{proof}
Denote by $\mathcal{E}_{exc}$ the set of primitive central idempotents $e$ such that $\Q G e \cong \Ma_2(\Q)$. 
For $e \in \mathcal{E}_{exc}$ consider the associated projection $\pi_e :  G \rightarrow Ge$ 
(which is the restriction of the natural projection from $\U(\Q G)$ to $\U(\Q G)e$). Since $G$ is a $2$-group one has that 
$\pi_e(G) = Ge \cong D_8$. Consider $Q = \ker( \pi_e)$ and take $g,h \in G$ such that 
$Ge \cong \langle \pi_e(g) \rangle \rtimes \langle \pi_e(h) \rangle$, with $o(gQ) = 4, o(hQ) = 2$ and 
$[gQ,hQ] = (gQ)^2$. In this way we obtain a $D_8$-triple $(gh, h, Q)$ such that $1 \neq \mathcal{H}(gh,h,Q)$ (by \Cref{prop H-unit is H-unit}) 
and $G = \langle gh,h,Q \rangle$ . This allows to apply \Cref{H-unit contain principal congruence in one comp} to conclude that 
$H_e := \mathcal{H}(gh,h,Q) \leq \SL_1(\Z G)$ is of finite index in $1 - e + \U(\Z Ge)$.

Now consider $\mathcal{E} := \PCI (\Q G) \setminus \mathcal{E}_{exc}$ and take $e^{\prime} \in \mathcal{E}$, i.e. $\Q G e^{\prime} \ncong \Ma_2(\Q)$. 
By assumption, if $\Q G e^{\prime}$ is a division algebra it needs to be a totally definite quaternion algebra, and hence $\SL_1(\Z Ge^{\prime})$ is 
finite \cite{Kleinert}. It now remains to consider the case that $\Q G e^{\prime}$ is not a division algebra, say $\Ma_n(D)$, and let $\O$ 
be an order in the division algebra $D$. In this case classical arguments can be used. Namely, for such a component \Cref{Jespers-LEal for twisted bicyclic} gives a $y\in \Z$ and 
subgroup $1 - e^{\prime} + E_n(y\mathcal{O}) \leq \Bic (G)$. Because $\Q G e^{\prime}$ is non-exceptional, the solution to the subgroup 
congruence problem in higher rank (e.g. see \cite[Theorem 11.2.3]{EricAngel1}) yields that $E_{e^{\prime}} := E_n(y\mathcal{O})$ is 
of finite index in $\GL_n(\O)$. All together we obtained a subgroup $H = \prod_e H_e \times \prod_{e^{\prime}} E_{e^{\prime}}$ of $\SL_1(\Z G)$ 
which is of finite index in $\prod_{e \in \PCI (\Q G))} \SL_1(\Z G e)$, hence also in $\SL_1(\Z G)$. The second part now classically follows 
by a Bass-Milnor Theorem \cite[Theorem 11.1.2]{EricAngel1} which says that the Bass units map to a subgroup of finite index in the 
Whitehead group $K_1(\Z G) := \GL (\Z G)^{ab}$ of $\Z G$ and hence \cite[Prop. 9.5.11]{EricAngel1} jointly with $H$ it is of finite 
index in $\U( \Z G)$. In particular also $\langle \mathcal{B} (G) , \mathcal{H}(G) \rangle$ is of finite index in $\U (\Z G)$.
\end{proof}

\begin{remark}\label{extend Jespers-Leal remark}
The first part of the proof of \Cref{H-units for 2-grps give finite index} shows that when $G$ does not map onto $S_3$ 
one can extend the Jespers-Leal theorem \cite{JesLea2} by including also the exceptional component $\Ma_2(\Q)$. 
Concretely, let $e \in \PCI(\Q G)$ such that $\Q Ge \cong \Ma_2(\Q)$ and $3 \nmid |Ge|$. Then $\SL_1(\Z G)$ 
contains a subgroup $W_e$, consisting of $H$-units, that is of finite index in $ 1-e + \U(\Z Ge)  \subset 1-e + \Q Ge$. 
Moreover $W_e$ contains a principal congruence subgroup of level $2 o(gQ) |Q| = 2\cdot 2 \cdot \frac{|G|}{8} = \frac{|G|}{2}$ of $\SL_1(\Z Ge)$. More precisely, $W_e \cong V_{|G|/4}.$ 

The condition $3 \nmid |Ge|$ stems from the fact that the proof of \Cref{prop H-unit is H-unit} requires \Cref{prop description unit twisted group ring}. On turn the latter originates from a splitting of $D_8$ and needs a precise understanding on subgroups of small index in $\SL_2(\Z).$ Using \cite{MPV}, the necessary tools seem to exist to extend the results to $S_3$. In particular we expect that the above and \Cref{H-units for 2-grps give finite index} extends to non $2$-groups.
\end{remark}

By a result of Jespers and del R\'io \cite{JesRioCrelle} natural examples for finite groups $G$ as in Theorem \ref{H-units for 2-grps give finite index} 
can be found when $\U( \Z G)$ is virtually a direct product of free products of abelian groups. Such groups have been classified and  
the $2$-groups of this type  are isomorphic to $K \times C_2^n$ with $K$ one of the following types :
\begin{enumerate}
\item [$\mathcal{G}_1$:]  $\langle x,y \mid x^4 = y^4 = 1 \text{ and } y^2, x^2 =[x,y] \text{ central } \rangle,$
\item [$\mathcal{G}_2$:] $\langle x, y_1 , \ldots, y_n \mid x^4 = y_i^2 = [y_i,y_j] = 1 \text{ and } x^2, [x,y_i] \text{ central } \rangle,$
\item[$\mathcal{G}_3$:] $\langle x, y_1 , \ldots, y_n \mid x^4 = y_i^4 = y_i^2[x,y_i]= [y_i,y_j] = 1 \text{ and } x^2, y_i^2 \text{ central } \rangle,$
\item[$\mathcal{G}_4$:] $\langle x, y_1 , \ldots, y_n \mid x^2 = y_i^2 = [y_i,y_j] =[[x,y_i],y_j] = [x,y_i]^2=1\rangle,$
\item[$\mathcal{G}_5$:] $\langle x, y_1 , \ldots, y_n \mid x^2 = y_i^4 = y_i^2[x,y_i] = [y_i,y_j] = [[x,y_i],x] = 1 \rangle,$
\item[$\mathcal{G}_6$:] $\langle x, y_1 , \ldots, y_n \mid x^4 = y_i^4 = x^2 y_1^2 = y_i^2[x,y_i] = [y_i,y_j]=[y_i^2,x]=1 \rangle,$
\item[$\mathcal{G}_7$:] $\langle x, y_1 , \ldots, y_n \mid x^4 = x^2y_i^4 = y_i^{-2}[x,y_i]=[y_i,y_j] = 1 \rangle.$
\end{enumerate}

\begin{remark}
In \cite{JesRioCrelle} the relation $y_i^2[x,y_i]=1$ is written for the groups in class $\mathcal{G}_7$ (which are the groups of `type (g)'in loc. cit.). Inspection of the proof however shows that it must be $y_i^{-2}[x,y_i]=1.$ 
\end{remark}

The group in $\mathcal{G}_1$ is simply $C_4 \rtimes C_4$ with $C_4$ acting by inversion. It is known \cite[Corollary 12.7.2]{EricAngel1} that $\Bic(C_4 \rtimes C_4)$ is of infinite index in $\SL_1(\Z G)$. 

For groups $G$  as above, \cite[Theorem 1.3]{JesRioCrelle} tells that the non-division algebra components of  $\Q G$  are isomorphic to 
$\Ma_2(\Q)$. Moreover, $\Q G$ does not  have exceptional components of type I. Thus we can use \Cref{H-units for 2-grps give finite index} 
to obtain that $\mathcal{H}(G)$ is a so-called congruence subgroup of $\SL_1(\Z G)$. More importantly, as a byproduct we obtain a more precise version of \cite[Theorem 1.1]{RioRuiz} and a classification-free proof.

\begin{corollary}\label{families of grps with H-unit new}
Let $G = K \times C_2^n$, with $K$ a group in $\mathcal{G}_1 \cup \ldots \cup \mathcal{G}_7$, and let $q$ denote the number of simple components 
of the type $\Ma_2(\Q)$ in $\Q G$. Then, there exist $D_8$-triples $(g_i,h_i,Q_i)$, $1 \leq i \leq q$, such that 
$\langle \mathcal{H}(g_i,h_i,Q_i) \rangle \cong \prod_{i=1}^q  \mathcal{H}(g_i,h_i,Q_i)$ is a finite index normal subgroup of 
$\SL_1(\Z G)$. In particular, the $H$-units $\mathcal{H}(G)$ are of finite index despite that $\Bic(G)$ can be of infinite index. 
Moreover, if $G \ncong D_8$, $\langle \mathcal{H}(g_i,h_i,Q_i) \rangle \cong F_n^{q}$ with $n = 1 + \frac{|G|^3}{2.4^4}$.
\end{corollary}
In fact $\langle \mathcal{H}(g_i,h_i,Q_i) \rangle$ is the largest finite index subgroup in $\SL_1(\Z G)$ which is the direct product of free groups. This follows from \Cref{H-unit contain principal congruence in one comp}, saying that   $\mathcal{H}(g_i,h_i,Q_i)  = \SL_1(\Z G) \cap 1 -e + \Q G e$ for an associated $e \in \PCI(\Q G)$, and the indecomposability of $SL_1(\O)$ for $\O$ any order in some $\Ma_n(D)$ ($n\neq 1$) \cite{KleRio}.
That $\prod_{e \in \PCI(\Q G)} \SL_1(\Z G) \cap 1-e + \Q Ge$ is the largest direct product of free groups was already obtained by del R\'io and Ruiz in \cite[Theorem 1.1]{RioRuiz}. However our proof is uniform, i.e. we do not use the classification for $G$, and yield more explicit generators.

\begin{proof}[Proof of \Cref{families of grps with H-unit new}]
Let $e \in \PCI(\Q G)$. By \cite[Theorem 1.3]{JesRioCrelle}, either $\Q Ge$ is a totally definite quaternion algebra or it is isomorphic 
to $\Ma_2(\Q)$. In the former case $\SL_1(\Z G e)$ is finite, so we only need to consider the case $\Q Ge \cong \Ma_2(\Q).$ 
As pointed out in \Cref{extend Jespers-Leal remark}, the proof of \Cref{H-units for 2-grps give finite index} gives a $D_8$-triple $(g_e,h_e,Q_e)$ 
such that $\mathcal{H}(g_e,h_e,Q_e)$ is of finite index in $1-e + \mathcal{U}(\Z Ge)$. In particular, picking one such triple for every $e$ 
now yields a subgroup of $\SL_1(\Z G)$ which is even of finite index in the overgroup $\prod_e \SL_1( \Z Ge)$ such that 
$\langle \mathcal{H}(g_i,h_i,Q_i) \rangle \cong \prod_{i=1}^q \mathcal{H}(g_i,h_i,Q_i)$. 
Following \Cref{H-unit contain principal congruence in one comp}, $\mathcal{H}(g_e,h_e,Q_e)$ is isomorphic to a certain group $V_m$ which 
is normal in $\U(\Z Ge)$. This implies that $\prod_{i=1}^q  \mathcal{H}(g_i,h_i,Q_i)$ is normal in $\prod_e \U(\Z Ge)$ and in particular 
in the subgroup $\U (\Z G)$. Now, as mentioned earlier for example in the group $\mathcal{G}_1$ the bicyclic units are of 
infinite index. This finishes the first part. 

That $\mathcal{H}(g_i,h_i,Q_i) \cong F_n$ with $n$ as in the statement is a combination of \Cref{prop H-unit is H-unit} and \Cref{extend Jespers-Leal remark}. More precisely, the latter says that $\mathcal{H}(g_i,h_i,Q_i) \cong V_m$ with $m =o(gQ)|Q| = \frac{|G|}{4}$. Now would $q\neq 0$ and $o(gQ)|Q| =2$, then $Q = 1$ and $o(g)=2.$ This however entails that $G \cong D_8,$ which was excluded. Therefore, \Cref{prop H-unit is H-unit} yields that $\mathcal{H}(g_i,h_i,Q_i)$ is a free group and as $m$ is a $2$-power the product in \Cref{prop H-unit is H-unit} only runs over the prime divisor $2$, yielding the desired formula.
\end{proof}


To finish this section, we exhibit a first surprising application of $H$-units which  indicate a first of many new possible paths of research. 

\subsubsection*{Example of a normal complement via $H$-units}
Consider 
$$\Gamma = C_4 \rtimes C_4 := \langle a,b \mid a^4=b^4=1, \, a^b = a^{-1} \rangle,$$
a group in the class $\mathcal{G}_1$.
Recall that 
$\Q[\Gamma] \cong \Q[C_2 \times C_2] \op 2 \Q(i) \op \qa{-1}{-1}{\Q} \op \Ma_2(\Q)$. In particular we can apply all the results above. Concretely, notice that $(ab, b , \langle b^2 \rangle)$ is a $D_8$-triple fulfilling the non-triviality condition of \Cref{prop H-unit is H-unit} with $o(\overline{ab})|\langle b^2 \rangle | = 4$. As there is only one matrix component and $\mathcal{Z}(\U (\Z \G))$ is finite, \Cref{prop H-unit is H-unit} now yields that
$$\mathcal{H}(ab, b , \langle b^2 \rangle) \cong F_{1 + \frac{4^3.3}{6.4}} = F_{9} \text{ finite index normal subgroup in }\U(\Z \G).$$
Furthermore, by \Cref{H-unit contain principal congruence in one comp}, 
$\mathcal{H}(ab, b , \langle b^2 \rangle)) = \SL_1(\Z \G) \cap (1- e + \Q \G e)$ with $\Q \G e \cong \Ma_2(\Q)$ and 
$\mathcal{H}(ab, b , \langle b^2 \rangle)) = 1 -e + V_4$ is a normal subgroup. Furthermore, 
$$\SL_1(\Z \G) = \langle \mathcal{H}(ab, b , \langle b^2 \rangle)), G \cap \SL_1(\Z \G) \rangle.$$
The latter can be seen from the facts: (i) $\SL_1(\Z \G) \leq 1 \times \SL_1\qa{-1}{-1}{\Z} = Q_8 \times \SL_2(\Z)$ and (ii) $\G \cap \SL_1(\Z \G) = \langle a^2 \rangle $ (all other $g \in \G$ have a non-trivial projection in one of the commutative components) and with $a^2$ mapping to $-1$ in $\SL_1\qa{-1}{-1}{\Z}$. With a bit more of work\footnote{For example, via the methods as in the proof of Claim 4 in the proof of \Cref{Theorem abelianisation of only M2(Q) components} one can prove that $\U(\Z \G) = \langle \pm \G, \SL_1(\Z G) \rangle$. The latter fact can alternatively be deduced from \cite[Theorem 5.1]{JesLea}.} one can proof that $\U (\Z \G) = \langle \mathcal{H}(ab, b , \langle b^2 \rangle), \pm \G \rangle$. All together we recover  \cite[Theorem 5.1.]{JesLea} (also see \cite[Example 5.5]{BMM}):
$$\U(\Z[\Gamma]) \cong F_9 \rtimes \pm \Gamma $$
where $F_9 = \mathcal{H}(ab, b , \langle b^2 \rangle)$. This description of the free group $F_9$ yields a {\it new set of generators.} \vspace{0,2cm}

{\it A full presentation} can also be obtained. A presentation was given in \cite[Example 5.5]{BMM}, 
using \cite[Theorem 5.1.]{JesLea}, but our methods will yield a considerably more symmetric presentation. Indeed, 
as $F_9 = \mathcal{H}(ab, b , \langle b^2 \rangle) = 1 - e + V_4$, it is enough to compute the action of 
$\pi_e(\G ) = \langle \pi_e(ab), \pi_e(b) \rangle \cong D_8$ on $V_4$, all seen as subgroups of $\GL_2(\Z)$. 
To do so, recall that \Cref{prop description unit twisted group ring} and its proof yields a matrix representation of 
all elements and also record the required actions. More precisely, $V_2 = \langle w^2,v^2,w^{-1}v, -1\rangle$ with 
$w= \left( \begin{array}{ll} 1 & 2 \\ 0 & 1  \end{array} \right)$ and $v= \left( \begin{array}{ll} 1 & 0 \\ 2 & 1  \end{array} \right)$. 
Also, $\pi_e(ab) = \left( \begin{array}{lr} 1 & 0 \\ 0 & -1  \end{array} \right)$ and $\pi_e(b) = 
\left( \begin{array}{ll} 0 & 1 \\ 1 & 0  \end{array} \right).$ 

Via a direct application of the Reidemeister-Schreier method one obtains that 
\begin{equation}\label{gens V4}
V_4 = \langle v^4,w^4, w^2 v^{-2}, x, v^2 x v^{-2} \mid x \in S \rangle
\end{equation}
with $S = \{ (w^{-1}v)^2, (wv^{-1})^2, (w^{-1}v) (wv^{-1})^{-1} \}.$
Using the action in \Cref{prop description unit twisted group ring} (or the matrices above) one readily verifies that 
$$
y^{\pi_e(ab)} =y^{-1} \text{ for } y \in \{ v^4, w^4, (w^{-1}v) (wv^{-1})^{-1}\}.
$$
and
$$
z^{\pi_e(b)} = z^{-1} \text{ for } z \in \{w^2v^{-2}, (w^{-1}v)^2, (wv^{-1})^2 \}.
$$
Furthermore, $(w^2v^{-2})^{\pi_e(ab)} = w^{-4}(w^2v^{-2})v^4$ and $$\left((w^{-1}v) (wv^{-1})^{-1}\right)^{\pi_e(b)} = (w^{-1}v)^2 (w^{-1}v) (wv^{-1})^{-1} (wv^{-1})^2.$$ 
The remaining actions are computed similarly, yielding all together the following presentation:
$$V(\Z \G) = \langle s,t,u, x_1, x_2, x_3, y_1, y_2,y_3 \rangle \rtimes \langle a,b \rangle \cong F_9 \rtimes \Gamma$$
where the action is the following:
$$
\begin{array}{lll}
    s^{ab} = s^{-1} & x_1^{ab} = x_2 & y_1^{ab} = s^{-1}y_2s \\ 
    t^{ab} = t^{-1}& x_2^{ab} = x_1 & y_2^{ab} = s^{-1}y_1s \\
    u^{ab} = t^{-1} u s & x_3^{ab} = x_3^{-1} & y_3^{ab} = s^{-1}y_3s \\
    s^{b} = t & x_1^{b} = x_1^{-1} &  y_1^{b} = u y_1^{-1} u^{-1} \\ 
    t^{b} = s&  x_2^{b} = x_2^{-1} & y_2^{b} = u y_2^{-1}u^{-1}\\
    u^{b} = u^{-1} & x_3^{b} = x_1^{-1} x_3 x_2 & y_3^{b} = u y_1^{-1} y_3 y_2 u^{-1}\\
\end{array}
$$\vspace{0,2cm}

\noindent The previous example naturally raises the following question:

\begin{question}\label{question when generic constr normal complement}
When is $\langle \Bic (G), \mathcal{H}(G) \rangle $ a normal complement for the trivial units? Also, when is $\SL_1(\Z G)$ a complement?
\end{question}

\subsection{Applications to the abelianisation conjectures}\label{Sectie ab conjectures}

\hspace{0,2cm}\newline
 \indent  
  Recall that $\U (\Z G)$ is a finitely generated group and hence $\U (\Z G)^{ab}$ is the direct product of a finitely generated free abelian group 
  $\Z^n$ and a finite abelian group. One calls $n$ the rank of the abelian group $\U (\Z G)^{ab}$. Also recall that the center $\ZZ (\mathcal{U}(\Z G))$ of $\mathcal{U}(\Z G)$ is finitely generated, hence its rank is finite. Finally, recall that $\U(\Z G) = \pm V(\Z G)$
  with $V(\Z G)$ the {\it group of invertible elements with augmentation one}.

We finish the article with  another application of $H$-units. We do this by  giving an answer, for the class of groups considered above, 
 to the following recent questions on the abelianisation of the unit group of $\Z G$:
\begin{enumerate}
\item[(R1)] Is the rank of the abelian groups  $\ZZ (\mathcal{U}(\Z G))$ and  $\U(\Z G)^{ab}$ equal? In particular if  $\ZZ (\mathcal{U}(\Z G))$ is finite, is $\U(\Z G)^{ab}$ also finite? (see \cite[Question 7.8 and Proposition 7.9]{BJJKT}, The labelling of the questions is taken from \cite{BMM}.) 

\item[(P)] If $V(\Z G)^{ab}$ contains an element of prime order $p$, then so does  $G^{ab}$?
(see \cite[page 2]{BMM})
\item[(E1)] Is $\exp V(\Z G)^{ab} = \exp G^{ab}$? (see \cite[page 2]{BMM})
\end{enumerate} 

\noindent  The labelling of the questions is taken from \cite{BMM}. In question (E1) by $\exp \G$ of a group $\G$ we mean the least common multiple of the elements of finite order in $\G$. Note that question (E2) implies question (P).

In general, as proven in  \cite[Proposition 6.1]{BJJKT}, 
$$\rank  \U(\Z G)^{ab}  \geq \rank  \ZZ (\mathcal{U}(\Z G)).$$ In case  $\Q G$ does not have  exceptional components the above is 
an equality by \cite[Theorem 6.3]{JeOlvGdR}, and hence question (R1) has a positive answer  in that case. However in \cite[Theorem D]{BMM} 
one group, where $\Q G$ has an exceptional component of the form $\Ma_2(\Q(i))$, was found where the rank of the abelianisation is non-zero but the center of the unit group is finite (thus the rank of $\U(\Z G)^{ab}$ is larger than expected).
Surprisingly, crucially using $\mathcal{H}(G)$, for each group in $\mc{G}_i \times C_2^n$, for $1 \leq i \leq 7$, we obtain an 
expression for the exponent and rank of the abelianisation. 

Recall that for $G \in \mc{G}_i$ and $e \in \PCI(\Q G)$, one has that $\Q G e$ either is a division algebra that is not exceptional of type I or it is a simple component of type $M_2(\Q )$ (and thus of exceptional type II). The set of primitive central idempotents $e$ of the latter type we will denote (as in the proof of Theorem~\ref{H-units for 2-grps give finite index})  as 
$\mathcal{E}_{exc}$. Also recall that for $e\in \mathcal{E}_{exc}$ the group
 $G$ can be written as an extension as follows:
$$1 \rightarrow N \longrightarrow G \xrightarrow{\pi_e} Ge \cong D_8= \langle a : a^4 =1 \rangle \rtimes \langle b : b^2=1 \rangle \rightarrow 1.$$
Consider the following property:
\begin{equation}\label{prop star}
    (\star)\, \forall e \in \mathcal{E}_{exc}, \exists\,  g,h \in G : o(\pi_e(g))= o(ab) \text{ and } o(\pi_e(h)) = o(b).
\end{equation}

Note that this property is satisfied if $G$ is a split extension of $D_8.$ Under the additional property $(\star)$ we will give a positive answer to  (R1) and (P), despite that such groups may have arbitrarily many exceptional components. 

\begin{theorem}\label{Theorem abelianisation of only M2(Q) components}
Let $G = K \times C_2^n$ with $K$ a group in $\mathcal{G}_1 \cup \ldots \cup \mathcal{G}_7$ and $\pi$ the natural  map of $\U(\Z G)$ onto $\U(\Z G)^{ab}$. Then 
$$
\rank  \U(\Z G)^{ab}  = \rank \ZZ (\mathcal{U}(\Z G)) + \rank  \pi(\langle \mc{H}(G)_{un}\rangle) ,
$$
where $\mc{H}(G)_{un} = \{ x \in \mc{H}(G) \mid x \text{ is unipotent } \}.$ Furthermore,
$$ \exp(V(\Z G)^{ab}) = \lcm\left( \exp(G^{ab}), \exp \left(V(\Z G)/\ZZ(V(\Z G))\,  cl_{\U(\Z G)}(\langle \Bic(G), \pm G \rangle\right)^{ab} \right).$$
Moreover, if $G$ satisfies $(\star)$, then $cl_{\U(\Z G)}(\langle \Bic(G), \pm G \rangle)$, i.e. the normal closure of the bicyclic and trivial units, 
together with the centre $\ZZ(\U(\Z G))$ is of finite index in $\U( \Z G)$ and (R1) and (P) have a positive answer.
\end{theorem}
Recall that for the group $\mathcal{G}_1$ the group $\langle \Bic(G), \pm G \rangle$ is of infinite index, which we expect to be a rather general phenomena for the classes of groups considered. Thus it is somehow surprising that under property $(\star)$ their normal closure is of finite index.
 
\begin{remark} (1) The combination of \cite[Lemma 4.4]{JM} with \Cref{H-unit contain principal congruence in one comp} yields that $\langle \mc{H}(G)_{un} \rangle$ is of infinite index in $\SL_1(\Z G)$ whenever $|G| \geq 32$ and $G = K \times C_2^n$ with $K \in \mathcal{G}_1 \cup \ldots \cup \mathcal{G}_7$. Thus for this class of groups the rank-formula reduces the problem of 
determining the abelianisation to a significantly smaller group.

(2) The proof of \Cref{Theorem abelianisation of only M2(Q) components} will furthermore yield that
$$
\rank \U(\Z G)^{ab}  = \rank \ZZ (\mathcal{U}(\Z G)) + \rank \left(\U(\Z G)/\ZZ (\mathcal{U}(\Z G))\, cl_{\U(\Z G)}(\langle \Bic(G), \pm G \rangle)\right)^{ab}.
$$
Clearly $D_8\in \mathcal{G}_{4}$ and  $G=D_8 \times C_2^n$ is a cut group (i.e. $\ZZ (\mathcal{U}(\Z G))$ is finite) and $G$ satisfies $(\star)$, thus \Cref{Theorem abelianisation of only M2(Q) components} yields that 
$$\rank( \U(\Z [D_8 \times C_2^n])^{ab} ) =0$$
for all $n$, answering \Cref{question on abelianisation} in case $G = D_8$.
\end{remark}
\begin{proof}[Proof of the torsion-free part of \Cref{Theorem abelianisation of only M2(Q) components}]
Recall that taking abelianisation is right exact. More precisely, for any group $\Gamma$ and normal subgroup $N$ ons has the short exact sequence
\begin{equation}\label{SES at level of ab}
1 \rightarrow \pi_{\G} (N) \rightarrow \G^{ab} \rightarrow (\G/N)^{ab} \rightarrow 1
\end{equation}
where  $\pi_{\G}: \G \rightarrow \G^{ab}$ is the canonical projection and thus
$\pi_{\G}(N)\cong  N/N\cap [\G,\G]$.

In particular 
\begin{equation}\label{equality rank in SES}
\rank (\G^{ab}) = \rank(\pi_{\G}(N)) + \rank( (\G/N)^{ab} ).
\end{equation}

Furthermore, it follows from the proof of \cite[Proposition 5.5.1]{EricAngel1} that $\ZZ (\U (\Z G)) \cap \SL_1(\Z G)$ is finite. Since $[\U(\Z G), \U( \Z G)] \leq \SL_1(\Z G)$ and thus also $\ZZ (\U (\Z G)) \cap [\U(\Z G), \U( \Z G)]$ is finite, this implies that 
\begin{equation}\label{rank with central part}
\rank \pi( \langle \ZZ (\U (\Z G)), N \rangle ) = \rank \ZZ (\U (\Z G)) + \rank \pi(N)\end{equation} 
for every normal subgroup $N$ in $\U (\Z G)$ such that $\pi(N) \cap \pi(\ZZ (\U (\Z G)))$ is finite. The latter condition is for example satisfied for a subgroup of $\langle \SL_1(\Z G), \pm G \rangle.$ Moreover, if $N \leq \SL_1(\Z G)$, then
\begin{equation*}\label{rank with SL1}
\rank \left( \SL_1(\Z G)/N \right)^{ab} = \rank \left( \SL_1(\Z G)\, \ZZ (\U (\Z G)/N\,  \ZZ (\U (\Z G)\right)^{ab}
\end{equation*}
and thus, 
because $\langle \ZZ (\U (\Z G)), \SL_1(\Z G) \rangle$ is of finite index in $\U (\Z G)$ (see \cite[Proposition 5.5.1]{EricAngel1}),
\begin{equation}\label{rank with SL1 lower bound}
\rank \left( \SL_1(\Z G)/N \right)^{ab}  \geq \rank \left( \U(\Z G)/N\, \ZZ (\U (\Z G)\right)^{ab}.
\end{equation}

Consider $\mathcal{E}_{exc} := \{e \in \PCI(\Q G) \mid \Q Ge \cong \Ma_2(\Q) \}$ and let $e_1, \ldots, e_q$ be its distinct elements.
For the rest of the proof consider the subgroup 
$\langle \pm G, \Bic (G), \prod_{i=1}^q \mc{H}(g_i,h_i,Q_i) \rangle$ delivered by \Cref{families of grps with H-unit new}, where 
 by \Cref{H-unit contain principal congruence in one comp},
$$\mc{H}(g_i,h_i,Q_i) = \SL_1(\Z G) \cap (1 - e_i + \Q Ge_i).$$ 
As explained in the proof of \Cref{H-unit contain principal congruence in one comp}, for each $i$ we have a  morphism as in (\ref{embedding h-unit in matrices}), which we now denote by $\Phi_i$, is the composition of projecting to $\Q Ge_i$ with the isomorphism $\phi$ defined in (\ref{def phi from section 8}) (\Cref{subsection start case elemen ab}). Moreover, by \Cref{remark on triviality H-unit}, one has that $e_i = \widehat{\langle Q_i, -[g_i,h_i] \rangle} = \widehat{Q_i}\,\frac{(1-[g_i,h_i])}{2}$. By \Cref{H-unit contain principal congruence in one comp}, $\mc{H}(g_i,h_i,Q_i) = 1 - e_i + V_{o(g_iQ_i)\, |Q_i|}$. Note that $o(g_iQ_i)\, |Q_i| = 2 |Q_i| = |G|/4$ does not  depend on $i$, and thus  $|Q_i| = 2^{m-1}$ for some positive integer $m$. \vspace{0,2cm}

First we choose $N = \ZZ (\U (\Z G)) \, cl_{\U(\Z G)}(\langle \Bic(G), \pm G \rangle).$ It follows from\footnote{In \cite{BMM} the result is only proven for bicyclic units of the form $1 + (1-h)g \wt{h}$, but the same proof works for those of the form $1 + \wt{h} g (1-h)$.} \cite[Proposition 3.1]{BMM} that any bicyclic unit of $\Z G$ is a product of commutators of elements of $\langle \Bic(G), \pm G \rangle$. Hence,  $\pi(cl_{\U(\Z G)}(\langle \Bic(G), \pm G \rangle))$ is finite.  Consequently, because of (\ref{equality rank in SES}),
and (\ref{rank with central part}) we obtain that: 
\begin{eqnarray}\label{rank bounded via SL_1}
\rank \U(\Z G)^{ab}  &=& \rank \left(\pi (\ZZ (\mathcal{U}(\Z G)) \, cl_{\U(\Z G)}(\langle \Bic(G))\right) \nonumber\\
&&+ \rank  \left(\U(\Z G)/\ZZ (\mathcal{U}(\Z G))\, cl_{\U(\Z G)}(\langle \Bic(G), \pm G \rangle)\right)^{ab}\nonumber\\
&=&\rank \ZZ (\mathcal{U}(\Z G)) + \rank \left(\U(\Z G)/\ZZ (\mathcal{U}(\Z G))\, cl_{\U(\Z G)}(\langle \Bic(G), \pm G \rangle)\right)^{ab}. 
\end{eqnarray}

Thus, if $\ZZ (\mathcal{U}(\Z G))\, cl_{\U(\Z G)}(\langle \Bic(G), \pm G \rangle)$ is of finite index, then (R1) has a positive answer.

\noindent Now consider the bicyclic unit $b_{\widetilde{h_i},g_i} := 1 + \wt{h_i} g_i (1- h_i^{-1})$. Using that $o(h_iQ) = 2$ one sees that 
$$e_i b_{\wt{h_i},g_i} = e_i\left( 1 + \frac{o(h_i)}{o(h_iQ)} (1+h_i)g_i (1-h_i) \right) = e_i \left( 1 + o(h_i) (1+h_i)g_i \right) = e_i v_{(\frac{-o(h_i)}{2}, \frac{o(h_i)}{2}, \frac{-o(h_i)}{2}, \frac{o(h_i)}{2})}.$$
Therefore, using (\ref{embedding h-unit in matrices}), composing with $\phi$ gives that 
$$\Phi_i(b_{\wt{h_i},g_i}) = \left( \begin{array}{ll} 1 + o(h) & - o(h) \\ o(h) & 1-o(h)  \end{array} \right).$$

Next consider the bicyclic units $b_{h_i,\widetilde{g_i}} := 1 + (1-g_i)h_i\wt{g_i}$ and $b_{\widetilde{g_i}, h_i} := 1 + \wt{g_i} h_i (1- g_i)$. Analogously as with the preceding unit one verifies that 
$$\begin{array}{rcl}
e_i b_{h_i,\tilde{g_i}} & = & e_i \left( 1 + \frac{o(g_i)}{o(g_iQ_i)} (1-g_i) h_i (1+ g_i) \right) = e_i v_{(0, o(g_i), 0,0)} \\
e_i b_{\tilde{g_i}, h_i} & = & e_i \left( 1 + \frac{o(g_i)}{o(g_iQ_i)} (1+g_i) h_i (1- g_i) \right) = e_i v_{(o(g_i), 0, 0,0)}.
\end{array}$$
Thus after composing with $\phi$ we obtain the matrices 
$$\Phi_i(b_{h_i,\tilde{g_i}}) = \left( \begin{array}{lc} 1 & 2 o(g_i) \\ 0 & 1  \end{array} \right)  \text{ and } \Phi_i(b_{\tilde{g_i}, h_i})= \left( \begin{array}{lc} 1 & 0 \\ 2 o(g_i) & 1  \end{array} \right).$$ 

{\it Now suppose that $G$ has property $(\star).$} In particular, $o(h_i) = 2 = o(g_i)$ and so the matrices obtained above are simply those already encountered in \Cref{prop description unit twisted group ring}. Namely : 
$$ (\Phi_i(b_{\wt{h_i},g_i}), \Phi_i(b_{h_i,\tilde{g_i}}), \Phi_i(b_{\tilde{g_i}, h_i}) ) = ( vw^{-1}, w^2, v^2 ).$$
Therefore, by (\ref{generators U_1}) we obtain that 
$$\Phi_i(\pm \langle b_{h_i,\tilde{g_i}}, b_{\tilde{g_i}, h_i}, b_{\wt{h_i},g_i} \rangle ) \cong V_2 =  \{ \left(\begin{array}{cc}
1 + 2 \, l_2 & 2 \, t_1  \\
 2 \,t_2 & 1 + 2\, l_1
\end{array} \right) \in \SL_2(\Z) \mid l_1 \equiv l_2 \text{ and } t_1 \equiv t_2 \mod 2 \}.$$
In particular, because of the description given in \eqref{H-units as Vm}, we get that $\mc{H}(g_i,h_i,Q_i) \cong \Phi_i(\mc{H}(g_i,h_i,Q_i) ) \, \triangleleft \, \Phi_i(\pm \langle b_{h_i,\tilde{g_i}}, b_{\tilde{g_i}, h_i}, b_{\wt{h_i},g_i} \rangle )$. Denote 
$$H := cl_{\U(\Z G)}\left( \langle b_{h_i,\tilde{g_i}}, b_{\tilde{g_i}, h_i}, b_{\wt{h_i},g_i}, g_i, h_i \mid 1 \leq i \leq q \rangle \right).$$
Next, note that 
$$[H, \mc{H}(g_i,h_i,Q_i)] = [1-e_i + He_i, \mc{H}(g_i,h_i,Q_i)] \cong [\Phi_i (H), \Phi_i(\mc{H}(g_i,h_i,Q_i))],$$
with $\Phi_i(g_i) = \left( \begin{array}{lr} 1 & 0 \\ 0 & -1  \end{array} \right)$ and $\Phi_i(h_i) = \left( \begin{array}{ll} 0 & 1 \\ 1 & 0  \end{array} \right).$ We will prove that $H$ is of finite index in $\prod_{i=1}^{q} \U(\Z G e_i).$ Since the latter contains $\U(\Z G)$ and $H \leq cl_{\U (\Z G)}(\langle \Bic(G), \pm G \rangle)$, this would finish the proof of the last statement of the Theorem.

Recall that $\mc{H}(g_i,h_i,Q_i) = 1 -e_i + V_{|G|/4}$. Hence, by the above descriptions,
\begin{equation}\label{identification commutator}
[ H , \prod_{i=1}^q \mc{H}(g_i,h_i,Q_i)] = \prod_{i=1}^q 1 - e_i + [\langle V_2,\Phi(g_i), \Phi(h_i) \rangle, V_{|G|/4}].
\end{equation}

We claim that $ V_{|G|/4}/ [\langle V_2,\Phi(g_i), \Phi(h_i) \rangle , V_{|G|/4}] $ is finite. To prove this we need following general group theoretical inequality: \vspace{0,1cm}

\noindent {\it Claim $1$: } Let $N_2 \leq N_1$ be normal finite index subgroups of some group $\Gamma$. Denote $[N_1 / [\Gamma , N_2] : N_2 / [\Gamma , N_2]]= n$ which divides $[N_1 : N_2]$. If $N_1 / [\Gamma , N_1]$ is finite, then $N_2 / [\Gamma, N_2]$ is finite. Furthermore, $|[\G, \G]/[\G, N_2]|$ divides $\left( [\G : N_1] n \right)^{[s/2]+1}$ where $s$ is the product of all $p^{e_p}$ with $e_p$ the maximum exponent of $p$ dividing $n[\G : N_1]$.\vspace{0.2cm}

Indeed, to prove the claim and for notation simplicity we denote $M := [\G, N_2].$ Note that $N_2/M$ is central in 
$\G / M$ and thus the latter is central-by-finite. Hence, by a well-known theorem of Schur, $[\G/M, \G/M]$ is finite. 
Moreover, by \cite[Theorem 1]{Wehr}, $|[\G/M, \G/M]| \mid ([\G/M : \ZZ (G)/M])^{[t/2]+1}$ with $t$ the product of 
all $p^{l_p}$ with $l_p$ the maximum exponent of $p$ dividing $[\G/M : \ZZ (\G)/M]$. As 
$[\G/M : \ZZ (\G)/M] \mid [\G/M : N_1/M]. [N_1/M : N_2/M]$ it also divides the  multiple mentioned in the statement of the claim. 
Now, as $[\G, N_1]/M \leq [\G, \G]/M$ is finite, and by assumption also $N_1/[\G,N_1]$, we obtain that $N_1/M$ is finite. 
In particular $N_2 / M$ is finite as desired. 

Using this we can now prove that:\vspace{0,1cm}

\noindent {\it Claim 2: } $ V_{|G|/4}/ [\langle V_2,\Phi(g_i), \Phi(h_i) \rangle , V_{|G|/4}] $ is finite. \vspace{0,2cm}

\noindent Applying Claim $1$ to $\Gamma = \langle V_2,\Phi(g_i), \Phi(h_i) \rangle, N_2 = V_{|G|/4}$ and $N_1 = V_2$ we see that it is enough to prove that 
$$V_{2}/ [\langle V_2,\Phi(g_i), \Phi(h_i) \rangle , V_{2}] \text{ is finite.}$$

The latter can be seen via an explicit set of generators. Concretely, following (\ref{generators U_1}), $V_2  = \langle w^2,v^2,w^{-1}v, -1 \rangle$ with $w= \left( \begin{array}{ll} 1 & 2 \\ 0 & 1  \end{array} \right)$ and $v= \left( \begin{array}{ll} 1 & 0 \\ 2 & 1  \end{array} \right)$. It is readily computed that 
\begin{equation}\label{actie h en g op de v en w}
v^{\Phi_i(h_i)} = w, v^{\Phi_i(g_i)} = v^{-1} \text{ and } w^{\Phi_i(g_i)} = w^{-1}.
\end{equation}
From this one verifies that the square of each generator is a single commutator. Therefore, $V_2 / [\langle \Phi_i(g_i), \Phi_i(h_i) \rangle, V_2]$ is an elementary abelian $2$-group, finishing the proof of Claim 2.

Thus all together we have proven that when $G$ satisfies property $(\star)$, 
then $\prod_{i=1}^q [H, \mc{H}(g_i,h_i,Q_i)]$ is of finite index in $\prod_{i=1}^q \U( \Z G e_i)$. 
But by the normality of $H$ in $\U (\Z G)$ we have that $[H, \mc{H}(g_i,h_i,Q_i)] \leq H$ for each $i$ and hence $\ZZ(\U (\Z G))\,H$  indeed is of finite index in $\U (\Z G).$ As mentioned earlier, by (\ref{rank bounded via SL_1}) this implies that (R1) holds for $G$ if it satisfies $(\star)$.\vspace{0,2cm}

Now consider again a general $G \in \mc{G}_j \times C_2^n.$ We will now apply (\ref{equality rank in SES}) to $\G = \U(\Z G)$ and $N = \langle \ZZ (\U (\Z G), \mc{H}(G)_{un} \rangle$. Using (\ref{rank with central part}), we see that the first part of the statement follows if $\left( \U( \Z G)/\ZZ (\U (\Z G)). \langle \mc{H}(G)_{un} \rangle \right)^{ab}$ is finite.

For this recall that each $\mc{H}(g_i,h_i,Q_i)$ contains the element $1- e_i + \begin{pmatrix} 1 & 2^{m} \\ 0 & 1
\end{pmatrix}$ (for simplicity, we abuse notation by writing matrices in $e_i$-part). As $V_{o(g_iQ_i).|Q_i|}$ is normal in $\U(\Z G e_i)$, the H-units $\mc{H}(g_i,h_i,Q_i)$ 
contain the group $A_i := 1 - e_i + cl_{\U(\Z G e_i)} (\langle \begin{pmatrix} 1 & 2^{m} \\ 0 & 1
\end{pmatrix} \rangle).$ Notice that $A_i \leq \langle \mc{H}(G)_{un} \rangle.$ Now as abelianisation is right exact and 
by (\ref{rank with SL1}) it is enough to prove that $\left( \SL_1( \Z G)/ \prod_{i} A_i \right)^{ab}$ is finite. To prove the latter
it is sufficient to show that $\prod_{i=1}^q \left( \SL_1( \Z G e_i)/ A_i \right)^{ab}$ is finite. This is directly verified using the presentation in \Cref{prop description unit twisted group ring} as also 
$\begin{pmatrix} 1 & 0 \\ 2^{m} & 1 \end{pmatrix} = 1- e_i + h^{-1}e_i \begin{pmatrix} 1 & 2^{m} \\ 0 & 1
\end{pmatrix} he_i  \in A_i$. This finishes the proof of the torsion-free part of the Theorem.
\end{proof}

Next,

\begin{proof}[Proof of the torsion part of \Cref{Theorem abelianisation of only M2(Q) components}]
For the torsion statement we consider the short exact sequence (\ref{SES at level of ab}) for $\G = V(\Z G)$ and $N = \ZZ (\U (\Z G))\, cl_{V(\Z G)}(\langle \Bic(G), G\rangle)$. Hereby the following is crucial:

\noindent{\it Claim 3:} For $\G = V(\Z G)$ and $N = \ZZ (\U (\Z G))\, cl_{V(\Z G)}(\langle \Bic(G), G\rangle)$ holds:
\begin{equation}\label{SES for exp torsion}
\exp (\G^{ab}) = lcm (\exp (\pi(N)), \exp (\G/N)^{ab} ).
\end{equation}

When $\pi(N)$ is finite Claim 3 is clear, as the claim even holds more generally for any short exact sequence of finitely generated abelian groups with finite kernel. In particular, recalling that $\pi(cl_{V(\Z G)}(\langle \Bic(G), G\rangle))$ is finite, we have that Claim 3 holds when $G \notin \mc{G}_7 \times C_2^n$ as for such groups $G$ we have that $\ZZ(\U (\Z G))$ is finite and hence also so is  $\pi(N)$. 

To handle the family $\mc{G}_7 \times C_2^n$ we need we need to do some more work.
First recall that by \cite[Theorem 6]{JesLealRio} and \cite[beginning of  
Section 5]{JesRioCrelle} the only simple components of $\Q[G]$ are of the form $\Q, \Q (i), \qa{-1}{-1}{\Q}, \qa{-1}{-1}{\Q(\sqrt{2})}$, 
or $\Ma_2(\Q)$. Therefore by \cite[Lemmas 2 and 3]{JesLealRio} and \cite[Lemma 5.3]{JesRioCrelle} $G$ is a subgroup of 
$C_2^{n_1} \times C_4^{n_2} \times Q_8^{n_3} \times D_8^{n_4} \times Q_{16}^{n_5}$ for some $n_1,\ldots,n_5\in \N$ with $n_5 = 0$ 
if $K \notin \mc{G}_7$. Recall that both $Q_8$ and $Q_{16}$ have a unique subgroup of order $2$ which moreover is central. 
Thus the second part of Claim 4, see below, holds for every subgroup of $C_2^{n_1} \times C_4^{n_2} \times Q_8^{n_3} 
\times D_8^{n_4} \times Q_{16}^{n_5}$. Also note that $G$ has a normal subgroup, say $A$, so that both groups $A$ and   $G/A$ are abelian and also  $ \exp (G/A)$ divides $4$.
A result of Cliff, Sehgal and Weiss (see \cite[Theorem 31.1]{SehgalBook93}) yields that $V_{tf}(\Z G) :=V(\Z G)\cap (1+\ker(\omega_A)\ker (\omega_G))$ is a torsion-free normal subgroup of $V(\Z G)$ and $V(\Z G)= V_{tf}(\Z G)\, G \cong V_{tf}(\Z G) \rtimes G$.
\vspace{0,2cm}

\noindent {\it Claim 4:} if $x \in V_{tf}(\Z G)$ is such that $x^m \in \ZZ (V(\Z G))$ for some $m \in \Z_{>0}$, then $x\in \ZZ (V(\Z G))$. Furthermore, $g^{o(g)/2} \in \ZZ(G)$ for every $g \in G$ with $o(g) > 2.$\vspace{0,2cm}

Take $e\in \PCI (\Q G)$. Note that the Cliff-Sehgal-Weiss result also holds for the quotient groups $Ge$ of $G$. 
Thus also $V(\Z[Ge])= V_{tf}(\Z[Ge]) \rtimes Ge$ has such a decomposition. Furthermore, in view of the explicit description 
of $V_{tf}(\Z G)$, the decompositions are compatible in the following sense: the natural epimorphism  $\pi_e: G \rightarrow Ge$ extends
to an epimorphism $\phi_e: \Z G \rightarrow \Z [Ge]$ and its restriction yields a morphism  $\phi_e: V(\Z G) \rightarrow V(\Z[Ge])$. It follows from the description of the basis of the kernel of a relative augmentation map that  $\phi_e(V_{tf}(\Z G)) {\bf \subseteq}  V_{tf}(\Z [Ge])$ and $\phi_e (\ZZ (V(\Z[G]))\subseteq \ZZ (V(\Z[Ge]))$.
This compatibility combined with the assumption yields that 
\begin{equation}\label{image power x}
\phi_e(x)^m \in V_{tf}(\Z[Ge]) \cap \ZZ (V(\Z[Ge])).
\end{equation}

Now if $Ge$ is abelian or isomorphic to $Q_8$ or $D_8$ then $\ZZ (V(\Z[Ge]))$ is finite. Thus 
$\phi_e(x)^m = e$ by (\ref{image power x}) and, since $\phi_e (x)$ belongs to the torsion-free group 
$V_{tf}(\Z [Ge]$, we thus even get $\phi_e(x)=e$ (in particular it is central). Next consider the case that $Ge \cong Q_{16}$ the quaternion group of order $16$. From  the description of the unit group of $\Z Q_{16}$ obtained in \cite[Theorem 4]{JesParment} it follows that $V_{tf}(\Z[Ge])=V_{tf}(\Z Q_{16})$ is the direct product of an infinite cyclic group, which moreover is $V_{tf}(\Z Q_{16})\cap  \ZZ\left( V(\Z [Ge])\right)$, and a non-abelian free group. Thus (\ref{image power x}) can only happen if $\phi_e(x) \in \ZZ\left( V(\Z [Ge])\right).$

So, we have shown that $\phi_e(x)$ is central for each $e\in \PCI (\Q G)$. Therefore $xe$, the projection of $x$ in the simple component $\Q Ge$, is central for each $e$. Hence, $x$ itself is central and the claim follows.
\vspace{0,1cm}

\noindent {\it Proof of Claim 3:} by Claim 4, and because $G\subseteq N$, we can choose a transversal $\mc{T}$ of $N$ in $\G$ such that
$\mc{T}\subseteq V_{tf}(\Z G)$ and 
the only elements in $\mc{T}$ with some power central are the central  elements. Now, for $x\in \G^{ab}$ write  $x = t\, y\, z$, with $t\in \pi(\mc{T}), z \in \pi(\ZZ (V(\Z G)))$ and $y \in \pi(\langle \Bic(G), G\rangle).$ If $x$ is periodic, then so are $t,y,z.$ To see this, recall that $\pi(cl_{V(\Z G)}(\langle \Bic(G), G\rangle))$ is finite. Thus for some positive integer  $n$ we have 
 $t^n$ is a central unit. However, by the choice of the transversal, this implies that  also $t$ is periodic. Consequently, the remaining component $z$ also needs to be periodic. With this Claim 3 now follows directly.\vspace{0,2cm}

The central and bicyclic units contribute as predicted by conjecture (E1):\vspace{0,1cm}

\noindent {\it Claim 5:} $\exp (\pi \left( \ZZ (V (\Z G))\,  cl_{V(\Z G)}(\langle \Bic(G), G \rangle)\right)) = \exp(G^{ab}).$\vspace{0.2cm}

First we note that $\exp(G^{ab}) \mid \exp (\pi(N))$ with $N= \ZZ (V (\Z G))\,  cl_{V(\Z G)}(\langle \Bic(G), G \rangle))$. To see this, consider the relative augmentation $\omega_{G'} : \Z[G] \rightarrow \Z[G/G']$. As $\U(\Z[G/G'])$ is abelian, we have an induced morphism $\wt{\omega_{G'}} : V(\Z G)^{ab} \rightarrow V(\Z[G/G'])$. Since $G'\subseteq [V(\Z G),V(\Z G)]$ and $\wt{\omega_{G'}}(g[V(\Z G),V( \Z G)])=gG'$ for $g\in G$, we have that any periodic element of $G/G'$ is a $\restr{\wt{\omega_{G'}}}{\pi(N)}$-image of a periodic element of $\pi(N)$. Hence it follows that $\exp (G/G') \mid \exp (\pi(N))$.

Thus in order to prove Claim 5, it is enough to show that $o(\pi(\alpha))$ divides $\exp(G^{ab})$ for every element $\alpha$ of $\ZZ (\U (\Z G)),  \Bic(G)$ and $G$. For the latter this trivially is true. Now consider a central unit $\alpha \in \ZZ (\U (\Z G))$. By \cite[Proposition 5.5.1]{EricAngel1} one has that $\ZZ (V(\Z G)) \cap [V(\Z G), V(\Z G)] \leq \ZZ (V(\Z G)) \cap \SL_1(\Z G)$ is finite. Thus if  $o(\pi(\alpha))$ is finite, then $\alpha$ itself is a periodic unit. Consequently, by a result of Berman and Higman \cite[Corollary 7.1.3]{PolSeh} (or see Theorem~\ref{size and comparission of ker}), $\alpha \in G$ and in particular $o(\pi(\alpha)) \mid \exp(G^{ab})$. 

 Next consider $\alpha = 1+ (1-x) y \wt{x} \in \Bic(G)$ whose inverse is $\alpha^{-1}= 1 - (1-x) y \wt{x}$. We will prove that
  \begin{equation}\label{bicyclic in ab}
 o(\pi(\alpha)) \mid 2.
 \end{equation}
 If $o(x) = 2,$ then (\ref{bicyclic in ab}) was obtained in \cite[Proposition 3.1]{BMM}. So suppose that $o(x) > 2$ and thus $o(x)=4$ or $8$. However, $o(x)=8$ only occurs for the class $\mc{G}_7$ and in that case $x$ generates a normal subgroup \cite[Lemma 5.7]{JesRioCrelle}. In particular if $o(x)=8,$ then $\alpha = 1.$ In conclusion, we may suppose that $o(x)=4.$

 It was noticed in the proof of 
 \cite[Proposition 3.1]{BMM} that $[\alpha^{-1}, x^k] = 1 + (1-x) (1- x^{-k}) y \wt{x}$ for any non-negative integer $k$. 
 Consequently, for $\mc{I}$ a subset of $\{ 1, \ldots, o(x) \}$ one has that
 $$\begin{array}{rcl}
    \prod\limits_{k \in \mc{I}} [\alpha^{-1}, x^k]  & = & 1+ \sum\limits_{k \in \mc{I}} (1-x) (1- x^{-k})y \wt{x} \\
      & = & 1 + |\mc{I}| (1-x)y\wt{x} - (1-x) (\sum\limits_{k \in \mc{I}} x^{-k}) y \wt{x}.
 \end{array}$$
 Now take $\mc{I} = \{ o(x)-1, o(x) \}$ and using that $\wt{x} = (1+x)(1+x^2)$ we see that
 $$(1-x) (\sum\limits_{k \in \mc{I}} x^{-k}) y \wt{x} = (1-x) (1+x) y (1+x)(1+x^2) = (1-x^2) (1+x^2) y (1+x) = 0,$$
 where we used that $x^2$ is central (by Claim 4). Altogether we have proven that $\alpha^2 = 1 + 2 (1-x) y \wt{x} \in [V(\Z G), V(\Z G)],$ yielding (\ref{bicyclic in ab}).\vspace{0,1cm}

The statement (\ref{bicyclic in ab}) also holds for the units $1+ \wt{x}y(1-x)$ and follows from an analogue proof. This finishes the proof of Claim 5. \vspace{0.2cm}

Claim 2 together with (\ref{SES for exp torsion}) yields the upper bound for $\exp (V (\Z G)^{ab})$ we were looking for.\vspace{0,2cm}

\noindent Now suppose that $G$ satisfies $(\star)$, then we  already have proven that the group generated by 
$\ZZ (V( \Z G))$ and $cl_{V(\Z G)}(\langle \Bic(G) , G) \rangle$ is of finite index in $V(\Z G)$. More precisely, recall that $\mc{H}(g_i,h_i,Q_i) = 1 - e_i + V_{|G|/4}$ and also the identification from (\ref{identification commutator}). With this we can reformulate Claim 2 saying that
$ \prod_i \mc{H}(g_i,h_i,Q_i)/ [cl_{V(\Z G)}(\langle \Bic(G) , g_i, h_i \rangle ), \mc{H}(g_i,h_i,Q_i)] $ is finite. To control the exponent of the latter we will pass over to an overgroup:
$$| \frac{\mc{H}(g_i,h_i,Q_i)}{[cl_{V(\Z G)}(\langle \Bic(G) , g_i, h_i \rangle ), \mc{H}(g_i,h_i,Q_i)]}| \text{ divides } | \frac{\langle V_2, \Phi(g_i),\Phi(h_i) \rangle}{[\langle V_2, \Phi(g_i),\Phi(h_i) \rangle, V_{|G|/4}]}|.$$

Next note that the proof of Claim 2 entails that $(\langle V_2, \Phi(g_i),\Phi(h_i) \rangle)^{ab}$ is a finite elementary abelian $2$-group. 
Therefore using the explicit bound from\footnote{More precisely we replace $[\G: N_1]n$ by the multiple 
$[\G: N_1].[N_1: N_2] = [\G : N_2]$.} Claim 1 in the setting of Claim 2 yields that 
$$\exp\left(\langle V_2, \Phi(g_i),\Phi(h_i) \rangle / [\langle V_2, \Phi(g_i),\Phi(h_i) \rangle, V_{|G|/4}] \right) 
\text{ divides } 2\, ([\langle V_2, \Phi(g_i),\Phi(h_i) \rangle : V_{|G|/4}])^{[t/2]+1}$$
where $t$ is the product of all $p^{t_p}$ with $t_p$ the maximum exponent of $p$ dividing $[\langle V_2, \Phi(g_i),\Phi(h_i) \rangle :
V_{|G|/4}].$ Now, recalling that $V_{2^i} \cong U_i$ by (\ref{Ui as matrix}), \Cref{the groups U_i for elemen ab} says that $[V_2 : V_{|G|/4}]$ is a power of two. Furthermore, \Cref{prop description unit twisted group ring} yields that $[\langle V_2, \Phi(g_i),\Phi(h_i) \rangle :
V_2]$ is a $2$-power. Summarized we have proven that 
$$\exp\left(\prod_i \frac{\mc{H}(g_i,h_i,Q_i)}{[cl_{V(\Z G)}(\langle \Bic(G) , g_i, h_i \rangle ), \mc{H}(g_i,h_i,Q_i)]} \right) \text{ is a power of } 2.$$
Combining this with the fact that $[\U (\Z G e_i): V_{|G|/4}]$ is $2$-power and
$$\exp \left( \frac{V(\Z G)}{\ZZ(V(\Z G))\, cl_{\U(\Z G)}(\langle \Bic(G), \pm G \rangle)}\right) \text{ divides }  \exp \left(
\prod_i \frac{\U(\Z G e_i)}{[cl_{V(\Z G)}(\langle \Bic(G) , g_i, h_i \rangle ), \mc{H}(g_i,h_i,Q_i)]} \right)$$
we obtain that $2$ is the only prime divisor of the left hand
side quotient group. Therefore, as $\exp(G^{ab}) \mid \exp(G)=4$ and thanks to the value obtained for $\exp V(\Z G)^{ab}$ conjecture (P) holds, finishing the proof.
\end{proof}

The statement that $cl_{\U (\Z G)}(\langle \Bic(G), \pm G \rangle)$ is of finite index when $G$ satisfies $(\star)$ does not mention $H$-units. However we would like to emphasize that the proof needed them and hence the result is truly a combined use of bicyclic and $H$-units. In upcoming work by the first author a systematic study of the abelianisation of $H$-units and their role on the rank of $\U(\Z G)^{ab}$ will be done.

\bibliographystyle{plain}
\bibliography{Twisted}

\end{document}